\documentclass[a4paper,10pt]{amsart}
\usepackage{pb-diagram}
\usepackage{xypic}
\usepackage{amscd}
\usepackage [curve]{xy}
\usepackage{bm}
\usepackage{slashbox}

\usepackage{amsmath,amssymb,amscd,bbm,bbding,amsthm,mathrsfs,hyperref}

\usepackage{latexsym}
\usepackage{amsfonts}
\usepackage{amsthm}
\usepackage{indentfirst}

 \newtheorem{thm}{Theorem}[section]
 \newtheorem{cor}[thm]{Corollary}
 \newtheorem{lem}[thm]{Lemma}
 \newtheorem{prop}[thm]{Proposition}
 \newtheorem{defn}[thm]{Definition}
 \newtheorem{ex}[thm]{Example}
 \newtheorem{rem}[thm]{Remark}

\def\k{\mathbbm{k}}

 \newcommand{\Hom}{\mathrm{Hom}}


\title{Homological properties of homologically smooth connected cochain DGAs}
\author{X.-F. Mao}
\address{Department of Mathematics, Shanghai University, Shanghai, China, 200444}
\address{Newtouch Center for Mathematics of Shanghai University, Shanghai, China, 200444}
\email{xuefengmao@shu.edu.cn}

\date{}

\begin{document}
 \def\abstactname{abstract}
\begin{abstract}
 Assume that $\mathscr{A}$ is a connected cochain DG algebra. We show that $\mathscr{A}$ is homologically smooth and Gorenstein if and only if its $\mathrm{Ext}$-algebra $H(R\Hom_{\mathscr{A}}(\mathbbm{k},\mathbbm{k}))$ is a Frobenius graded algebra. Moreover, $\mathscr{A}$ is Calabi-Yau
if and only if the $\mathrm{Ext}$-algebra $H(R\Hom_{\mathscr{A}}(\mathbbm{k},\mathbbm{k}))$ is a symmetric Frobenius graded algebra.
These generalize the corresponding results in \cite{HW1} and \cite{HM}, where the additional Koszul hypothesis is needed. 
\end{abstract}

\subjclass[2010]{Primary 16E10,16E45,16W50,16E65}


\keywords{homologically smooth, DG algebra, Gorenstein, Calabi-Yau}
\maketitle
\section*{introduction}
The theory of differential graded algebras (DG algebras, for short) and their modules has numerous applications in classic ring theory, representation theory, and noncommutative algebraic geometry. In turn, the development of DG homological algebra usually benefits from the insights gained from the rich theories in these research fields. In DG homological algebra, the homologically smoothness of a DG algebra plays a similar important role as the regularity of an algebra does in the homological ring theory. Recall that a connected cochain DG algebra $\mathscr{A}$ is called homologically smooth if ${}_{\mathscr{A}^e}\mathscr{A}$ admits a minimal semi-free resolution $F$ which has a finite semi-free basis, or equivalently, ${}_{\mathscr{A}}\k$ admits a minimal semi-free resolution $K$ which has a finite semi-free basis (cf. \cite[Lemma 3.3]{MYH} or\cite[Lemma 2.16]{Mao}). The research on this fundamental property of DG algebras have attracted many people's interests. The author and Wu \cite{MW2} proved that any homologically
smooth connected cochain DG algebra $\mathscr{A}$ is cohomologically unbounded
unless $\mathscr{A}$ is quasi-isomorphic to the simple algebra $k$. And it was
proved that the $\mathrm{Ext}$-algebra
 of a homologically smooth DG algebra $\mathscr{A}$
is Frobenius if and only if both $\mathscr{D}^b_{lf}(\mathscr{A})$ and
$\mathscr{D}^b_{lf}(\mathscr{A}\!^{op})$ admit Auslander-Reiten triangles.  Shklyarov \cite{Sh} developed
a Riemann-Roch Theorem for homologically smooth DG algebras.
Besides these, some important classes of DG algebras are homologically smooth. For example, Calabi-Yau DG algebras introduced by Ginzburg in \cite{Gin} are homologically smooth by definition. Especially, non-trivial Noetherian DG down-up algebras and DG free algebras generated by two degree $1$ elements are Calabi-Yau by \cite{MHLX} and \cite{MXYA}, respectively. Moreover, there is a construction called `Calabi-Yau completion' \cite{Kel2} which produces a canonical Calabi-Yau DG algebra from a homologically smooth DG algebra. Recently, the author \cite{Mao} gives some criteria for a connected cochain DG algebra $\mathscr{A}$ to be homologically smooth in terms of the singularity category, the cone length of the canonical module $\mathbbm{k}$  and the global dimension of $\mathscr{A}$, when the graded algebra $H(\mathscr{A})$ is Noetherian.

 In \cite{HW1},  He-Wu introduced the concept of
Koszul DG algebras, and obtained a DG version of the Koszul duality for Koszul, homologically smooth and Gorenstein DG algebras. Let $\mathscr{A}$ be a connected cochain DG algebra.
Recall that $\mathscr{A}$ is Koszul if the trivial left DG $\mathscr{A}$-module $\mathbbm{k}$ admits a minimal semi-free resolution $F$ which has a semi-free basis concentrated in degree $0$. If $\dim_kH(R\Hom_{\mathscr{A}}(\mathbbm{k},\mathscr{A})) =1$, (resp. $\dim_{k}H(R\Hom_{\mathscr{A}^{op}}(\mathbbm{k},\mathscr{A}))=1$), then $\mathscr{A}$ is called left (resp. right) Gorenstein.   If $\mathscr{A}$ is both left Gorenstein and right Gorenstein, then we say that $\mathscr{A}$ is Gorenstein. By the way, homologically smooth DG algebras are called `regular' DG algebras in \cite{HW1} and \cite{MW2}.  For any homologically smooth DG algebra $\mathscr{A}$, it is left Gorenstein if and only if it is right Gorenstein by \cite[Remark 7.6]{MW2}.
Let $\mathscr{A}$ be a Koszul cochain DG algebra. He-Wu proved in \cite{HW1} that $\mathscr{A}$ is homologically smooth and Gorenstein if and only if its $\mathrm{Ext}$-algebra $H(R\Hom_{\mathscr{A}}(\mathbbm{k},\mathbbm{k}))$ is a Frobenius algebra.
This can be seen as a generalization of the corresponding result for graded case in \cite{Sm}.
A natural question is whether we can drop the Koszul condition as in the graded context. As far as the author is aware, it is still an open question whether the Gorensteinness of a homologically smooth connected cochain DG algebra
can be characterized by the Frobenius property of its Ext-algebra.
 In this paper, we give a positive answer to it. We have the following theorem (cf. Theorem \ref{Gorenstein}).
\\
\begin{bfseries}
Theorem\, A
\end{bfseries}
 Let $\mathcal{E}$  be the  Koszul dual DG algebra of a homologically smooth connected cochain DG algebra $\mathscr{A}$. Then the following conditions are equivalent.
\begin{enumerate}
\item The Ext-algebra $H(\mathcal{E})$ is a Frobenius graded algebra.
\item The DG algebra $\mathscr{A}$ is left Gorenstein.
\item The DG algebra $\mathscr{A}$ is right Gorenstein.
\item $(\mathcal{E}^*)_{\mathcal{E}}\in \mathscr{D}^c(\mathcal{E}^{op})$ and ${}_{\mathcal{E}}(\mathcal{E}^*)\in \mathscr{D}^c(\mathcal{E})$.
\item $\dim_{\mathbbm{k}}H(R\Hom_{\mathcal{E}}(\mathbbm{k},\mathcal{E}))<\infty$ and $\dim_{\mathbbm{k}}H(R\Hom_{\mathcal{E}^{op}}(\mathbbm{k},\mathcal{E}))<\infty$.
\item $\dim_{\mathbbm{k}}H(R\Hom_{\mathcal{E}}(\mathbbm{k},\mathcal{E}))=1$ and $\dim_{\mathbbm{k}}H(R\Hom_{\mathcal{E}^{op}}(\mathbbm{k},\mathcal{E}))=1$.
\item $\mathscr{D}^c(\mathcal{E})$ and $\mathscr{D}^c(\mathcal{E}^{op})$ admit Auslander-Reiten triangles.
\item $\mathscr{D}^b_{lf}(\mathscr{A})$ and $\mathscr{D}^b_{lf}(\mathscr{A}^{op})$ admit Auslander-Reiten triangles.
\end{enumerate}

By Theorem $A$, one sees that a connected cochain DG algebra $\mathscr{A}$ is homologically smooth and Gorenstein if and only if its Ext-algebra $H(R\Hom_{\mathscr{A}}(\mathbbm{k},\mathbbm{k}))$ is a graded Frobenius algebra.

A special family of Gorenstein homologically smooth DG algebras are called Calabi-Yau.
 Since Ginzburg introduced Calabi-Yau (DG) algebras in \cite{Gin}, they have been extensively studied due to their links to
mathematical physics, representation theory and non-commutative algebraic geometry. One well-known family of Calabi-Yau DG algebras are the Ginzburg DG algebras associated with finite quivers \cite{Gin,VdB}. Generally, it is difficult to judge whether a given DG algebra is Calabi-Yau.
If one considers a DG algebra $\mathcal{A}$ as a living thing, then the underlying graded algebra $\mathcal{A}^{\#}$  and the differential $\partial_{\mathcal{A}}$ are its body and soul, respectively.  The homological
properties of a DG algebra is determined by the joint effects of its graded algebra structure and differential structure.
So it is meaningful to find some ways to detect the Calabi-Yauness of a given DG algebra.

In \cite{MHLX} and \cite{MXYA}, non-trivial Noetherian DG
down-up algebras and DG free algebras generated by two degree one elements are proved Calabi-Yau, respectively.
Recently,  a connected cochain DG algebra $\mathscr{A}$ is proved to be Calabi-Yau if the trivial DG algebra $(H(\mathscr{A}),0)$ is Calabi-Yau \cite{MYY}.
The author and J.-W. He \cite{HM} give a criterion for a connected cochain DG algebra to be $0$-Calabi-Yau, and prove that a locally finite connected cochain DG algebra is $0$-Calabi-Yau if and only if it is defined by a potential. It is natural for one to ask whether one can get similar results for non-Koszul cases.
Another motivation of this paper is to seek some criteria for a Gorenstein and homologically smooth DG algebra to be Calabi-Yau.
 By \cite[Proposition 6.4]{MXYA}, any Calabi-Yau DG algebra is Gorenstein. While the converse is generally not true. For a $n$-Calabi-Yau connected cochain DG algebra $\mathscr{A}$,  the full triangulated subcategory $\mathscr{D}^b_{lf}(\mathscr{A})$ of $\mathscr{D}(\mathscr{A})$ containing DG $\mathscr{A}$-modules with finite dimensional total cohomology is a $n$-Calabi-Yau triangulated category (cf. \cite{CV}).
 The notion of Calabi-Yau triangulated category was introduced by Kontsevich \cite{Kon1} in the late 1990s. They appear in string theory,  conformal field theory, Mirror symmetry, integrable system and
 representation theory of quivers and finite-dimensional algebras. Due to the applications of triangulated Calabi-Yau categories in the categorification of Fomin-Zelevinsky's cluster
algebras \cite{FZ}, they have become popular in representation theory.
 In this paper,
 we obtain the following theorem (cf. Theorem \ref{cycond}).
\\
\begin{bfseries}
Theorem\, B
\end{bfseries}
   Let $\mathcal{E}$ be the Koszul dual DG algebra of a homologically smooth and Gorenstein DG algebra
$\mathscr{A}$. Then the following conditions are equivalent.
\begin{enumerate}
\item The DG algebra $\mathscr{A}$ is Calabi-Yau.
\item The Ext-algebra $H(\mathcal{E})$ is a symmetric Frobenius graded algebra.
\item The triangulated categories $\mathscr{D}^c(\mathcal{E})$ and $\mathscr{D}^c(\mathcal{E}^{op})$ are Calabi-Yau.
\item The triangulated categories $\mathscr{D}^b_{lf}(\mathscr{A})$ and $\mathscr{D}^b_{lf}(\mathscr{A}^{op})$ are Calabi-Yau.
\end{enumerate}

By Theorem A and Theorem B, one sees that a connected cochain DG algebra 
$\mathscr{A}$ is Calabi-Yau if and only if its Ext-algebra $H(\mathcal{E})$ is a symmetric Frobenius graded algebra.
This generalize the corresponding result for Koszul case in \cite{HM}.

As applications, we apply Theorem A and Theorem B to detect Gorensteinness and Calabi-Yauness of some examples of homologically smooth DG algebras. In Section \ref{appl}, we show that a connected cochain DG algebra is not Calabi-Yau but Gorenstein if its cohomology graded algebra is a cubic Artin-Schelter regular algebra of type A. In \cite{MHLX}, DG down-up algebras are introduced and systematically studied. It is proved there that all non-trivial Noetherian DG down-up algebras are Calabi-Yau. In this paper,  we apply Theorem B to reprove the Calabi-Yauness of one meaningful DG down-up algebra.
In the last section of this paper,  we give applications on DG free algebras introduced in \cite{MXYA}.  Recall that a connected cochain DG algebra $\mathscr{A}$ is called DG free if its underlying graded algebra $$\mathcal{A}^{\#}=\k\langle x_1,x_2,\cdots, x_n\rangle,\,\, \text{with}\,\, |x_i|=1,\,\, \forall i\in \{1,2,\cdots, n\}.$$
For the case $n=2$, it is proved in \cite{MXYA} that all those non-trivial DG free algebras are Koszul and Calabi-Yau. It is worth noting that as $n$ grows large, the classifications, cohomology and homological properties of the DG free algebras become increasingly difficult to compute and study. This increased complexity is, in large part, due to the irregular increase of the number of cases one needs to study separately.
Unlike the case of $n=2$, we have examples of non-Koszul and non-Gorenstein DG free algebras generated by three degree one elements.
In spite of this, there are Koszul and Calabi-Yau examples. For convenience, we summarize them by the following tabular.
\\
\begin{tabular}{l|llll}\hline
\backslashbox{cases}{properties}& Koszul & Gorenstein & homologically smooth & Calabi-Yau \\
 \hline
Example \ref{example1}  & \XSolid & \Checkmark &\Checkmark & \Checkmark \\
 Example \ref{ex3}  & \Checkmark & \XSolid &\Checkmark & \XSolid  \\
 Example \ref{ex2}, \ref{ex5} & \Checkmark &\Checkmark &\Checkmark & \Checkmark
 \\
 \hline
\end{tabular}

\section{frobenius algebras}
In this section, we review some basics on Frobenius algebras, which are a family of algebras with some symmetric properties.
Throughout the paper, $\mathbbm{k}$ is a fixed algebraic closed field of characteristic zero.

\begin{defn}\cite{Nak1,Nak2}{\rm
A finite dimensional $\mathbbm{k}$-algebra $E$ is called Frobenius  if there is
a nondegenerate associative bilinear form
$\langle-,-\rangle: E\times E\to \mathbbm{k}$ satisfying
$\langle xy,z\rangle=\langle x,yz\rangle$ for any $x,y,z\in E$. }
\end{defn}

As a convention, the bilinear form in the definition of a Frobenius algebra is called the Frobenius form of the algebra.
For any $x,y,z, x_i,y_i\in E$ and $k\in \mathbbm{k}$, the following conditions hold.
\begin{enumerate}
\item $\langle xy,z\rangle=\langle x,yz\rangle$;
\item $\langle x_1+x_2,y\rangle =\langle x_1,y\rangle+\langle x_2,y\rangle$;
\item $\langle x,y_1+y_2\rangle=\langle x,y_1\rangle + \langle x,y_2\rangle$;
\item $\langle kx,y\rangle =\langle x,ky\rangle =k\langle x,y\rangle $;
\item  $\langle x,y\rangle=0$ for all $x\in E$ only if $y=0$.
\end{enumerate}

For any $\mathbbm{k}$-vector space $V$, we write $V^*=\Hom_{\mathbbm{k}}(V,\mathbbm{k})$. Let $\{e_i|i\in I\}$ be a basis of a finite dimensional $\mathbbm{k}$-vector space $V$.  We denote the dual basis of $V$ by $\{e_i^*|i\in I\}$, i.e., $\{e_i^*|i\in I\}$ is a basis of $V^*$ such that $e_i^*(e_j)=\delta_{i,j}$. Let $E$ be a Frobenius algebra. Then $E^*$ has a natural left and right $E$-module structure. It is well-known that the definition of Frobenius algebras is equivalent to the condition that $E\cong E^*$ as left $E$-modules or as right $E$-modules.
For each Frobenius algebra $E$, there is a $\mathbbm{k}$-algebra automorphism $\mu:E\to E$ such that $\langle\mu(a),b\rangle =\langle b,a\rangle$
for any $a,b\in E$. The automorphism $\mu$ is called Nakayama automorphism (cf. \cite{Mur}).

\begin{defn}
{\rm
Let $E$ be a Frobenius algebra.  If the Frobenius form $$\langle-, -\rangle: E\times E\to \mathbbm{k}$$ satisfies the condition that $\langle a,b\rangle=\langle b,a\rangle$ for any $a, b\in E$, then we say that $E$ is a symmetric Frobenius algebra.
}
\end{defn}

Now, let us transfer to graded context. For any graded vector space $W$ and $j\in\Bbb{Z}$,  the $j$-th suspension $\Sigma^j W$ of $W$ is a graded vector space defined by $(\Sigma^j W)^i=W^{i+j}$. Recall that a $\Bbb{Z}$-graded algebra is an associated algebra $A$ which is an associative algebra $A$ which is a direct sum of subspaces $$A=\bigoplus\limits_{i\in \Bbb{Z}}A^i$$ such that $A^i\cdot A^j\subseteq A^{i+j}$ for all $i,j\in \Bbb{Z}$. The elements in $A^i$ are said to be homogeneous of degree $i$.
The concept of (symmetric) Frobenius algebras have their graded versions in graded context.

\begin{defn}\label{gradedfrob}{\rm  Let $E$ be a finite dimensional graded $\mathbbm{k}$-algebra. Then it is called graded Frobenius if one of the following equivalent conditions hold.
\begin{enumerate}
\item There is an isomorphism of graded $E^e$ modules $E^*\cong \Sigma^{l}E_{\nu}$, for some graded algebra automorphism $\nu\in \mathrm{Aut}_{\mathbbm{k}}E$ and $l\in \Bbb{Z}$.
\item  There is an isomorphism of left $E$-modules: $\Sigma^{l}E\to E^*$, for some $l\in \Bbb{Z}$.

\item There is an isomorphism of right $E$-modules: $\Sigma^{l}E\to E^*$, for some $l\in \Bbb{Z}$.

\item There is an integer $d$ and a graded non-degenerate bilinear form $$\langle-, -\rangle: E\times E\to \Sigma^d \mathbbm{k}$$ such that $\langle ab,c\rangle =\langle a, bc\rangle$ for any $a,b,c\in E$.
\end{enumerate}
Furthermore, if the Frobenius form of a Frobenius graded algebra $E$  satisfies the condition that $\langle a,b\rangle=(-1)^{ij}\langle b,a\rangle$ for all graded elements $a\in E^i, b\in E^j$, then we say that $E$ is a symmetric Frobenius graded algebra.}

\end{defn}

Let $A$ be a connected $\Bbb{N}$-graded finitely $\mathbbm{k}$-algebra which is generated in degree $1$. Consider the trivial left $A$-module ${}_A\mathbbm{k}=A/A^{\ge 1}$, we can construct the associated
$\mathrm{Ext}$-algebra $E=E(A)=\oplus_{i=0}^{\infty}\mathrm{Ext}_{A}^i(\mathbbm{k},\mathbbm{k})$. It has a well-known multiplication given by the Yoneda product, and as such it is again a connected $\Bbb{N}$-graded algebra. The study of $\mathrm{Ext}$-algebras and their properties is an important general topic in homological algebra.
 In noncommutative algebraic geometry, it is well known that there are close relations between graded Frobenius algebras and Artin-Schelter regular algebras. Let
$$\cdots\to P_n\to \cdots\to P_2\to P_1\to P_0\to \mathbbm{k}\to 0$$
be the minimal free resolution of the trivial module ${}_Ak$. We say that $A$ is Koszul if each $P_i$ is generated in degree $i$, for each $i\ge 0$. For any Noetherian Koszul algebra $A$, it is Artin-Schelter regular if and only if its $\mathrm{Ext}$-algebra $E(A)$ is a graded Frobenius algebra by \cite{Sm}. In \cite[Corollary D]{LPWZ}, this result is generalized to non-Koszul case. Let $A$ be a connected $\Bbb{N}$-graded algebra generated in degree $1$ with $\mathrm{gl.dim}(A)<\infty$. It is proved that $A$ is AS-Gorenetein if and only if $E(A)$ is a graded Frobenius algebra. One sees that any connected graded algebra can be seen as a connected cochain DG algebra with zero differential. The motivation of this paper is to seek an analogous result in DG context.

\section{preliminaries in dg homological algebra}
In this section, we review some basics on differential graded (DG
for short) homological algebra. There is some
overlap here with the papers \cite{MW1, MW2,FHT2}. It is assumed that
the reader is familiar with basics on DG modules, triangulated
categories and derived categories. If this is not the case, we refer
to  \cite{Nee, Wei} for more details on them.

  Let $\mathscr{A}$ be a $\Bbb{Z}$-graded
$\mathbbm{k}$-algebra.  If there is a $\mathbbm{k}$-linear map $\partial_{\mathscr{A}}: \mathscr{A}\to \mathscr{A}$  of
degree $1$ such that $\partial_{\mathscr{A}}^2 = 0$ and
\begin{align*}
\partial_{\mathscr{A}}(ab) = \partial_{\mathscr{A}}(a)b + (-1)^{n|a|}a\partial_{\mathscr{A}}(b)
\end{align*}
for all graded elements $a, b\in \mathscr{A}$, then  $\mathscr{A}$ is called a cochain
differential graded $\mathbbm{k}$-algebra. We write DG for differential graded.
For any cochain DG $\mathbbm{k}$-algebra $\mathscr{A}$, its
underlying graded algebra obtained by forgetting the differential of
$\mathscr{A}$ is denoted by $\mathscr{A}^{\#}$. If $\mathscr{A}^{\#}$ is a connected graded
algebra, then $\mathscr{A}$ is called a connected cochain DG algebra.
For the rest of this paper, we denote $\mathscr{A}$ as a
connected DG algebra over a field $\mathbbm{k}$ if no special assumption is
emphasized. The cohomology graded algebra of $\mathscr{A}$ is the graded algebra $$H(\mathscr{A})=\bigoplus_{i\in \Bbb{Z}}\frac{\mathrm{ker}(\partial_{\mathscr{A}}^i)}{\mathrm{im}(\partial_{\mathscr{A}}^{i-1})}.$$
 For any cocycle element $z\in \mathrm{ker}(\partial_{\mathscr{A}}^i)$, we write $\lceil z \rceil$ as the cohomology class in $H(\mathscr{A})$ represented by $z$. It is easy to
check that $H(\mathscr{A})$ is a connected graded algebra if $\mathscr{A}$ is a
connected DG algebra. We denote $\mathscr{A}\!^{op}$ as the opposite DG
algebra of $\mathscr{A}$, whose multiplication is defined as
 $a \cdot b = (-1)^{|a|\cdot|b|}ba$ for all
homogeneous elements $a$ and $b$ in $\mathscr{A}$. For any connected cochain DG algebra $\mathscr{A}$, it has the following maximal DG
ideal $$\frak{m}: \cdots\to 0\to \mathscr{A}^1\stackrel{\partial_1}{\to}
\mathscr{A}^2\stackrel{\partial_2}{\to} \cdots \stackrel{\partial_{n-1}}{\to}
\mathscr{A}^n\stackrel{\partial_n}{\to}
 \cdots .$$ Obviously, the enveloping DG algebra $\mathscr{A}^e = \mathscr{A}\otimes \mathscr{A}\!^{op}$ of $\mathscr{A}$
is also a connected DG algebra with $H(\mathscr{A}^e)\cong H(\mathscr{A})^e$, and its
maximal DG ideal is $\frak{m}\otimes \mathscr{A}^{op} + \mathscr{A}\otimes
\frak{m}^{op}$.

A DG $\mathscr{A}$-module $F$ is called DG free, if it is isomorphic to a
direct sum of suspensions of $\mathscr{A}$. We must emphasize that DG free $\mathscr{A}$-modules are not free objects in
the category of DG $\mathscr{A}$-modules.  Let $Y$ be a graded set, we denote
$\mathscr{A}^{(Y)}$ as the DG free DG module $\oplus_{y\in Y}\mathscr{A} e_y$, where
$|e_y|=|y|$ and $\partial(e_y)=0$. Let $M$ be a DG $\mathscr{A}$-module. A
subset $E$ of $M$ is called a \emph{semi-basis} if it is a basis of
$M^{\#}$ over $\mathscr{A}^{\#}$ and has a decomposition $E =
\bigsqcup_{i\ge0}E^i$ as a union of disjoint graded subsets $E^i$
such that
\begin{align*}
\partial(E^0)=0 \,\,\, \textrm {and} \,\,\,\partial(E^u)\subseteq \mathcal{A}
(\bigsqcup_{i<u}E^i)\, \,\,\textrm{for all}\,\, \,u >0.
\end{align*}
A DG $\mathscr{A}$-module $M$ is called semi-free if there is a
sequence of DG submodules
\begin{align*}
0=M_{-1}\subset M_{0}\subset\cdots\subset M_{n}\subset\cdots
\end{align*}
such that $M = \cup_{n \ge 0} \,M_{n}$ and that each $M_{n}/M_{n-1}=\mathscr{A}^{(Y)}$
is free on a basis $\{e_y|y\in Y\}$ of cocycles. We usually say that $M_n$ is an extension of $M_{n-1}$ since $M_{n-1}$ is a DG submodule of $M_{n-1}$, $M^{\#}_n=M^{\#}_{n-1}\oplus \mathcal{A}^{(Y)}$ and $\partial_{M_{n}}(e_y)\subseteq M_{n-1}$ for any $y\in Y$. Note that we have the following linearly split short exact sequence of DG $\mathscr{A}$-modules $$0\to M_{n-1}\to M_n\to M_n/M_{n-1}\to 0.$$
It is easy to see that a DG
$\mathscr{A}$-module is semi-free if and only if it has a semi-basis. A semi-free resolution of a DG $\mathscr{A}$-module $M$ is a
quasi-isomorphism $\varepsilon:F \to M$, where $F$ is a semi-free DG
$\mathscr{A}$-module. Sometimes we call $F$ itself a semi-free resolution of
$M$.  A semi-free resolution $F$ is called minimal if $\partial_F(F)\subseteq \frak{m}_{\mathscr{A}}F$.
By \cite[Proposition $2.4$]{MW1}, any DG $\mathscr{A}$-module with bounded below cohomology admits a minimal semi-free resolution. Another well-known semi-free resolution is called Eilenberg-Moore resolution, which is constructed from the free resolution of the cohomology graded module of the given DG module. It has a semi-basis which is in one to one correspondence with the free basis of the terms in that free resolution.
The readers can see a detailed proof of it in \cite[P. 279-280]{FHT2}.

We write $\mathscr{D}(\mathscr{A})$ for
the derived category of left DG modules over $\mathscr{A}$. We write
$\mathscr{D}^{b}(\mathscr{A})$ for the full subcategories of $\mathscr{D}(\mathscr{A})$,
whose objects are cohomologically
bounded.  We say a graded vector
space $M = \oplus_{i\in \Bbb{Z}}M^i$ is locally finite, if
each $M^i$ is finite dimensional. The full subcategory of
$\mathscr{D}(\mathscr{A})$ consisting of DG modules with locally finite
cohomology is denoted by $\mathscr{D}_{lf}(\mathscr{A})$.
A DG $\mathscr{A}$-module $M$ is compact if the functor $\Hom_{\mathscr{D}(\mathscr{A})}(M,-)$ preserves
all coproducts in $\mathscr{D}(\mathscr{A})$.
 Compact DG modules play the same role as finitely presented
modules of finite projective dimension do in ring theory (cf. \cite{Jor2}). It follows from \cite[Proposition 3.3]{MW1} that a DG $\mathscr{A}$-module $M$ is compact if and only if $M$ has a minimal semi-free resolution $F_M$, which has a finite semi-basis.
The full subcategory of $\mathscr{D}(\mathscr{A})$ consisting of compact DG $\mathscr{A}$-modules is denoted by $\mathscr{D}^c(\mathscr{A})$.

\begin{defn}\label{smooth}
{\rm
Let $\mathscr{A}$ be a connected cochain DG algebra. If ${}_{\mathscr{A}^e}\mathscr{A}\in \mathscr{D}^c(\mathscr{A}^e)$, then $\mathscr{A}$
is called homologically smooth. }
\end{defn}
By \cite[Corollary 2.7]{MW3}, $\mathscr{A}$ is homologically smooth if and only if
 $\mathbbm{k} \in \mathscr{D}^c(\mathscr{A})$.
Besides the homologically smoothness of a DG algebra, Koszul, Gorenstein and Calabi-Yau properties have been objects of much interest recently.
We list their definitions as follows.
\begin{defn}\cite{FHT1}\label{gordef}{\rm
Let $\mathscr{A}$ be a connected cochain DG algebra.
If $$\dim_{\mathbbm{k}}H(R\Hom_{\mathscr{A}}(\mathbbm{k},\mathscr{A})) =1,\,\,(\text{resp.} \dim_{\mathbbm{k}}H(R\Hom_{\mathscr{A}^{op}}(\mathbbm{k},\mathscr{A}))=1),$$ then $\mathscr{A}$ is called left (resp. right) Gorenstein.   If $\mathscr{A}$ is both left Gorenstein and right Gorenstein, then we say that $\mathscr{A}$ is Gorenstein. }
\end{defn}
 By \cite[Remark 7.6]{MW2}, any homologically smooth DG algebra is left Gorenstein if and only if it is right Gorenstein. And we have the following lemma.
\begin{lem}\cite[Proposition $5.1$]{HW1}\label{Goren}
Let $\mathscr{A}$ be a connected cochain DG algebra such that $H(\mathscr{A})$ is a left AS-Gorenstein graded algebra. Then $\mathscr{A}$ is a left Gorenstein DG algebra.
\end{lem}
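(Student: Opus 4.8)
The plan is to compute the total dimension of $H(R\Hom_{\mathscr{A}}(\mathbbm{k},\mathscr{A}))$ by passing through the cohomology algebra $H(\mathscr{A})$ via a change-of-rings (Eilenberg--Moore type) spectral sequence, and to show that the AS-Gorenstein hypothesis forces this spectral sequence to collapse onto a one-dimensional abutment. First I would unwind the hypothesis: to say that $H(\mathscr{A})$ is left AS-Gorenstein means that $\mathrm{injdim}_{H(\mathscr{A})}H(\mathscr{A})=d<\infty$ together with $\mathrm{Ext}^{i}_{H(\mathscr{A})}(\mathbbm{k},H(\mathscr{A}))=0$ for $i\neq d$ and $\mathrm{Ext}^{d}_{H(\mathscr{A})}(\mathbbm{k},H(\mathscr{A}))\cong \Sigma^{l}\mathbbm{k}$ for some $l\in\Bbb{Z}$. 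In particular, the total bigraded vector space $\bigoplus_{i}\mathrm{Ext}^{i}_{H(\mathscr{A})}(\mathbbm{k},H(\mathscr{A}))$ is one-dimensional and concentrated in a single homological degree $d$.

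Next I would invoke the standard hyper-$\mathrm{Ext}$ spectral sequence attached to the DG $\Hom$-complex, namely one of the form
$$E_2^{p,q}=\mathrm{Ext}^{p}_{H(\mathscr{A})}(H(\mathbbm{k}),H(\mathscr{A}))^{q}\Longrightarrow H^{p+q}(R\Hom_{\mathscr{A}}(\mathbbm{k},\mathscr{A})),$$
which is built from an Eilenberg--Moore filtration of a semi-free resolution of $\mathbbm{k}$ whose associated graded pieces realize the minimal graded free resolution of $\mathbbm{k}$ over $H(\mathscr{A})$; this is precisely the construction recorded in the cited \cite{FHT2}. Since $H(\mathbbm{k})=\mathbbm{k}$, the $E_2$-page is exactly $\mathrm{Ext}_{H(\mathscr{A})}(\mathbbm{k},H(\mathscr{A}))$. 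The finiteness of the injective dimension bounds the $p$-range of the nonzero terms, so the spectral sequence converges.

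The crux is then immediate: by the first step the $E_2$-page is a one-dimensional bigraded vector space, nonzero only in the single spot of homological degree $d$. Each differential $d_r$ $(r\geq 2)$ strictly changes the homological degree, so into or out of the unique nonzero position it has either zero source or zero target; hence every differential vanishes, $E_\infty=E_2$, and by convergence $\dim_{\mathbbm{k}}H(R\Hom_{\mathscr{A}}(\mathbbm{k},\mathscr{A}))$ equals the total dimension of $E_\infty$, namely $1$. This is exactly the defining condition (Definition \ref{gordef}) for $\mathscr{A}$ to be left Gorenstein. The main obstacle I expect is the clean setup and convergence of this spectral sequence rather than the final collapse: one must carefully identify the $E_2$-page with the graded $\mathrm{Ext}$ over $H(\mathscr{A})$ and justify convergence, and it is precisely the finite injective dimension supplied by the AS-Gorenstein hypothesis that guarantees the boundedness needed for convergence. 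Once convergence and the single-spot $E_2$-page are secured, the degeneration and the dimension count require no further work.
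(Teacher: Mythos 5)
The paper offers no proof of this lemma; it is quoted verbatim from \cite[Proposition 5.1]{HW1}, and your argument --- building the Eilenberg--Moore semi-free resolution of ${}_{\mathscr{A}}\mathbbm{k}$ from the minimal graded free resolution of $\mathbbm{k}$ over $H(\mathscr{A})$ and running the resulting spectral sequence $E_2^{p,q}=\mathrm{Ext}^{p}_{H(\mathscr{A})}(\mathbbm{k},H(\mathscr{A}))^{q}\Rightarrow H^{p+q}(R\Hom_{\mathscr{A}}(\mathbbm{k},\mathscr{A}))$, which the AS-Gorenstein hypothesis concentrates in a single bidegree --- is precisely the standard proof given in that reference, and it is correct. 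The one point you flag but do not fully discharge is strong convergence when $H(\mathscr{A})$ has infinite global dimension (so the filtration is unbounded); this is genuinely needed and follows from the completeness of the Eilenberg--Moore filtration on $\Hom_{\mathscr{A}}(F,\mathscr{A})$ together with the collapse at $E_2$, which forces $RE_\infty=0$.
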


\begin{defn}\label{CY}\cite{Gin,VdB}
{\rm If $\mathcal{A}$ is homologically smooth and $$R\Hom_{\mathcal{A}^e}(\mathcal{A}, \mathcal{A}^e)\cong
\Sigma^{-n}\mathcal{A}$$ in  the derived category $\mathscr{D}((\mathcal{A}^e)^{op})$ of right DG $\mathcal{A}^e$-modules, then $\mathcal{A}$ is called an $n$-Calabi-Yau DG algebra. }
\end{defn}
 By \cite[Proposition 6.4]{MXYA}, Calabi-Yau DG algebras are Gorenstein. However, the converse is generally not true even in Koszul case. The reader can check this by Proposition \ref{noncykosz}. In reference, there are two different versions of Koszul DG algebras.
 The earlier one is given by R. Bezrukavnikov in \cite{Bez}, where a DG algebra is called Koszul if its underlying graded algebra is Koszul.
  In \cite{HW1}, He-Wu gave a different definition of Koszul DG algebra. We must emphasize that these two definitions are not equivalent to each other. In this paper, we adopt the latter definition.
\begin{defn}\label{koszul}
{\rm Let $\mathscr{A}$ be a connected cochain DG algebra.
If ${}_{\mathscr{A}}\mathbbm{k}$, or equivalently ${}_{\mathscr{A}^e}\mathscr{A}$, has a minimal semi-free resolution with a semi-basis concentrated in degree $0$, then $\mathscr{A}$ is called Koszul.
}\end{defn}

The following lemma indicates that the concept of Koszul DG algebras in cochain cases is a natural generalization of Koszulness in graded context.
\begin{lem}\cite[Proposition $2.3$]{HW1}\label{koszul}
Let $\mathscr{A}$ be a connected cochain DG algebra such that $H(\mathscr{A})$ is a Koszul graded algebra.  Then $\mathscr{A}$ is a Koszul DG algebra.
\end{lem}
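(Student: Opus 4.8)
The plan is to construct an explicit minimal semi-free resolution of ${}_{\mathscr{A}}\k$ whose semi-basis lies entirely in cohomological degree $0$, using the Eilenberg--Moore resolution recalled in the preliminaries. Since $H(\mathscr{A})$ is a connected $\Bbb{N}$-graded algebra that is Koszul, the trivial module ${}_{H(\mathscr{A})}\k$ admits a minimal graded free resolution
$$\cdots \to F_n \to \cdots \to F_1 \to F_0 \to \k \to 0$$
that is \emph{linear}: each $F_n$ is free on a basis $B_n$ of generators concentrated in internal degree $n$, and every entry of the resolution differential $d^{res}\colon F_n \to F_{n-1}$ lies in $H(\mathscr{A})^{1}$. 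Applying the Eilenberg--Moore construction of \cite[p.~279--280]{FHT2} to $M = {}_{\mathscr{A}}\k$ (for which $H(M)=\k$ as a trivial $H(\mathscr{A})$-module) produces a semi-free resolution $\varepsilon\colon F \to {}_{\mathscr{A}}\k$ whose semi-basis is in one-to-one correspondence with $\bigsqcup_n B_n$.

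Next I would track the cohomological degree of each semi-basis element. In the cochain totalization underlying the Eilenberg--Moore resolution the cohomological degree of the semi-basis element attached to a generator $z \in B_n$ equals its internal degree minus its homological degree $n$; this is exactly the normalization that makes the total differential $\partial_F$ homogeneous of degree $+1$, since the internal differential $\partial_{\mathscr{A}}$ has bidegree (internal $+1$, homological $0$) while $d^{res}$ has bidegree (internal $0$, homological $-1$), so both contribute $+1$ to the total degree. By linearity of the resolution the generators in $B_n$ all have internal degree $n$, hence every semi-basis element of $F$ sits in cohomological degree $n-n=0$. Thus the semi-basis of $F$ is concentrated in degree $0$.

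Minimality would then come for free from this degree concentration. Since every semi-basis element has degree $0$, any term $a\cdot \tilde w$ occurring in $\partial_F(\tilde z)$ with $\tilde z, \tilde w$ semi-basis elements must have $|a|=1$ in order for $\partial_F$ to raise cohomological degree by one; in particular $a\in \mathscr{A}^{\ge 1}=\mathfrak{m}$, so $\partial_F(F)\subseteq \mathfrak{m}F$ and $F$ is minimal. Consequently ${}_{\mathscr{A}}\k$ has a minimal semi-free resolution with semi-basis concentrated in degree $0$, which is precisely the defining condition for $\mathscr{A}$ to be a Koszul DG algebra, and the proof is complete.

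The step I expect to require the most care is the degree bookkeeping for the Eilenberg--Moore resolution: one must verify that the stated bijection between the semi-basis and $\bigsqcup_n B_n$ really does assign cohomological degree (internal degree of $z$)$\,-\,n$ to each generator, and that the higher correction terms in $\partial_F$ beyond the naive lift of $d^{res}$ disturb neither the degree-$0$ concentration nor minimality. Both follow once one observes that $\partial_F$ is by construction homogeneous of total degree $+1$, so the concentration argument and the minimality argument are in fact the same degree count; the only genuine input is the Koszulness of $H(\mathscr{A})$, which forces internal degree $=n$ on $B_n$.
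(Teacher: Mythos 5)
The paper states this lemma only as a quotation of \cite[Proposition 2.3]{HW1} and supplies no proof of its own, so there is nothing internal to compare against; your Eilenberg--Moore argument is the standard one, and it is exactly the degree bookkeeping the paper itself deploys in Proposition \ref{noncykosz} and Proposition \ref{ex1} to read Koszulity off the cohomology algebra. Your proof is correct: linearity of the minimal free resolution of $\k$ over $H(\mathscr{A})$ forces every semi-basis element into total degree $n-n=0$, and minimality then comes for free because each coefficient occurring in $\partial_F$ of a semi-basis element must have degree $1$ and hence lies in $\mathfrak{m}$.
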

Due to the Koszul duality for Koszul, Gorenstein and homologically smooth DG algebras,  we have the following lemma which indicates that a homologically smooth and Koszul DG algebra is Gorenstein if and only if its Ext-algebra is Frobenius.
\begin{lem}\cite[Proposition $5.4$]{HW1}
Let $\mathscr{A}$ be a Koszul connected cochain DG algebra with Ext-algebra $E=H(R\Hom_{\mathscr{A}}(\mathbbm{k},\mathbbm{k}))$. Then $\mathscr{A}$ is homologically smooth and Gorenstein if and only if $E$ is a Frobenius algebra.
\end{lem}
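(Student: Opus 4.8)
The plan is to split the biconditional into a \emph{finiteness} half, which controls homological smoothness, and a \emph{duality} half, which controls Gorensteinness, and to route both through the Koszul dual $\mathcal{E}=R\Hom_{\mathscr{A}}(\mathbbm{k},\mathbbm{k})$, whose cohomology is $E$. First I would make the Koszul hypothesis concrete: fix a minimal semi-free resolution $\varepsilon\colon F\to {}_{\mathscr{A}}\mathbbm{k}$ whose semi-basis $V$ is concentrated in degree $0$, so $F^{\#}=\mathscr{A}^{\#}\otimes V$. Minimality forces the differential on $\Hom_{\mathscr{A}}(F,\mathbbm{k})$ to vanish, and a direct computation gives $E\cong V^{*}$, concentrated in internal degree $0$; this is exactly why the relevant notion here is ordinary (ungraded) Frobenius rather than graded Frobenius. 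It also yields the first equivalence, using \cite[Corollary 2.7]{MW3} and the existence of minimal semi-free resolutions \cite[Proposition $2.4$]{MW1}:
$$\mathscr{A}\text{ homologically smooth}\iff {}_{\mathscr{A}}\mathbbm{k}\text{ compact}\iff F\text{ has a finite semi-basis}\iff \dim_{\mathbbm{k}}E<\infty.$$
Moreover the homological filtration of the semi-basis endows $E$ with a connected grading whose augmentation ideal, being finite-dimensional in the smooth case, is nilpotent; hence $E$ is a finite-dimensional local algebra with $E/\mathrm{rad}(E)=\mathbbm{k}$.

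Next I would set up the Koszul duality bridge, which is the heart of the matter. Invoking the Koszul duality of \cite{HW1} for the Koszul, homologically smooth algebra $\mathscr{A}$, the contravariant functor $D=R\Hom_{\mathscr{A}}(-,\mathbbm{k})$ sends ${}_{\mathscr{A}}\mathbbm{k}$ to the rank-one free module $\mathcal{E}$ and sends ${}_{\mathscr{A}}\mathscr{A}$ to the trivial module $\mathbbm{k}_{\mathcal{E}}$, and it is a duality on the pertinent compact subcategories. Applying it to $R\Hom_{\mathscr{A}}(\mathbbm{k},\mathscr{A})$ and flipping the two arguments gives
$$R\Hom_{\mathscr{A}}(\mathbbm{k},\mathscr{A})\simeq R\Hom_{\mathcal{E}}(D\mathscr{A},D\mathbbm{k})=R\Hom_{\mathcal{E}}(\mathbbm{k},\mathcal{E})$$
in $\mathscr{D}(\mathcal{E}^{op})$. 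Since $E=H(\mathcal{E})$ is concentrated in a single internal degree, a truncation argument shows $\mathcal{E}$ is formal: both $\tau_{\le 0}\mathcal{E}\hookrightarrow\mathcal{E}$ and $\tau_{\le 0}\mathcal{E}\twoheadrightarrow E$ are DG-algebra quasi-isomorphisms. Hence $R\Hom_{\mathcal{E}}(\mathbbm{k},\mathcal{E})\simeq R\Hom_{E}(\mathbbm{k},E)$, and therefore
$$\dim_{\mathbbm{k}}H(R\Hom_{\mathscr{A}}(\mathbbm{k},\mathscr{A}))=\dim_{\mathbbm{k}}\mathrm{Ext}_{E}(\mathbbm{k},E).$$

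Then I would translate the Gorenstein condition into an intrinsic property of $E$. For the local finite-dimensional algebra $E$ one has $\Hom_{E}(\mathbbm{k},E)=\mathrm{soc}({}_{E}E)\neq 0$, so $\dim_{\mathbbm{k}}\mathrm{Ext}_{E}(\mathbbm{k},E)=1$ forces both $\mathrm{Ext}^{>0}_{E}(\mathbbm{k},E)=0$ (that is, $E$ is self-injective) and $\dim_{\mathbbm{k}}\mathrm{soc}(E)=1$; for a local algebra this is precisely the assertion that $E$ is Frobenius, and the converse is immediate. Combining this with the first paragraph, $\mathscr{A}$ is homologically smooth and left Gorenstein if and only if $\dim_{\mathbbm{k}}E<\infty$ and $\dim_{\mathbbm{k}}H(R\Hom_{\mathscr{A}}(\mathbbm{k},\mathscr{A}))=1$, if and only if $E$ is Frobenius. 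Because the Frobenius property is left-right symmetric and, by \cite[Remark 7.6]{MW2}, a homologically smooth DG algebra is left Gorenstein exactly when it is right Gorenstein, this upgrades to the full Gorenstein conclusion. I would be careful about the logic of the ($\Leftarrow$) direction: smoothness occurs on both sides of the equivalence, so I would first deduce $\dim_{\mathbbm{k}}E<\infty$ (hence smoothness) from $E$ being Frobenius, which is what legitimizes invoking Koszul duality in the first place.

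The step I expect to be the main obstacle is the rigorous deployment of the isomorphism $R\Hom_{\mathscr{A}}(\mathbbm{k},\mathscr{A})\simeq R\Hom_{\mathcal{E}}(\mathbbm{k},\mathcal{E})$: one must check that ${}_{\mathscr{A}}\mathbbm{k}$ and ${}_{\mathscr{A}}\mathscr{A}$ both lie in the subcategory on which $D$ is a genuine duality, and track the handedness of the module structures and the suspensions so that the two $\Hom$-complexes are matched as DG $\mathcal{E}$-modules and not merely as graded vector spaces. By comparison, the formality of $\mathcal{E}$ and the socle computation over $E$ are routine.
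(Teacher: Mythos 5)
The paper gives no proof of this lemma at all --- it is imported verbatim from \cite[Proposition 5.4]{HW1} --- so the natural benchmark is the paper's own proof of the non-Koszul generalization, Proposition \ref{mainthm} (together with Proposition \ref{goren}). Measured against that, your argument is correct in outline but follows a genuinely different and, in the Koszul case, cleaner route. The smoothness half (smooth $\iff \dim_{\mathbbm{k}}E<\infty$, via minimality of $F$ and \cite[Corollary 2.7]{MW3}) coincides with what the paper does. The real divergence is your formality reduction: since the semi-basis of $F$ sits in degree $0$, the endomorphism DG algebra $\mathcal{E}$ is concentrated in non-negative degrees with cohomology only in degree $0$, so $Z^{0}(\mathcal{E})=H^{0}(\mathcal{E})=E$ includes quasi-isomorphically and the whole Gorenstein condition collapses to the elementary fact that a finite-dimensional \emph{local} algebra $E$ (locality is \cite[Lemma 5.4]{Mao}, which you may cite instead of your filtration sketch) satisfies $\dim_{\mathbbm{k}}\mathrm{Ext}_{E}(\mathbbm{k},E)=1$ iff it is Frobenius. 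The paper cannot argue this way in general, because without Koszulness $\mathcal{E}$ need not be formal; it instead works with the truncated dual $\mathcal{E}'$, Eilenberg--Moore resolutions of $\mathcal{E}^{*}$, and a truncation-plus-Nakayama argument on $\Hom_{\mathscr{A}}(K,\mathscr{A})$. Your approach buys brevity and transparency at the price of being confined to the Koszul case, which is precisely the scope of the lemma.

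One step does need to be tightened, and it is the one you flag. The transfer $R\Hom_{\mathscr{A}}(\mathbbm{k},\mathscr{A})\simeq R\Hom_{\mathcal{E}^{op}}(\mathbbm{k},\mathcal{E})$ does not follow merely from ``$D=R\Hom_{\mathscr{A}}(-,\mathbbm{k})$ is a duality on the pertinent compact subcategories'': what is actually needed is full faithfulness of $D$ on $\mathscr{D}^{c}(\mathscr{A})$ (both $\mathbbm{k}$ and $\mathscr{A}$ lie there, the former by smoothness), and by d\'evissage from the generator $\mathscr{A}$ this is \emph{equivalent} to the biduality $\mathscr{A}\simeq R\Hom_{\mathcal{E}^{op}}(\mathbbm{k},\mathbbm{k})$, i.e.\ to the double-Koszul-dual theorem of \cite{HW1}, which in turn uses local finiteness of $\mathscr{A}$ (an assumption the paper also uses silently in Proposition \ref{mainthm}). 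So you should invoke that specific statement rather than ``flipping arguments'' in an unspecified duality; alternatively you can bypass it entirely by following the paper's covariant route, identifying $R\Hom_{\mathscr{A}}(\mathbbm{k},\mathscr{A})$ with $K^{*}=W(\mathscr{A})$ and arguing on the $\mathcal{E}^{op}$-module structure of $K^{*}$ directly. With that reference supplied, the proof is complete.
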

In \cite{HM}, the author and J.-W. He give a criterion for a connected cochain DG algebra to be $0$-Calabi-Yau, and prove that a locally finite connected cochain DG algebra is $0$-Calabi-Yau if and only if it is defined by a potential. From these result, one sees a Koszul homologically smooth DG algebra is Calabi-Yau if and only if its Ext-algebra is a symmetric Frobenius algebra. One motivation of this paper is to obtain a similar result for non-Koszul cases.

\section{auslander-reiten theory and calabi-yau triangulated categories}
In this section, we will review basic notations and some important
results on Auslander-Reiten triangles and Calabi-Yau triangulated categories.

Let $\mathcal{C}$ be a $\Hom$-finite triangulated $\mathbbm{k}$-category. It is called
Krull-Schmidt if all the idempotent morphisms in $\mathcal{C}$ are
split, or equivalently the endomorphism ring of any indecomposable
object is local (see \cite[p.52]{Ri}). In this case, any object is uniquely decomposed into a direct sum of indecomposables, up to isomorphisms and up to the order of indecomposable direct summands. In the rest of this section, we assume that $\mathcal{C}$ is a $\Hom$-finite Krull-Schmidt triangulated $\mathbbm{k}$-category.
\begin{defn}{\rm
For any morphism $f:M\to N$ in $\mathcal{C}$, we say that $f$ is a section
if there is a morphism $g: N\to M$ such that $g\circ f = \mathrm{id}_M$.  And $f$ is called a retraction, if
there is a morphism $h: N\to M$ such that $f\circ h = \mathrm{id}_N$. }
\end{defn}
\begin{defn}\cite[Definition 2.1]{Kr2}{\rm
 An exact triangle
$M\stackrel{f}{\to}N\stackrel{g}{\to} P\stackrel{h}{\to}\Sigma M$
in $\mathcal{C}$ is called an Auslander-Reiten triangle if the
following conditions
are satisfied: \\
(AR1) Each morphism $M\to N'$ which is not a section factors
through
$f$.\\
(AR2) Each morphism $N'\to P$ which is not a retraction
factors
through $g$.\\
(AR3) $h\neq 0$.
}
\end{defn}

Given an object $P$ of $\mathcal{C}$ in the definition above, there
may or may not exist an Auslander-Reiten triangle. But if there
does, then it is determined up to isomorphism by \cite[Proposition
3.5(i)]{Hap1}. We denote $M$ as $\tau_{\mathcal{C}}P$, and the operation $\tau_{\mathcal{C}}$ is
called the Auslander-Reiten translation of $\mathcal{C}$. Note
that $\tau_{\mathcal{C}}P$ is only defined up to isomorphism. By \cite[Lem
2.3]{Kr1}, the endomorphism rings of the end terms $M$ and $P$ in an
Auslander-Reiten triangle are local. In particular, $M$ and $P$ are
indecomposable. By definition, $\mathcal{C}$ has Auslander-Reiten triangles if there is an Auslander-Reiten triangle
$M\to N\to P\to \Sigma M,$ for any indecomposable $P$. When it comes to the existence of Auslander-Reiten triangles, we have the following well-known lemma.

\begin{lem}\cite[Theorem 2.2]{Kr1}\label{brown}
Let $Z$ be a compact object in $\mathcal{C}$
such that the endomorphism ring $\Gamma=\Hom_{\mathcal{C}}(Z,Z)$ is a
local ring. Denote by $\mu:\Gamma/\mathrm{rad}\Gamma\to I$ the
injective envelope of $\Gamma\!^{op}$-module
$\Gamma/\mathrm{rad}\Gamma$. If there is an object $T_ZI$ in
$\mathcal{C}$ such that
$$\Hom_{\Gamma\!^{op}}(\Hom_{\mathcal{C}}(Z, -), I)\cong
\Hom_{\mathcal{C}}(-, T_ZI), \quad\quad\quad (*)$$ then there exists
an Auslander-Reiten triangle
$\Sigma^{-1}(T_ZI)\stackrel{\alpha}{\to} Y\stackrel{\beta}{\to}
Z\stackrel{\gamma}{\to} T_ZI,$ where $\gamma$ is the natural
morphism of
$$\Hom_{\mathcal{C}}(Z,Z)\stackrel{\pi}{\to}
\Gamma/\mathrm{rad}\Gamma \stackrel{\mu}{\to} I$$ under the
correspondence of $(*)$.
\end{lem}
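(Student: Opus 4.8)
The plan is to build the candidate triangle explicitly out of the datum $(*)$ and then check the three axioms, putting most of the effort into transporting the module-theoretic information sitting at $Z$ into morphisms of $\mathcal{C}$ through the natural isomorphism. First I would manufacture $\gamma$. Since $\mathrm{rad}\,\Gamma$ is a two-sided ideal, $\mu\circ\pi$ is a morphism $\Gamma\to I$ of $\Gamma^{op}$-modules, i.e. an element of $\Hom_{\Gamma^{op}}(\Hom_{\mathcal{C}}(Z,Z),I)$, and it is nonzero because $\pi$ surjects onto $\Gamma/\mathrm{rad}\,\Gamma\neq 0$ and $\mu$ is a monomorphism. Evaluating the isomorphism $(*)$ at the object $Z$ carries $\mu\pi$ to a morphism $\gamma\colon Z\to T_ZI$, and $\gamma\neq 0$ since $(*)$ is an isomorphism; this is already (AR3). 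Completing $\gamma$ to an exact triangle and rotating produces $\Sigma^{-1}(T_ZI)\xrightarrow{\alpha}Y\xrightarrow{\beta}Z\xrightarrow{\gamma}T_ZI$, the candidate Auslander--Reiten triangle, in which $Z$ is indecomposable because $\Gamma$ is local.

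The heart of the argument is (AR2). Given $w\colon N'\to Z$ that is not a retraction, applying $\Hom_{\mathcal{C}}(N',-)$ to the triangle shows that $w$ factors through $\beta$ exactly when $\gamma\circ w=0$. I would evaluate $\gamma\circ w$ using the naturality of $(*)$: the naturality square for $w$ identifies $\gamma\circ w\in\Hom_{\mathcal{C}}(N',T_ZI)$ with the functional $\Hom_{\mathcal{C}}(Z,N')\to I$ given by $\psi\mapsto \mu\pi(w\circ\psi)$. Because $\mu$ is injective, this functional is zero if and only if $w\circ\psi\in\mathrm{rad}\,\Gamma$ for every $\psi\colon Z\to N'$. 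This is where the locality of $\Gamma$ enters: if some $w\circ\psi$ were a unit of $\Gamma=\mathrm{End}_{\mathcal{C}}(Z)$, then $w\circ\bigl(\psi(w\psi)^{-1}\bigr)=\mathrm{id}_Z$ would make $w$ a retraction, against the hypothesis. Hence $\gamma\circ w=0$, so $w$ factors through $\beta$, which is (AR2). The delicate point here is keeping the module structures and sidedness straight in the naturality computation, but no genuinely new input is needed.

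The remaining axiom (AR1) is the step I expect to be the main obstacle, because $(*)$ only governs morphisms \emph{into} $T_ZI$ and therefore offers no direct control over morphisms \emph{out of} $M=\Sigma^{-1}(T_ZI)$. My plan is to first show $M$ is indecomposable: using $(*)$ one computes $\Hom_{\mathcal{C}}(Z,T_ZI)\cong\Hom_{\Gamma^{op}}(\Gamma,I)\cong I$ and then $\mathrm{End}_{\mathcal{C}}(T_ZI)\cong\mathrm{End}_{\Gamma^{op}}(I)$, which is local since $I$, being the injective envelope of the simple $\Gamma^{op}$-module, is an indecomposable injective; hence $\mathrm{End}_{\mathcal{C}}(M)$ is local and $M$ is indecomposable. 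Combined with $\gamma\neq 0$, a standard Krull--Schmidt splitting argument then forces $\beta$ to be right minimal, for any nonzero summand of $Y$ annihilated by $\beta$ would split off a trivial triangle and either exhibit $M$ as a proper direct sum or force $\gamma=0$. Thus $\beta$ is minimal right almost split, and I would invoke the foundational correspondence between minimal right almost split morphisms and Auslander--Reiten triangles, together with the uniqueness statement \cite[Proposition 3.5(i)]{Hap1}, to conclude that $\alpha$ is left almost split, giving (AR1) and $M=\tau_{\mathcal{C}}Z$. The one thing to verify with care is precisely this upgrade from ``right almost split'' to the full set of axioms, since it is the only place where the asymmetry of $(*)$ must be repaired by structural input rather than by the representability itself.
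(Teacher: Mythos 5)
The paper offers no proof of this lemma: it is quoted verbatim from Krause \cite[Theorem 2.2]{Kr1}, so the only benchmark is the cited source. Your reconstruction is correct and is essentially Krause's original argument: $\gamma$ is the image of $\mu\pi$ under $(*)$ (giving (AR3)); (AR2) follows from the naturality of $(*)$ together with the locality of $\Gamma$, exactly as you compute; and (AR1) is recovered from the locality of $\mathrm{End}_{\mathcal{C}}(T_ZI)\cong \mathrm{End}_{\Gamma^{op}}(I)$ (the injective envelope of a simple module being indecomposable) via the standard fact that a triangle with nonzero connecting morphism, right almost split $\beta$, and local endomorphism ring of the first term is an Auslander--Reiten triangle -- the step you correctly single out as the one requiring structural input beyond representability.
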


Recall that a morphism $X\to Y$ in $\mathcal{C}$ is
called a pure monomorphism if the map
$\Hom_{\mathcal{C}}(C,X)\to \Hom_{\mathcal{C}}(C, Y)$ is a
monomorphism for any compact object $C$. An object $X$ in
$\mathcal{C}$ is called pure-injective, if each pure
monomorphism $X\to Y$ is a split monomorphism. Finally, a morphism
$\alpha: X\to Y$ in $\mathcal{C}$ is a pure-injective
envelope, if $Y$ is pure injective and if the composite
$\beta\circ\alpha$ is a pure monomorphism if and only if $\beta$ is
a pure monomorphism.

\begin{lem}\cite[Lemma 4.3]{Kr1}\label{pureinj}
Let $k$ be a commutative Noetherian, complete local ring and let
$\mathcal{C}$ be a compactly generated $k$-linear category such that
$\Hom_{\mathcal{C}}(X, Y)$ is a finitely generated $k$-module for any
compact object $X$ and $Y$. Then any compact object in $\mathcal{C}$
is pure-injective.
\end{lem}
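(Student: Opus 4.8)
The plan is to translate pure-injectivity into an injectivity statement inside a functor category and then to exploit that $\mathcal{C}^c$ is itself a triangulated category. Write $\mathrm{Mod}\,\mathcal{C}^c$ for the abelian category of additive functors $(\mathcal{C}^c)^{op}\to \mathrm{Mod}\,k$, and let $h\colon \mathcal{C}\to \mathrm{Mod}\,\mathcal{C}^c$, $hX=\Hom_{\mathcal{C}}(-,X)|_{\mathcal{C}^c}$, be the restricted Yoneda functor. By the definition of purity recalled above, a morphism is a pure monomorphism precisely when $h$ carries it to a monomorphism, and it is standard in the purity theory of compactly generated triangulated categories that an object $X$ is pure-injective if and only if $hX$ is an injective object of $\mathrm{Mod}\,\mathcal{C}^c$. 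Thus the whole statement reduces to a single assertion: for every compact $Z$, the representable functor $hZ$ is injective in $\mathrm{Mod}\,\mathcal{C}^c$.

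Next I would use that $\mathcal{C}^c$ is triangulated and not merely additive. Passing to the full subcategory $\mathrm{mod}\,\mathcal{C}^c$ of coherent (finitely presented) functors, which is abelian because $\mathcal{C}^c$ has weak kernels, one has Freyd's classical fact that $\mathrm{mod}\,\mathcal{C}^c$ is a Frobenius abelian category whose projective objects, injective objects, and representable functors all coincide. In particular $hZ$, being representable, is automatically an injective object of $\mathrm{mod}\,\mathcal{C}^c$. At the level of coherent functors the desired injectivity therefore costs nothing beyond the triangulated structure of $\mathcal{C}^c$.

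The real work is to promote injectivity from $\mathrm{mod}\,\mathcal{C}^c$ to the ambient category $\mathrm{Mod}\,\mathcal{C}^c$, and this is exactly where the hypotheses on $k$ enter. The assumption that $k$ is Noetherian, together with the finite generation of $\Hom_{\mathcal{C}}(X,Y)$ for compact $X,Y$, makes $\mathrm{Mod}\,\mathcal{C}^c$ locally Noetherian with the coherent functors as its Noetherian objects; by the Baer criterion in a locally Noetherian Grothendieck category it then suffices to test injectivity of $hZ$ against subobjects of coherent functors, i.e. to solve extension problems $\mathrm{Ext}^1(N,hZ)=0$ with $N$ coherent. Completeness of the local ring $k$ supplies Matlis duality $\Hom_k(-,E_k(k/\mathfrak m))$, an exact contravariant involution on finitely generated modules, which I would use to control these extension problems and to identify the $k$-duals of the coherent data; this is what rigidifies the finitely presented picture enough that the injectivity of $hZ$ already available inside $\mathrm{mod}\,\mathcal{C}^c$ genuinely forces its injectivity inside $\mathrm{Mod}\,\mathcal{C}^c$. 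Combining the three steps gives that $hZ$ is injective in $\mathrm{Mod}\,\mathcal{C}^c$, and hence that $Z$ is pure-injective.

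I expect this last passage to be the main obstacle: the Yoneda reduction and the Frobenius property are formal, but the transition from coherent to arbitrary functors is precisely the point at which both completeness and the finiteness of the Hom-modules become indispensable. Dropping either hypothesis would allow the injective envelope of $hZ$ in $\mathrm{Mod}\,\mathcal{C}^c$ to be strictly larger than $hZ$, and compact objects would then fail to be pure-injective. It is worth noting, finally, that in the setting actually used in this paper $k$ is a field, so that the Hom-spaces are finite dimensional, Matlis duality is ordinary $k$-linear duality $(-)^{*}$, and $\mathrm{Mod}\,\mathcal{C}^c$ is locally finite; in that case the coherent-to-arbitrary upgrade is immediate and the argument collapses essentially to the first two paragraphs.
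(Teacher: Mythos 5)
The paper does not prove this statement at all: it is imported verbatim from Krause \cite[Lemma 4.3]{Kr1}, so the only proof to measure yours against is Krause's. Your first two steps are correct and essentially forced: writing $\mathcal{C}^c$ for the full subcategory of compact objects and $hZ=\Hom_{\mathcal{C}}(-,Z)|_{\mathcal{C}^c}$, pure-injectivity of $Z$ is equivalent to injectivity of $hZ$ in $\mathrm{Mod}\,\mathcal{C}^c$, and Freyd's theorem makes the representable $hZ$ injective in the Frobenius category $\mathrm{mod}\,\mathcal{C}^c$ for free. You also correctly locate all the content in the passage from $\mathrm{mod}\,\mathcal{C}^c$ to $\mathrm{Mod}\,\mathcal{C}^c$: that step must use the hypotheses, since in general a compact object is fp-injective but not pure-injective ($\mathbb{Z}$ in $\mathscr{D}(\mathbb{Z})$, where $k=\mathbb{Z}$ is Noetherian with finitely generated Hom-modules but not local complete).

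The gap is that your mechanism for that passage rests on a false premise. Finite generation of each $\Hom_{\mathcal{C}}(X,Y)$ over a Noetherian $k$ does not make $\mathrm{Mod}\,\mathcal{C}^c$ locally Noetherian: a subfunctor of a representable $hC$ is a compatible family of submodules $U_{C'}\subseteq\Hom_{\mathcal{C}}(C',C)$ indexed by all (infinitely many, up to isomorphism) compacts $C'$, and an ascending chain of such families need not stabilize even though each individual $\Hom_{\mathcal{C}}(C',C)$ is a Noetherian module. Concretely, for $\mathcal{C}=\mathscr{D}(\Lambda)$ with $\Lambda$ a finite-dimensional algebra of infinite representation type over a field $k$ --- a situation satisfying every hypothesis of the lemma --- the functor category on $\mathcal{C}^c$ is not locally Noetherian (this is the Auslander/Gruson--Jensen circle of results tying local Noetherianness of the functor category to pure-semisimplicity); the same objection defeats your closing remark that the upgrade is ``immediate'' when $k$ is a field. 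Without local Noetherianness, Baer's criterion does not reduce the extension problems to coherent subobjects, and the sentence invoking Matlis duality ``to control these extension problems'' is a placeholder rather than an argument. Krause uses Matlis duality differently and decisively: each value of $hZ$ is a finitely generated module over the complete Noetherian local ring $k$, hence Matlis reflexive, so $hZ\cong D^{2}(hZ)$ with $D=\Hom_{k}(-,E(k/\mathfrak{m}))$; the covariant functor $D(hZ)$ is cohomological, hence flat, and the $D$-dual of a flat functor is an injective object of $\mathrm{Mod}\,\mathcal{C}^c$. That chain gives the injectivity of $hZ$ directly (making the Freyd step unnecessary) and is exactly the step your outline is missing.
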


\begin{prop}\cite[Proposition 3.2]{Kr1}\label{large}
Suppose that we have the following map between triangles in a
compactly generated triangulated category.
\begin{align*}
\xymatrix{\delta': \quad\quad  &X'\ar[r]\ar[d]^{\alpha} & Y'
\ar[d]\ar[r]& Z\ar[d]^{id_Z}\ar[r] &\Sigma X'\ar[d]
\\
\delta: \quad\quad  &X \ar[r] &Y \ar[r] &Z \ar[r] & \Sigma X
\\}
\end{align*}
Let the objects in the first triangle be compact, and let the
morphism $X'\to X$ be a pure-injective envelope. Then the following
statements are equivalent.

\emph{(1)}$\delta'$ is an Auslander-Reiten triangle in the category
of compact objects.

\emph{(2)}$\delta$ is an Auslander-Reiten triangle in the category
of all objects.
\end{prop}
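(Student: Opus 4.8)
The plan is to reduce the statement to a comparison of the right almost split conditions at the common end term $Z$. Write $\mathcal{C}^c$ for the full subcategory of compact objects. Since $Z$ is compact we have $\mathrm{Hom}_{\mathcal{C}}(Z,Z)=\mathrm{Hom}_{\mathcal{C}^c}(Z,Z)$, so $Z$ is indecomposable in $\mathcal{C}^c$ if and only if it is indecomposable in $\mathcal{C}$; if $Z$ is decomposable then no Auslander-Reiten triangle ends at it and both (1) and (2) fail, so I may assume $Z$ indecomposable. I will use the standard fact that an exact triangle $U\xrightarrow{a}V\xrightarrow{b}Z\xrightarrow{c}\Sigma U$ with $Z$ indecomposable is an Auslander-Reiten triangle precisely when $c\neq 0$, every non-retraction into $Z$ factors through $b$ (that is, $b$ is right almost split), and the left term $U$ is indecomposable; minimality is then automatic. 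Hence everything is governed by the connecting maps, and the crucial structural input is the relation $h=(\Sigma\alpha)\circ h'$ read off from the given morphism of triangles, together with the hypothesis that $\alpha$ is a pure-injective envelope.

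First I record the easy transfers. As $\alpha$, and hence $\Sigma\alpha$, is a pure monomorphism, the map $\mathrm{Hom}_{\mathcal{C}}(C,\Sigma\alpha)$ is injective for every compact $C$; applied with $C=Z$ to $h=(\Sigma\alpha)\circ h'$ this shows $h\neq 0$ if and only if $h'\neq 0$, so (AR3) transfers in both directions. For (2)$\Rightarrow$(1), let $w'\colon W'\to Z$ be a non-retraction with $W'$ compact; it is still a non-retraction in $\mathcal{C}$, so $h\circ w'=0$, whence $(\Sigma\alpha)\circ h'\circ w'=0$ and therefore $h'\circ w'=0$ because $W'$ is compact and $\Sigma\alpha$ is a pure monomorphism. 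This gives (AR2) for $\delta'$. For the indecomposability of $X'$ I argue through minimality: were $\delta'$ not minimal, its right almost split map would split off a nonzero summand $X_1'$ of $X'$ through which $h'$ does not pass, and applying pure-injective envelopes---which commute with finite direct sums---would produce a nonzero summand of $X$ superfluous for $h$, contradicting the indecomposability of $X$ forced by $\delta$. Hence $\delta'$ is an Auslander-Reiten triangle.

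The hard direction is (1)$\Rightarrow$(2), and the main obstacle is to verify (AR2) for $\delta$ against \emph{arbitrary}, not merely compact, test objects. Given a non-retraction $w\colon W\to Z$, for every compact $C$ and every $c\colon C\to W$ the composite $w\circ c$ is a non-retraction out of a compact object, so $h'\circ(w\circ c)=0$ by (AR2) for $\delta'$, and thus $h\circ w\circ c=(\Sigma\alpha)\circ h'\circ w\circ c=0$. Therefore $h\circ w$ annihilates every morphism from a compact object; that is, $h\circ w$ is a phantom map. Here the hypothesis on $\alpha$ becomes indispensable: since $X$ is a pure-injective envelope it is pure-injective, hence so is $\Sigma X$, and there are no nonzero phantom maps into a pure-injective object---under the restricted Yoneda functor $\mathcal{C}\to\mathrm{Mod}\,\mathcal{C}^c$ a pure-injective goes to an injective and phantom maps are controlled by $\mathrm{Ext}^1$ in $\mathrm{Mod}\,\mathcal{C}^c$, which vanishes into an injective. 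Consequently $h\circ w=0$, giving (AR2) for $\delta$. Since moreover $X$, being the pure-injective envelope of the indecomposable compact object $X'$, again has local endomorphism ring and so is indecomposable, the criterion above shows $\delta$ is an Auslander-Reiten triangle.

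Thus the two implications are symmetric except for the compact-to-arbitrary passage in (AR2): this cannot be handled by the elementary observation that a pure monomorphism is injective on homomorphisms out of compact objects, and instead forces me to invoke the vanishing of phantom maps into pure-injective objects. This is the single genuinely large-category ingredient, and it is exactly the reason the hypothesis on $\alpha$ must be a pure-injective \emph{envelope}---guaranteeing $X$ pure-injective and minimal---rather than merely a pure monomorphism. I would first isolate, from the cited work, the routine facts assumed above---that end terms of Auslander-Reiten triangles are indecomposable, that right almost split with indecomposable ends yields an Auslander-Reiten triangle, that pure-injective envelopes respect finite direct sums and preserve local endomorphism rings, and that phantom maps into pure-injectives vanish---and then carry out the four verifications.
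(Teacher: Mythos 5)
The paper does not prove this statement; it is imported verbatim from \cite[Proposition 3.2]{Kr1}, so your attempt can only be measured against Krause's argument and against correctness. Much of your skeleton is sound: the transfer of (AR3) via $h=(\Sigma\alpha)\circ h'$, the descent of (AR2) along the pure monomorphism $\Sigma\alpha$ in the direction (2)$\Rightarrow$(1), and above all the observation that in (1)$\Rightarrow$(2) the composite $h\circ w$ is phantom and therefore zero because $\Sigma X$ is pure-injective --- this last point is indeed the essential ``large-category'' ingredient and matches the mechanism in \cite{Kr1}.

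There is, however, a genuine gap in (1)$\Rightarrow$(2) at the step where you verify (AR1) for $\delta$. You reduce it to the claim that $X$, being the pure-injective envelope of an indecomposable compact $X'$ (equivalently, of one with local endomorphism ring), again has local endomorphism ring, and you list ``pure-injective envelopes preserve local endomorphism rings'' among the routine facts to be imported. That principle is false. Under the restricted Yoneda functor the pure-injective envelope of $X'$ corresponds to the injective envelope of the finitely presented projective functor $H_{X'}=\Hom(-,X')|_{\mathcal{T}^c}$, and injective envelopes do not preserve local endomorphism rings: already for $\Lambda=\k[x,y]/(x,y)^2$ the module $\Lambda$ has local endomorphism ring while $E(\Lambda)\cong E(\k)\oplus E(\k)$ is decomposable, because $\Lambda$ is not uniform. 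A symptom of the problem is that your (1)$\Rightarrow$(2) argument never uses condition (AR1) of $\delta'$, yet (AR2)+(AR3) for $\delta'$ alone cannot suffice: the direct sum of an Auslander--Reiten triangle in $\mathcal{T}^c$ with a trivial summand $X_1'\stackrel{1}{\to}X_1'\to 0\to\Sigma X_1'$ still satisfies (AR2) and (AR3) but has decomposable first term, and its pure-injective envelope yields a $\delta$ that is not an Auslander--Reiten triangle. The missing argument is precisely where (AR1) for $\delta'$ enters: for any nonzero $\gamma\colon C\to X'$ with $C$ compact, the cone map $u\colon X'\to D$ is not a section (else $\gamma=0$), so (AR1) gives $u\circ\Sigma^{-1}h'=0$ and hence $\Sigma^{-1}h'$ factors through $\gamma$. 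Thus the simple subfunctor $S=\mathrm{im}\,H_{\Sigma^{-1}h'}$ is contained in every nonzero subfunctor of $H_{X'}$, i.e.\ it is an essential simple socle; consequently $H_X=E(H_{X'})=E(S)$ is an indecomposable injective, $\mathrm{End}(X)\cong\mathrm{End}(H_X)$ is local, and only then does your criterion (right almost split plus nonzero connecting map plus local endomorphism ring of the left term) apply to conclude that $\delta$ is an Auslander--Reiten triangle. Without this essentiality argument the implication (1)$\Rightarrow$(2) is unproved.
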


Recall from Bondal and Kapranov\cite{BK} that a $\mathbbm{k}$-linear functor $F:\mathcal{C}\to \mathcal{C}$ is a right Serre functor if there exist isomorphisms
$$\eta_{A,B}: \Hom_{\mathcal{C}}(A,B)\to \Hom_{\mathcal{C}}(B,F(A))^*, \forall A,B\in \mathcal{C},$$
which are natural both in $A$ and $B$. Such a right $F$ is unique up to a natural isomorphism, and fully-faithful; if it is an equivalence, then a quasi-inverse $F^{-1}$ is a left Serre functor;  in this case we call $F$ a Serre functor. Recall that a left Serre functor is a functor $G: \mathcal{C}\to \mathcal{C}$ together with isomorphisms
$$\xi_{A,B}: \Hom_{\mathcal{C}}(A,B)\to \Hom_{\mathcal{C}}(G(B),A)^*, \forall A,B\in \mathcal{C}.$$
Note that $\mathcal{C}$  has a Serre functor if and only if it has both right and left Serre functor \cite{RV}.
By \cite[Theorem 1.2.4]{RV}, $\mathcal{C}$ has a Serre functor $F$ if and only if $\mathcal{C}$ has Auslander-Reiten triangles. In this case, $F$ coincides with $\Sigma\circ \tau_{\mathcal{C}}$ on objects, up to isomorphisms.

\begin{defn}\cite{Kon1}{\rm
Let $\Sigma$ be the shift functor of a Hom-finite triangulated category $\mathcal{C}$ with Serre functor $F$.
We call $\mathcal{C}$ a Calabi-Yau category if there is a natural isomorphism $F\cong \Sigma^d$ for some $d\in \Bbb{Z}$.  The minimal non-negative integer $d$ such that $F\cong \Sigma^d$ is called the Calabi-Yau dimension of $\mathcal{C}$.}
\end{defn}

 The notion of Calabi-Yau triangulated category was introduced by Kontsevich \cite{Kon1} in the late 1990s. Calabi-Yau triangulated  categories occur naturally in string theory, conformal field theory, integrable system and
 representation theory of quivers and finite-dimensional algebras. Due to the applications of triangulated Calabi-Yau categories in the categorification of Fomin-Zelevinsky's cluster
algebras \cite{FZ}, they are popular in representation theory. The following proposition indicates that we can use the notion of Calabi-Yau category to characterize the symmetric properties of a Frobenius graded algebra.
\begin{prop}
Let $E$ be a graded Frobenius algebra. Then $\mathscr{D}^c(E)$ is a Calabi-Yau category if and only if $E$ is a graded symmetric algebra.
\end{prop}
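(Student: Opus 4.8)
The plan is to identify the Serre functor of $\mathscr{D}^c(E)$ explicitly as a Nakayama-type twisting autoequivalence and then read off exactly when it is a power of the suspension. First I would view the finite-dimensional graded algebra $E$ as a DG algebra with zero differential, and note that, being Frobenius, $E$ is proper and self-injective; in particular $E^*$ is a finitely generated projective graded left $E$-module, so the Nakayama functor $F=E^*\otimes^L_E(-)$ carries $\mathscr{D}^c(E)$ into itself and, because $E^*$ is projective, agrees with the underived twist $E^*\otimes_E(-)$. Using the graded Serre duality isomorphism $\Hom_{\mathscr{D}(E)}(P,Y)^*\cong \Hom_{\mathscr{D}(E)}(Y,F(P))$, valid for perfect $P$ since $E$ is proper, together with the fact that $F$ preserves $\mathscr{D}^c(E)$ and is an autoequivalence (self-injectivity), I would conclude that $F$ is a Serre functor on $\mathscr{D}^c(E)$. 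Thus this category always admits a Serre functor, and by the Reiten--Van den Bergh theorem recalled above it always admits Auslander--Reiten triangles; the Calabi-Yau question is then purely the question of whether $F\cong\Sigma^d$ for some $d$.

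Next I would invoke the defining property of a graded Frobenius algebra, Definition \ref{gradedfrob}(1): there is an isomorphism of graded $E$-bimodules $E^*\cong \Sigma^{l}E_{\nu}$, where $\nu$ is the Nakayama automorphism and $l\in\Bbb{Z}$. Substituting this into $F$ produces a natural isomorphism $F\cong \Sigma^{l}\circ G_{\nu}$, where $G_{\nu}=E_{\nu}\otimes_E(-)$ is the exact, degree-preserving autoequivalence of $\mathscr{D}^c(E)$ obtained by twisting the module structure along $\nu$. This factorization reduces everything to understanding $G_{\nu}$.

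For the forward implication, if $E$ is graded symmetric then its Nakayama automorphism $\nu$ is inner, hence $E_{\nu}\cong E$ as bimodules and $G_{\nu}\cong \mathrm{id}$; therefore $F\cong \Sigma^{l}$ and $\mathscr{D}^c(E)$ is Calabi-Yau. For the converse I would assume $F\cong \Sigma^{d}$ for some $d$. Combined with $F\cong \Sigma^{l}G_{\nu}$ this gives $G_{\nu}\cong \Sigma^{d-l}$. Evaluating at the module $E$ and using that $G_{\nu}$ is degree-preserving with $G_{\nu}(E)\cong E$, whereas $\Sigma^{d-l}E$ is a genuine shift, forces $d=l$ by comparing, say, the top nonzero degree of the finite-dimensional module $E$; hence $G_{\nu}\cong \mathrm{id}_{\mathscr{D}^c(E)}$. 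A twisting autoequivalence is isomorphic to the identity precisely when its twisting automorphism is inner (a natural isomorphism $G_{\nu}\cong\mathrm{id}$ evaluated at $E$ yields a left-module isomorphism $E\to E_{\nu}$ which, by naturality, respects right multiplication and is thus a bimodule isomorphism, giving $\nu$ inner). Consequently $\nu$ is inner, the bimodule isomorphism may be chosen untwisted as $E^*\cong \Sigma^{l}E$, and this is exactly the condition that the Frobenius form satisfy $\langle a,b\rangle=(-1)^{ij}\langle b,a\rangle$, i.e.\ that $E$ be graded symmetric.

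The step I expect to be the main obstacle is the identification of $F$ with the Nakayama functor and, in the converse direction, the passage from the mere functor isomorphism $F\cong\Sigma^{d}$ back to the bimodule isomorphism $E^*\cong\Sigma^{d}E$. One must check carefully that a natural isomorphism of tensor functors $E^*\otimes^L_E(-)\cong \Sigma^{d}E\otimes^L_E(-)$ recovers the full \emph{bimodule} structure rather than only the underlying graded left module, since it is precisely the right-hand action that pins down the Nakayama automorphism and hence detects symmetry. Handling the grading shift $l$ consistently throughout, and verifying that $G_{\nu}$ is genuinely degree-preserving so that the comparison forcing $d=l$ is legitimate, are the remaining points requiring care.
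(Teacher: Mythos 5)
Your proof is correct and follows essentially the same route as the paper: both identify $E^*\otimes_E^L-$ as the Serre functor of $\mathscr{D}^c(E)$ via the natural equivalence $\Hom_{\mathscr{D}(E)}(Q,-)^*\simeq\Hom_{\mathscr{D}(E)}(-,E^*\otimes_E^LQ)$, factor it as $\Sigma^{l}$ composed with the Nakayama twist using $E^*\cong\Sigma^{l}E_{\nu}$, and reduce the Calabi--Yau condition to the innerness of $\nu$. Your argument that the Calabi--Yau shift $d$ must equal $l$ (by comparing the degree-preserving twist against a genuine suspension) makes explicit a step the paper passes over silently, but the overall strategy is the same.
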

\begin{proof}
If $E$ is a graded Frobenius algebra, then there is an isomorphism of graded $E^e$ modules $E^*\cong \Sigma^{l}E_{\nu}$  by Definition \ref{gradedfrob}, for some graded algebra automorphism $\nu\in \mathrm{Aut}_{\mathbbm{k}}E$ and $l\in \Bbb{Z}$. For any indecomposable object of
$\mathscr{D}^c(E)$, there exists a natural equivalence
\begin{align}\label{serefun}
\Hom_{\mathscr{D}(E)}(Q,-)^*\simeq \Hom_{\mathscr{D}(E)}(-,E^*\otimes_E^LQ).
\end{align}

If $\mathscr{D}^c(E)$ is a Calabi-Yau category, then (\ref{serefun}) indicates that $E^*\otimes_E^L-$ is a Serre functor of $\mathscr{D}^c(E)$.
Since $\mathscr{D}^c(E)$ is a Calabi-Yau triangulated category, we have
$$\Sigma^l=E^*\otimes_E^L-=\Sigma^l {}_EE_{\nu}\otimes_E-\simeq \Sigma^l\circ \Phi,$$
where $\Phi:\mathscr{D}^c(E)\to \mathscr{D}^c(E)$ is the functor given by the operation $M\mapsto {}_\nu^{-1}M$. It implies that $\nu$ is an inner automorphism. Then $\Sigma^{-l}E^*\cong E$ in $\mathscr{D}(E^e)$. So $E$ is a symmetric Frobenius algebra.

Conversely, if the Frobenius algebra $E$ is symmetric, then $\nu$ is an inner automorphism and $E^*\cong \Sigma^{l}E$ in $\mathscr{D}(E^e)$. The natural equivalence in (\ref{serefun}) indicates that the functor $E^*\otimes_E^L-\cong \Sigma^l$ is a Serre functor of $\mathscr{D}^c(E)$.
\end{proof}

For a Calabi-Yau connected DG algebra $\mathscr{A}$, one sees that the full triangulated subcategory $\mathscr{D}^b_{lf}(\mathscr{A})$ of $\mathscr{D}(\mathscr{A})$ containing DG $\mathscr{A}$-modules with finite dimensional total cohomology is a Calabi-Yau triangulated category (cf. \cite{CV}).  It is natural to ask whether the converse is true. As far as the author is aware, it is still an open problem even if the DG algebra is homologically smooth and Gorenstein.

\section{koszul dual dg algebra of a homologically smooth dg algebra}
In this section,  we assume that $\mathscr{A}$ is a homologically smooth connected cochain DG algebra.
One sees that both ${}_{\mathscr{A}}\mathbbm{k}$ and $\mathbbm{k}_{\mathscr{A}}$ are compact \cite[Lemma 2.16]{Mao}.
 Let $K$ and $L$ be the minimal semi-free resolutions of ${}_{\mathscr{A}}\mathbbm{k}$ and $\mathbbm{k}_{\mathscr{A}}$, respectively.  We have $\langle K\rangle = \langle {}_{\mathscr{A}}\mathbbm{k} \rangle$ and $\langle L\rangle =\langle \mathbbm{k}_{\mathscr{A}}\rangle$  in $\mathscr{D}(\mathscr{A})$.
Set $$\mathcal{N}=\langle {}_{\mathscr{A}}\mathbbm{k}\rangle^{\bot} =\langle {}_{\mathscr{A}}K\rangle^{\bot}, \mathscr{D}^{\mathrm{tors}}(\mathscr{A})={}^{\bot}\mathcal{N}\,\, \text{and}\,\, \mathscr{D}^{\mathrm{comp}}(\mathscr{A})=\mathcal{N}^{\bot}$$ in $\mathscr{D}(\mathscr{A})$. The DG modules in $\mathscr{D}^{\mathrm{tors}}(\mathscr{A})$ and $\mathscr{D}^{\mathrm{comp}}(\mathscr{A})$ are called torsion DG modules and complete DG modules, respectively.
Then $\mathscr{D}^{\mathrm{tors}}(\mathscr{A})=\langle{}_{\mathscr{A}}\mathbbm{k}\rangle=\langle{}_{\mathscr{A}}K\rangle$.
We call the endomorphism DG algebra $\mathcal{E}=\Hom_{\mathscr{A}}(K,K)$ by the Koszul dual DG algebra of $\mathscr{A}$.
By \cite[Lemma $5.4$]{Mao}, $\mathcal{E}$ satisfies the following conditions.
 \begin{enumerate}
\item $\dim_kH(\mathcal{E})<\infty$;
\item  $0=\sup\{i\in \Bbb{Z}|H^i(\mathcal{E})\neq 0\}$;
\item  $H^0(\mathcal{E})$ is a local finite dimensional algebra.
 \end{enumerate}
 Let $J$ be its maximal ideal.
Set $b=\inf\{i|H^i(\mathcal{E})\neq 0\}$, $Z^i=\mathrm{ker}(d_{\mathcal{E}}^i)$, $C^i=\mathcal{E}^i/Z^i$, $H^i=H^i(\mathcal{E})$ and $B^i=\mathrm{im}(d_{\mathcal{E}}^{i-1})$. Then $\mathcal{E}$ admits two DG subalgebras
$$\mathcal{E}':\quad\quad \cdots \stackrel{d_{\mathcal{E}}^{i-1}}{\to} \mathcal{E}^i\stackrel{d_{\mathcal{E}}^{i}}{\to} \mathcal{E}^{i+1}
\stackrel{d_{\mathcal{E}}^{i+1}}{\to}\cdots \stackrel{d_{\mathcal{E}}^{-2}}{\to}\mathcal{E}^{-1}\stackrel{d_{\mathcal{E}}^{-1}}{\to} Z^0\to 0$$
and
$$\mathcal{E}'':\quad\quad  \cdots \stackrel{d_{\mathcal{E}}^{j-1}}{\to} \mathcal{E}^j\stackrel{d_{\mathcal{E}}^{j}}{\to} \mathcal{E}^{j+1}
\stackrel{d_{\mathcal{E}}^{j+1}}{\to}\cdots  \stackrel{d_{\mathcal{E}}^{b-1}}{\to} B^b\to 0.$$ Clearly, $\mathcal{E}''$ is a DG ideal of $\mathcal{E}'$. Note that the DG algebra $\mathcal{E}'/\mathcal{E}''$ is
$$0\to C^b\oplus H^b \stackrel{d_{\mathcal{E}}^{b}}{\to} \mathcal{E}^{b+1}\stackrel{d_{\mathcal{E}}^{b+1}}{\to} \cdots \stackrel{d_{\mathcal{E}}^{-2}}{\to} \mathcal{E}^{-1} \stackrel{d_{\mathcal{E}}^{-1}}{\to} Z^0\to 0.$$
One sees that both the inclusion morphism $\iota: \mathcal{E}'\to \mathcal{E}$ and the canonical surjection $\varepsilon: \mathcal{E}'\to \mathcal{E}'/\mathcal{E}''$ are quasi-isomorphisms. We can apply the following lemma.
\begin{lem}\label{quasi-iso}\cite[Theorem III. 4.2]{KM}
Let $\phi:\mathscr{A}\to \mathscr{A}'$ be a quasi-isomorphism of DG algebras. Then the pull back functor
$\phi^*:\mathscr{D}(\mathscr{A}')\to \mathscr{D}(\mathscr{A})$ is an equivalence of categories with inverse given by the extension of scalars functor $\mathscr{A}'\otimes^L_{\mathscr{A}}-$.
\end{lem}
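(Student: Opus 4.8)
The plan is to realize the two functors as an adjoint pair and to prove that the unit and counit of this adjunction are natural isomorphisms. Restriction of scalars along $\phi$ gives the pullback functor $\phi^*:\mathscr{D}(\mathscr{A}')\to\mathscr{D}(\mathscr{A})$, and extension of scalars gives its left adjoint $\mathscr{A}'\otimes^L_{\mathscr{A}}-:\mathscr{D}(\mathscr{A})\to\mathscr{D}(\mathscr{A}')$; both are triangulated functors that commute with arbitrary coproducts ($\phi^*$ manifestly so, and $\mathscr{A}'\otimes^L_{\mathscr{A}}-$ because the derived tensor product is computed by semi-free resolutions and the tensor product commutes with direct sums). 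Once the adjunction $(\mathscr{A}'\otimes^L_{\mathscr{A}}-,\ \phi^*)$ is in place, it suffices to show that the unit $\eta_N: N\to \phi^*(\mathscr{A}'\otimes^L_{\mathscr{A}} N)$ and the counit $\varepsilon_M: \mathscr{A}'\otimes^L_{\mathscr{A}}\phi^* M\to M$ are isomorphisms in the respective derived categories.

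The key computation is on the free generators. First I would compute the unit on $N=\mathscr{A}$: since $\mathscr{A}'\otimes^L_{\mathscr{A}}\mathscr{A}=\mathscr{A}'$, the map $\eta_{\mathscr{A}}$ is exactly $\phi:\mathscr{A}\to\phi^*\mathscr{A}'$, which is a quasi-isomorphism by hypothesis, hence an isomorphism in $\mathscr{D}(\mathscr{A})$. To pass from the generator to an arbitrary $N$, I would take a semi-free resolution $F\xrightarrow{\simeq} N$ over $\mathscr{A}$ and use its filtration $0=F_{-1}\subset F_0\subset\cdots$ with each subquotient $F_n/F_{n-1}$ a direct sum of shifts of $\mathscr{A}$. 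Applying the underived functor $\mathscr{A}'\otimes_{\mathscr{A}}-$, which is exact on these linearly split extensions, produces a compatible filtration whose subquotients are the corresponding sums of shifts of $\mathscr{A}'$; on each subquotient the unit is a direct sum of shifted copies of $\phi$, hence a quasi-isomorphism. A five-lemma induction on the filtration together with the fact that cohomology commutes with the filtered colimit $F=\bigcup_n F_n$ then shows that $\eta_N$ is an isomorphism for every $N$; equivalently, the locus of objects on which $\eta$ is an isomorphism is a localizing subcategory containing $\mathscr{A}$, hence all of $\mathscr{D}(\mathscr{A})$.

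For the counit, I would again test on the compact generator $\mathscr{A}'$ of $\mathscr{D}(\mathscr{A}')$: because $\phi$ is a quasi-isomorphism, $\mathscr{A}$ itself serves as a semi-free resolution of the $\mathscr{A}$-module $\phi^*\mathscr{A}'$, so that $\mathscr{A}'\otimes^L_{\mathscr{A}}\phi^*\mathscr{A}'\cong\mathscr{A}'\otimes_{\mathscr{A}}\mathscr{A}=\mathscr{A}'$ and $\varepsilon_{\mathscr{A}'}$ is an isomorphism. Since $\mathscr{A}'\otimes^L_{\mathscr{A}}\phi^*(-)$ and the identity functor are triangulated and coproduct-preserving, the iso-locus of $\varepsilon$ is again localizing and contains the generator $\mathscr{A}'$, whence $\varepsilon_M$ is an isomorphism for all $M$. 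This establishes that the two functors are mutually quasi-inverse equivalences. The step I expect to be the main obstacle is the d\'evissage itself: carefully justifying that an isomorphism verified only on the free rank-one module propagates to all objects, which hinges on both functors being exact and commuting with coproducts and on the derived category being generated, as a localizing subcategory, by that free module. The remaining verifications, namely the existence of the adjunction and the behavior of the derived tensor product on semi-free modules, are routine given the semi-free machinery recalled earlier.
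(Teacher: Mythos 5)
The paper does not prove this lemma at all; it simply cites \cite[Theorem III.4.2]{KM}. Your argument — set up the adjunction between extension and restriction of scalars, verify the unit on the generator $\mathscr{A}$ and the counit on $\mathscr{A}'$ using that $\phi$ is a quasi-isomorphism, and propagate by the localizing-subcategory d\'evissage — is the standard proof of the cited result and is correct and complete.
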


By Lemma \ref{quasi-iso},  we have two pairs of quasi-equivalences of categories as follows.
$$\xymatrix{\mathscr{D}(\mathcal{E})   \ar@<1ex>[r]^{\Phi}
& \mathscr{D}(\mathcal{E}') \ar@<1ex>[l]^{\Psi}  \ar@<1ex>[r]^{\Theta} & \mathscr{D}(\mathcal{E}'/\mathcal{E}'') \ar@<1ex>[l]^{\mathrm{\Omega}} } .$$
For convenience, we call $\mathcal{E}'$ by the truncated Koszul dual DG algebra of $\mathscr{A}$.
Write $Q=K^*{}^L\otimes_{\mathcal{E}}K$. Let $R_t=(\mathcal{E}')^{-t}$ and $d_t^R=d_{\mathcal{E}}^{-t}$ for any $t\ge 0$.  In this way, $\mathcal{E}'$ can be considered as a chain DG algebra $R$:
$$\cdots\stackrel{d^R_{i+1}}{\to} R_{i}\stackrel{d_{i}^R}{\to} R_{i-1}\stackrel{d_{i-1}^R}{\to}\cdots \stackrel{d_{2}^R}{\to} R_1\stackrel{d_{1}^R}{\to} R_0\to 0.$$
Moreover,
$H_0(R)=R_0/\mathrm{im}(d_1^R)\cong H^0$ is a finite dimensional local algebra and $\dim_{\mathbbm{k}}H(R)=\dim_{\mathbbm{k}}H(\mathcal{E}')=\dim_{\mathbbm{k}} H(\mathcal{E})<\infty$. Each $H_i(R)$ is a finitely generated $H_0(R)$-module and $-b=\sup\{i\in \Bbb{Z}|H_i(R)\neq 0\}$.  So $R$ is  a local chain DG algebra introduced in \cite{FJ}. Its maximal DG ideal is
$$\mathfrak{m}_R: \quad \cdots \stackrel{d^R_{i+1}}{\to} R_i\stackrel{d_i^R}{\to}\cdots \stackrel{d_{2}^R}{\to}R_1 \stackrel{d_1^R}{\to} R_0=B^0\oplus J\to 0.$$
\begin{rem}
The DG algebra $\mathcal{E}'$ and $\mathcal{E}$ are both augmented DG algebras with augmented DG ideals
\begin{align*}
\mathfrak{m}_{\mathcal{E}'}: \quad\quad \cdots \stackrel{d_{\mathcal{E}}^{i-1}}{\to} \mathcal{E}^i\stackrel{d_{\mathcal{E}}^{i}}{\to}\cdots \stackrel{d_{\mathcal{E}}^{-2}}{\to}\mathcal{E}^{-1} \stackrel{d_{\mathcal{E}}^{-1}}{\to} B^0\oplus J\to 0
\end{align*}
and
\begin{align*}
\mathfrak{m}_{\mathcal{E}}: \quad\quad \cdots \stackrel{d_{\mathcal{E}}^{i-1}}{\to} \mathcal{E}^i\stackrel{d_{\mathcal{E}}^{i}}{\to}\cdots \stackrel{d_{\mathcal{E}}^{-2}}{\to}\mathcal{E}^{-1} \stackrel{d_{\mathcal{E}}^{-1}}{\to} B^0\oplus J\oplus C^0 \stackrel{d_{\mathcal{E}}^{0}}{\to} \mathcal{E}^1 \stackrel{d_{\mathcal{E}}^{1}}{\to} \cdots \stackrel{d_{\mathcal{E}}^{j}}{\to} \mathcal{E}^j\stackrel{d_{\mathcal{E}}^{j+1}}{\to}\cdots.
\end{align*}
\end{rem}

 \begin{prop}\label{minres}
 Let $X$ be a left DG $\mathcal{E}'$-module such that each $H^i(X)$ is a finitely generated $H^0(\mathcal{E}')$-module and $u=\sup\{i|H^i(X)\neq 0\}<\infty$. Then $X$ admits a minimal semi-free resolution $F$ with $F^{\#}_X=\coprod\limits_{j\le u}\Sigma^j(\mathcal{E}'^{\#})^{(\beta_j)}$, where each $\beta_j$ is finite.
 \end{prop}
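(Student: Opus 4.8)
The plan is to construct the resolution one degree at a time, in the manner of a minimal free resolution over a local ring, using the reindexing of $\mathcal{E}'$ as the local chain DG algebra $R$ so that the cohomological bound $u$ becomes a \emph{lower} bound in homological degree. First I would pass to the chain picture: writing $X_t = X^{-t}$ turns $X$ into a chain DG $R$-module whose homology $H_t(X) = H^{-t}(X)$ vanishes for $t < -u$ and is finitely generated over $H_0(R) = H^0(\mathcal{E}')$ in each degree. Because $\dim_{\mathbbm{k}} H(\mathcal{E}') < \infty$ and $H^0(\mathcal{E}')$ is a finite-dimensional local algebra, each $H_t(X)$ is then finite-dimensional and admits a finite minimal generating set over the Artinian algebra $H_0(R)$, to which Nakayama's lemma applies.

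Next, with $w = -u = \inf\{t \mid H_t(X) \neq 0\}$, I would build an increasing chain of semi-free DG $R$-submodules $F(w) \subseteq F(w+1) \subseteq \cdots$ with compatible morphisms $\varepsilon_t\colon F(t) \to X$. At the bottom I lift a minimal generating set of $H_w(X)$ to cycles in $X_w$ and let $F(w)$ be DG free on the corresponding degree-$w$ generators, mapping them to those cycles. Inductively, if $\varepsilon_t$ is an isomorphism on $H_{<t}$ and a surjection on $H_t$, I attach free generators in homological degree $t+1$ in two batches: one whose boundaries kill a minimal generating set of $\ker\bigl(H_t(\varepsilon_t)\bigr)$, and one lifting a minimal generating set of the cokernel of $H_{t+1}(\varepsilon_t)$. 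Selecting all these generating sets minimally over the local algebra $H_0(R)$ forces $\partial\bigl(F(t+1)\bigr) \subseteq \mathfrak{m}_R F(t+1)$, so $F = \bigcup_t F(t)$ is a \emph{minimal} semi-free resolution. Re-expressing $F$ over $\mathcal{E}'$ then yields $F^{\#} = \coprod_{j \le u} \Sigma^j(\mathcal{E}'^{\#})^{(\beta_j)}$, since no cells appear below homological degree $w = -u$, i.e. the semi-basis sits only in cohomological degrees $\le u$.

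The step I expect to be the main obstacle is showing each $\beta_j$ is finite, i.e. that only finitely many cells are attached in each degree. This reduces to keeping $\ker\bigl(H_t(\varepsilon_t)\bigr)$ and $\mathrm{coker}\bigl(H_{t+1}(\varepsilon_t)\bigr)$ finitely generated over $H_0(R)$ throughout the induction. Here I would use that each finite stage $F(t)$ is built from finitely many free generators, so $H_*(F(t))$ is finitely generated over $H_0(R)$ because the total homology $H(\mathcal{E}')$ is finite-dimensional; combining this with the degreewise-finite $H_*(X)$ in the long exact sequence of $\mathrm{cone}(\varepsilon_t)$ shows the cone has finite-dimensional homology in each degree, so finitely many generators suffice. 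It is precisely the finite-dimensionality of $H_0(\mathcal{E}')$ (hence its Noetherian and Artinian nature) that propagates finite generation through the construction and pins down $\beta_j < \infty$.

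Finally, as a cross-check, since $R$ is a local chain DG algebra in the sense of \cite{FJ} one may instead invoke the existence of minimal semi-free resolutions there and read off the conclusion from the identification $\beta_j = \dim_{\mathbbm{k}} H^{j}(\mathbbm{k} \otimes^{L}_{\mathcal{E}'} X)$: these groups vanish for $j > u$ by the boundedness of $H(\mathcal{E}')$ and $H(X)$, and are finite because $\mathbbm{k} \otimes^{L}_{\mathcal{E}'} X$ has degreewise finite-dimensional cohomology. This alternative gives the finiteness count cleanly but obscures minimality, so I would still carry out the explicit construction to secure the minimality of $F$.
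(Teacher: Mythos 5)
Your approach matches the paper's: the proof there performs exactly your reindexing of $X$ as a chain DG module over the local chain DG algebra $R$, then simply cites \cite[0.5]{FJ} for the existence of a minimal semi-free resolution with finitely many basis elements in each homological degree and none below $\inf\{i\mid H_i\neq 0\}=-u$, and reindexes back. Your explicit cell-attachment construction (together with your closing remark) is essentially the standard proof of that cited result, so the argument is correct and not genuinely different in route.
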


\begin{proof}
Let $M=\bigoplus\limits_{j\in \Bbb{Z}} M_j$ with $M_{-i}=X^i$ for any $i\in \Bbb{Z}$. Then $M$ is DG $R$-module such that each $H_i(M)$ is a finitely generate $H_0(R)$-module. And $H(M)$ is bounded below with $-u=\inf\{i|H_i(M)\neq 0\}$. It follows from \cite[0.5]{FJ} that $M$ admits a minimal semi-free resolution $G$ such that
$$G^{\#}=\coprod\limits_{i\ge -u}\Sigma^{-i}(R^{\#})^{(\beta_i)},$$ where each $\beta_i$ is finite. Let $F^i=G_{-i}$. Then $F$ is a minimal semi-free $\mathcal{E}'$-module with
$$F^{\#}=\coprod\limits_{j\le u}\Sigma^{-j}(\mathcal{E}'^{\#})^{\beta_j}.$$ Moreover, it is a minimal semi-free resolution of $X$.
\end{proof}

\begin{prop}\label{semifree}
Let $N$ be a DG $\mathcal{E}$-module such that $u=\sup\{i|H^i(X)\neq 0\}<\infty$ and each $H^i(N)$ is a finitely generated $H^0(\mathcal{E})$-module.
Then $N$ admits a minimal semi-free resolution $F$ such that $F^{\#}=\coprod\limits_{j\le u}\Sigma^{-j}(\mathcal{E}^{\#})^{\beta_j}$,  where each $\beta_j$ is finite.
\end{prop}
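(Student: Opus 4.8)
The plan is to transport the statement from the truncated Koszul dual $\mathcal{E}'$ back to $\mathcal{E}$ along the quasi-isomorphism $\iota:\mathcal{E}'\to\mathcal{E}$, using the derived equivalence of Lemma \ref{quasi-iso} together with the already-established Proposition \ref{minres}. Concretely, I would first pull $N$ back to a DG $\mathcal{E}'$-module $\iota^*N$, build a minimal semi-free resolution $F'$ of $\iota^*N$ over $\mathcal{E}'$ by Proposition \ref{minres}, and then extend scalars to obtain $F=\mathcal{E}\otimes_{\mathcal{E}'}F'$, which I claim is the desired minimal semi-free resolution of $N$.

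First I would check that $\iota^*N$ satisfies the hypotheses of Proposition \ref{minres}. Since $\iota$ is a quasi-isomorphism, $H(\iota):H(\mathcal{E}')\to H(\mathcal{E})$ is an isomorphism of graded algebras, and the underlying complex of $\iota^*N$ is that of $N$ with the $\mathcal{E}'$-action restricted along $\iota$. Hence $H^i(\iota^*N)=H^i(N)$ as graded vector spaces, the $H^0(\mathcal{E}')$-module structure being induced from the $H^0(\mathcal{E})$-structure via the isomorphism $H^0(\iota)$. Consequently each $H^i(\iota^*N)$ is a finitely generated $H^0(\mathcal{E}')$-module and $\sup\{i\mid H^i(\iota^*N)\neq 0\}=u<\infty$. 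Proposition \ref{minres} then yields a minimal semi-free resolution $\varepsilon':F'\to \iota^*N$ over $\mathcal{E}'$ with $F'^{\#}=\coprod_{j\le u}\Sigma^{-j}(\mathcal{E}'^{\#})^{(\beta_j)}$ and each $\beta_j$ finite.

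Next I would set $F=\mathcal{E}\otimes_{\mathcal{E}'}F'$ and verify its three required features. For semi-freeness: if $E'$ is a semi-basis of $F'$, then $\{1\otimes e\mid e\in E'\}$ is a semi-basis of $F$ with the same filtration, so $F$ is semi-free with $F^{\#}=\mathcal{E}^{\#}\otimes_{\mathcal{E}'^{\#}}F'^{\#}=\coprod_{j\le u}\Sigma^{-j}(\mathcal{E}^{\#})^{(\beta_j)}$, as required. For the quasi-isomorphism: because $F'$ is semi-free, $\mathcal{E}\otimes_{\mathcal{E}'}F'$ computes $\mathcal{E}\otimes^L_{\mathcal{E}'}F'\cong \mathcal{E}\otimes^L_{\mathcal{E}'}\iota^*N$, and since $\mathcal{E}\otimes^L_{\mathcal{E}'}-$ is the quasi-inverse of $\iota^*$ by Lemma \ref{quasi-iso}, this is isomorphic to $N$ in $\mathscr{D}(\mathcal{E})$; thus $F$ is a semi-free resolution of $N$.

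The step I expect to be the crux is minimality, i.e. $\partial_F(F)\subseteq \mathfrak{m}_{\mathcal{E}}F$. For a generator $1\otimes e$ one has $\partial_F(1\otimes e)=\pm\,1\otimes\partial_{F'}(e)$, and minimality of $F'$ gives $\partial_{F'}(e)\in \mathfrak{m}_{\mathcal{E}'}F'$, say $\partial_{F'}(e)=\sum_i a_ie_i$ with $a_i\in\mathfrak{m}_{\mathcal{E}'}$. Then $1\otimes\partial_{F'}(e)=\sum_i \iota(a_i)\otimes e_i$, and since $\iota$ is the inclusion of augmented DG algebras it carries $\mathfrak{m}_{\mathcal{E}'}$ into $\mathfrak{m}_{\mathcal{E}}$ (compare the explicit augmentation ideals in the Remark above). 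Hence $\partial_F(1\otimes e)\in\mathfrak{m}_{\mathcal{E}}F$, so $F$ is minimal. The only point demanding genuine care is precisely this: confirming that base change along $\iota$ simultaneously preserves the finite-generation pattern of the underlying graded module and the minimality of the differential, which is exactly what the compatibility $\iota(\mathfrak{m}_{\mathcal{E}'})\subseteq\mathfrak{m}_{\mathcal{E}}$ guarantees.
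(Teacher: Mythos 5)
Your proposal is correct and follows essentially the same route as the paper: restrict $N$ to $\mathcal{E}'$ along $\iota$, apply Proposition \ref{minres} to get a minimal semi-free resolution $G$ over $\mathcal{E}'$, and extend scalars to $F=\mathcal{E}\otimes_{\mathcal{E}'}G$. The paper compresses the verification into ``one sees easily,'' whereas you spell out the semi-basis transport, the identification $\mathcal{E}\otimes^L_{\mathcal{E}'}\iota^*N\cong N$ via Lemma \ref{quasi-iso}, and the minimality check $\iota(\mathfrak{m}_{\mathcal{E}'})\subseteq\mathfrak{m}_{\mathcal{E}}$ — all of which are the intended details.
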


\begin{proof}
Via the inclusion morphism $\iota: \mathcal{E}'\to \mathcal{E}$, $N$ can be considered as a DG $\mathcal{E}'$-module. By Proposition \ref{minres}, ${}_{\mathcal{E}'}N$ admits a minimal semi-free resolution $G$ such that $$G^{\#}=\coprod\limits_{j\le u}\Sigma^{-j}(\mathcal{E}'^{\#})^{\beta_j},$$  where each $\beta_j$ is finite. One sees easily that $F=\mathcal{E}\otimes_{\mathcal{E}'}G$ is a minimal semi-free resolution of ${}_{\mathcal{E}}N$ and
$$F^{\#}=\coprod\limits_{j\le u}\Sigma^{-j}(\mathcal{E}^{\#})^{\beta_j},$$ where each $\beta_j$ is finite.
\end{proof}
By Proposition \ref{semifree} and \cite[Lemma A. 13]{Jor4}, we have the following corollary.
\begin{cor}\label{compact}
Let $N$ be a DG module in $\mathscr{D}_{fg}(\mathcal{E})$. Then $N$ is a compact DG $\mathcal{E}$-module if and only if it has a minimal semi-free resolution $F$  which admits a finite semi-basis.
\end{cor}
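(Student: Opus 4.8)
The plan is to treat the two implications separately, using Proposition~\ref{semifree} to manufacture a canonical minimal semi-free resolution and then gauging the size of its semi-basis by the derived fibre $\mathbbm{k}\otimes^L_{\mathcal{E}}N$.

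The implication from a finite semi-basis to compactness is the formal half. If $N$ has a minimal semi-free resolution $F$ with a finite semi-basis, then $F$ is a finite iterated extension of shifts of the DG free module $\mathcal{E}$, so $F$ lies in the thick subcategory of $\mathscr{D}(\mathcal{E})$ generated by $\mathcal{E}$; since $N\cong F$ in $\mathscr{D}(\mathcal{E})$, this makes $N$ compact. This is the easy half of the general compactness criterion \cite[Proposition 3.3]{MW1}, which I only need to invoke over $\mathcal{E}$.

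For the converse I would first check that every $N\in\mathscr{D}_{fg}(\mathcal{E})$ meets the hypotheses of Proposition~\ref{semifree}: since $H(\mathcal{E})$ is finite dimensional with $H^0(\mathcal{E})$ the relevant local algebra, each $H^i(N)$ is a finitely generated $H^0(\mathcal{E})$-module and $u=\sup\{i\mid H^i(N)\neq 0\}<\infty$. Proposition~\ref{semifree} then delivers a minimal semi-free resolution $F$ with $F^{\#}=\coprod_{j\le u}\Sigma^{-j}(\mathcal{E}^{\#})^{\beta_j}$, each $\beta_j$ finite, so the only thing left to prove is that $\beta_j=0$ for all but finitely many $j$. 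The key device is minimality: because $\partial_F(F)\subseteq\mathfrak{m}_{\mathcal{E}}F$, the differential induced on $\mathbbm{k}\otimes_{\mathcal{E}}F\cong\mathbbm{k}\otimes^L_{\mathcal{E}}N$ vanishes, whence $\dim_{\mathbbm{k}}H^j(\mathbbm{k}\otimes^L_{\mathcal{E}}N)=\beta_j$ for every $j$. Thus $F$ has a finite semi-basis precisely when $\dim_{\mathbbm{k}}H(\mathbbm{k}\otimes^L_{\mathcal{E}}N)<\infty$.

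It then remains to see that compactness of $N$ forces this total dimension to be finite, and this is exactly what I would extract from \cite[Lemma A.13]{Jor4}: a compact $N$ lies in the thick subcategory generated by $\mathcal{E}$, for which $\mathbbm{k}\otimes^L_{\mathcal{E}}\mathcal{E}\cong\mathbbm{k}$ has finite total cohomology, and finiteness of the total cohomology of $\mathbbm{k}\otimes^L_{\mathcal{E}}(-)$ is stable under shifts, cones and direct summands. Hence $\sum_j\beta_j<\infty$ and $F$ has a finite semi-basis. I expect the main obstacle to be this converse: Proposition~\ref{semifree} already gives each $\beta_j$ finite, so the real work is to exclude infinitely many nonzero $\beta_j$, and this rests on the minimality identification $\sum_j\beta_j=\dim_{\mathbbm{k}}H(\mathbbm{k}\otimes^L_{\mathcal{E}}N)$ combined with the compactness input of \cite[Lemma A.13]{Jor4}.
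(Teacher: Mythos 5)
Your proof is correct and takes essentially the same route as the paper, which obtains the corollary by combining Proposition \ref{semifree} with \cite[Lemma A.13]{Jor4}. Your identification $\beta_j=\dim_{\mathbbm{k}}H^j(\mathbbm{k}\otimes^L_{\mathcal{E}}N)$ via minimality, together with the thick-subcategory argument showing that compactness forces $\dim_{\mathbbm{k}}H(\mathbbm{k}\otimes^L_{\mathcal{E}}N)<\infty$, is precisely an unpacking of the cited lemma, so nothing is missing.
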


\begin{prop}\label{tensor}
Let $\mathcal{E'}$ be the truncated Koszul dual DG algebra of a homologically smooth DG algebra $\mathscr{A}$. If $M$ and $N$ are object in $\mathscr{D}_{fg}(\mathcal{E}'^{op})$ and $\mathscr{D}_{fg}(\mathcal{E}')$ respectively,  then $\sup\{i|H^i( M\otimes_{\mathcal{E}'}^L N )\neq 0\}=\sup\{i| H^i(M)\neq 0\}+\sup\{i|H^i(N)\neq 0\}$.
\end{prop}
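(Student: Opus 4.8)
The plan is to bound $\sup\{i\mid H^i(M\otimes^L_{\mathcal{E}'}N)\neq 0\}$ from above and below separately, using minimal semi-free resolutions together with the fact that $\mathcal{E}'$ is concentrated in cohomological degrees $\le 0$ with $H^0(\mathcal{E}')=H^0(\mathcal{E})$ local finite dimensional (its maximal ideal has degree-$0$ part $B^0\oplus J$, and $Z^0/(B^0\oplus J)\cong\mathbbm{k}$). Write $s=\sup\{i\mid H^i(M)\neq 0\}$ and $t=\sup\{i\mid H^i(N)\neq 0\}$. Applying Proposition \ref{minres} to $\mathcal{E}'$ and to $\mathcal{E}'^{op}$ (which is again of the same type), I would fix minimal semi-free resolutions $P\to M$ and $F\to N$ with $P^{\#}=\coprod_{i\le s}\Sigma^{-i}(\mathcal{E}'^{\#})^{(\alpha_i)}$ and $F^{\#}=\coprod_{j\le t}\Sigma^{-j}(\mathcal{E}'^{\#})^{(\beta_j)}$, all $\alpha_i,\beta_j$ finite, so that $M\otimes^L_{\mathcal{E}'}N\simeq P\otimes_{\mathcal{E}'}F$. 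A short degree count shows the top semi-basis degrees are attained: since $\mathcal{E}'$ lives in degrees $\le 0$, the degree-$t$ component of $F$ equals $(Z^0)^{(\beta_t)}$, so if $\beta_t=0$ it would vanish and force $H^t(N)=0$; hence $\beta_t\neq 0$, and likewise $\alpha_s\neq 0$.

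For the upper bound, tensoring the graded semi-bases gives $(P\otimes_{\mathcal{E}'}F)^{\#}\cong\coprod_{i\le s,\,j\le t}\Sigma^{-i-j}(\mathcal{E}'^{\#})^{(\alpha_i\beta_j)}$, which is concentrated in degrees $\le s+t$ because $\mathcal{E}'^{\#}$ sits in degrees $\le 0$. Therefore $H^k(P\otimes_{\mathcal{E}'}F)=0$ for $k>s+t$, and in particular $\sup\{i\mid H^i(M\otimes^L_{\mathcal{E}'}N)\neq 0\}\le s+t$.

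For the lower bound --- the crux --- I would produce a nonzero class in degree $s+t$ by comparing with the residue field. Minimality gives $\partial_P(P)\subseteq P\mathfrak{m}_{\mathcal{E}'}$ and $\partial_F(F)\subseteq\mathfrak{m}_{\mathcal{E}'}F$, so the canonical surjection of complexes
$$\pi:\; P\otimes_{\mathcal{E}'}F\longrightarrow (P\otimes_{\mathcal{E}'}\mathbbm{k})\otimes_{\mathbbm{k}}(\mathbbm{k}\otimes_{\mathcal{E}'}F)$$
is a chain map whose target carries the zero differential and equals $\big(\coprod_i\Sigma^{-i}\mathbbm{k}^{(\alpha_i)}\big)\otimes_{\mathbbm{k}}\big(\coprod_j\Sigma^{-j}\mathbbm{k}^{(\beta_j)}\big)$. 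In top degree $s+t$ the target is exactly $\mathbbm{k}^{(\alpha_s\beta_t)}$, and one checks $\pi$ is surjective there; since $\pi$ annihilates $\mathrm{im}\,\partial$ and $(P\otimes_{\mathcal{E}'}F)^{s+t+1}=0$, it induces a surjection $H^{s+t}(P\otimes_{\mathcal{E}'}F)\twoheadrightarrow\mathbbm{k}^{(\alpha_s\beta_t)}$. As $\alpha_s\neq 0$ and $\beta_t\neq 0$, the right-hand side is nonzero, whence $H^{s+t}(M\otimes^L_{\mathcal{E}'}N)\neq 0$ and $\sup=s+t$.

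The main obstacle is the lower bound, and concretely the verification that $\pi$ is a well-defined chain map that stays surjective in the top degree: this is exactly where both minimality (to confine the differentials into $\mathfrak{m}_{\mathcal{E}'}$) and the non-positivity of $\mathcal{E}'$ (to pin the degree $s+t$ onto the degree-$0$ generators, where $Z^0/(B^0\oplus J)\cong\mathbbm{k}$) must be used simultaneously; the semi-free differentials mix the resolution part with the internal differential of $\mathcal{E}'$, and one must see that neither contributes outside $\mathfrak{m}_{\mathcal{E}'}$ in degree $s+t$. Conceptually this is the cochain analogue, over the local chain DG algebra $R\cong\mathcal{E}'$ of \cite{FJ}, of the additivity of the extremal (co)homological degree under derived tensor product, and one could alternatively deduce it from the amplitude results of \cite{FJ}.
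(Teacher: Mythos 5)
Your proof is correct and follows essentially the same route as the paper's: both take minimal semi-free resolutions from Proposition \ref{minres}, get the upper bound $\le s+t$ by observing the tensor product of the resolutions is concentrated in degrees $\le s+t$, and get the lower bound by noting that a top-degree semi-basis tensor $\lambda\otimes\omega$ is a cocycle which, by minimality, cannot be a coboundary because the image of the differential lies in $P\mathfrak{m}_{\mathcal{E}'}\otimes F+P\otimes\mathfrak{m}_{\mathcal{E}'}F$. Your projection $\pi$ onto $(P\otimes_{\mathcal{E}'}\mathbbm{k})\otimes_{\mathbbm{k}}(\mathbbm{k}\otimes_{\mathcal{E}'}F)$ and the check that $\alpha_s,\beta_t\neq 0$ merely make explicit what the paper leaves implicit.
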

\begin{proof}
Let $u=\sup\{i|H^i(M)\neq 0\}$ and $v=\sup\{i|H^i(N)\neq 0\}$.
By Proposition \ref{minres}, $M$ and $N$ admit minimal semi-free resolution $F_M$ and $F_N$ such that
$$F_M^{\#}=\coprod\limits_{j\le u}\Sigma^{-j}(\mathcal{E}'^{op\#})^{\beta_j}\quad \text{and}\quad F_N^{\#}=\coprod\limits_{j\le v}\Sigma^{-j}(\mathcal{E}'^{\#})^{\alpha_j},$$
where $\beta_j$ and $\alpha_j$ are finite.
Note that $\mathcal{E}'$ is concentrated in degrees $\le 0$. So we have the truncation
$$\Gamma^{\le u}M: \cdots \to M^{u-2}\stackrel{\partial_M^{u-2}}{\to} M^{u-1}\stackrel{\partial_M^{u-1}}{\to} \mathrm{ker}(\partial_M^u)\to 0. $$
It is a DG $\mathcal{E}'$-submodule of $M$ which is quasi-isomorphic to $M$.
Since $$(\Gamma^{\ge u}M\otimes_{\mathcal{E}'}F_N)^{\#}=\coprod\limits_{j\le v}\Sigma^{-j}(\Gamma^{\ge u}M)^{\alpha_j}$$ is concentrated in degrees $\le u+v$, we have $$
\sup\{i|H^i(M\otimes_{\mathcal{E}'}^LN)\neq 0\}=\sup\{i|H^i(\Gamma^{\ge u}M\otimes_{\mathcal{E}'}F_N)\neq 0\} \le u+v.$$
On the other hand, $F_M$ and $F_N$ admit semi-basis elements $\lambda$ and $\omega$ such that $|\lambda|=u$ and $|\omega|=v$.
Since $F_M\otimes_{\mathcal{E}'}F_N$ is concentrated in degrees $\le u+v$, the element $\lambda\otimes \omega$ is a cocycle element in $F_M\otimes_{\mathcal{E}'}F_N$. By the minimality of $F_M$ and $F_N$, one sees that $\lambda\otimes \omega$ is not a coboundary element since
$\lambda\otimes \omega\not\in F_M\frak{m}_{\mathcal{E}'}\otimes F_N+F_M\otimes \frak{m}_{\mathcal{E}'}F_N$.
Therefore, $u+v=\sup\{H^i(M\otimes_{\mathcal{E}'}^L N)\neq 0$.

\end{proof}

\begin{prop}\label{goren}
Let $\mathcal{E'}$ be the truncated Koszul dual DG algebra of a homologically smooth DG algebra $\mathscr{A}$. Then the following statements are equivalent to each other.
\begin{enumerate}
\item There are isomorphism $\mathbbm{k}$-vector spaces
$$H^i(R\Hom_{\mathcal{E}'}(\mathbbm{k},\mathcal{E}'))\cong \left\{
 \begin{aligned}
\mathbbm{k}, i=b\\
0, i\neq b
\end{aligned}\right\}\cong H^i(R\Hom_{\mathcal{E}'^{op}}(\mathbbm{k},\mathcal{E}')).$$

\item There are isomorphisms
${}_{\mathcal{E}'}(\mathcal{E}^*)\cong \Sigma^b{}_{\mathcal{E}'}\mathcal{E}'$ in $\mathscr{D}(\mathcal{E}')$ and $(\mathcal{E}'^*)_{\mathcal{E}'}\cong \Sigma^b \mathcal{E}'_{\mathcal{E}'}$ in $\mathscr{D}(\mathcal{E}'^{op})$.
\item $\dim_{\mathbbm{k}}H(R\Hom_{\mathcal{E}'}(\mathbbm{k},\mathcal{E}'))<\infty$ and $\dim_{\mathbbm{k}}H(R\Hom_{\mathcal{E}'^{op}}(\mathbbm{k},\mathcal{E}'))<\infty$.
\item The graded algebra $H(\mathcal{E}')$ is a Frobenius graded algebra. Equivalently,
there are isomorphisms of graded $H(\mathcal{E}')$-modules
$${}_{H(\mathcal{E}')}[H(\mathcal{E}')]^*\cong \Sigma^b{}_{H(\mathcal{E}')}H(\mathcal{E}')\quad\text{and}\quad [H(\mathcal{E}')]^*_{H(\mathcal{E}')}\cong \Sigma^bH(\mathcal{E}')_{H(\mathcal{E}')}.$$
\item ${}_{\mathcal{E}'}(\mathcal{E}'^*)\in \mathscr{D}^c(\mathcal{E}')$ and $(\mathcal{E}'^*)_{\mathcal{E}}\in \mathscr{D}^c(\mathcal{E}'^{op})$.

\end{enumerate}
\end{prop}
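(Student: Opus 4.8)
The plan is to prove the five conditions equivalent by assembling a short duality toolkit and then running the cycle $(2)\Rightarrow(1)\Rightarrow(3)\Leftrightarrow(5)\Rightarrow(1)$ together with $(1)\Leftrightarrow(2)\Leftrightarrow(4)$. The toolkit consists of three natural isomorphisms, all available because $\mathcal{E}'$ is locally finite and concentrated in degrees $\le 0$ with $H^0(\mathcal{E}')$ local: the adjunction $R\Hom_{\mathcal{E}'}(\mathbbm{k},\mathcal{E}'^*)\cong\mathbbm{k}$ and its $\mathcal{E}'^{op}$ analogue; the $\mathbbm{k}$-linear duality $R\Hom_{\mathcal{E}'}(M,N)^*\cong N^*\otimes^L_{\mathcal{E}'}M$, valid whenever $M$ admits a degreewise finite semifree resolution, which by Proposition \ref{minres} covers both $\mathbbm{k}$ and $\mathcal{E}'$; and, for any compact $X$, the isomorphism $R\Hom_{\mathcal{E}'}(\mathbbm{k},\mathcal{E}')\otimes^L_{\mathcal{E}'}X\cong R\Hom_{\mathcal{E}'}(\mathbbm{k},X)$, proved by dévissage in $X$ from the trivial case $X=\mathcal{E}'$ through the thick subcategory $\mathscr{D}^c(\mathcal{E}')=\langle\mathcal{E}'\rangle$. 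I will justify the product/coproduct bookkeeping in the second isomorphism using local finiteness.

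The short implications go as follows. For $(2)\Rightarrow(1)$ substitute ${}_{\mathcal{E}'}\mathcal{E}'^*\cong\Sigma^b{}_{\mathcal{E}'}\mathcal{E}'$ into the adjunction to get $R\Hom_{\mathcal{E}'}(\mathbbm{k},\mathcal{E}')\cong\Sigma^{-b}R\Hom_{\mathcal{E}'}(\mathbbm{k},\mathcal{E}'^*)\cong\Sigma^{-b}\mathbbm{k}$, and symmetrically. The step $(1)\Rightarrow(3)$ is immediate. For $(3)\Leftrightarrow(5)$, minimality makes the left Betti numbers of $\mathcal{E}'^*$ satisfy $\beta_j=\dim_{\mathbbm{k}}H^j(\mathbbm{k}\otimes^L_{\mathcal{E}'}\mathcal{E}'^*)=\dim_{\mathbbm{k}}H^j(R\Hom_{\mathcal{E}'^{op}}(\mathbbm{k},\mathcal{E}')^*)$, so the finiteness in $(3)$ is exactly $\sum_j\beta_j<\infty$, which by Corollary \ref{compact} means ${}_{\mathcal{E}'}\mathcal{E}'^*\in\mathscr{D}^c(\mathcal{E}')$; the opposite side is symmetric. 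Dualizing $(1)$ gives $\mathbbm{k}\otimes^L_{\mathcal{E}'}\mathcal{E}'^*\cong\Sigma^b\mathbbm{k}$, so $\mathcal{E}'^*$ has a single semifree generator in degree $-b$ and hence ${}_{\mathcal{E}'}\mathcal{E}'^*\cong\Sigma^b{}_{\mathcal{E}'}\mathcal{E}'$, giving $(1)\Rightarrow(2)$. Finally $(2)\Rightarrow(4)$ follows by applying $H(-)$, and $(4)\Rightarrow(2)$ by choosing a cocycle $z\in(\mathcal{E}'^*)^{-b}$ whose class generates the cyclic module $H(\mathcal{E}')^*\cong\Sigma^bH(\mathcal{E}')$; the map $\Sigma^b\mathcal{E}'\to\mathcal{E}'^*$ with $1\mapsto z$ is surjective on cohomology and hence bijective by equality of dimensions, so it is a quasi-isomorphism.

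The heart of the argument is the closing implication $(5)\Rightarrow(1)$. Put $T=R\Hom_{\mathcal{E}'}(\mathbbm{k},\mathcal{E}')$. First, dualizing and applying Proposition \ref{tensor} to $\mathcal{E}'^*\otimes^L_{\mathcal{E}'}\mathbbm{k}\cong T^*$ yields $\sup T^*=\sup\mathcal{E}'^*+\sup\mathbbm{k}=-b$, i.e. $\inf T=b$. Second, the compactness isomorphism together with the adjunction gives $T\otimes^L_{\mathcal{E}'}\mathcal{E}'^*\cong R\Hom_{\mathcal{E}'}(\mathbbm{k},\mathcal{E}'^*)\cong\mathbbm{k}$, and a second application of Proposition \ref{tensor} forces $\sup T+\sup\mathcal{E}'^*=0$, i.e. $\sup T=b$. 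Thus $T\simeq\Sigma^{-b}H^b(T)$ is concentrated in degree $b$. To pin $\dim_{\mathbbm{k}}H^b(T)=1$, I tensor the finite minimal semifree resolution $F$ of the compact module $\mathcal{E}'^*$ with $T$: since $T\otimes_{\mathcal{E}'}\mathcal{E}'\cong T$, the complex $T\otimes_{\mathcal{E}'}F$ is bounded with finite-dimensional terms, and its Euler characteristic factors as $\chi(T)\cdot\sum_j(-1)^j\beta_j$. Comparing with $\chi(\mathbbm{k})=1$ forces $\chi(T)=\pm1$, which combined with the concentration in degree $b$ gives $\dim_{\mathbbm{k}}H^b(T)=1$, so $T\cong\Sigma^{-b}\mathbbm{k}$. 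The symmetric computation over $\mathcal{E}'^{op}$ then yields $(1)$.

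The main obstacle is exactly this rigidity: a finiteness (``finite injective dimension'') hypothesis must be upgraded to the rank-one conclusion of $(1)$, the DG analogue of Bass's theorem. What makes it tractable with the tools at hand, rather than the full Gorenstein theory of local chain DG algebras, is the interplay of the two instances of Proposition \ref{tensor} — which squeeze $T$ into a single cohomological degree — with the compactness isomorphism and the integrality of the Euler characteristic, which fixes the remaining cohomology at dimension one. The delicate points to watch are the justification of the $\mathbbm{k}$-linear dualities at the level of DG bimodules (controlling products versus coproducts via local finiteness) and the observation that $\mathcal{E}'^*\otimes_{\mathcal{E}'}\mathcal{E}'\cong\mathcal{E}'^*$ keeps every intermediate complex degreewise finite, without which the Euler-characteristic count would be meaningless.
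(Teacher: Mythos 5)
Your proof is correct, and its skeleton coincides with the paper's own: both arguments rest on the duality $R\Hom_{\mathcal{E}'}(\mathbbm{k},\mathcal{E}')\cong\bigl(\mathcal{E}'^*\otimes^L_{\mathcal{E}'}\mathbbm{k}\bigr)^*$, on reading the Betti numbers of $\mathcal{E}'^*$ off a minimal semi-free resolution so that (3)$\Leftrightarrow$(5) becomes Corollary \ref{compact}, on the d\'evissage identity $R\Hom_{\mathcal{E}'}(\mathbbm{k},\mathcal{E}')\otimes^L_{\mathcal{E}'}X\cong R\Hom_{\mathcal{E}'}(\mathbbm{k},X)$ for compact $X$, and on the two applications of Proposition \ref{tensor} that squeeze $T=R\Hom_{\mathcal{E}'}(\mathbbm{k},\mathcal{E}')$ into the single degree $b$ in the key implication (5)$\Rightarrow$(1). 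The one genuine divergence is the endgame of that implication. The paper truncates $T$ to claim $T\cong\Sigma^{-b}(\mathbbm{k}^{(\Lambda)})_{\mathcal{E}'}$ and then feeds this back into $H(T\otimes^L_{\mathcal{E}'}\mathcal{E}'^*)\cong\mathbbm{k}$ to conclude that $\Lambda$ is a singleton; writing the truncation as a sum of copies of $\mathbbm{k}$ tacitly uses that the right $H^0(\mathcal{E}')$-action on $H^b(T)$ is trivial, a point the paper passes over as ``easily shown.'' Your Euler-characteristic count $\chi(T)\cdot\sum_j(-1)^j\beta_j=\pm 1$ reaches $\dim_{\mathbbm{k}}H^b(T)=1$ without invoking the module structure at all (after which triviality of the action of the local ring $H^0(\mathcal{E}')$ on a one-dimensional module is automatic), so it is a slightly cleaner route to the same conclusion; its only extra cost is the bookkeeping you already flag, namely that $T$ must first be replaced by its (locally finite, hence here finite-dimensional) cohomology so that the Euler characteristic is defined and additive over the finite semi-free filtration of the resolution of $\mathcal{E}'^*$. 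Your hands-on (4)$\Rightarrow$(2) via a generating cocycle $z\in(\mathcal{E}'^*)^{-b}$ is just an unwound Eilenberg--Moore resolution, i.e.\ the same device the paper uses for its (4)$\Rightarrow$(5); the remaining implications match the paper step for step.
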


\begin{proof}
By Proposition \ref{minres}, $({}_{\mathcal{E}'}\mathcal{E}')^*$ admits a minimal semi-free resolution $F$.

(1)$\Leftrightarrow$ (2). By the minimality of $F$,
we have \begin{align*}
H(R\Hom_{\mathcal{E}'}(\mathbbm{k},\mathcal{E}'))&\cong H(R\Hom_{\mathcal{E}'}(\mathbbm{k},\mathcal{E}'^{**}))\\
&\cong H(\Hom_{\mathbbm{k}}(\mathcal{E}'^*\otimes_{\mathcal{E}'}^L\mathbbm{k},\mathbbm{k}))\cong \Hom_{\mathbbm{k}}(F\otimes_{\mathcal{E}'}\mathbbm{k},\mathbbm{k}).
\end{align*}
Hence
 $$H^i(R\Hom_{\mathcal{E}'}(\mathbbm{k},\mathcal{E}'))=\begin{cases}
\mathbbm{k}, i=b\\
0, i\neq b
\end{cases}$$ holds if and only if $F=\Sigma^b \mathcal{E}'_{\mathcal{E}'}$. We can symmetrically show that
$$H^i(R\Hom_{\mathcal{E}'^{op}}(\mathbbm{k},\mathcal{E}'))=\begin{cases}
\mathbbm{k}, i=b\\
0, i\neq b
\end{cases}$$ if and only if $(\mathcal{E}'_{\mathcal{E}'})^*\cong \Sigma^b {}_{\mathcal{E}'}\mathcal{E}'$.

(1)$\Rightarrow$ (3) and (2)$\Rightarrow$ (4) are trivial.

(3)$\Leftrightarrow$ (5) Note that $
H(R\Hom_{\mathcal{E}'}(\mathbbm{k},\mathcal{E}'))\cong \Hom_{\mathbbm{k}}(F\otimes_{\mathcal{E}'}\mathbbm{k},\mathbbm{k})$. This implies that $\dim_{\mathbbm{k}}H(R\Hom_{\mathcal{E}'}(\mathbbm{k},\mathcal{E}'))<\infty$ if and only if $F$ has a finite semi-basis, which is also equivalent
to $({}_{\mathcal{E}'}\mathcal{E}')^*\in \mathscr{D}^c(\mathcal{E}'^{op})$.  Similarly, $\dim_{\mathbbm{k}}H(R\Hom_{\mathcal{E}'^{op}}(\mathbbm{k},\mathcal{E}'))<\infty$ if and only if $(\mathcal{E}'_{\mathcal{E}'})^*\in \mathscr{D}^c(\mathcal{E}')$.

(4)$\Rightarrow$(5). If $H(\mathcal{E})$ is a Frobenius graded algebra,
then there is an isomorphism of graded $H(\mathcal{E})$-modules: $\Sigma^b H(\mathcal{E})\to [H(\mathcal{E})_{H(\mathcal{E})}]^*$. This implies
that $H[(\mathcal{E}_{\mathcal{E}})^*]$ has a graded free resolution $$0\to \Sigma^{b}H(\mathcal{E})\to
H[(\mathcal{E}_{\mathcal{E}})^*]\to 0.
$$
By the construction of the Eilenberg-Moore resolution, $(\mathcal{E}_{\mathcal{E}})^*$
admits a semifree resolution: $\Sigma^{b}\mathcal{E}\stackrel{\simeq}{\to}
(\mathcal{E}_{\mathcal{E}})^*$. Obviously $[\mathcal{E}_{\mathcal{E}}]^*$ is a compact DG $\mathcal{E}$-module. Similarly,
we can prove that $({}_{\mathcal{E}}\mathcal{E})^*$ is a compact DG $\mathcal{E}\!^{op}$-module.

(5)$\Rightarrow$ (1).  Since ${}_{\mathcal{E}'}(\mathcal{E}'^*)\in \mathscr{D}^c(\mathcal{E}')$,
we have \begin{align*}
H(R\Hom_{\mathcal{E}'}(\mathbbm{k},\mathcal{E}')\otimes_{\mathcal{E}'}^L(\mathcal{E}'^*))&\cong H(R\Hom_{\mathcal{E}'}(\mathbbm{k},\mathcal{E}'\otimes_{\mathcal{E}'}^L(\mathcal{E}'^*)))\\
&\cong H(R\Hom_{\mathcal{E}'}(\mathbbm{k},\mathcal{E}'^*))\cong \mathbbm{k}.
\end{align*}
By Corollary \ref{compact}, $(\mathcal{E}'^*)_{\mathcal{E}'}$ admits a minimal semi-free resolution $F$ which has a finite semi-basis.
We have \begin{align*}
\dim_{\mathbbm{k}}H(R\Hom_{\mathcal{E}'}(\mathbbm{k},\mathcal{E}'))&=\dim_{\mathbbm{k}}H(R\Hom_{\mathcal{E}'}(\mathbbm{k},\mathcal{E}'^{**}))\\
&=\dim_{\mathbbm{k}}H(\Hom_{\mathbbm{k}}((\mathcal{E}'^*)\otimes_{\mathcal{E}'}^L\mathbbm{k},\mathbbm{k}))\\
&=\dim_{\mathbbm{k}}(\Hom_{\mathbbm{k}}(F\otimes_{\mathcal{E}'}\mathbbm{k},\mathbbm{k}))<\infty.
\end{align*}
Hence $R\Hom_{\mathcal{E}'}(\mathbbm{k},\mathcal{E}')\in \mathscr{D}_{fg}(\mathcal{E}'^{op})$. As an object in $\mathscr{D}^c(\mathcal{E}')$, ${}_{\mathcal{E}'}(\mathcal{E}'^*)$ is also in $\mathscr{D}_{fg}(\mathcal{E}')$. Since $\sup\{i|H^i(\mathcal{E}'^*)\neq 0\}=-b$,
 we have
\begin{align*}
\sup\{i|H^i(R\Hom_{\mathcal{E}'}(\mathbbm{k},\mathcal{E}'))\neq 0\}-b=\sup\{i|H^i(\mathbbm{k})\neq 0\}=0
\end{align*}
by Proposition \ref{tensor}.
Then $\sup\{i|H^i(R\Hom_{\mathcal{E}'}(\mathbbm{k},\mathcal{E}'))=b$.
On the other hand,
\begin{align*}
\inf\{i|H^i(R\Hom_{\mathcal{E}'}(\mathbbm{k},\mathcal{E}'))\neq 0\}&=\inf\{i|H^i(R\Hom_{\mathcal{E}'}(\mathbbm{k},(\mathcal{E}')^{**}))\\
&=\inf\{i|H^i(\Hom_{\mathbbm{k}}((\mathcal{E}')^{*}\otimes_{\mathcal{E}'}^L\mathbbm{k},\mathbbm{k}))\neq 0\}\\
&=-\sup\{i|H^i[(\mathcal{E}')^{*}\otimes_{\mathcal{E}'}^L\mathbbm{k}]\neq 0\}\\
&\stackrel{(a)}{=}-[\sup\{i|H^i[(\mathcal{E}')^{*}]\neq 0\}+\sup\{i|H^i(\mathbbm{k})\neq 0\}]\\
&=-[(-b)+0]=b,
\end{align*}
where $(a)$ is by Proposition \ref{tensor} again. So $H(R\Hom_{\mathcal{E}'}(\mathbbm{k},\mathcal{E}'))$ is concentrated in degree $b$.
Applying the truncation technique to the DG $\mathcal{E}'^{op}$-module $R\Hom_{\mathcal{E}'}(\mathbbm{k},\mathcal{E}')$, we can easily show that
$R\Hom_{\mathcal{E}'}(\mathbbm{k},\mathcal{E}')\cong \Sigma^{-b} (\mathbbm{k}^{(\Lambda)})_{\mathcal{E}'}$
in $\mathscr{D}(\mathcal{E}'^{op})$, where $\Lambda$ is an indexed set. Inserting this into
$$H(R\Hom_{\mathcal{E}'}(\mathbbm{k},\mathcal{E}')\otimes_{\mathcal{E}'}^L(\mathcal{E}'^*))\cong \mathbbm{k}$$ shows that $\Lambda$ contains a single element.
Hence $R\Hom_{\mathcal{E}'}(\mathbbm{k},\mathcal{E}')\cong \Sigma^{-b}\mathbbm{k}_{\mathcal{E}'}$. It is equivalent to the first isomorphism in $(1)$. We can obtain the second one by a symmetric argument.
\end{proof}

\begin{rem}\label{replace}
Since the inclusion morphism $\iota: \mathcal{E}'\to \mathcal{E}$ is a quasi-isomorphism, we can obtain a similar result if we replace $\mathcal{E}'$ by $\mathcal{E}$ in Proposition \ref{goren}.

\end{rem}

 The DG module $K$ acquires the structure ${}_{\mathscr{A},{}_{\mathcal{E}}}K$ while $K^{\dagger}=\Hom_{\mathscr{A}}(K,\mathscr{A})$ has the structure $K^{\dagger}_{\mathscr{A},\mathcal{E}}$. Define functors $T(-)=-{\otimes}_{\mathcal{E}}^LK,$
$$W(-)=\Hom_{\mathscr{A}}(K,-)\simeq K^{\dagger}{\otimes}_{\mathscr{A}}^L- \,\, \text{and}\,\, C(-)=R\Hom_{\mathcal{E}^{op}}(K^{\dagger},-),$$
which form adjoint pairs $(T,W)$ and $(W,C)$ between $\mathscr{D}(\mathcal{E}^{op})$ and $\mathscr{D}(\mathscr{A})$.
There are pairs of quasi-inverse equivalences of categories as follows
$$\xymatrix{\mathscr{D}^{\mathrm{comp}}(\mathscr{A})   \ar@<1ex>[r]^{\quad W}
& \mathscr{D}(\mathcal{E}^{op}) \ar@<1ex>[l]^{\quad C}  \ar@<1ex>[r]^{T} & \mathscr{D}^{\mathrm{tors}}(\mathscr{A}) \ar@<1ex>[l]^{W} } .$$
In particular, $WC$ and $WT$ are equivalent to the identity functor on $\mathscr{D}(\mathcal{E}^{op})$ , so if we set $$\Gamma =TW, \Lambda=CW,$$
then we get endofunctors of $\mathscr{D}(\mathscr{A})$ which form an adjoint pair $(\Gamma, \Lambda)$ and satisfy
$$\Gamma^2\simeq \Gamma, \Lambda^2\simeq \Lambda, \Gamma\Lambda\simeq \Gamma, \Lambda\Gamma\simeq \Lambda.$$
These functors are adjoints of inclusions as follows, where left-adjoint are displayed above right-adjoints
$$\xymatrix{\mathscr{D}^{\mathrm{comp}}(\mathscr{A})   \ar@<1ex>[r]^{\quad \mathrm{inc}}
& \mathscr{D}(\mathscr{A}) \ar@<1ex>[l]^{\quad \Lambda}  \ar@<1ex>[r]^{\Gamma} & \mathscr{D}^{\mathrm{tors}}(\mathscr{A}) \ar@<1ex>[l]^{ \mathrm{inc }} } .$$
Write $Q=K^{\dagger}{}^L\otimes_{\mathcal{E}}K$ and $D=Q^{*}=\Hom_{\mathbbm{k}}(Q,\mathbbm{k})$. One sees that $Q$ and $D$ have the structures ${}_{\mathscr{A}}Q_{\mathscr{A}}$ and ${}_{\mathscr{A}}D_{\mathscr{A}}$,  respectively.
\begin{prop}\label{mainthm}
Let $\mathscr{A}$ be a connected cochain DG algebra. Then $\mathscr{A}$ is Gorenstein  and homologically smooth if and only if its $\mathrm{Ext}$-algebra $H(R\Hom_{\mathscr{A}}(\mathbbm{k},\mathbbm{k}))=H(\mathcal{E})$ is a graded Frobenius algebra.
\end{prop}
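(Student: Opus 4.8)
The plan is to prove the equivalence by combining Proposition \ref{goren} (transported from $\mathcal{E}'$ to $\mathcal{E}$ via Remark \ref{replace}) with the Koszul-duality equivalences $(T,W)$ and $(W,C)$ set up in this section, so that the Gorenstein condition on $\mathscr{A}$ is matched term by term with the finiteness conditions on $\mathcal{E}$ that characterize the Frobenius property of $H(\mathcal{E})$. A preliminary point is that homological smoothness comes for free on one side: a graded Frobenius algebra is by definition finite dimensional, and by minimality of the semi-free resolution $K$ of ${}_{\mathscr{A}}\mathbbm{k}$ the differential of $\Hom_{\mathscr{A}}(K,\mathbbm{k})$ vanishes, so $\dim_{\mathbbm{k}}H(\mathcal{E})=\dim_{\mathbbm{k}}H(R\Hom_{\mathscr{A}}(\mathbbm{k},\mathbbm{k}))$ equals the cardinality of the semi-basis of $K$. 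Hence $\dim_{\mathbbm{k}}H(\mathcal{E})<\infty$ forces $K$ to have a finite semi-basis, i.e. ${}_{\mathscr{A}}\mathbbm{k}\in\mathscr{D}^c(\mathscr{A})$, which by \cite[Corollary 2.7]{MW3} is equivalent to $\mathscr{A}$ being homologically smooth. Thus in either direction of the asserted equivalence I may assume $\mathscr{A}$ is homologically smooth, so that $\mathcal{E}$, the truncated dual $\mathcal{E}'$, and the entire apparatus of this section are available.

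Next I would reduce the Frobenius property to a statement purely about $\mathcal{E}$. By Remark \ref{replace}, Proposition \ref{goren} holds verbatim with $\mathcal{E}'$ replaced by $\mathcal{E}$, so $H(\mathcal{E})$ is graded Frobenius if and only if $\dim_{\mathbbm{k}}H(R\Hom_{\mathcal{E}}(\mathbbm{k},\mathcal{E}))<\infty$ and $\dim_{\mathbbm{k}}H(R\Hom_{\mathcal{E}^{op}}(\mathbbm{k},\mathcal{E}))<\infty$, equivalently these cohomologies are one dimensional and concentrated in the single degree $b$. It therefore remains to match these conditions with the Gorenstein conditions $\dim_{\mathbbm{k}}H(R\Hom_{\mathscr{A}}(\mathbbm{k},\mathscr{A}))=1$ and $\dim_{\mathbbm{k}}H(R\Hom_{\mathscr{A}^{op}}(\mathbbm{k},\mathscr{A}))=1$ of Definition \ref{gordef}.

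The bridge between the two sides is the pair of quasi-inverse equivalences relating $\mathscr{D}(\mathcal{E}^{op})$ to $\mathscr{D}^{\mathrm{tors}}(\mathscr{A})$ and $\mathscr{D}^{\mathrm{comp}}(\mathscr{A})$. Starting from $W(\mathscr{A})=\Hom_{\mathscr{A}}(K,\mathscr{A})=K^*$, which already computes $R\Hom_{\mathscr{A}}(\mathbbm{k},\mathscr{A})$ as an object of $\mathscr{D}(\mathcal{E}^{op})$, I would transport the computation across the equivalences, using the identification $T(\mathcal{E})=K\simeq{}_{\mathscr{A}}\mathbbm{k}$ together with $WT\simeq\mathrm{id}$ and $WC\simeq\mathrm{id}$, to establish an isomorphism $R\Hom_{\mathscr{A}}(\mathbbm{k},\mathscr{A})\cong\Sigma^{s}R\Hom_{\mathcal{E}^{op}}(\mathbbm{k},\mathcal{E})$ in $\mathscr{D}(\mathcal{E}^{op})$ for a suitable shift $s$, and symmetrically on the opposite sides. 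Granting this identification, the total cohomology dimensions agree, so $\mathscr{A}$ is left (resp. right) Gorenstein exactly when the corresponding $\mathcal{E}$-side finiteness and dimension-one condition of Proposition \ref{goren} holds. Moreover, invoking \cite[Remark 7.6]{MW2}, which asserts that a homologically smooth DG algebra is left Gorenstein if and only if it is right Gorenstein, lets me reduce the verification to a single side, the other following by the symmetric argument. Chaining these matchings with Proposition \ref{goren} then yields that $H(\mathcal{E})$ is graded Frobenius if and only if $\mathscr{A}$ is Gorenstein, which together with the first paragraph completes both directions.

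The main obstacle is precisely the dictionary invoked in the previous paragraph: making rigorous the identification of $R\Hom_{\mathscr{A}}(\mathbbm{k},\mathscr{A})=K^*$, viewed in $\mathscr{D}(\mathcal{E}^{op})$, with a suspension of $R\Hom_{\mathcal{E}^{op}}(\mathbbm{k},\mathcal{E})$. This forces one to keep careful track of the left/right and $\mathcal{E}$ versus $\mathcal{E}^{op}$ module structures carried by the bimodule ${}_{\mathscr{A},{}_{\mathcal{E}}}K$ and its dual $K^*_{\mathscr{A},\mathcal{E}}$, and to use the minimality of $K$ (its finite semi-basis) to see that $K$ represents the trivial module on the Koszul dual side, which is the heart of the biduality underlying the equivalences. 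Once these structures are pinned down, the dimension count and the final appeal to Proposition \ref{goren} and \cite[Remark 7.6]{MW2} are routine.
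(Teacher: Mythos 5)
Your preliminary reductions are sound and match the paper: homological smoothness does come for free from $\dim_{\mathbbm{k}}H(\mathcal{E})<\infty$ via the minimality of $K$, and Proposition \ref{goren} together with Remark \ref{replace} correctly converts the Frobenius property of $H(\mathcal{E})$ into the condition that $H(R\Hom_{\mathcal{E}}(\mathbbm{k},\mathcal{E}))$ and $H(R\Hom_{\mathcal{E}^{op}}(\mathbbm{k},\mathcal{E}))$ are one-dimensional. The gap is the bridge you then invoke: the asserted isomorphism $R\Hom_{\mathscr{A}}(\mathbbm{k},\mathscr{A})\cong\Sigma^{s}R\Hom_{\mathcal{E}^{op}}(\mathbbm{k},\mathcal{E})$ does \emph{not} follow from $T(\mathcal{E})\simeq\mathbbm{k}$, $WT\simeq\mathrm{id}$ and $WC\simeq\mathrm{id}$, and it is not an unconditional statement. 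What the equivalences give you for free is $R\Hom_{\mathscr{A}}(\mathscr{A},\mathbbm{k})\cong R\Hom_{\mathcal{E}^{op}}(W(\mathscr{A}),W(\mathbbm{k}))=R\Hom_{\mathcal{E}^{op}}(W(\mathscr{A}),\mathcal{E})$, because $W(\mathbbm{k})\simeq\mathcal{E}$; to replace the first argument $W(\mathscr{A})$ by $\mathbbm{k}_{\mathcal{E}}$ you must already know that $W(\mathscr{A})=R\Hom_{\mathscr{A}}(\mathbbm{k},\mathscr{A})$ is a shifted copy of $\mathbbm{k}$, which is exactly the left Gorenstein condition you are trying to characterize. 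The object of $\mathscr{D}(\mathscr{A})$ corresponding to $\mathbbm{k}_{\mathcal{E}}$ under the duality is $T(\mathbbm{k})=\mathbbm{k}\otimes^{L}_{\mathcal{E}}K$, not ${}_{\mathscr{A}}\mathbbm{k}$ or $\mathscr{A}$, so your "dictionary" is circular rather than a matter of tracking left/right structures. (There is also a side mismatch: $R\Hom_{\mathscr{A}}(\mathbbm{k},\mathscr{A})$ is naturally a right $\mathcal{E}$-module while $R\Hom_{\mathcal{E}^{op}}(\mathbbm{k},\mathcal{E})$ is a left one, a symptom of the fact that these are parallel conditions on the two sides of the duality, not literally the same complex.)

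The paper's proof shows what is actually needed in place of this step, and in both directions it is the hypothesis that does the work. In the forward direction, Gorensteinness gives $\dim_{\mathbbm{k}}H(W(\mathscr{A}))=1$, whence $W(\mathscr{A})\simeq\Sigma^{m}\mathbbm{k}$ by \cite[Lemma 2.7]{MGYC}, and only then can $\mathbbm{k}\cong H(R\Hom_{\mathcal{E}^{op}}(W(\mathscr{A}),\mathcal{E}))$ be rewritten as $\Sigma^{-m}H(R\Hom_{\mathcal{E}^{op}}(\mathbbm{k},\mathcal{E}))$. In the converse direction, the Frobenius hypothesis is used to build the semi-injective resolution $(\Sigma^{-m}\mathcal{E}')^{*}$ of $\mathcal{E}'$, and then a genuinely nontrivial argument (truncating $\Hom_{\mathscr{A}}(K,\mathscr{A})$ to a bounded-below $S$, producing a surjection $S\to\Sigma^{-b}\mathbbm{k}$ via Nakayama, dualizing the resulting short exact sequence into $(\Sigma^{-m}\mathcal{E}')^{*}$ and chasing the long exact sequence against $H(S^{*})\cong\Sigma^{-m}\mathbbm{k}$) forces $S\simeq\Sigma^{-b}\mathbbm{k}$ and hence $\dim_{\mathbbm{k}}H(R\Hom_{\mathscr{A}}(\mathbbm{k},\mathscr{A}))=1$. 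None of this is routine dimension counting, so as written your proposal is missing the essential content of the proposition.
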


\begin{proof}
If the DG algebra $\mathscr{A}$ is homologically smooth and Gorenstein, then we have
$\dim_{\mathbbm{k}}H(R\Hom_{\mathscr{A}}(\mathbbm{k},\mathbbm{k}))<\infty$ and $\dim_{\mathbbm{k}}H(W(\mathscr{A}))=\dim_{\mathbbm{k}}H(R\Hom_{\mathscr{A}}(\mathbbm{k},\mathscr{A}))=1$.
By \cite[Lemma 2.7]{MGYC}, there exists $m\in \Bbb{Z}$ such that $$W(A)=R\Hom_{\mathscr{A}}(\mathbbm{k},\mathscr{A})=\Hom_{\mathscr{A}}(K,\mathscr{A})\simeq \Sigma^{m}\mathbbm{k}_{\mathscr{A},\mathcal{E}}$$ in $\mathscr{D}(\mathscr{A}^{op})$.
Then
\begin{align*}
\mathbbm{k}=H(R\Hom_{\mathscr{A}}(\mathscr{A},\mathbbm{k}))&=H(R\Hom_{\mathscr{A}}(CW(\mathscr{A}),TW(\mathbbm{k})))\\
&=H(R\Hom_{\mathcal{E}^{op}}(WCW(\mathscr{A}),WTW(\mathbbm{k})))\\
&=H(R\Hom_{\mathcal{E}^{op}}(W(\mathscr{A}),W(\mathbbm{k})))\\
&=H(R\Hom_{\mathcal{E}^{op}}(\Sigma^m\mathbbm{k}_{\mathscr{A},\mathcal{E}},\mathcal{E}))\\
&=\Sigma^{-m}H(R\Hom_{\mathcal{E}^{op}}(\mathbbm{k}_{\mathscr{A},\mathcal{E}},\mathcal{E})).
\end{align*}
So $H(R\Hom_{\mathcal{E}^{op}}(\mathbbm{k},\mathcal{E}))=\Sigma^{m}k$. Since $\dim_kH(\mathcal{E})<\infty$, we have $\mathcal{E}\simeq \mathcal{E}^{**}$ as a DG $\mathcal{E}^{op}$-module.
The DG $\mathcal{E}$-module $\mathcal{E}^*$ admits a minimal semi-free resolution $F_{\mathcal{E}^{*}}$. Then $F_{\mathcal{E}^{*}}^*$ is a minimal semi-injective resolution of
$\mathcal{E}^{**}$.
Then
\begin{align*}
\Sigma^m\mathbbm{k}=H(R\Hom_{\mathcal{E}^{op}}(\mathbbm{k},\mathcal{E}))&=H(\Hom_{\mathcal{E}^{op}}(\mathbbm{k},F_{\mathcal{E}^{*}}^*)\\
&=H(\Hom_{\mathbbm{k}}(\mathbbm{k}\otimes_{\mathcal{E}}F_{\mathcal{E}^*},k)),\\
\end{align*}
which implies that $F_{\mathcal{E}^*}=\Sigma^{-m}\mathcal{E}$. Therefore the graded $H(\mathcal{E})$-module $H(\mathcal{E})^*$ is isomorphic to $\Sigma^{-m}H(\mathcal{E})$.
By definition, the Ext-algebra $H(\mathcal{E})$ is a graded Frobenius algebra.

Conversely, if the Ext-algebra $H(\mathcal{E})$ is graded Frobenius, then $\dim_kH(\mathcal{E})<\infty$ and the graded right $H(\mathcal{E})$-module $H(\mathcal{E})^*$ is isomorphic to $\Sigma^{-m}H(\mathcal{E})$.
Since $$\dim_{\mathbbm{k}}\Hom_{\mathscr{A}}(K,\mathbbm{k})= \dim_{\mathbbm{k}}H(R\Hom_{\mathscr{A}}(\mathbbm{k},\mathbbm{k})=\dim_{\mathbbm{k}}H(\mathcal{E})<\infty,$$ the minimal semi-free resolution $K$ of ${}_{\mathscr{A}}\mathbbm{k}$ admits a finite semi-free basis.
Thus $\mathscr{A}$ is homologically smooth. And the DG right $\mathcal{E}$-module $\mathcal{E}^*$ admits a semi-free resolution $f:\Sigma^{-m}\mathcal{E}\to \mathcal{E}^*$ by the construction of Eilenberg-Moore resolution.
Since the inclusion $\iota: \mathcal{E}'\to \mathcal{E}$ is a quasi-isomorphism, the DG right $\mathcal{E}'$-module $\mathcal{E}'^*$ admits a semi-free resolution $f:\Sigma^{-m}\mathcal{E}'\to \mathcal{E}'^*$. The morphism of DG $\mathcal{E}'$-modules
\begin{align*}
 &\theta : \mathcal{E}' \mapsto \mathcal{E}'^{**}\\
&x \mapsto (f\mapsto (-1)^{|f|\cdot|x|}f(x))
\end{align*} is a quasi-isomorphism since $H(\mathcal{E}')$ is finite dimensional. Then the composition
$$\mathcal{E}'\stackrel{\theta}{\to} \mathcal{E}'^{**}\stackrel{f^*}{\to} [\Sigma^{-m}\mathcal{E}']^*$$ is a quasi-isomorphism and hence a semi-injective resolution of ${}_{\mathcal{E}'}\mathcal{E}'$.
Thus $$H(R\Hom_{\mathcal{E}'}(\mathbbm{k},\mathcal{E}'))=H(\Hom_{\mathcal{E}'}(\mathbbm{k}, [\Sigma^{-m}\mathcal{E}']^*))=\Sigma^m\mathbbm{k}.$$
We want to show that $$\dim_kW(\mathscr{A})=\dim_{\mathbbm{k}}H(R\Hom_{\mathscr{A}}(\mathbbm{k},\mathscr{A}))=\dim_{\mathbbm{k}}H(\Hom_{\mathscr{A}}(K,\mathscr{A}))=1.$$
By \cite[Proposition 2.4]{MW1}, we have $$K^{\#}\cong \coprod\limits_{0\le i\le
u}\Sigma^{-i}(\mathscr{A}^{\#})^{(\Lambda^i)},$$ where each $\Lambda^i$ is a finite
index set. Then $$\Hom_{\mathscr{A}}(K,\mathscr{A})^{\#}\cong \coprod\limits_{0\le i\le
u}\Sigma^{i}(\mathscr{A}^{\#})^{(\Lambda^i)} $$ is locally finite and bounded below since $\mathscr{A}$ is locally finite and positively graded. Let $\inf\{i|H^i(\Hom_{\mathscr{A}}(K,\mathscr{A}))\neq 0\}=b.$
We claim that $\Hom_{\mathscr{A}}(K,\mathscr{A}))\cong \Sigma^b \mathbbm{k}$ as a DG $\mathcal{E}^{op}$-module. Note that any
DG $\mathcal{E}^{op}$-module can be considered as a DG $\mathcal{E'}^{op}$-module via the quasi-isomorphism $\iota: \mathcal{E}'\to \mathcal{E}$.
It suffices to show that
$\Hom_{\mathscr{A}}(K,\mathscr{A})\cong \Sigma^b \mathbbm{k}$ as a DG $\mathcal{E'}^{op}$-module. We can apply truncation technique to the DG $\mathcal{E'}^{op}$-module
$\Hom_{\mathscr{A}}(K,\mathscr{A})$. There exists a bounded below $\mathcal{E'}^{op}$-module $S$ such that $S$ is quasi-isomorphic to $\Hom_{\mathscr{A}}(K,\mathscr{A})$ and
$b=\inf\{i|S^i\neq 0\}$. Since $S^b$ is the left-most non-zero component of $S$, there is a surjection of right $\mathcal{E}'^0$-modules $S^b\to H^b(S)$.
 Moreover, $H^b(S)\cong H^b(\Hom_{\mathscr{A}}(K,\mathscr{A}))$ is  finitely generated as a left $H^0(\mathcal{E}')$-module since
 $\dim_kH^b(\Hom_{\mathscr{A}}(K,\mathscr{A}))<\infty$. Nakayama's Lemma indicates that there exists a surjection of right $H^0(\mathcal{E}')$-modules
$H^b(S)\to \mathbbm{k}$. Altogether, there is a surjection of right $\mathcal{E}'^0$-modules, $S^b\to \mathbbm{k}$. This gives rise to a surjection of right $\mathcal{E}'$-modules
$\beta: S\to \Sigma^{-b}\mathbbm{k}$. Let $S'$ be its kernel. Then we have a short exact sequence of right DG $\mathcal{E}'$-modules
\begin{align}\label{shortseq}
0\to S'\stackrel{i}{\to}S\stackrel{\beta}{\to}\Sigma^{-b}\mathbbm{k}\to 0.
\end{align}
Acting the functor $R\Hom_{\mathcal{E}'^{op}}(-,\mathcal{E}')=\Hom_{\mathcal{E}'^{op}}(-,(\Sigma^{-m}\mathcal{E}')^*)$ on the short exact sequence (\ref{shortseq}) gives a new short exact sequence
$$0\to \Sigma^{m+b}\mathbbm{k}\stackrel{\beta^*}{\to} \Sigma^mS^*\stackrel{i^*}{\to} \Sigma^{m}S'^*\to 0,$$
whose cohomology long exact sequence contains
\begin{align*}
\stackrel{H^{m+b-1}(i^*)}{\to}H^{m-1}(S'^*)\stackrel{\delta^{m+b-1}}{\to}\mathbbm{k}\stackrel{H^{m+b}(\beta^*)}{\to}H^m(S^*)\stackrel{H^{m+b}(i^*)}{\to}H^m(S'^*)\stackrel{\delta^{m+b}}{\to}0.
\end{align*}
Obviously, $H^{m+b}(\beta^*)$ is an injection since $\beta^*$ is injective. On the other hand, we have
\begin{align*}
\mathbbm{k}=H(R\Hom_{\mathscr{A}}(\mathscr{A},\mathbbm{k}))&=H(R\Hom_{\mathscr{A}}(CW(\mathscr{A}),TW(\mathbbm{k})))\\
&=H(R\Hom_{\mathcal{E}^{op}}(WCW(\mathscr{A}),WTW(\mathbbm{k})))\\
&=H(R\Hom_{\mathcal{E}^{op}}(W(\mathscr{A}),W(\mathbbm{k})))\\
&=H(R\Hom_{\mathcal{E}^{op}}(W(\mathscr{A}),\mathcal{E}))\\
&=H(R\Hom_{\mathcal{E}'^{op}}(W(\mathscr{A}),\mathcal{E}))\\
&=H(R\Hom_{\mathcal{E}'^{op}}(W(\mathscr{A}),\mathcal{E}'))\\
&=H(R\Hom_{\mathcal{E}'^{op}}(S,\mathcal{E}'))\\
&=H(\Hom_{\mathcal{E}'^{op}}(S,(\Sigma^{-m}\mathcal{E}')^*))\\
&=\Sigma^mH(S^*).
\end{align*}
So $H^{m+b}(\beta^*)$ is an isomorphism and $H(S'^*)$ is quasi-trivial. Since $H(S'^*)\cong H(S')^*$, we conclude that $H(S')$ is quasi-trivial and
$S\cong \Sigma^{-b}\mathbbm{k}$ as a right DG $\mathcal{E}'$-module. Then $\dim_{\mathbbm{k}}H(\Hom_{\mathscr{A}}(K,\mathscr{A}))=\dim_{\mathbbm{k}}H(S)=1$ and the homologically smooth DG algebra $\mathscr{A}$ is hence Gorenstein.

\end{proof}

\section{gorensteinness of homologically smooth dg algebras}
Let $\mathscr{A}$ be a homologically smooth connected cochain DG algebra. Suppose that
 $\mathcal{E}$ is its Koszul dual DG algebra. In this section, we will give some criteria for $\mathscr{A}$ to be Gorenstein.
\begin{prop}\label{krull-sch}
Let $\mathcal{E}$ be the Koszul dual DG algebra of  a homologically smooth DG algebra $\mathscr{A}$.
 Then $\mathscr{D}^c(\mathcal{E})$ and $\mathscr{D}^c(\mathcal{E}^{op})$ are both
$\Hom$-finite, $k$-linear Krull-Schmidt triangulated categories.
\end{prop}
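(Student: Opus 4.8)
The plan is to verify the three defining properties of a Krull--Schmidt category separately, exploiting the finiteness properties of $\mathcal{E}$ recorded at the start of Section 4. First, $\mathbbm{k}$-linearity is immediate, since $\mathscr{D}(\mathcal{E})$ is a $\mathbbm{k}$-linear triangulated category and $\mathscr{D}^c(\mathcal{E})$ is a full triangulated subcategory. According to the characterization recalled earlier (following \cite{Ri}), for a $\Hom$-finite $\mathbbm{k}$-linear triangulated category the Krull--Schmidt property is equivalent to the splitting of all idempotents. Hence it suffices to establish (i) that $\mathscr{D}^c(\mathcal{E})$ is $\Hom$-finite and (ii) that idempotents split in $\mathscr{D}^c(\mathcal{E})$. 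Everything then applies verbatim to $\mathscr{D}^c(\mathcal{E}^{op})$, since $\mathcal{E}^{op}$ enjoys exactly the same finiteness, namely $\dim_{\mathbbm{k}}H(\mathcal{E}^{op})=\dim_{\mathbbm{k}}H(\mathcal{E})<\infty$ with $H^0(\mathcal{E}^{op})$ local.

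For $\Hom$-finiteness, let $M,N\in\mathscr{D}^c(\mathcal{E})$. By the compactness criterion recalled in Section 2 (a DG module is compact iff it has a minimal semi-free resolution with a finite semi-basis), $M$ admits a resolution $F_M$ with $F_M^{\#}$ a finite direct sum of shifts of $\mathcal{E}^{\#}$. I would first note that every compact DG module has finite-dimensional total cohomology: $F_M$ is a finite iterated extension of shifts of $\mathcal{E}$, and since $\dim_{\mathbbm{k}}H(\mathcal{E})<\infty$, the long exact cohomology sequences of these extensions force $\dim_{\mathbbm{k}}H(M)<\infty$; in particular $\mathscr{D}^c(\mathcal{E})\subseteq\mathscr{D}_{lf}(\mathcal{E})$ with bounded cohomology. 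Then I would compute
$$\Hom_{\mathscr{D}(\mathcal{E})}(M,N)=H^0(R\Hom_{\mathcal{E}}(M,N))=H^0(\Hom_{\mathcal{E}}(F_M,N)).$$
Because $F_M^{\#}$ is a finite sum of shifts of $\mathcal{E}^{\#}$, the complex $\Hom_{\mathcal{E}}(F_M,N)$ is a finite iterated extension of shifts of $N$, and as $\dim_{\mathbbm{k}}H(N)<\infty$ its total cohomology is finite-dimensional. Hence $\Hom_{\mathscr{D}(\mathcal{E})}(M,N)$ is finite-dimensional over $\mathbbm{k}$.

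For the splitting of idempotents, I would use that $\mathscr{D}(\mathcal{E})$ is a compactly generated triangulated category (generated by $\mathcal{E}$ itself), and so admits all set-indexed coproducts. By the Bökstedt--Neeman argument (cf. \cite{Nee}), idempotents split in any triangulated category with countable coproducts, and a direct summand of a compact object is again compact. Consequently every idempotent endomorphism of an object of $\mathscr{D}^c(\mathcal{E})$ splits inside $\mathscr{D}^c(\mathcal{E})$, i.e. $\mathscr{D}^c(\mathcal{E})$ is idempotent complete. Combined with the $\Hom$-finiteness above and the characterization of Krull--Schmidt categories, this shows $\mathscr{D}^c(\mathcal{E})$ is a $\Hom$-finite $\mathbbm{k}$-linear Krull--Schmidt triangulated category, and symmetrically for $\mathscr{D}^c(\mathcal{E}^{op})$.

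The only step requiring genuine care is the passage from "finite semi-basis" to "finite-dimensional total cohomology", that is, checking that compact $\mathcal{E}$-modules actually lie in the cohomologically bounded, locally finite part of $\mathscr{D}(\mathcal{E})$; this is precisely what makes the $\Hom$-computation terminate in finite dimensions and is where the hypothesis $\dim_{\mathbbm{k}}H(\mathcal{E})<\infty$ is essential. Once this and the compact generation of $\mathscr{D}(\mathcal{E})$ are in place, the remainder of the argument is formal.
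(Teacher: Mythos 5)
Your proposal is correct and follows essentially the same route as the paper: both arguments extract minimal semi-free resolutions with finite semi-bases via the compactness criterion, use the resulting finite filtrations together with $\dim_{\mathbbm{k}}H(\mathcal{E})<\infty$ to bound the cohomology of the Hom-complex (you phrase this as ``finite iterated extension of shifts,'' the paper as an induction over the long exact sequences of the filtration), and then invoke the B\"okstedt--Neeman splitting of idempotents in the compactly generated category $\mathscr{D}(\mathcal{E})$ plus closure of compacts under summands. The only cosmetic difference is that you record the slightly stronger (and true) statement that compact objects have finite-dimensional total cohomology, where the paper only states local finiteness; this changes nothing in the argument.
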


\begin{proof}
Let $M$ and $N$ be objects in $\mathscr{D}^c(\mathcal{E})$. By Corollary
\ref{compact}, $M$ and $N$ admit minimal semifree resolutions
$F_M$ and $F_N$ respectively, and both $F_M$ and $F_N$ have a finite
semibasis.  Hence $F_{M}$ and $F_N$ have the following semifree
filtrations of finite length $$ F_{M}(0)\subset
F_{M}(1)\subset\cdots\subset F_{M}(n) = F_{M}\quad \text{and}\quad
F_N(0)\subset F_N(1)\subset\cdots\subset F_N(l) = F_N,$$ where
$F_{M}(0), F_N(0)$ and each $F_{M}(i)/F_{M}(i-1), F_N(j)/F_N(j-1)$
are DG free $\mathcal{E}$-modules on a cocycle basis.
We have two sequences of semi-split short exact sequences of
DG $\mathcal{E}$-modules:
\begin{equation}\label{qian}
0\to F_{M}(i)\to F_{M}(i+1) \to F_{M}(i+1)/F_{M}(i)\to 0 \quad (i=0,
1, \cdots, n-1),
\end{equation}
and
\begin{equation}\label{hou}
0\to F_N(j)\to F_N(j+1) \to F_N(j+1)/F_N(j)\to 0 \quad (i=0, 1,
\cdots, l-1).
\end{equation}
By those cohomology long exact sequences induced from (\ref{hou})
and the fact that $\dim_k H(\mathcal{E})<\infty$, we can prove
inductively that $H(N)$ is locally finite. Acting the functor
$\Hom_{\mathcal{E}}(-,N)$ on (\ref{qian}), we can get a new sequence of short
exact sequences ($i=0, 1, \cdots, n-1$):
\begin{small}
\begin{align*}
0\to \Hom_{\mathcal{E}}(F_{M}(i+1)/F_{M}(i),N) \to \Hom_{\mathcal{E}}(F_M(i+1),N) \to
\Hom_{\mathcal{E}}(F_M(i),N) \to 0.
\end{align*}
\end{small}\\
By those homology long exact sequences induced from this sequence of
short exact sequences, we can prove inductively that
 $H^0(\Hom_{\mathcal{E}}(F_M, N))
 \cong \Hom_{\mathscr{D}(\mathcal{E})}(M,N)$ is finite dimensional.

Since $\mathscr{D}(\mathcal{E})$ has arbitrary coproducts,  any idempotent
morphism in $\mathscr{D}(\mathcal{E})$ is split by \cite[Proposition 3.2]{BN}.
Certainly, $\mathscr{D}^c(\mathcal{E})$ is a full triangulated subcategory of
$\mathscr{D}(\mathcal{E})$ closed under direct summands. Hence any idempotent
morphism in $\mathscr{D}^c(\mathcal{E})$ is split. So $\mathscr{D}^c(\mathcal{E})$
is a $\Hom$-finite, $\mathbbm{k}$-linear Krull-Schmidt triangulated category.
\end{proof}

\begin{prop}\label{arcompact}
Let $\mathcal{E}$ be the Koszul dual DG algebra of  a homologically smooth DG algebra $\mathscr{A}$. Then
 $\mathscr{D}^c(\mathcal{E})$ admits Auslander-Reiten triangles if and only if $(\mathcal{E}_{\mathcal{E}})^*$
is a compact DG $\mathcal{E}$-module.
\end{prop}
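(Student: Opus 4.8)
The plan is to recognize $(\mathcal{E}_{\mathcal{E}})^*$ as the \emph{Serre dual} of the free module $\mathcal{E}$ and to reduce the existence of Auslander--Reiten triangles in $\mathscr{D}^c(\mathcal{E})$ to the compactness of this single object. The starting point is the computation that, for every $N\in\mathscr{D}(\mathcal{E})$,
\[
\Hom_{\mathscr{D}(\mathcal{E})}(N,(\mathcal{E}_{\mathcal{E}})^*)\cong \Hom_k(H^0(N),k)\cong \Hom_{\mathscr{D}(\mathcal{E})}(\mathcal{E},N)^*,
\]
obtained from the adjunction $R\Hom_{\mathcal{E}}(N,\Hom_k(\mathcal{E},k))\cong \Hom_k(\mathcal{E}\otimes^L_{\mathcal{E}}N,k)$ together with $H^0(N^*)\cong(H^0N)^*$. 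More generally, for $M$ compact one has $R\Hom_{\mathcal{E}}(M,N)\cong R\Hom_{\mathcal{E}}(M,\mathcal{E})\otimes^L_{\mathcal{E}}N$, and dualizing gives a natural isomorphism
\[
\Hom_{\mathscr{D}(\mathcal{E})}(M,N)^*\cong \Hom_{\mathscr{D}(\mathcal{E})}(N,S(M)),\qquad S(M):=\bigl(R\Hom_{\mathcal{E}}(M,\mathcal{E})\bigr)^*,
\]
with $S(\mathcal{E})=(\mathcal{E}_{\mathcal{E}})^*$. Since $\mathscr{D}^c(\mathcal{E})$ is $\Hom$-finite Krull--Schmidt by Proposition \ref{krull-sch}, biduality $\Hom_{\mathscr{D}(\mathcal{E})}(M,N)^{**}\cong\Hom_{\mathscr{D}(\mathcal{E})}(M,N)$ on compacts turns this into the defining property of a right Serre functor, \emph{provided} $S$ preserves compactness.

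For the direction assuming $(\mathcal{E}_{\mathcal{E}})^*\in\mathscr{D}^c(\mathcal{E})$, I would first check that $S$ restricts to an endofunctor of $\mathscr{D}^c(\mathcal{E})$. The contravariant functor $R\Hom_{\mathcal{E}}(-,\mathcal{E})$ is the usual perfect duality carrying $\mathscr{D}^c(\mathcal{E})$ onto $\mathscr{D}^c(\mathcal{E}^{op})=\mathrm{thick}(\mathcal{E}_{\mathcal{E}})$, while the exact functor $(-)^*$ sends the generator $\mathcal{E}_{\mathcal{E}}$ to the compact object $(\mathcal{E}_{\mathcal{E}})^*$; since the compacts form a thick subcategory, $(-)^*$ maps all of $\mathscr{D}^c(\mathcal{E}^{op})$ into $\mathscr{D}^c(\mathcal{E})$, whence $S(\mathscr{D}^c(\mathcal{E}))\subseteq\mathscr{D}^c(\mathcal{E})$. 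With $S$ a right Serre functor in hand, I would produce the Auslander--Reiten triangles through the machinery recalled above: for an indecomposable $Z\in\mathscr{D}^c(\mathcal{E})$ with local endomorphism ring $\Gamma$, Brown representability in the compactly generated category $\mathscr{D}(\mathcal{E})$ makes the cohomological functor $\Hom_{\Gamma^{op}}(\Hom_{\mathscr{D}(\mathcal{E})}(Z,-),I)$ representable, so Lemma \ref{brown} yields an Auslander--Reiten triangle ending at $Z$ in the \emph{ambient} category $\mathscr{D}(\mathcal{E})$. Its third term is governed by the Serre dual $S$, hence is compact once $(\mathcal{E}_{\mathcal{E}})^*$ is, hence is pure-injective by Lemma \ref{pureinj}; applying Proposition \ref{large} to the identity map on this compact translate then descends the triangle to an Auslander--Reiten triangle inside $\mathscr{D}^c(\mathcal{E})$.

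For the converse, suppose $\mathscr{D}^c(\mathcal{E})$ has Auslander--Reiten triangles. Then $\mathscr{D}^c(\mathcal{E})$ admits a right Serre functor $S'$, and in particular $S'(\mathcal{E})\in\mathscr{D}^c(\mathcal{E})$ is compact and represents the functor $N\mapsto \Hom_{\mathscr{D}(\mathcal{E})}(\mathcal{E},N)^*=H^0(N)^*$ on $\mathscr{D}^c(\mathcal{E})$. By the opening computation, $(\mathcal{E}_{\mathcal{E}})^*$ represents the same functor on all of $\mathscr{D}(\mathcal{E})$. Evaluating the natural isomorphism $\Hom_{\mathscr{D}(\mathcal{E})}(-,S'(\mathcal{E}))\cong\Hom_{\mathscr{D}(\mathcal{E})}(-,(\mathcal{E}_{\mathcal{E}})^*)$ at the compact object $S'(\mathcal{E})$ produces an actual morphism $S'(\mathcal{E})\to(\mathcal{E}_{\mathcal{E}})^*$ inducing an isomorphism on $\Hom_{\mathscr{D}(\mathcal{E})}(C,-)$ for every compact $C$; as the compacts generate $\mathscr{D}(\mathcal{E})$, this morphism is an isomorphism, and therefore $(\mathcal{E}_{\mathcal{E}})^*\cong S'(\mathcal{E})$ is compact.

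I expect the main obstacle to be the descent in the first direction, namely passing from an Auslander--Reiten triangle in $\mathscr{D}(\mathcal{E})$ to one among compact objects. Brown representability produces such a triangle ending at any compact $Z$ unconditionally, so the entire content of the ``if'' part is concentrated in showing its terms may be chosen compact; this is exactly where the hypothesis that $(\mathcal{E}_{\mathcal{E}})^*$ is compact enters, through the thick-subcategory argument that $S$ preserves compactness and through the pure-injectivity of compacts (Lemma \ref{pureinj}) that makes Proposition \ref{large} applicable. A secondary subtlety, in the converse, is that the restricted Yoneda functor into modules over $\mathscr{D}^c(\mathcal{E})$ is not full, so the identification $S'(\mathcal{E})\cong(\mathcal{E}_{\mathcal{E}})^*$ must be obtained by evaluating the natural transformation at the compact representing object rather than by a naive Yoneda argument.
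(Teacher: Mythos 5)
Your proof is correct, and the forward implication is essentially the paper's argument: both construct the triangle ending at an indecomposable compact $Z$ via Krause's representability criterion (Lemma \ref{brown}) using the natural isomorphism $\Hom_{\mathscr{D}(\mathcal{E})}(Z,-)^*\simeq \Hom_{\mathscr{D}(\mathcal{E})}(-,\mathcal{E}^*\otimes^L_{\mathcal{E}}Z)$, and both deduce compactness of the third term from a thick-subcategory argument starting at the generator $\mathcal{E}$; your invocation of Lemma \ref{pureinj} and Proposition \ref{large} to descend to $\mathscr{D}^c(\mathcal{E})$ is slightly heavier than necessary (an Auslander--Reiten triangle in the ambient category with all terms compact restricts directly, since the factorization conditions are only tested against fewer objects), but it is not wrong. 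The converse is where you genuinely diverge. The paper builds the Auslander--Reiten triangle ending at $\mathcal{E}$ twice --- once inside $\mathscr{D}^c(\mathcal{E})$ by hypothesis, lifted to $\mathscr{D}(\mathcal{E})$ via pure-injectivity of compacts (Lemma \ref{pureinj}) and Proposition \ref{large}, and once in $\mathscr{D}(\mathcal{E})$ with third term $\mathcal{E}^*$ via Krause's lemma --- and then identifies them by the uniqueness of Auslander--Reiten triangles from \cite{Hap1}. You instead pass through the Reiten--Van den Bergh equivalence between Auslander--Reiten triangles and Serre functors, and identify $S'(\mathcal{E})$ with $(\mathcal{E}_{\mathcal{E}})^*$ by evaluating the natural isomorphism of restricted representable functors at the compact object $S'(\mathcal{E})$ and using that compacts generate $\mathscr{D}(\mathcal{E})$ to upgrade the resulting comparison morphism to an isomorphism. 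Your route avoids the pure-injectivity machinery entirely at the price of invoking \cite{RV} (whose hypotheses are supplied by Proposition \ref{krull-sch}) and of the care you correctly flag about the restricted Yoneda functor; the paper's route stays entirely within the Auslander--Reiten formalism and needs only the uniqueness statement. Both are complete.
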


\begin{proof}
Let $P$ be an indecomposable object in $\mathscr{D}^c(\mathcal{E})$. By
Proposition \ref{krull-sch}, $\Gamma=\Hom_{\mathscr{D}(\mathcal{E})}(P,P)$ is
local. It is easy to check that $\Hom_{\mathbbm{k}}(\Gamma,\mathbbm{k})$ is an injective
envelope of $\Gamma/\mathrm{rad}\Gamma$ as a $\Gamma$-module.
Applying the same proof as \cite[Lemma 4.1]{Jor3}, we may prove the
natural equivalence $\Hom_{\mathscr{D}(\mathcal{E})}(P,-)^*\simeq
\Hom_{\mathscr{D}(\mathcal{E})}(-,\mathcal{E}^*\otimes_{\mathcal{E}}^LP).$ Hence we have
$\Hom_{\Gamma}(\Hom_{\mathscr{D}(\mathcal{E})}(P,-), \Hom_{\mathbbm{k}}(\Gamma, \mathbbm{k}))\simeq
\Hom_{\mathscr{D}(\mathcal{E})}(-,\mathcal{E}^*\otimes_{\mathcal{E}}^LP).$
Using Proposition \ref{brown}, we can get the following
Auslander-Reiten triangle: $$(*): \Sigma^{-1}(\mathcal{E}^*\otimes_{\mathcal{E}}^LP)\to
N\to P\to \mathcal{E}^*\otimes_{\mathcal{E}}^LP$$ in $\mathscr{D}(\mathcal{E})$. If $(\mathcal{E}_{\mathcal{E}})^*$ is a compact DG $\mathcal{E}$-module, then $\mathcal{E}^*\otimes_{\mathcal{E}}^LP$ is
also a compact DG $\mathcal{E}$-module since $\mathcal{E}$ is a compact DG $\mathcal{E}$-module.
Hence $(*)$ is an Auslander-Reiten triangle in $\mathscr{D}^c(\mathcal{E})$. So
$\mathscr{D}^c(\mathcal{E})$ admits Auslander-Reiten triangles.

Conversely, if $\mathscr{D}^c(\mathcal{E})$ admits Auslander-Reiten triangles,
we can get the following Auslander-Reiten triangle:
$$(**): \tau \mathcal{E}\to Y\to \mathcal{E}\to \Sigma(\tau \mathcal{E})$$ in
$\mathscr{D}^c(\mathcal{E})$, since
$\Hom_{\mathscr{D}^c(\mathcal{E})}(\mathcal{E},\mathcal{E})=\Hom_{\mathscr{D}(\mathcal{E})}(\mathcal{E},\mathcal{E})=H^0(\mathcal{E})$ is
a local ring.  By Lemma \ref{pureinj} and Proposition \ref{large},
it is easy to prove that $(**)$ is also an Auslander-Reiten triangle
in $\mathscr{D}(\mathcal{E})$.

On the other hand, if the DG module $P$ in $(*)$ is replaced by $\mathcal{E}$,
we can get the following Auslander-Reiten triangle:
$$(***): \Sigma^{-1}(\mathcal{E}^*\otimes_{\mathcal{E}}^L\mathcal{E})\to N\to \mathcal{E}\to
\mathcal{E}^*\otimes_{\mathcal{E}}^L\mathcal{E} $$ in $\mathscr{D}(\mathcal{E})$. By \cite[Proposition
3.5(i)]{Hap1}, $(**)$ and $(***)$ are isomorphic to each other in
$\mathscr{D}(\mathcal{E})$. So $\mathcal{E}^*$ is a compact DG $\mathcal{E}$-module.
\end{proof}

\begin{thm}\label{Gorenstein}
 Let $\mathcal{E}$  be the  Koszul dual DG algebra of a homologically smooth connected cochain DG algebra $\mathscr{A}$. Then the following conditions are equivalent.
\begin{enumerate}
\item the Ext-algebra $H(\mathcal{E})$ is a Frobenius graded algebra;
\item $\mathscr{A}$ is left Gorenstein;
\item $\mathscr{A}$ is right Gorenstein;
\item $(\mathcal{E}^*)_{\mathcal{E}}\in \mathscr{D}^c(\mathcal{E}^{op})$ and ${}_{\mathcal{E}}(\mathcal{E}^*)\in \mathscr{D}^c(\mathcal{E})$;
\item $\dim_{\mathbbm{k}}H(R\Hom_{\mathcal{E}}(\mathbbm{k},\mathcal{E}))<\infty$ and $\dim_{\mathbbm{k}}H(R\Hom_{\mathcal{E}^{op}}(\mathbbm{k},\mathcal{E}))<\infty$;
\item $\dim_{\mathbbm{k}}H(R\Hom_{\mathcal{E}}(\mathbbm{k},\mathcal{E}))=1$ and $\dim_{\mathbbm{k}}H(R\Hom_{\mathcal{E}^{op}}(\mathbbm{k},\mathcal{E}))=1$;
\item $\mathscr{D}^c(\mathcal{E})$ and $\mathscr{D}^c(\mathcal{E}^{op})$ admit Auslander-Reiten triangles;
\item $\mathscr{D}^b_{lf}(\mathscr{A})$ and $\mathscr{D}^b_{lf}(\mathscr{A}^{op})$ admit Auslander-Reiten triangles;
\end{enumerate}

\end{thm}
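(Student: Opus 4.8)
The plan is to group the eight conditions into three families and to connect them using the results already established in Sections 4 and 5. Consider first the purely $\mathcal{E}$-theoretic conditions (1), (4), (5), (6). Here I would invoke Proposition \ref{goren} via Remark \ref{replace}, which allows us to replace the truncated dual $\mathcal{E}'$ by $\mathcal{E}$ throughout. Matching the five equivalent statements of Proposition \ref{goren} against our list, its item (4) is our (1), its item (3) is our (5), its item (1) is our (6) and its item (5) is our (4). Thus a single appeal to Proposition \ref{goren} delivers $(1)\Leftrightarrow(4)\Leftrightarrow(5)\Leftrightarrow(6)$ at once.

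Next, the Gorenstein conditions (2), (3). As $\mathscr{A}$ is homologically smooth, \cite[Remark 7.6]{MW2} gives $(2)\Leftrightarrow(3)$. To splice these into the previous family I would use Proposition \ref{mainthm}: it asserts that $\mathscr{A}$ is homologically smooth and Gorenstein precisely when $H(\mathcal{E})$ is graded Frobenius. Since homological smoothness is a standing hypothesis, this reads ``$\mathscr{A}$ is Gorenstein $\Leftrightarrow$ (1)''; and because Gorenstein means simultaneously left and right Gorenstein, combining with $(2)\Leftrightarrow(3)$ yields $(1)\Leftrightarrow(2)\Leftrightarrow(3)$.

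Finally, the Auslander--Reiten conditions (7), (8). The equivalence $(4)\Leftrightarrow(7)$ is immediate from Proposition \ref{arcompact} and its version for $\mathcal{E}^{op}$: the former says $\mathscr{D}^c(\mathcal{E})$ admits Auslander--Reiten triangles exactly when $(\mathcal{E}_{\mathcal{E}})^*={}_{\mathcal{E}}(\mathcal{E}^*)$ lies in $\mathscr{D}^c(\mathcal{E})$, the latter that $\mathscr{D}^c(\mathcal{E}^{op})$ admits them exactly when $(\mathcal{E}^*)_{\mathcal{E}}\in\mathscr{D}^c(\mathcal{E}^{op})$, and these two memberships together are condition (4). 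For $(7)\Leftrightarrow(8)$ I would pass through the Koszul-duality equivalences of Section 5. Since $\mathscr{A}$ is homologically smooth, ${}_{\mathscr{A}}\mathbbm{k}$ is compact, hence is a compact generator of $\mathscr{D}^{\mathrm{tors}}(\mathscr{A})=\langle{}_{\mathscr{A}}\mathbbm{k}\rangle$; a dévissage along the connected grading then identifies the compact objects of $\mathscr{D}^{\mathrm{tors}}(\mathscr{A})$ with $\langle{}_{\mathscr{A}}\mathbbm{k}\rangle_{\mathrm{thick}}=\mathscr{D}^b_{lf}(\mathscr{A})$. The equivalence $W:\mathscr{D}^{\mathrm{tors}}(\mathscr{A})\xrightarrow{\sim}\mathscr{D}(\mathcal{E}^{op})$ carries $\mathbbm{k}$ to the compact generator $\mathcal{E}$, so it restricts to a triangle equivalence $\mathscr{D}^b_{lf}(\mathscr{A})\simeq\mathscr{D}^c(\mathcal{E}^{op})$; the same argument applied to $\mathscr{A}^{op}$, whose Koszul dual is $\mathcal{E}^{op}$, gives $\mathscr{D}^b_{lf}(\mathscr{A}^{op})\simeq\mathscr{D}^c(\mathcal{E})$. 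As Auslander--Reiten triangles are preserved by any triangle equivalence, $(7)\Leftrightarrow(8)$ follows.

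I expect this last identification to be the main obstacle: one must check carefully that $\mathscr{D}^b_{lf}(\mathscr{A})$ is exactly the subcategory of compact objects of $\mathscr{D}^{\mathrm{tors}}(\mathscr{A})$ (this is where homological smoothness, guaranteeing compactness of ${}_{\mathscr{A}}\mathbbm{k}$, is essential), that $W$ preserves compactness and sends $\mathbbm{k}$ to $\mathcal{E}$, and that the dual construction legitimately identifies the Koszul dual of $\mathscr{A}^{op}$ with $\mathcal{E}^{op}$. Once these are in place, the remaining links, namely the family (1)--(6) and the step $(4)\Leftrightarrow(7)$, amount to bookkeeping on top of Propositions \ref{goren}, \ref{mainthm} and \ref{arcompact}.
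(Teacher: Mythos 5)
Your proposal is correct and follows essentially the same route as the paper: the equivalences among (1), (4), (5), (6) via Proposition \ref{goren} and Remark \ref{replace}, the link to (2), (3) via \cite[Remark 7.6]{MW2} and Proposition \ref{mainthm}, and the Auslander--Reiten conditions via Proposition \ref{arcompact}; the only cosmetic difference is that you wire the implications into a cleaner hub around (1) and (4), whereas the paper proves a partly redundant cycle and also uses \cite[Theorem 10.8]{MW2} for $(8)\Leftrightarrow(1)$. The identification $\mathscr{D}^b_{lf}(\mathscr{A})\simeq\mathscr{D}^c(\mathcal{E}^{op})$ that you flag as the main obstacle, and sketch via compact generation of $\mathscr{D}^{\mathrm{tors}}(\mathscr{A})$ by $\mathbbm{k}$, is simply quoted in the paper from \cite[Lemma 8.2]{Jor3}, so no new argument is needed there.
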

\begin{proof}
(7)$\Rightarrow $(6).  If $\mathrm{D}^c(\mathcal{E})$ and $\mathscr{D}^c(\mathcal{E}\!^{op})$ admit
Auslander-Reiten triangles, then $(\mathcal{E}_{\mathcal{E}})^*$ and $({}_{\mathcal{E}}\mathcal{E})^*$ are compact by Proposition \ref{arcompact}.
Then there are isomorphisms of $\mathbbm{k}$-vector spaces
$$H^i(R\Hom_{\mathcal{E}}(\mathbbm{k},\mathcal{E}))\cong \left\{
 \begin{aligned}
\mathbbm{k}, i=b\\
0, i\neq b
\end{aligned}\right\}\cong H^i(R\Hom_{\mathcal{E}^{op}}(\mathbbm{k},\mathcal{E}))$$
by Remark \ref{replace} and Proposition \ref{goren}.

(6)$\Rightarrow$ (7). If $\dim_{\mathbbm{k}}H(R\Hom_{\mathcal{E}}(\mathbbm{k},\mathcal{E}))=1$ and $\dim_{\mathbbm{k}}H(R\Hom_{\mathcal{E}^{op}}(\mathbbm{k},\mathcal{E}))=1$,
then $$\dim_{\mathbbm{k}}H(R\Hom_{\mathcal{E}}(\mathbbm{k},\mathcal{E})) =
\dim_{\mathbbm{k}}H(\Hom_{\mathbbm{k}}(F_{({}_{\mathcal{E}}\mathcal{E})^*}\otimes_{\mathcal{E}}\mathbbm{k}, \mathbbm{k})) =1$$ and
$$\dim_{\mathbbm{k}}H(R\Hom_{\mathcal{E}\!^{op}}(\mathbbm{k},\mathcal{E}))= \dim_kH(\Hom_{\mathbbm{k}}(\mathbbm{k}\otimes_{\mathcal{E}} F_{(\mathcal{E}_{\mathcal{E}})^*},\mathbbm{k})) =1,$$
where $F_{({}_{\mathcal{E}}\mathcal{E})^*}$ and $F_{(\mathcal{E}_{\mathcal{E}})^*}$ are the minimal semifree
resolutions of $({}_{\mathcal{E}}\mathcal{E})^*$ and $(\mathcal{E}_{\mathcal{E}})^*$ respectively. Hence
$F_{(\mathcal{E}_{\mathcal{E}})^*}\cong \Sigma^{u}{}_{\mathcal{E}}\mathcal{E}$ and $F_{({}_{\mathcal{E}}\mathcal{E})^*} \cong
\Sigma^{u}\mathcal{E}_{\mathcal{E}}$. Clearly, we have $(\mathcal{E}_{\mathcal{E}})^*\in \mathscr{D}^c(\mathcal{E})$ and
$({}_{\mathcal{E}}\mathcal{E})^*\in \mathscr{D}^c(\mathcal{E}\!^{op})$. By Proposition
\ref{arcompact}, both $\mathscr{D}^c(\mathcal{E})$ and $\mathscr{D}^c(\mathcal{E}\!^{op})$
admit Auslander-Reiten triangles.

(7)$\Leftrightarrow$ (8). By \cite[Lemma 8.2]{Jor3},
there are quasi-inverse equivalences of categories,
\begin{align*}
\xymatrix{&\mathscr{D}^b_{lf}(\mathscr{A})\quad\quad\ar@<1ex>[r]^{\Hom_{\mathscr{A}}(K,
-)}&\quad\quad
\mathscr{D}^{c}(\mathcal{E}^{\!op})\ar@<1ex>[l]^{-\otimes_{\mathcal{E}}^L
K}}.
\end{align*}
So $\mathscr{D}^b_{lf}(\mathscr{A})$ and $\mathscr{D}^{c}(\mathcal{E}^{\!op})$ admit Auslander-Reiten triangles simultaneously. By a symmetric argument, we can show that $\mathscr{D}^b_{lf}(\mathscr{A}^{op})$ has Auslander-Reiten triangles if and only if $\mathscr{D}^{c}(\mathcal{E})$ has Auslander-Reiten triangles.

(8)$\Leftrightarrow$ (1). By \cite[Theroem 10.8]{MW2}, $\mathscr{D}^b_{lf}(\mathscr{A})$  and $\mathscr{D}^b_{lf}(\mathscr{A}^{op})$ admit  Auslander-Reiten triangles if and only if the Ext-algebra $H(\mathcal{E})$ is a Frobenius graded algebra.

(1)$\Leftrightarrow$ (2)$\Leftrightarrow$ (3) Since $\mathscr{A}$ is homologically smooth, it is left Gorenstein if and only if it is right Gorenstein by \cite[Remark 7.6]{MW2}. By Proposition \ref{mainthm}, $\mathscr{A}$ is Gorenstein if and only if the Ext-algebra $H(\mathcal{E})$ is a Frobenius graded algebra.

(1)$\Rightarrow $ (4) If $H(\mathcal{E})$ is a Frobenius graded algebra,
then there is an isomorphism of graded $H(\mathcal{E})$-modules: $\Sigma^b H(\mathcal{E})\to [H(\mathcal{E})_{H(\mathcal{E})}]^*$. This implies
that $H[(\mathcal{E}_{\mathcal{E}})^*]$ has a graded free resolution $$0\to \Sigma^{b}H(\mathcal{E})\to
H[(\mathcal{E}_{\mathcal{E}})^*]\to 0.
$$
By the construction of the Eilenberg-Moore resolution, $(\mathcal{E}_{\mathcal{E}})^*$
admits a semifree resolution: $\Sigma^{b}\mathcal{E}\stackrel{\simeq}{\to}
(\mathcal{E}_{\mathcal{E}})^*$. Obviously $[\mathcal{E}_{\mathcal{E}}]^*$ is a compact DG $\mathcal{E}$-module. Similarly,
we can prove that $({}_{\mathcal{E}}\mathcal{E})^*$ is a compact DG $\mathcal{E}\!^{op}$-module.

(4)$\Rightarrow $ (5). By Corollary \ref{compact}, $({}_{\mathcal{E}}\mathcal{E})^*$  admits a minimal semifree resolution $F$ which has a finite semi-basis. Hence \begin{align*}
\dim_{\mathbbm{k}}H(R\Hom_{\mathcal{E}}(\mathbbm{k},\mathcal{E}))&=\dim_{\mathbbm{k}} H(R\Hom_{\mathcal{E}}(\mathbbm{k},\mathcal{E}^{**}))\\
&=\dim_{\mathbbm{k}} H(\Hom_{\mathbbm{k}}(({}_{\mathcal{E}}\mathcal{E})^*\otimes_{\mathcal{E}}^L\mathbbm{k},\mathbbm{k}))\\
&=\dim_{\mathbbm{k}}\Hom_{\mathbbm{k}}(F\otimes_{\mathcal{E}}\mathbbm{k},\mathbbm{k})<\infty.
\end{align*}
One can symmetrically prove $\dim_{\mathbbm{k}}H(R\Hom_{\mathcal{E}^{op}}(\mathbbm{k},\mathcal{E}))<\infty$.

(5)$\Leftrightarrow $ (6). It can be obtained by Proposition \ref{goren} and Remark \ref{replace}.
\end{proof}

\section{Calabi-Yau DG aglebras}
In this section, we assume that $\mathscr{A}$ is a homologically smooth DG algebra. By \cite[Proposition 7.3]{MW2}, $\mathscr{D}^b_{lf}(\mathscr{A})$ is a full triangulated subcategory of $\mathscr{D}^c(\mathscr{A})$.  Set $\Omega=R\Hom_{\mathscr{A}^{e}}(\mathscr{A},\mathscr{A}^e)$. For any $M\in \mathscr{D}^b_{lf}(\mathscr{A})$ and $N\in \mathscr{D}(\mathscr{A})$, we have the following canonical isomorphisms
\begin{align*}
\xymatrix{(N\otimes M^*)\otimes_{\mathscr{A}^e}^L\Omega \ar@<1ex>[r]\ar@{=}[d] &  \Hom_k(M,N)\otimes_{\mathscr{A}^e}^L\Omega \ar@<1ex>[r] &R\Hom_{\mathscr{A}^e}(A,\Hom_k(M,N))\ar@{=}[d]\\
N\otimes_{\mathscr{A}}^L\Omega\otimes_{\mathscr{A}}^LM^* & & R\Hom_{\mathscr{A}}(M,N)\\}
\end{align*}
and
\begin{align*}
\xymatrix{R\Hom_{\mathscr{A}}(M,N)^*\ar[r] &[N\otimes_{\mathscr{A}}\Omega\otimes_{\mathscr{A}}M^*]^*\ar@{=}[d]&\\
&R\Hom_{\mathscr{A}}(N\otimes_{\mathscr{A}}^L\Omega, M^{**})\ar[r]&R\Hom_{\mathscr{A}}(N\otimes_{\mathscr{A}}^L\Omega, M)\\}
\end{align*}
in $\mathscr{D}(k)$. Then we obtain a canonical isomorphism:
\begin{align}\label{isom}
\Hom_{\mathscr{D}(\mathscr{A})}(M,N)^*\stackrel{\sim}{\to}\Hom_{\mathscr{D}(\mathscr{A})}(N\otimes_{\mathscr{A}}^L\Omega, M)
\end{align}
by taking zeroth cohomology.
 By \cite[Lemma $10.1$, Lemma $10.2$]{MW2},
$\mathscr{D}^b_{lf}(\mathscr{A})$ is a $\mathbbm{k}$-linear, Hom-finite triangulated category. By the isomorphism (\ref{isom}), the category $\mathscr{D}^b_{lf}(\mathscr{A})$ admits a left Serre functor given by $-\otimes_{\mathscr{A}}^L\Omega$.

\begin{prop}\label{suff}
Let $\mathscr{A}$ be a Calabi-Yau connected cochain DG algebra then $\mathscr{D}^b_{lf}(\mathscr{A})$ is a Calabi-Yau triangulated category and the  Ext-algebra $H(\mathcal{E})$ is a symmetric Frobenius graded algebra.
\end{prop}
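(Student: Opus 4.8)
The plan is to treat the two assertions separately. For the first, since $\mathscr{A}$ is $n$-Calabi-Yau for some $n$, Definition \ref{CY} gives $\Omega=R\Hom_{\mathscr{A}^e}(\mathscr{A},\mathscr{A}^e)\cong\Sigma^{-n}\mathscr{A}$. The discussion preceding this proposition shows that $-\otimes_{\mathscr{A}}^L\Omega$ is a left Serre functor of $\mathscr{D}^b_{lf}(\mathscr{A})$; substituting the Calabi-Yau isomorphism identifies this functor with the shift $\Sigma^{-n}$, which is an auto-equivalence. Hence $\mathscr{D}^b_{lf}(\mathscr{A})$ carries a left Serre functor that happens to be an equivalence, so by \cite{RV} it admits a genuine Serre functor $F$, necessarily the quasi-inverse $\Sigma^{n}$ (one uses the adjunction $\Hom_{\mathscr{D}^b_{lf}(\mathscr{A})}(\Sigma^{-n}Y,X)\cong\Hom_{\mathscr{D}^b_{lf}(\mathscr{A})}(Y,\Sigma^{n}X)$ to check $\Sigma^{n}$ is right Serre). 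Since $F\cong\Sigma^{n}$ is a power of the shift, $\mathscr{D}^b_{lf}(\mathscr{A})$ is a Calabi-Yau triangulated category.

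For the second assertion I would first observe that $\mathscr{A}$ is homologically smooth (by the very definition of Calabi-Yau) and Gorenstein (by \cite[Proposition 6.4]{MXYA}), so Theorem \ref{Gorenstein} already guarantees that $H(\mathcal{E})$ is a Frobenius graded algebra; it remains only to upgrade this to symmetry. Here the idea is to transport the Calabi-Yau structure across the equivalence $\Hom_{\mathscr{A}}(K,-):\mathscr{D}^b_{lf}(\mathscr{A})\xrightarrow{\sim}\mathscr{D}^c(\mathcal{E}^{op})$ of \cite[Lemma 8.2]{Jor3}. Because triangle equivalences commute with the shift and carry Serre functors to Serre functors, the first part forces the Serre functor of $\mathscr{D}^c(\mathcal{E}^{op})$ to be $\Sigma^{n}$ as well. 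On the other hand, the opposite-handed version of the computation in Proposition \ref{arcompact} identifies this Serre functor with $-\otimes_{\mathcal{E}}^L\mathcal{E}^*$. Feeding in the Frobenius isomorphism of DG bimodules $\mathcal{E}^*\cong\Sigma^{l}\mathcal{E}_{\nu}$ coming from Definition \ref{gradedfrob}(1) applied to $H(\mathcal{E})$, this functor becomes $\Sigma^{l}$ composed with the twist-by-$\nu$ functor $\Phi$. Comparing the two descriptions forces $\Sigma^{l}\circ\Phi\simeq\Sigma^{n}$, hence $\Phi\simeq\mathrm{id}$; exactly as in the proof of the earlier proposition on graded Frobenius algebras (that $\mathscr{D}^c(E)$ is Calabi-Yau if and only if $E$ is symmetric), this means the Nakayama automorphism $\nu$ of $H(\mathcal{E})$ is inner, whence $H(\mathcal{E})$ is a symmetric Frobenius graded algebra.

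The step I expect to be the main obstacle is this last one: lifting the graded Frobenius datum on the cohomology $H(\mathcal{E})$ to an honest isomorphism $\mathcal{E}^*\cong\Sigma^{l}\mathcal{E}_{\nu}$ in the derived category $\mathscr{D}(\mathcal{E}^e)$ of DG bimodules, and running the ``$\nu$ inner $\Rightarrow$ symmetric'' argument at the DG level rather than the purely graded level of that earlier proposition. In particular one must verify that $-\otimes_{\mathcal{E}}^L\mathcal{E}^*$ agrees on the nose with the Serre functor transported from $\mathscr{D}^b_{lf}(\mathscr{A})$, and must keep careful track of the Koszul signs so that the induced bilinear form genuinely satisfies $\langle a,b\rangle=(-1)^{ij}\langle b,a\rangle$ for $a\in H(\mathcal{E})^i,\,b\in H(\mathcal{E})^j$, rather than merely being non-degenerate and associative. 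Once these functor identifications and sign compatibilities are in place, everything else reduces to recognizing the relevant functors as powers of the shift, which is routine.
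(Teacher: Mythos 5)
Your argument is correct in outline, and its first half (deducing that $\mathscr{D}^b_{lf}(\mathscr{A})$ is Calabi-Yau from $\Omega\cong\Sigma^{-n}\mathscr{A}$ and the left Serre functor $-\otimes_{\mathscr{A}}^L\Omega$) coincides with the paper's proof. For the symmetry of $H(\mathcal{E})$, however, you take a genuinely different route. The paper disposes of this in one sentence: it specializes the Serre-duality isomorphism (\ref{isom}) to $M=N=\mathbbm{k}$ and asserts that the resulting non-degenerate pairing on $H(R\Hom_{\mathscr{A}}(\mathbbm{k},\mathbbm{k}))$ is associative and graded-symmetric because the Serre functor is a shift; the verification of associativity and of the sign rule $\langle a,b\rangle=(-1)^{ij}\langle b,a\rangle$ is left entirely to the reader. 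You instead transport the Calabi-Yau structure across the equivalence $\Hom_{\mathscr{A}}(K,-):\mathscr{D}^b_{lf}(\mathscr{A})\to\mathscr{D}^c(\mathcal{E}^{op})$ and run the Nakayama-twist argument, which is precisely the paper's own proof of the implication (3)$\Rightarrow$(2) in Theorem \ref{cycond}; in effect you reorganize the chain as (1)$\Rightarrow$(4)$\Leftrightarrow$(3)$\Rightarrow$(2) instead of proving (1)$\Rightarrow$(2) directly. This is legitimate and non-circular, since that implication rests only on Theorem \ref{Gorenstein}, Proposition \ref{goren} and Lemma \ref{Aetwist}, none of which invoke Proposition \ref{suff}. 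What your route buys is that every ingredient is explicitly established elsewhere in the paper (the identification of the Serre functor with $-\otimes_{\mathcal{E}}^L\mathcal{E}^*$ in Proposition \ref{arcompact}, the passage from a one-sided derived isomorphism $\mathcal{E}^*\cong\Sigma^{l}\mathcal{E}$ to a bimodule twist via the Lemma \ref{Aetwist} argument, and the inner-automorphism conclusion); the cost is exactly the step you flag, namely lifting the graded Frobenius isomorphism of Definition \ref{gradedfrob}(1) from $H(\mathcal{E})$ to an isomorphism $\Sigma^{-l}\mathcal{E}^*\cong\mathcal{E}_{\nu}$ in $\mathscr{D}(\mathcal{E}^e)$ — the paper handles this by first replacing $\mathcal{E}^*$ by its minimal semi-free resolution $\Sigma^{l}\mathcal{E}$, after which the honest Lemma \ref{Aetwist} computation applies, and you should do the same. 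With that step filled in as in the paper's Theorem \ref{cycond}, your proof is complete.
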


\begin{proof}
By \cite[Proposition 6.4]{MXYA}, the homologically smooth DG algebra $\mathscr{A}$ is Gorenstein.
  So $H(R\Hom_{\mathscr{A}}(\mathbbm{k},\mathbbm{k}))$ is a Frobenius graded algebra by Theorem \ref{mainthm}. By the definition of Calabi-Yau DG algebra, there exists some $n\in \Bbb{Z}$ such that
$\Omega\cong \Sigma^n \mathscr{A}$ in $\mathscr{D}((\mathscr{A}^{e})^{op})$. Then $\Omega \otimes_{\mathscr{A}}^L-$ is  $\Sigma^n \mathscr{A}\otimes_{\mathscr{A}}-=\Sigma^n$ when restricted to $\mathscr{D}^b_{lf}(\mathscr{A})$. Hence $\mathscr{D}^b_{lf}(\mathscr{A})$ is a Calabi-Yau triangulated category. The Serre duality condition $(\ref{isom})$ endows the Ext-algebra
$H(R\Hom_{\mathscr{A}}(\mathbbm{k},\mathbbm{k}))$ with the structure of a symmetric Frobenius graded algebra.
\end{proof}

\begin{lem}\label{Aetwist}
Let $\Omega$ be a DG $(\mathscr{A}^e)^{op}$-module such that ${}_{\mathscr{A}}M\cong \Sigma^b {}_{\mathscr{A}}\mathscr{A}$. Then there exists an automorphism $\phi\in \mathrm{Aut}_{dg}\mathscr{A}$ such that $\Sigma^{-b}M\cong \mathscr{A}(\phi)$ in $\mathscr{D}((\mathscr{A}^e)^{op})$, where $\mathscr{A}(\phi)$ is the invertible DG bimodule with generator $e$ such that $ea=\phi(a)e$ for any graded element $a\in \mathscr{A}$.
\end{lem}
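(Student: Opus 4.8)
The plan is to build the automorphism $\phi$ directly from a left-module generator of $M$ and then recognise the twisted-bimodule structure. First I would reduce to the case $b=0$ by replacing $M$ with $N:=\Sigma^{-b}M$, so that ${}_{\mathscr{A}}N\cong {}_{\mathscr{A}}\mathscr{A}$ as left DG $\mathscr{A}$-modules. Transporting the generator $1$ along this isomorphism yields a cocycle $e\in N^{0}$ with $\partial_{N}(e)=0$ such that $a\mapsto ae$ is an isomorphism ${}_{\mathscr{A}}\mathscr{A}\stackrel{\sim}{\to}{}_{\mathscr{A}}N$; in particular $N=\mathscr{A} e$ freely on the left. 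Since $N$ is a DG bimodule, for each homogeneous $a\in\mathscr{A}$ the element $ea$ lies in $N=\mathscr{A} e$ and can be written uniquely as $ea=\phi(a)e$ with $\phi(a)\in\mathscr{A}$ of the same degree as $a$. This defines a graded $\mathbbm{k}$-linear map $\phi:\mathscr{A}\to\mathscr{A}$ of degree $0$, which is the candidate automorphism.

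Next I would verify that $\phi$ is a morphism of DG algebras. Unitality is immediate from $e=e\cdot 1=\phi(1)e$. Multiplicativity follows from associativity of the two commuting actions: $\phi(ab)e=e(ab)=(ea)b=(\phi(a)e)b=\phi(a)(eb)=\phi(a)\phi(b)e$, whence $\phi(ab)=\phi(a)\phi(b)$ by left freeness. Compatibility with the differential uses the Leibniz rule together with $\partial_{N}(e)=0$: on the one hand $\partial_{N}(ea)=\partial_{N}(e)a+(-1)^{|e|}e\,\partial_{\mathscr{A}}(a)=e\,\partial_{\mathscr{A}}(a)=\phi(\partial_{\mathscr{A}}a)e$, and on the other $\partial_{N}(\phi(a)e)=\partial_{\mathscr{A}}(\phi(a))e+(-1)^{|\phi(a)|}\phi(a)\partial_{N}(e)=\partial_{\mathscr{A}}(\phi(a))e$; comparing the two gives $\phi\partial_{\mathscr{A}}=\partial_{\mathscr{A}}\phi$. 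Thus $\phi\in\mathrm{End}_{dg}\mathscr{A}$, and by construction the map $\mathscr{A}\to N$, $a\mapsto ae$, is an isomorphism of DG bimodules $\mathscr{A}(\phi)\stackrel{\sim}{\to}N$, that is, $\Sigma^{-b}M\cong\mathscr{A}(\phi)$ in $\mathscr{D}((\mathscr{A}^{e})^{op})$, once we know that $\phi$ is invertible.

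The main obstacle is exactly the passage from endomorphism to automorphism: a left-free-of-rank-one bimodule structure alone is not sufficient. For example, over $\mathscr{A}=\mathbbm{k}[x]$ with $|x|=1$ and zero differential, the augmentation twist is left-free of rank one but its twisting endomorphism annihilates $x$. The extra input is the invertibility of $M$ as a DG $\mathscr{A}$-bimodule, which holds in the intended application where $M=\Omega$ is the inverse dualizing bimodule of a homologically smooth Gorenstein DG algebra; equivalently, one also has a right-module generator, $M_{\mathscr{A}}\cong\Sigma^{b}\mathscr{A}_{\mathscr{A}}$. Concretely, I would pick a degree-$0$ cocycle $f$ with $N=f\mathscr{A}$ freely on the right and write $f=a_{0}e$; comparing degrees forces $a_{0}\in\mathscr{A}^{0}=\mathbbm{k}^{\times}$, so that $N=f\mathscr{A}=a_{0}\phi(\mathscr{A})e$. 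Comparing this with $N=\mathscr{A} e$ and using freeness on the left gives $a_{0}\phi(\mathscr{A})=\mathscr{A}$, hence $\phi(\mathscr{A})=\mathscr{A}$ and $\phi$ is surjective. Finally, since $\mathscr{A}$ is connected, positively graded and locally finite, the surjective degree-preserving map $\phi$ restricts to a surjective endomorphism of each finite-dimensional component $\mathscr{A}^{i}$ and is therefore bijective there; consequently $\phi\in\mathrm{Aut}_{dg}\mathscr{A}$, and the bimodule isomorphism of the second paragraph is established.
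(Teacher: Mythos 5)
Your construction of $\phi$ and the verification that it is a morphism of DG algebras coincide with the paper's own proof: the paper likewise picks the generator $e$ of $\Sigma^{-b}M$, defines $\phi$ by $ea=\phi(a)e$ using left freeness, and checks compatibility with the differential by exactly the Leibniz computation you give. Where you diverge is the passage from endomorphism to automorphism, and there you have put your finger on a genuine gap in the paper rather than introduced one. The paper disposes of this step with the sentence ``it is straightforward to check that $\phi$ is an automorphism of the graded algebra $\mathscr{A}^{\#}$'', but left freeness of rank one only yields a graded algebra endomorphism; your example of ${}_{\mathscr{A}}\mathscr{A}_{\varepsilon}$ over $\mathscr{A}=\mathbbm{k}[x]$ (right action through the augmentation) satisfies the stated hypothesis ${}_{\mathscr{A}}M\cong\Sigma^{b}{}_{\mathscr{A}}\mathscr{A}$ and defeats the conclusion, so the lemma as literally stated needs an extra hypothesis. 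Your repair --- importing the right-sided isomorphism $M_{\mathscr{A}}\cong\Sigma^{b}\mathscr{A}_{\mathscr{A}}$, equivalently the invertibility of $M$ --- is the right one, and it is harmless in context because the paper establishes both one-sided isomorphisms for $\Omega$ before invoking the lemma in Proposition \ref{tocy} (and similarly for $\mathcal{E}^{*}$ in Theorem \ref{cycond}). Two small refinements: the last step need not appeal to local finiteness of $\mathscr{A}^{\#}$, since writing $af=f\psi(a)$ for the right generator $f=a_{0}e$ with $a_{0}\in\mathbbm{k}^{\times}$ and using freeness on both sides gives $\psi\circ\phi=\phi\circ\psi=\mathrm{id}$ directly; and for completeness the statement of the lemma should be amended to include the right-module (or invertibility) hypothesis that your argument, and any correct argument, must use.
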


\begin{proof}
By the assumption, we have $\Sigma^{-b}{}_{\mathscr{A}}M\cong {}_{\mathscr{A}}\mathcal{A}$. Let $e$ be the generator of
$\Sigma^{-b}{}_{\mathcal{A}}M$. Then for any graded element $a\in \mathcal{A}$, there exists unique $l_a\in \mathscr{A}$  such that
$ea=l_ae$.
 This induces a map $\phi:\mathscr{A}\to \mathscr{A}$ such that $\phi(a)=l_a$.
  It is straightforward to check that $\phi$ is an automorphisms of the graded algebra $\mathscr{A}^{\#}$.
Furthermore, $\phi$ is a chain map since
\begin{align*}
\partial_{\mathscr{A}}(\phi(a))e&=\partial_{\mathscr{A}}(l_a)e\\
&=\partial_{\Sigma^{-b}M}(l_ae)\\
&=\partial_{\Sigma^{-b}M}(ea)\\
&=e\cdot \partial_{\mathscr{A}}(a)\\
&=l_{\partial_{\mathscr{A}}(a)}e =\phi(\partial_{\mathscr{A}}(a))e,
\end{align*}
for any $a\in \mathscr{A}$. So $\phi\in \mathrm{Aut}_{dg}\mathscr{A}$ and $\Sigma^{-b}M\cong \mathscr{A}_{\phi}$ as a DG $(\mathscr{A}^e)^{op}$-module.

\end{proof}

\begin{prop}\label{tocy}
Let $\mathscr{A}$ be a homologically smooth and Gorenstein DG algebra. If the Ext-algebra $H(\mathcal{E})$ is a symmetric Frobenius graded algebra, then $\mathscr{A}$ is Calabi-Yau.
\end{prop}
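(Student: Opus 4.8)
The plan is to verify the defining condition of Definition \ref{CY} directly: writing $\Omega := R\Hom_{\mathscr{A}^e}(\mathscr{A},\mathscr{A}^e)$, I want to produce an integer $n$ and an isomorphism $\Omega\cong\Sigma^{-n}\mathscr{A}$ in $\mathscr{D}((\mathscr{A}^e)^{op})$ of DG bimodules. Homological smoothness is already assumed, so this is exactly what Calabi-Yauness requires.

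First I would pin down the one-sided shape of $\Omega$. Since $\mathscr{A}$ is homologically smooth, ${}_{\mathscr{A}^e}\mathscr{A}$ is compact, hence $\Omega\in\mathscr{D}^c((\mathscr{A}^e)^{op})$ and in particular ${}_{\mathscr{A}}\Omega$ is a compact left DG $\mathscr{A}$-module. Specializing the canonical isomorphisms preceding (\ref{isom}) to $M=\mathbbm{k}$ and $N=\mathscr{A}$ gives $\Omega\otimes_{\mathscr{A}}^L\mathbbm{k}\simeq R\Hom_{\mathscr{A}}(\mathbbm{k},\mathscr{A})$, so the Gorenstein hypothesis $\dim_{\mathbbm{k}}H(R\Hom_{\mathscr{A}}(\mathbbm{k},\mathscr{A}))=1$ forces $H(\Omega\otimes_{\mathscr{A}}^L\mathbbm{k})$ to be one-dimensional, concentrated in a single degree $-n$. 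By minimality of the semi-free resolution of ${}_{\mathscr{A}}\Omega$, the number of basis elements in degree $i$ equals $\dim_{\mathbbm{k}}H^i(\Omega\otimes_{\mathscr{A}}^L\mathbbm{k})$, so the resolution has a single generator and ${}_{\mathscr{A}}\Omega\cong\Sigma^{-n}{}_{\mathscr{A}}\mathscr{A}$. Lemma \ref{Aetwist} then applies to the DG bimodule $\Omega$, yielding $\phi\in\mathrm{Aut}_{dg}\mathscr{A}$ with $\Omega\cong\Sigma^{-n}\mathscr{A}(\phi)$ in $\mathscr{D}((\mathscr{A}^e)^{op})$. Consequently the left Serre functor of $\mathscr{D}^b_{lf}(\mathscr{A})$ furnished by (\ref{isom}), namely $-\otimes_{\mathscr{A}}^L\Omega$, is $\Sigma^{-n}$ composed with the twist autoequivalence $\Phi_\phi:=-\otimes_{\mathscr{A}}^L\mathscr{A}(\phi)$. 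It therefore remains only to show $\mathscr{A}(\phi)\cong\mathscr{A}$ as DG bimodules, for then $\Omega\cong\Sigma^{-n}\mathscr{A}$ and $\mathscr{A}$ is $n$-Calabi-Yau.

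This last step is where the symmetric hypothesis enters, and it is the main obstacle. The automorphism $\phi$ induces a graded automorphism of the Ext-algebra $H(\mathcal{E})=H(R\Hom_{\mathscr{A}}(\mathbbm{k},\mathbbm{k}))$, which is Frobenius graded by Proposition \ref{mainthm}; tracing the Serre duality (\ref{isom}) on the generator $\mathbbm{k}$ identifies this induced automorphism with the Nakayama automorphism $\nu$ of $H(\mathcal{E})$. The assumption that $H(\mathcal{E})$ is symmetric means precisely that $\nu$ is inner, i.e. $\mathcal{E}^*\cong\Sigma^l\mathcal{E}$ as DG $\mathcal{E}$-bimodules rather than merely as twisted bimodules, and the argument of the proposition characterizing when $\mathscr{D}^c(E)$ is Calabi-Yau then shows $\Phi_\phi\cong\mathrm{id}$ as an autoequivalence of $\mathscr{D}^b_{lf}(\mathscr{A})$.

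The genuinely delicate point, which I expect to be the crux, is upgrading this functor-level (equivalently cohomology-level) triviality of the twist to an honest isomorphism $\mathscr{A}(\phi)\cong\mathscr{A}$ of DG bimodules: a twist can act trivially on the finite-length subcategory $\mathscr{D}^b_{lf}(\mathscr{A})$ without the twisting bimodule being trivial, and since $\mathscr{A}\notin\mathscr{D}^b_{lf}(\mathscr{A})$ one cannot simply evaluate $\Phi_\phi$ at $\mathscr{A}$. I would resolve this by transporting the symmetric isomorphism $\mathcal{E}^*\cong\Sigma^l\mathcal{E}$ across the Koszul-dual equivalences through the bimodules ${}_{\mathscr{A}}Q_{\mathscr{A}}$ and ${}_{\mathscr{A}}D_{\mathscr{A}}=Q^{\vee}$, obtaining first an untwisted isomorphism of $H(\mathscr{A})$-bimodules $H(\Omega)\cong\Sigma^{-n}H(\mathscr{A})$, and then lifting it to a DG bimodule quasi-isomorphism $\Omega\cong\Sigma^{-n}\mathscr{A}$. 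Here finiteness of $\dim_{\mathbbm{k}}H(\mathcal{E})$ together with the uniqueness of minimal semi-free resolutions should confine the lifting obstructions to a single cohomological degree, so that triviality of the induced twist on $H(\mathcal{E})$ propagates to the bimodule level. Carrying out this propagation cleanly is the heart of the argument.
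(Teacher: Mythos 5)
Your reduction to the twist is sound and is essentially the paper's endgame: Gorensteinness plus compactness of $\Omega$ gives the one-sided freeness ${}_{\mathscr{A}}\Omega\cong\Sigma^{-n}{}_{\mathscr{A}}\mathscr{A}$, and Lemma \ref{Aetwist} converts this into $\Omega\cong\Sigma^{-n}\mathscr{A}(\phi)$ for an honest $\phi\in\mathrm{Aut}_{dg}\mathscr{A}$. The gap is that the step you yourself call ``the heart of the argument'' --- killing the twist --- is not carried out, and the strategy you sketch for it would not work as stated. You propose to produce an untwisted isomorphism of $H(\mathscr{A})$-bimodules $H(\Omega)\cong\Sigma^{-n}H(\mathscr{A})$ and then lift it to a DG bimodule quasi-isomorphism, claiming that finiteness of $\dim_{\mathbbm{k}}H(\mathcal{E})$ confines the lifting obstructions to a single degree. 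But that lift must be performed over $H(\mathscr{A})^e$, not over $H(\mathcal{E})$, and $H(\mathscr{A})$ is cohomologically unbounded for every nontrivial homologically smooth $\mathscr{A}$; a bimodule resolution of $H(\Omega)$ has generators in infinitely many degrees, so nothing confines the obstructions, and a cohomology-level bimodule isomorphism need not lift (this is a formality problem). Likewise your assertion that ``symmetric means precisely that $\nu$ is inner, i.e.\ $\mathcal{E}^*\cong\Sigma^l\mathcal{E}$ as DG $\mathcal{E}$-bimodules'' conflates a graded statement about $H(\mathcal{E})$ with a derived statement about $\mathcal{E}$; that implication is itself a lifting step requiring proof.

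The paper performs the lift where it is actually feasible: on the Koszul-dual side, where $H(\mathcal{E})$ is finite-dimensional. From $H(\mathcal{E}^*)\cong\Sigma^bH(\mathcal{E})$ as graded $H(\mathcal{E})^e$-modules it builds a single Eilenberg--Moore resolution $F$ admitting quasi-isomorphisms to both $\mathcal{E}^*$ and $\Sigma^b\mathcal{E}$, so $\mathcal{E}^*\cong\Sigma^b\mathcal{E}$ with no twist; hence the Serre functor of $\mathscr{D}^c(\mathcal{E})$ is exactly $\Sigma^b$, hence via $\Hom_{\mathscr{A}}(K,-)$ the Serre functor of $\mathscr{D}^b_{lf}(\mathscr{A})$ is exactly $\Sigma^b$, hence $\Omega\otimes^L_{\mathscr{A}}-=\Sigma^b=\Sigma^b\circ\Phi$ where $\Phi$ is the twist by $\phi$. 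The worry you raise about a twist acting trivially on $\mathscr{D}^b_{lf}(\mathscr{A})$ without the bimodule being trivial is then dispatched not by evaluating at $\mathscr{A}$ but by the fact that $\Phi$ is induced by a genuine automorphism of a \emph{connected} cochain DG algebra, whose only units are scalars, so $\Phi\cong\mathrm{id}$ forces $\phi=\mathrm{id}_{\mathscr{A}}$ and $\Omega\cong\Sigma^b\mathscr{A}$ in $\mathscr{D}((\mathscr{A}^e)^{op})$. To repair your outline, replace the $H(\mathscr{A})$-bimodule lifting step by this Eilenberg--Moore argument over $\mathcal{E}$.
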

\begin{proof}
The graded $H(\mathcal{E})^e$-module $H(\mathcal{E})^*\cong \Sigma^b H(\mathcal{E})$ since
$H(\mathcal{E})$ is a symmetric Frobenius graded algebra. On the other hand, the graded $H(\mathcal{E})^e$-module $H(\mathcal{E}^*)\cong H(\mathcal{E})^*$ since the functor $\Hom_{\mathbbm{k}}(-,\mathbbm{k})=(\quad )^*$ is exact. So we have an isomorphism of the graded $H(\mathcal{E})^e$-module
$\sigma: H(\mathcal{E}^*)\stackrel{\cong}{\to} H(\Sigma^b \mathcal{E})$.
Let \begin{align}\label{freeres1}\cdots \stackrel{d_{n+1}}{\to} F_n\stackrel{d_n}{\to} F_{n-1}\stackrel{d_{n-1}}{\to} \cdots \stackrel{d_1}{\to} F_0\stackrel{\varepsilon}{\to} H(\mathcal{E}^*)\to 0
\end{align}
be a graded free resolution of the $H(\mathcal{E})^e$-module $H(\mathcal{E}^*)$.
We can construct an Eilenberg-Moore resolution $\tau: F \stackrel{\simeq}{\to}\mathcal{E}^*$ from (\ref{freeres1}). One sees that
\begin{align}\label{freeres2}\cdots \stackrel{d_{n+1}}{\to} F_n\stackrel{d_n}{\to} F_{n-1}\stackrel{d_{n-1}}{\to} \cdots \stackrel{d_1}{\to} F_0\stackrel{\sigma\circ \varepsilon}{\to} H(\Sigma^b \mathcal{E})\to 0
\end{align}
is a graded free resolution of the $H(\mathcal{E})^e$-module $H(\Sigma^b\mathcal{E})$. From (\ref{freeres2}), one can also get a quasi-isomorphism $\eta: F\stackrel{\simeq}{\to} \Sigma^b\mathcal{E}$. Then $ \mathcal{E}^*\cong \Sigma^b\mathcal{E}$ in $\mathscr{D}(\mathcal{E})$, since we have
\begin{align*}
\xymatrix{
\mathcal{E}^* &F\ar[l]_{\simeq}\ar[r]^{\simeq} & \Sigma^b\mathcal{E}}.
\end{align*}
Note that there is a natural equivalence
\begin{align}\label{serre}
\Hom_{\mathscr{D}(\mathcal{E})}(M,-)^*\simeq \Hom_{\mathscr{D}(\mathcal{E})}(-,\mathcal{E}^*\otimes_{\mathcal{E}}^LM)
\end{align}
for any $M\in \mathscr{D}^c(\mathcal{E})$. By Theorem \ref{Gorenstein}, $\mathscr{D}^c(\mathcal{E})$ has AR Triangles. Then (\ref{serre}) implies that $\mathcal{E}^*\otimes_{\mathcal{E}}^L-\cong \Sigma^b\mathcal{E}\otimes_{\mathcal{E}}^L-\cong \Sigma^b$ is a Serre functor of $\mathscr{D}^c(\mathcal{E})$.  Therefore $\mathscr{D}^c(\mathcal{E})$ is a Calabi-Yau triangulated category. By a symmetric argument, we can show that $\mathscr{D}^c(\mathcal{E}^{op})$ is Calabi-Yau. By \cite[Lemma 8.2]{Jor3},
there are quasi-inverse equivalences of categories,
\begin{align*}
\xymatrix{&\mathscr{D}^b_{lf}(\mathscr{A})\quad\quad\ar@<1ex>[r]^{\Hom_{\mathscr{A}}(K,
-)}&\quad\quad
\mathscr{D}^{c}(\mathcal{E}^{\!op})\ar@<1ex>[l]^{-\otimes_{\mathcal{E}}^L
K}}.
\end{align*}
Hence $\mathscr{D}^b_{lf}(\mathscr{A})$ is Calabi-Yau with a Serre functor $\Sigma^b$.  Since $\mathscr{A}$ is homologically smooth and Gorenstein, the Ext-algebra $H(\mathcal{E})$ is a Frobenius graded algebra by Theorem \ref{Gorenstein}. It follows from \cite[Theorem $10.8$]{MW2} that $\mathscr{D}^b_{lf}(\mathscr{A})$ and $\mathscr{D}^b_{lf}(\mathscr{A}^{op})$ admit Auslander-Reiten triangles.
By (\ref{isom}),
 we have
a canonical isomorphism:
\begin{align*}
\Hom_{\mathscr{D}(\mathscr{A})}(M,X)^*\stackrel{\sim}{\to}\Hom_{\mathscr{D}(\mathscr{A})}(X\otimes_{\mathscr{A}}^L\Omega, M),
\end{align*}
for any objects $M, X\in \mathscr{D}^b_{lf}(\mathscr{A})$. These imply that $\mathscr{D}^b_{lf}(\mathscr{A})$ admits a Serre functor given by $-\otimes_{\mathscr{A}}^L\Omega$. Hence $-\otimes_{\mathscr{A}}^L\Omega = \Sigma^b$. By a symmetric argument, we have $\Omega\otimes_{\mathscr{A}}^L- = \Sigma^b$. We get ${}_{\mathscr{A}}\Omega\cong \Sigma^b{}_{\mathscr{A}}\mathscr{A}$ since $-\otimes_{\mathscr{A}}^L\Omega = \Sigma^b$. By Lemma \ref{Aetwist}, there exists $\phi\in \mathrm{Aut}_{dg}\mathscr{A}$ such that
$\Sigma^{-b}\Omega \cong \mathscr{A}(\phi)$ in $\mathscr{D}((\mathscr{A}^e)^{op})$. Let $\Phi: \mathscr{D}^b_{lf}(\mathscr{A})\to \mathscr{D}^b_{lf}(\mathscr{A})$ be the functor given by the operation $M\mapsto {}^{\phi}M$, which twists the action on a DG $\mathscr{A}$-module by the automorphism $\phi$. Then we have
$$\Omega\otimes_{\mathscr{A}}^L-= \Sigma^b \mathscr{A}(\phi)\otimes_{\mathscr{A}}-=\Sigma^b\circ \Phi.$$
On the other hand, $\Omega\otimes_{\mathscr{A}}^L- =\Sigma^b$.  So $\Phi\cong \mathrm{id}_{\mathscr{D}^b_{lf}(\mathscr{A})}$ and $\phi=\mathrm{id}_{\mathscr{A}}$. Hence $\Omega\cong \Sigma^b \mathscr{A}$ in $\mathscr{D}((\mathscr{A}^e)^{op})$.

\end{proof}

\begin{thm}\label{cycond}
Let $\mathcal{E}$ be the Koszul dual DG algebra of a homologically smooth and Gorenstein DG algebra
$\mathscr{A}$. Then the following conditions are equivalent.
\begin{enumerate}
\item $\mathscr{A}$ is Calabi-Yau;
\item The Ext-algebra $H(\mathcal{E})$ is a symmetric Frobenius graded algebra;
\item The triangulated categories $\mathscr{D}^c(\mathcal{E})$ and $\mathscr{D}^c(\mathcal{E}^{op})$ are Calabi-Yau;
\item The triangulated categories $\mathscr{D}^b_{lf}(\mathscr{A})$ and $\mathscr{D}^b_{lf}(\mathscr{A}^{op})$ are Calabi-Yau.
\end{enumerate}
\end{thm}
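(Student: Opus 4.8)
The plan is to prove the four conditions equivalent by closing the cycle $(1)\Rightarrow(2)\Rightarrow(3)\Leftrightarrow(4)\Rightarrow(1)$, exploiting that most of the hard analytic work is already packaged in Propositions \ref{suff} and \ref{tocy}. First I would dispatch $(1)\Leftrightarrow(2)$ directly: Proposition \ref{suff} gives $(1)\Rightarrow(2)$, while Proposition \ref{tocy} gives $(2)\Rightarrow(1)$, both being available because $\mathscr{A}$ is assumed homologically smooth and Gorenstein. This already reduces the theorem to relating the categorical Calabi-Yau conditions $(3)$ and $(4)$ to the symmetric Frobenius condition $(2)$.

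Next I would establish $(2)\Rightarrow(3)$ by reusing the opening of the proof of Proposition \ref{tocy}: from $H(\mathcal{E})$ symmetric Frobenius one constructs, via an Eilenberg--Moore resolution, a quasi-isomorphism exhibiting $\mathcal{E}^{*}\cong\Sigma^{b}\mathcal{E}$ in $\mathscr{D}(\mathcal{E})$. Combined with the natural equivalence $\Hom_{\mathscr{D}(\mathcal{E})}(M,-)^{*}\simeq\Hom_{\mathscr{D}(\mathcal{E})}(-,\mathcal{E}^{*}\otimes_{\mathcal{E}}^{L}M)$ and the existence of Auslander--Reiten triangles in $\mathscr{D}^{c}(\mathcal{E})$ guaranteed by Theorem \ref{Gorenstein}, this shows $\mathcal{E}^{*}\otimes_{\mathcal{E}}^{L}-\cong\Sigma^{b}$ is a Serre functor, so $\mathscr{D}^{c}(\mathcal{E})$ is Calabi--Yau; the symmetric argument treats $\mathscr{D}^{c}(\mathcal{E}^{op})$. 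For $(3)\Leftrightarrow(4)$ I would invoke the quasi-inverse equivalences $\mathscr{D}^{b}_{lf}(\mathscr{A})\simeq\mathscr{D}^{c}(\mathcal{E}^{op})$ and $\mathscr{D}^{b}_{lf}(\mathscr{A}^{op})\simeq\mathscr{D}^{c}(\mathcal{E})$ of \cite[Lemma 8.2]{Jor3}. Since a triangle equivalence carries the shift to the shift and a Serre functor to a Serre functor, it transports a defining natural isomorphism $F\cong\Sigma^{d}$ of a Calabi--Yau structure to the other side, so each pair is Calabi--Yau simultaneously.

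The remaining and genuinely new step is $(4)\Rightarrow(1)$, which I expect to be the main obstacle. Assuming $(4)$, I would use that $\mathscr{A}$ homologically smooth and Gorenstein makes $\mathscr{D}^{b}_{lf}(\mathscr{A})$ a $\Hom$-finite triangulated category whose left Serre functor is $-\otimes_{\mathscr{A}}^{L}\Omega$ with $\Omega=R\Hom_{\mathscr{A}^{e}}(\mathscr{A},\mathscr{A}^{e})$, via the canonical isomorphism (\ref{isom}). The Calabi--Yau hypothesis forces $-\otimes_{\mathscr{A}}^{L}\Omega\cong\Sigma^{b}$ on $\mathscr{D}^{b}_{lf}(\mathscr{A})$, and symmetrically, from $\mathscr{D}^{b}_{lf}(\mathscr{A}^{op})$ Calabi--Yau, $\Omega\otimes_{\mathscr{A}}^{L}-\cong\Sigma^{b}$. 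In particular ${}_{\mathscr{A}}\Omega\cong\Sigma^{b}{}_{\mathscr{A}}\mathscr{A}$, so Lemma \ref{Aetwist} produces $\phi\in\mathrm{Aut}_{dg}\mathscr{A}$ with $\Sigma^{-b}\Omega\cong\mathscr{A}(\phi)$ in $\mathscr{D}((\mathscr{A}^{e})^{op})$.

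The difficulty is exactly to show that the twist $\phi$ is trivial: a one-sided isomorphism $\Omega\cong\Sigma^{b}\mathscr{A}$ is strictly weaker than the bimodule isomorphism demanded by the Calabi--Yau property, and the discrepancy is precisely a Nakayama-type automorphism. Writing $\Phi$ for the twist functor $M\mapsto{}^{\phi}M$, one has $\Omega\otimes_{\mathscr{A}}^{L}-=\Sigma^{b}\circ\Phi$; comparing this with $\Omega\otimes_{\mathscr{A}}^{L}-=\Sigma^{b}$ forces $\Phi\cong\mathrm{id}_{\mathscr{D}^{b}_{lf}(\mathscr{A})}$ and hence $\phi=\mathrm{id}_{\mathscr{A}}$. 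Therefore $\Omega\cong\Sigma^{b}\mathscr{A}$ in $\mathscr{D}((\mathscr{A}^{e})^{op})$, which is the Calabi--Yau condition for $\mathscr{A}$. This tail argument, identical to the end of the proof of Proposition \ref{tocy}, is where the real content resides, and it is essential that \emph{both} $\mathscr{D}^{b}_{lf}(\mathscr{A})$ and $\mathscr{D}^{b}_{lf}(\mathscr{A}^{op})$ be Calabi--Yau, since one needs the shift identity on both sides to annihilate the twist.
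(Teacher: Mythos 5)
Your proposal is correct, but it closes the cycle of implications differently from the paper. The paper proves $(1)\Rightarrow(2)$ and $(1)\Rightarrow(4)$ by Proposition \ref{suff}, $(2)\Rightarrow(1)$ by Proposition \ref{tocy}, $(3)\Leftrightarrow(4)$ by the equivalences $\mathscr{D}^b_{lf}(\mathscr{A})\simeq\mathscr{D}^c(\mathcal{E}^{op})$, and then does its genuinely new work in $(3)\Rightarrow(2)$: it runs the twist-killing argument on the Koszul dual side, lifting the Nakayama automorphism $\sigma$ of the Frobenius algebra $H(\mathcal{E})$ to $\phi\in\mathrm{Aut}(\mathcal{E})$ with $\Sigma^{-b}\mathcal{E}^*\cong{}_{\mathcal{E}}\mathcal{E}_{\phi}$ in $\mathscr{D}(\mathcal{E}^e)$ and using the Calabi--Yau property of $\mathscr{D}^c(\mathcal{E})$ to force $\phi$ to be inner, whence $H(\mathcal{E})$ is symmetric. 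You instead place the new work in $(4)\Rightarrow(1)$, running the same mechanism on the $\mathscr{A}$-side with $\Omega=R\Hom_{\mathscr{A}^e}(\mathscr{A},\mathscr{A}^e)$ and Lemma \ref{Aetwist}; this is essentially the tail of the paper's proof of Proposition \ref{tocy}, reused with the shift exponent supplied by the abstract Calabi--Yau hypothesis rather than by the Frobenius degree $b$ (uniqueness of Serre functors makes this substitution harmless, and, as you rightly stress, both $\mathscr{D}^b_{lf}(\mathscr{A})$ and $\mathscr{D}^b_{lf}(\mathscr{A}^{op})$ are needed: one side produces the one-sided isomorphism ${}_{\mathscr{A}}\Omega\cong\Sigma^b{}_{\mathscr{A}}\mathscr{A}$ and hence the twist $\phi$, the other side annihilates it). The trade-off is cosmetic rather than substantive: the paper's route keeps the bimodule bookkeeping on the finite-dimensional algebra $\mathcal{E}$, where Proposition \ref{goren} and Theorem \ref{Gorenstein} already supply the needed compactness statements, while your route works directly with the inverse dualizing complex and makes the derivation of the bimodule isomorphism $\Omega\cong\Sigma^b\mathscr{A}$ the visible centerpiece; both rely on the same two propositions, the same Serre-functor identities, and the same automorphism-lifting lemma.
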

\begin{proof}

(1)$\Rightarrow$  (2) and (1)$\Rightarrow$ (4) can be obtained by Proposition \ref{suff}.

(2)$\Rightarrow$ (1). It is obtained by Proposition \ref{tocy}.

(3)$\Leftrightarrow$ (4)
By \cite[Lemma 8.2]{Jor3},
there are quasi-inverse equivalences of categories,
\begin{align*}
\xymatrix{&\mathscr{D}^b_{lf}(\mathscr{A})\quad\quad\ar@<1ex>[r]^{\Hom_{\mathscr{A}}(K,
-)}&\quad\quad
\mathscr{D}^{c}(\mathcal{E}^{\!op})\ar@<1ex>[l]^{-\otimes_{\mathcal{E}}^L
K}}.
\end{align*}
Symmetrically, $\mathscr{D}^c(\mathcal{E})$ and $\mathscr{D}^b_{lf}(\mathscr{A}^{op})$ are also equivalent.

(3)$\Rightarrow $ (2). Since $\mathscr{A}$ is homologically smooth and Gorenstein, its Ext-algebra $H(\mathcal{E})$ is a Frobenius graded algebra by Theorem \ref{Gorenstein}. By definition, there is an automorphism $\sigma$ of the graded algebra $H(\mathcal{E})$ such that $[H(\mathcal{E})]^*\cong \Sigma^b H(\mathcal{E})_{\sigma}$ as graded $H(\mathcal{E})^e$-modules.
Since $\mathscr{D}^c(\mathcal{E})$ and $\mathscr{D}^c(\mathcal{E}^{op})$ are Calabi-Yau, they admit Serre functors and hence Auslander-Reiten triangles. By Proposition \ref{Gorenstein}, $({}_{\mathcal{E}}\mathcal{E})^*\cong \Sigma^b \mathcal{E}_{\mathcal{E}}$ in $\mathscr{D}(\mathcal{E}^{op})$ and $(\mathcal{E}_{\mathcal{E}})^*\cong \Sigma^b {}_{\mathcal{E}}\mathcal{E}$ in $\mathscr{D}(\mathcal{E})$. By a similar proof as that of Lemma \ref{Aetwist}, we get an automorphism $\phi$ of  $\mathcal{E}$ such that $\Sigma^{-b}\mathcal{E}^*\cong {}_{\mathcal{E}}\mathcal{E}_{\phi}$ in $\mathscr{D}(\mathcal{E}^e)$. One sees easily that $H(\phi)=\sigma$. Let $Q$ be an indecomposable object of $\mathscr{D}^c(\mathcal{E})$. Then there is a natural equivalence
\begin{align}\label{serrefunctor}
\Hom_{\mathscr{D}(\mathcal{E})}(Q,-)^*\simeq \Hom_{\mathscr{D}(\mathcal{E})}(-,\mathcal{E}^*\otimes_{\mathcal{E}}^LQ).
\end{align}
It indicates that the functor $\mathcal{E}^*\otimes_{\mathcal{E}}^L-$ is a Serre functor of $\mathscr{D}^c(\mathcal{E})$. Since $\mathscr{D}^c(\mathcal{E})$ is a Calabi-Yau triangulated category, we have $$\Sigma^b=\mathcal{E}^*\otimes_{\mathcal{E}}^L-=\Sigma^{b}{}_{\mathcal{E}}\mathcal{E}_{\phi}\otimes_{\mathcal{E}}-\simeq \Sigma^b\circ \Phi,$$
where
$\Phi: \mathscr{D}^c(\mathcal{E})\to \mathscr{D}^c(\mathcal{E})$ is the functor given by the operation $M\mapsto {}_{\phi^{-1}}M$.
It implies that $\phi$ is an inner automorphism. Then $\Sigma^{-b}\mathcal{E}^*\cong \mathcal{E}$ in $\mathscr{D}(\mathcal{E}^e)$. So
$\Sigma^{-b}H(\mathcal{E})^*\cong H(\mathcal{E})$ as a graded $H(\mathcal{E})^e$-module.
\end{proof}
By Theorem \ref{cycond} and Proposition \ref{mainthm}, we can easily get the following corollary.
\begin{cor}
A connected cochain DG algebra $\mathscr{A}$ is Calabi-Yau if and only if its $\mathrm{Ext}$-algebra $H(\mathcal{E})$ is a symmetric Frobenius graded algebra.
\end{cor}

\section{applications on some examples}\label{appl}
In this section, we give some applications of Theorem \ref{cycond} and Theorem \ref{Gorenstein} to detect Calabi-Yauness and Gorensteinness of
a homologically smooth DG algebra.

In general, the cohomology graded algebra $H(\mathscr{A})$ of a cochain DG algebra $\mathscr{A}$ usually contains some homological information.
One sees that $\mathscr{A}$ is a Calabi-Yau DG algebra if the trivial DG algebra $(H(\mathscr{A}),0)$ is Calabi-Yau by \cite{MYY}, and it is proved in \cite{MH} that
 a connected cochain DG algebra $\mathscr{A}$ is a Koszul Calabi-Yau DG algebra if $H(\mathscr{A})$ belongs to one of the following cases:
\begin{align*}
& (a) H(\mathscr{A})\cong \mathbbm{k};  \quad \quad (b) H(\mathscr{A})= \mathbbm{k}[\lceil z\rceil], z\in \mathrm{ker}(\partial_{\mathscr{A}}^1); \\
& (c) H(\mathscr{A})= \frac{\mathbbm{k}\langle \lceil z_1\rceil, \lceil z_2\rceil\rangle}{(\lceil z_1\rceil\lceil z_2\rceil +\lceil z_2\rceil \lceil z_1\rceil)}, z_1,z_2\in \mathrm{ker}(\partial_{\mathscr{A}}^1).
\end{align*}
By \cite[Therorem B]{MH}, a connected cochain DG algebra $\mathscr{A}$ is not Calabi-Yau  but Koszul, homologically smooth and Gorenstein, if $$H(\mathscr{A})=\mathbbm{k}[\lceil z_1\rceil, \lceil z_2\rceil], \,\, \text{for some }\,\, z_1, z_2\in Z^1(\mathscr{A}).$$
It is natural for one to study various homological properties of a connected cochain DG algebra if its cohomology graded algebra belongs to other special types. In this paper, we choose cubic Artin-Schelter algebras of type A to study.
\begin{prop}\label{noncykosz}
Let $\mathscr{A}$ be a connected cochain DG algebra such that
$$H(\mathscr{A})=\mathbbm{k}\frac{k\langle \lceil x\rceil,\lceil y\rceil\rangle}{(f_1,f_2)}, x, y\in Z^1(\mathscr{A}),$$
 \begin{align*}
f_1=a\lceil x\rceil \lceil y\rceil^2+b\lceil y\rceil \lceil x\rceil \lceil y\rceil+a\lceil y\rceil^2\lceil x\rceil+c\lceil x\rceil^3,\\
f_2=a\lceil y\rceil \lceil x\rceil^2+b\lceil x\rceil \lceil y\rceil \lceil x\rceil+a\lceil x\rceil^2\lceil y\rceil+c\lceil y\rceil^3,
\end{align*}
where $(a:b:c)\in \Bbb{P}_k^2-\mathfrak{D}$ and $\mathfrak{D}:=\{(0:0:1), (0:1:0)\}\sqcup\{(a:b:c)|a^2=b^2=c^2\}.$ Then $\mathscr{A}$ is a homologically smooth and Gorenstein DG algebra. However, it is neither Koszul nor Calabi-Yau.
\end{prop}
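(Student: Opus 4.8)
The plan is to identify $B:=H(\mathscr{A})$ with a three–dimensional cubic Artin–Schelter regular algebra of type $A$ and to deduce the four assertions from its homological invariants together with Theorems \ref{Gorenstein} and \ref{cycond}. The condition $(a:b:c)\in\mathbb{P}^2_{\mathbbm{k}}-\mathfrak{D}$ is precisely what makes $B$ a Noetherian AS–regular domain of global dimension $3$ (the two points $(0:0:1),(0:1:0)$ and the locus $a^2=b^2=c^2$ being where the defining data degenerate). First I would prove homological smoothness: as $B$ is AS–regular of global dimension $3$, the trivial module ${}_B\mathbbm{k}$ has the finite minimal graded free resolution
\begin{align*}
0\to B(-4)\to B(-3)^2\to B(-1)^2\to B\to\mathbbm{k}\to 0,
\end{align*}
so the Eilenberg–Moore construction yields a semi–free resolution of ${}_{\mathscr{A}}\mathbbm{k}$ with a finite semi–basis; hence ${}_{\mathscr{A}}\mathbbm{k}\in\mathscr{D}^c(\mathscr{A})$ and $\mathscr{A}$ is homologically smooth by \cite[Corollary 2.7]{MW3}. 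Since $B$ is moreover AS–Gorenstein, $\mathscr{A}$ is left Gorenstein by Lemma \ref{Goren}, and being homologically smooth it is then Gorenstein by \cite[Remark 7.6]{MW2}.

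Next I would rule out Koszulness using the same resolution. Its free generators occur in the bidegrees $(i,j)=(0,0),(1,1),(2,3),(3,4)$ (homological degree $i$, internal degree $j$), and the internal degrees $0<1<3<4$ are strictly increasing with $i$. Consequently every structure constant of the differential of the Eilenberg–Moore resolution of ${}_{\mathscr{A}}\mathbbm{k}$ has strictly positive internal degree and hence lies in $\mathfrak{m}_{\mathscr{A}}$, so that resolution is already minimal. Its semi–basis therefore sits in cohomological degrees $j-i\in\{0,1\}$, and the occurrence of degree $1$ elements shows the semi–basis is not concentrated in degree $0$; thus $\mathscr{A}$ is not a Koszul DG algebra.

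Finally, for the failure of the Calabi–Yau property I would apply Theorem \ref{cycond}, reducing the problem to showing that the graded Frobenius algebra $H(\mathcal{E})$ is not symmetric. Since the minimality just established forces $H(\mathcal{E})$ to coincide, as a graded algebra, with the Yoneda Ext–algebra $E(B)$, this is equivalent to the Nakayama automorphism of the cubic AS–regular algebra $B$ being nontrivial, i.e. to $B$ not being graded Calabi–Yau. Here I would use that a connected graded algebra of global dimension $3$ is Calabi–Yau iff it is the derivation–quotient $\mathcal{D}(w)$ of a quartic potential $w\in V^{\otimes 4}$ ($V=\mathbbm{k}x\oplus\mathbbm{k}y$) invariant under the signed cyclic operator $c_4=-\tau$, $\tau$ the rotation (cf. \cite{Gin,VdB}). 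A direct computation shows the space of $c_4$–invariant potentials is spanned by
\begin{align*}
w_1&=x^3y-yx^3+xyx^2-x^2yx, & w_2&=xy^3-yxy^2+y^2xy-y^3x,\\
w_3&=x^2y^2-yx^2y+y^2x^2-xy^2x, & w_4&=xyxy-yxyx,
\end{align*}
and that demanding the cyclic partial derivatives $\partial_xw,\partial_yw$ to span the relation space $\langle f_1,f_2\rangle$ forces $b=-a$ together with $a^2=c^2$, that is $a^2=b^2=c^2$. Hence an invariant potential cutting out $(f_1,f_2)$ exists only when $(a:b:c)\in\mathfrak{D}$; for $(a:b:c)\notin\mathfrak{D}$ there is none, so $B$ is not graded Calabi–Yau, $H(\mathcal{E})$ is not symmetric, and $\mathscr{A}$ is not Calabi–Yau.

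The hard part will be this last step. The delicate points are to fix correctly the Koszul signs in the cyclic operator $c_4$ and in the cyclic derivative for the odd (degree one) generators—miscalibrating them would, for instance, make every invariant potential have vanishing derivative—and to carry out the matching of $(\partial_xw,\partial_yw)$ against $\langle f_1,f_2\rangle$ allowing an arbitrary invertible change of the two relations, so that the Calabi–Yau locus is pinned down to be exactly $\{a^2=b^2=c^2\}$. A subsidiary point requiring justification is the identification of $H(\mathcal{E})$ with $E(B)$ as graded algebras (rather than merely as graded vector spaces), which is what licenses reading the symmetry of $H(\mathcal{E})$ off the Nakayama automorphism of $B$.
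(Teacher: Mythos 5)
Your treatment of homological smoothness, Gorensteinness and non-Koszulness coincides with the paper's: both build the minimal graded free resolution $0\to B(-4)\to B(-3)^{2}\to B(-1)^{2}\to B\to\mathbbm{k}\to 0$ of ${}_{B}\mathbbm{k}$ over the cubic AS-regular algebra $B=H(\mathscr{A})$, pass to the Eilenberg--Moore resolution of ${}_{\mathscr{A}}\mathbbm{k}$ (minimal by the internal-degree argument) with finite semi-basis sitting in cohomological degrees $0$ and $1$, and invoke Lemma \ref{Goren} together with the smoothness criterion. That part is correct and essentially identical to the paper.

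The non-Calabi--Yau step, however, contains a genuine gap. You reduce ``$H(\mathcal{E})$ is not a symmetric Frobenius graded algebra'' to ``$B$ is not graded Calabi--Yau'', i.e.\ to the nontriviality of the Nakayama automorphism of $B$, and then propose to settle the latter by a superpotential computation. This equivalence is false as a general principle, and the paper itself supplies the counterexample: for $H(\mathscr{A})=\mathbbm{k}[\lceil z_1\rceil,\lceil z_2\rceil]$ the cohomology algebra is graded Calabi--Yau (commutative AS-regular, trivial Nakayama automorphism), yet by \cite[Theorem B]{MH} such an $\mathscr{A}$ is never Calabi--Yau, so by Theorem \ref{cycond} its Ext-algebra (an exterior algebra concentrated in DG-degree $0$) is \emph{not} symmetric. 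The discrepancy is precisely what your ``delicate signs'' caveat gestures at but does not resolve: the Nakayama automorphism of the Yoneda algebra $E(B)$ differs from (the dual of) that of $B$ by the sign automorphism $a\mapsto(-1)^{|a|}a$, and in addition the symmetry demanded by Definition \ref{gradedfrob} is taken with respect to the collapsed cohomological grading of $H(\mathcal{E})$ (here degrees $0$ and $-1$), not the Yoneda bigrading. Hence even a correct determination of the locus where $B$ admits a signed-cyclic superpotential answers the wrong question; you would need the locus where $\mu_B$ equals the appropriate sign twist, and you would further have to prove, not merely flag, that $H(R\Hom_{\mathscr{A}}(\mathbbm{k},\mathbbm{k}))\cong E(B)$ as algebras --- an Eilenberg--Moore/$A_\infty$ extension problem, since the DG Ext-algebra depends on $\partial_{\mathscr{A}}$ and not only on $H(\mathscr{A})$. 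The paper sidesteps all of this by computing $H(\Hom_{\mathscr{A}}(F,F))$ and its multiplication table directly at the cochain level and then showing by explicit linear algebra that no $\mathcal{E}^{e}$-module isomorphism $\mathcal{E}\to\Sigma\,\Hom_{\mathbbm{k}}(\mathcal{E},\mathbbm{k})$ exists, while a one-sided module isomorphism does (so $\mathcal{E}$ is Frobenius but not symmetric).
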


\begin{proof}
Note that $H(\mathscr{A})$ is a cubic Artin-Schelter algebras of type A. The graded simple left $H(\mathscr{A})$-module $\mathbbm{k}$ admits a minimal graded free resolution:
$$0\to H(\mathscr{A})e_{\omega}\stackrel{d_3}{\to}  H(\mathscr{A})e_{r_1}\oplus H(\mathscr{A})e_{r_2}\stackrel{d_2}{\to} H(\mathscr{A})e_{x}\oplus H(\mathscr{A})e_{y}\stackrel{d_1}{\to}H(\mathscr{A})\stackrel{\varepsilon}{\to} \mathbbm{k}\to 0,$$
where $d_1, d_2$ and $d_3$ are defined by $d_1(e_{x})=\lceil x\rceil, d_1(e_{y})=\lceil y\rceil$,
\begin{align*}
d_2(e_{r_1})&=(a\lceil x\rceil \lceil y\rceil + b \lceil y\rceil \lceil x\rceil )e_{y} + (a\lceil y\rceil^2+c\lceil x\rceil^2)e_x\\
d_2(e_{r_2})&=(a\lceil y\rceil \lceil x\rceil + b\lceil x\rceil \lceil y\rceil)e_x+(a\lceil x\rceil^2+c\lceil y\rceil^2)e_y \\
d_3(e_{\omega})&=\lceil x\rceil e_{r_1}+\lceil y\rceil e_{r_2}.
\end{align*}
Applying the construction procedure of Eilenberg-Moore resolution in \cite{FHT2}, we can construct a semi-free resolution
$F$ of ${}_{\mathscr{A}}\mathbbm{k}$ such that
$$F^{\#}=\mathscr{A}^{\#}\oplus \mathscr{A}^{\#}\otimes (\mathbbm{k}\Sigma e_x\oplus \mathbbm{k}\Sigma e_y)\oplus  \mathscr{A}^{\#}\otimes (\mathbbm{k}\Sigma^2 e_{r_1}\oplus \mathbbm{k}\Sigma^2 e_{r_2})\oplus \mathscr{A}^{\#}\Sigma^3 e_{\omega}$$
where $\partial_F$ is defined by
\begin{align*}
&\partial_F(\Sigma e_x)=x, \partial_F(\Sigma e_y)=y\\
&\partial_F(\Sigma^2 e_{r_1})=(axy+byx)\Sigma e_y+(ay^2+cx^2)\Sigma e_x\\
&\partial_F(\Sigma^2 e_{r_2})=(ayx+bxy)\Sigma e_x+(ax^2+cy^2)\Sigma e_y\\
&\partial_F(\Sigma^3 e_{\omega})=x\Sigma^2 e_{r_1}+y\Sigma^2 e_{r_2}.
\end{align*}
Note that $F$ is minimal and admits a semi-basis $\{1,\Sigma e_{x},\Sigma e_{y},\Sigma^2 e_{r_1},\Sigma^2 e_{r_2},\Sigma^3 e_{\omega}\}$ concentrated in degrees $0$ and $1$ . Hence $\mathscr{A}$ is a non-Koszul and homologically smooth DG algebra.
Since $H(\mathscr{A})$ is an AS-regular algebra of dimension $3$, the DG algebra $\mathscr{A}$ is Gorenstein by
Lemma \ref{Goren}.
By the minimality of $F$, we have \begin{align*}
 H(R\Hom_{\mathscr{A}}(\mathbbm{k},\mathbbm{k}))&\cong H(\Hom_{\mathscr{A}}(F,\mathbbm{k}))=\Hom_{\mathscr{A}}(F,\mathbbm{k})\\
 &=\mathbbm{k}1^*\oplus \mathbbm{k}(\Sigma e_x)^*\oplus \mathbbm{k}(\Sigma e_y)^*\oplus \mathbbm{k}(\Sigma^2 e_{r_1})^*\oplus \mathbbm{k}(\Sigma^2e_{r_2})^*\oplus  \mathbbm{k}(\Sigma^3 e_{\omega})^* .
 \end{align*}
 It is concentrated in degrees $-1$ and $0$.
Since $$\Hom_{\mathscr{A}}(F,F)^{\#}\cong [\mathbbm{k}1^*\oplus \mathbbm{k}(\Sigma e_x)^*\oplus \mathbbm{k}(\Sigma e_y)^*\oplus \mathbbm{k}(\Sigma^2 e_{r_1})^*\oplus \mathbbm{k}(\Sigma^2 e_{r_2})^*\oplus \mathbbm{k}(\Sigma^3 e_{\omega})^*]\otimes F^{\#}$$ is concentrated in degree $\ge -1$, we have $H^{-1}(\Hom_{\mathscr{A}}(F,F))=Z^{-1}(\Hom_{\mathscr{A}}(F,F))$.
 For any $f\in \Hom_{\mathscr{A}}(F,F)^{-1}$, it is uniquely determined by a matrix
 $$A_f=\left(
     \begin{array}{cccccc}
       0 & 0 & 0 &  0& 0 & 0 \\
       0 & 0 & 0 & 0 & 0 & 0 \\
       0 & 0 & 0 & 0 & 0 & 0 \\
       a_{11} & a_{12} & a_{13} & 0 & 0 & 0 \\
       a_{21} & a_{22} & a_{23} & 0 & 0 & 0 \\
       a_{31} & a_{32} & a_{33} & 0 & 0 & 0 \\
     \end{array}
   \right)\in M_{6}(\mathbbm{k})$$ such that
   $$\left(
       \begin{array}{c}
         f(1) \\
         f(\Sigma e_x) \\
         f(\Sigma e_y) \\
         f(\Sigma^2 e_{r_1}) \\
         f(\Sigma^2 e_{r_2}) \\
         f(\Sigma^3 e_{\omega}) \\
       \end{array}
     \right)=A_f\left(
                  \begin{array}{c}
                    1 \\
                    \Sigma e_x \\
                    \Sigma e_y \\
                    \Sigma^2 e_{r_1} \\
                    \Sigma^2 e_{r_2} \\
                    \Sigma^3 e_{\omega} \\
                  \end{array}
                \right).$$

 Since
\begin{align*}
&\quad \quad \begin{cases}
\partial_F\circ f(1)=0\\
\partial_F\circ f(\Sigma e_x)=0\\
\partial_F\circ f(\Sigma e_y)=0\\
\partial_F\circ f(\Sigma^2 e_{r_1})=a_{12}x+a_{13}y\\
\partial_F\circ f(\Sigma^2 e_{r_2})=a_{22}x+a_{23}y\\
\partial_F\circ f(\Sigma^3 e_{\omega})=a_{32}x+a_{33}y
\end{cases} \\
& \text{and}\quad \begin{cases}
                                                 f\circ \partial_F(1)=0  \\
                                                   f\circ \partial_F(\Sigma e_x)=0          \\
                                                    f\circ \partial_F(\Sigma e_y)=0         \\
                                                    f\circ \partial_F(\Sigma^2 e_{r_1})=0    \\
                                                     f\circ \partial_F(\Sigma^2 e_{r_2})=0\\
                                                    f\circ \partial_F(\Sigma^3 e_{\omega})=-xa_{11}-ya_{21}-(xa_{12}+ya_{22})\Sigma e_x-(xa_{13}+ya_{23})\Sigma e_y
                                                   \end{cases},
\end{align*}
 the $\mathscr{A}$-linear map $\partial_{\Hom}(f)=\partial_F\circ f+f\circ \partial_F$ corresponds to the matrix
\begin{align}\label{coboundary}
\left(
     \begin{array}{cccccc}
       0 & 0 & 0 &  0& 0 & 0 \\
       0 & 0 & 0 & 0 & 0 & 0 \\
       0 & 0 & 0 & 0 & 0 & 0 \\
       a_{12}x+a_{13}y & 0 & 0 & 0 & 0 & 0 \\
       a_{22}x+a_{23}y & 0 & 0 & 0 & 0 & 0 \\
       (a_{32}-a_{11})x+(a_{33}-a_{21})y & -xa_{12}-ya_{22} & -xa_{13}-ya_{23} & 0 & 0 & 0 \\
     \end{array}
   \right).
\end{align}
If $f\in Z^{-1}(\Hom_{\mathscr{A}}(F,F))$, then
\begin{align*}
\begin{cases}
a_{12}x+a_{13}y=0\\
a_{22}x+a_{23}y=0\\
a_{32}x+a_{33}y-xa_{11}-ya_{21}=0\\
-xa_{12}-ya_{22}=0\\
-xa_{13}-ya_{23}=0,
\end{cases}
\end{align*}
which indicate that $a_{12}=a_{13}=a_{22}=a_{23}=0$ and $a_{11}=a_{32}, a_{21}=a_{33}$.
Hence $$H^{-1}(\Hom_{\mathscr{A}}(F,F))\cong \left\{\left(
     \begin{array}{cccccc}
       0 & 0 & 0 &  0& 0 & 0 \\
       0 & 0 & 0 & 0 & 0 & 0 \\
       0 & 0 & 0 & 0 & 0 & 0 \\
       n & 0 & 0 & 0 & 0 & 0 \\
       l & 0 & 0 & 0 & 0 & 0 \\
       m & n & l & 0 & 0 & 0 \\
     \end{array}
   \right)|m,n,l\in \mathbbm{k} \right\}.$$
     Hence we have $\dim_{\mathbbm{k}} \Hom_{\mathscr{A}}(F,F)^{-1}=\dim_{\mathbbm{k}}Z^{-1}(\Hom_{\mathscr{A}}(F,F))=3$. It implies that $Z^0(\Hom_{\mathscr{A}}(F,F))=H^0(\Hom_{\mathscr{A}}(F,F))$. For any $g\in Z^0(\Hom_{\mathscr{A}}(F,F))$, it is uniquely determined by a
     matrix $$X_g= \left(
     \begin{array}{cccccc}
       b_{11} & b_{12} & b_{13} &  0& 0 & 0 \\
       b_{21} & b_{22} & b_{23} & 0 & 0 & 0 \\
       b_{31} & b_{32} & b_{33} & 0 & 0 & 0 \\
       d_{11} & d_{12} & d_{13} & c_{11} & c_{12} & c_{13} \\
       d_{21} & d_{22} & d_{23} & c_{21} & c_{22} & c_{23} \\
       d_{31} & d_{32} & d_{33} & c_{31} & c_{32} & c_{33} \\
     \end{array}
   \right)=\left(\begin{array}{cc}
   B&0\\
   D&C\\
   \end{array}
   \right)$$
   such that$$\left(
       \begin{array}{c}
         g(1) \\
         g(\Sigma e_x) \\
         g(\Sigma e_y) \\
         g(\Sigma^2 e_{r_1}) \\
         g(\Sigma^2 e_{r_2}) \\
         g(\Sigma^3 e_{\omega}) \\
       \end{array}
     \right)=X_g\left(
                  \begin{array}{c}
                    1 \\
                    \Sigma e_x \\
                    \Sigma e_y \\
                    \Sigma^2 e_{r_1} \\
                    \Sigma^2 e_{r_2} \\
                    \Sigma^3 e_{\omega} \\
                  \end{array}
                \right),$$
where $b_{ij},c_{ij}\in \mathbbm{k}$ and $d_{ij}\in \mathscr{A}^1$. For simplicity, we let
$$\left(
      \begin{array}{cccccc}
        0 & 0 & 0 & 0 & 0 & 0 \\
        x & 0 & 0 & 0 & 0 & 0 \\
        y & 0 & 0 & 0 & 0 & 0 \\
        0 & ay^2+cx^2 & axy+byx & 0 & 0 & 0 \\
        0 & ayx+bxy & ax^2+cy^2 & 0 & 0 & 0 \\
        0 & 0 & 0 & x & y & 0 \\
      \end{array}
    \right)=\left(
      \begin{array}{cc}
      Q_1& 0\\
      Q_3& Q_2\\
      \end{array}\right),$$
      where
      $$Q_1=\left(
      \begin{array}{ccc}
        0 & 0 & 0  \\
        x & 0 & 0 \\
        y & 0 & 0 \\
\end{array}
\right), Q_2=\left(
      \begin{array}{ccc}
        0 & 0 & 0  \\
        0 & 0 & 0 \\
        x & y & 0 \\
\end{array}
\right), Q_3=\left(
      \begin{array}{ccc}
        0 & ay^2+cx^2 & axy+byx  \\
        0 & ayx+bxy & ax^2+cy^2 \\
        0 & 0 & 0 \\
\end{array}
\right).$$
Since $0=\partial_{\Hom}(g)=\partial_F\circ g-g\circ \partial_F$, we have
\begin{align*}
& \left(
     \begin{array}{cc}
      0 & 0  \\
      \partial_{\mathscr{A}}(D) & 0 \\
     \end{array}
   \right)-\left(
     \begin{array}{cc}
      0 & 0  \\
      DQ_1 & 0 \\
     \end{array}
   \right)+ \left(\begin{array}{cc}
   B&0\\
   0&C\\
   \end{array}
   \right) \left(
      \begin{array}{cc}
        Q_1 & 0 \\
        Q_3 &  Q_2 \\
      \end{array}
    \right)\\
    &=\left(
      \begin{array}{cc}
        Q_1 & 0 \\
        Q_3 &  Q_2 \\
      \end{array}
    \right)\left(\begin{array}{cc}
   B&0\\
   D&C\\
   \end{array}
   \right),
    \end{align*}
    where $\partial_{\mathscr{A}}(D)=\left(
      \begin{array}{ccc}
        \partial_{\mathscr{A}}(d_{11}) &  \partial_{\mathscr{A}}(d_{12})& \partial_{\mathscr{A}}(d_{13})  \\
        \partial_{\mathscr{A}}(d_{21}) & \partial_{\mathscr{A}}(d_{22}) & \partial_{\mathscr{A}}(d_{23}) \\
        \partial_{\mathscr{A}}(d_{31}) & \partial_{\mathscr{A}}(d_{32}) & \partial_{\mathscr{A}}(d_{33}) \\
\end{array}
\right).$
Then
$$\left(
      \begin{array}{cc}
        BQ_1 & 0 \\
      \partial_{\mathscr{A}}(D)-DQ_1+ CQ_3 &  CQ_2 \\
      \end{array}
    \right)=\left(
      \begin{array}{cc}
        Q_1B & 0 \\
        Q_3B+Q_2D &  Q_2C \\
      \end{array}
    \right).$$
Therefore,
\begin{align*}
\left(
      \begin{array}{ccc}
        b_{11} & b_{12} & b_{13}  \\
        b_{21} & b_{22} & b_{23} \\
        b_{31} & b_{32} & b_{33} \\
\end{array}
\right)\left(
      \begin{array}{ccc}
        0 & 0 & 0  \\
        x & 0 & 0 \\
        y & 0 & 0 \\
\end{array}
\right)=\left(
      \begin{array}{ccc}
        0 & 0 & 0  \\
        x & 0 & 0 \\
        y & 0 & 0 \\
\end{array}
\right)\left(
      \begin{array}{ccc}
        b_{11} & b_{12} & b_{13}  \\
        b_{21} & b_{22} & b_{23} \\
        b_{31} & b_{32} & b_{33} \\
\end{array}
\right)\\
\left(
      \begin{array}{ccc}
        c_{11} & c_{12} & c_{13}  \\
        c_{21} & c_{22} & c_{23} \\
        c_{31} & c_{32} & c_{33} \\
\end{array}
\right)\left(
      \begin{array}{ccc}
        0 & 0 & 0  \\
        0 & 0 & 0 \\
        x & y & 0 \\
\end{array}
\right)=\left(
      \begin{array}{ccc}
        0 & 0 & 0  \\
        0 & 0 & 0 \\
        x & y & 0 \\
\end{array}
\right)\left(
      \begin{array}{ccc}
        c_{11} & c_{12} & c_{13}  \\
        c_{21} & c_{22} & c_{23} \\
        c_{31} & c_{32} & c_{33} \\
\end{array}
\right)
\end{align*}
and
\begin{align*}
& \partial_{\mathscr{A}}(D)-D\left(
      \begin{array}{ccc}
        0 & 0 & 0  \\
        x & 0 & 0 \\
        y & 0 & 0 \\
\end{array}
\right)+\left(
      \begin{array}{ccc}
        c_{11} & c_{12} & c_{13}  \\
        c_{21} & c_{22} & c_{23} \\
        c_{31} & c_{32} & c_{33} \\
\end{array}
\right)\left(
      \begin{array}{ccc}
        0 & ay^2+cx^2 & axy+byx  \\
        0 & ayx+bxy & ax^2+cy^2 \\
        0 & 0 & x^2 \\
\end{array}
\right)\\
&=\left(
      \begin{array}{ccc}
        0 & ay^2+cx^2 & axy+byx  \\
        0 & ayx+bxy & ax^2+cy^2 \\
        0 & 0 & 0 \\
\end{array}
\right)\left(
      \begin{array}{ccc}
        b_{11} & b_{12} & b_{13}  \\
        b_{21} & b_{22} & b_{23} \\
        b_{31} & b_{32} & b_{33} \\
\end{array}
\right)+\left(
      \begin{array}{ccc}
        0 & 0 & 0  \\
        0 & 0 & 0 \\
        x & y & 0 \\
\end{array}
\right)D.
\end{align*}
These equations imply  that
\begin{align*}
\begin{cases}
b_{12}=b_{13}=b_{23}=0\\
b_{11}=b_{22}=b_{33}=c_{11}=c_{22}=c_{33}\\
c_{12}=c_{13}=c_{23}=0\\
b_{31}+c_{32}=0\\
c_{31}+b_{21}=0\\
d_{12}=-cb_{21}x-bb_{31}y\\
d_{13}=-ab_{21}y-ab_{31}x\\
d_{22}=-ab_{21}y-ab_{31}x\\
d_{23}=-bb_{21}x-cb_{31}y\\
d_{11}=\theta x+sy \\
d_{21}=\lambda x+\xi y\\
d_{31}=ux + vy\\
d_{32}=-\theta x -\lambda y\\
d_{33}=-s x-\xi y
\end{cases}
\end{align*}
where $\theta,s,\lambda, \xi, u,v,\lambda \in \mathbbm{k}$.
So $Z^0(\Hom_{\mathscr{A}}(F,F))$ is isomorphic to
$$\left\{ \left(
     \begin{array}{cccccc}
       r & 0 & 0 &  0& 0 & 0 \\
       p & r & 0 & 0 & 0 & 0 \\
       q & 0 & r & 0 & 0 & 0 \\
       \theta x+sy & -cpx-bqy & -apy-aqx & r &  0 & 0 \\
       \lambda x+\xi y & -apy-aqx & -bpx-cqy & 0 & r &  0 \\
       ux+vy & -\theta x-\lambda y & -sx-\xi y & -p & -q & r \\
     \end{array}
   \right) | r,p,q,\theta,s,\lambda,\xi, u,v\in \mathbbm{k}\right\}.$$
For any $g\in Z^0(\Hom_{\mathscr{A}}(F,F))$ such that
$$\left(
                  \begin{array}{c}
                   g(1)\\
                    g(\Sigma e_x) \\
                    g(\Sigma e_y) \\
                    g(\Sigma^2 e_{r_1}) \\
                    g(\Sigma^2 e_{r_2}) \\
                    g(\Sigma^3 e_{\omega}) \\
                  \end{array}
                \right)= \left(
     \begin{array}{cccccc}
       0 & 0 & 0 &  0& 0 & 0 \\
       0 & 0 & 0 & 0 & 0 & 0 \\
       0 & 0 & 0 & 0 & 0 & 0 \\
       \theta x+sy & 0 & 0 & 0 &  0 & 0 \\
       \lambda x+\xi y & 0 & 0 & 0 & 0 &  0 \\
       ux+vy & -\theta x-\lambda y & -sx-\xi y & 0 & 0 & 0\\
     \end{array}
   \right) \left(
                  \begin{array}{c}
                    1 \\
                    \Sigma e_x \\
                    \Sigma e_y \\
                    \Sigma^2 e_{r_1} \\
                    \Sigma^2 e_{r_2} \\
                    \Sigma^3 e_{\omega} \\
                  \end{array}
                \right)$$
with $\theta, s, \lambda, \xi, u, v\in \mathbbm{k}$, we have  $\partial_{\Hom}(h)=g$, where $h\in \Hom_{\mathscr{A}}(F,F)^{-1}$ defined by
$$
\left(
                  \begin{array}{c}
                    h(1)\\
                    h(\Sigma e_x)\\
                    h(\Sigma e_y) \\
                    h(\Sigma^2 e_{r_1}) \\
                    h(\Sigma^2 e_{r_2}) \\
                    h(\Sigma^3 e_{\omega}) \\
                  \end{array}
                \right)=\left(
     \begin{array}{cccccc}
       0 & 0 & 0 &  0& 0 & 0 \\
       0 & 0 & 0 & 0 & 0 & 0 \\
       0 & 0 & 0 & 0 & 0 & 0 \\
       0 & \theta & s & 0 &  0 & 0 \\
       0 & \lambda & \xi & 0 & 0 &  0 \\
        0& u & v & 0 & 0 & 0\\
     \end{array}
   \right)\left(
                  \begin{array}{c}
                    1 \\
                    \Sigma e_x \\
                    \Sigma e_y \\
                    \Sigma^2 e_{r_1} \\
                    \Sigma^2 e_{r_2} \\
                    \Sigma^3 e_{\omega} \\
                  \end{array}
                \right).$$
So $H^0(\Hom_{\mathscr{A}}(F,F))$ is isomorphic to  $$ \left\{ \left(
     \begin{array}{cccccc}
       r & 0 & 0 &  0& 0 & 0 \\
       p & r & 0 & 0 & 0 & 0 \\
       q & 0 & r & 0 & 0 & 0 \\
       0 & -cpx-bqy & -apy-aqx & r &  0 & 0 \\
       0 &  -apy-aqx &-bpx-cqy  & 0 & r &  0 \\
       0 & 0 & 0 & -p & -q & r \\
     \end{array}
   \right)\quad |\quad r,p,q\in \mathbbm{k}\right\}.$$
Set $e_0=\sum\limits_{i=1}^6E_{ii}$,
\begin{align*}
 &e_1=\left(
     \begin{array}{cccccc}
       0 & 0 & 0 &  0& 0 & 0 \\
       1 & 0 & 0 & 0 & 0 & 0 \\
       0 & 0 & 0 & 0 & 0 & 0 \\
       0 & -cx & -ay & 0 &  0 & 0 \\
       0 & -ay & -bx  & 0 & 0 &  0 \\
       0 & 0 & 0 & -1 & 0 & 0 \\
     \end{array}
   \right),\,\, e_2=\left(
     \begin{array}{cccccc}
       0 & 0 & 0 &  0& 0 & 0 \\
       0 & 0 & 0 & 0 & 0 & 0 \\
       1 & 0 & 0 & 0 & 0 & 0 \\
       0 & -by & -ax & 0 &  0 & 0 \\
       0 & -ax & -cy  & 0 & 0 &  0 \\
       0 & 0 & 0 & 0 & -1 & 0 \\
     \end{array}
   \right),\\
 & e_3=\left(
     \begin{array}{cccccc}
       0 & 0 & 0 &  0& 0 & 0 \\
       0 & 0 & 0 & 0 & 0 & 0 \\
       0 & 0 & 0 & 0 & 0 & 0 \\
       0 & 0 & 0 & 0 & 0 & 0 \\
       1 & 0 & 0 & 0 & 0 &  0 \\
       0 & 0 & 1 & 0 & 0 & 0 \\
     \end{array}
   \right), \,\, e_4=\left(
     \begin{array}{cccccc}
       0 & 0 & 0 &  0& 0 & 0 \\
       0 & 0 & 0 & 0 & 0 & 0 \\
       0 & 0 & 0 & 0 & 0 & 0 \\
       1 & 0 & 0 & 0 &  0 & 0 \\
       0 & 0 & 0 & 0 & 0 &  0 \\
       0 & 1 & 0 &  0 & 0 & 0 \\
     \end{array}
   \right), \,\, e_5=E_{61}.
\end{align*}
From the computations above, the Ext-algebra
$$H(\Hom_{\mathscr{A}}(F,F))\cong \bigoplus\limits_{i=0}^5\mathbbm{k}e_i, \,\, |e_0|=|e_1|=|e_2|=0, \,\, |e_3|=|e_4|=|e_5|=-1.$$
as a graded $\mathbbm{k}$-vector space.
 Note that
\begin{align*}
&e_1^2=\left(
     \begin{array}{cccccc}
       0 & 0 & 0 &  0& 0 & 0 \\
       0 & 0 & 0 & 0 & 0 & 0 \\
       0 & 0 & 0 & 0 & 0 & 0 \\
       -cx & 0 & 0 & 0 &  0 & 0 \\
       -ay & 0 & 0  & 0 & 0 &  0 \\
       0 & cx & ay & 0 & 0 & 0 \\
     \end{array}
   \right),\,\, e_2^2=\left(
     \begin{array}{cccccc}
       0 & 0 & 0 &  0& 0 & 0 \\
       0 & 0 & 0 & 0 & 0 & 0 \\
       0 & 0 & 0 & 0 & 0 & 0 \\
       -ax & 0 & 0 & 0 &  0 & 0 \\
       -cy & 0 & 0  & 0 & 0 &  0 \\
       0 & ax & cy & 0 & 0 & 0 \\
     \end{array}
   \right),\\
&e_1e_2= \left(
     \begin{array}{cccccc}
       0 & 0 & 0 &  0& 0 & 0 \\
       0 & 0 & 0 & 0 & 0 & 0 \\
       0 & 0 & 0 & 0 & 0 & 0 \\
       -ay & 0 & 0 & 0 &  0 & 0 \\
       -bx & 0 & 0  & 0 & 0 &  0 \\
       0 & by & ax & 0 & 0 & 0 \\
     \end{array}
   \right),\,\, e_2e_1= \left(
     \begin{array}{cccccc}
       0 & 0 & 0 &  0& 0 & 0 \\
       0 & 0 & 0 & 0 & 0 & 0 \\
       0 & 0 & 0 & 0 & 0 & 0 \\
       -by& 0 & 0 & 0 &  0 & 0 \\
       -ax & 0 & 0  & 0 & 0 &  0 \\
       0 & ay & bx & 0 & 0 & 0 \\
     \end{array}
   \right).
\end{align*}
By (\ref{coboundary}),  the morphisms of DG $\mathscr{A}$-modules corresponding to them all belong to $B^0(\Hom_{\mathscr{A}}(F,F))$.
Thus $e_1^2=0,e_2^2=0,e_1e_2=0,e_2e_1=0$ in $H(\Hom_{\mathscr{A}}(F,F))$.
The multiplication structure of $H(\Hom_{\mathscr{A}}(F,F))$ is determined by the following tabular:\\
\begin{center} \begin{tabular}{l|llllll}
$\cdot$ & $e_0$  & $e_1$ &  $e_2$ & $e_3$ & $e_4$ & $e_5$\\
 \hline
$e_0$    & $e_0$   & $e_1$ & $e_2$  & $e_3$ & $e_4$ & $e_5$ \\
$e_1$  & $e_1$ & $0$ & $0$    &$0$  & $-e_5$ & $0$ \\
$e_2$  & $e_2$ & $0$   & $0$    &$-e_5$  & $0$   & $0$\\
$e_3$  &$e_3$  & $0$ & $e_5$  &$0$    & $0$   & $0$\\
$e_4$  &$e_4$  & $e_5$ & $0$    &$0$    & $0$   & $0$\\
$e_5$  & $e_5$ & $0$   & $0$    &$0$    & $0$   & $0$
 \\
\end{tabular}.
\end{center}
For simplicity, we set $\mathcal{E}=H(\Hom_{\mathscr{A}}(F,F))$. We want to show that $\mathcal{E}$ is not a symmetric Frobenius algebra.
If $\mathcal{E}$ is a symmetric Frobenius algebra. Then there exists an isomorphism $\sigma: \mathcal{E}\to \Sigma \Hom_k(\mathcal{E},k)$ of left $\mathcal{E}^e$-modules.
There is a matrix $\Omega=\left(
                       \begin{array}{cc}
                         0_{3\times 3} & X \\
                         Y &  0_{3\times 3}  \\
                       \end{array}
                     \right)
\in \mathrm{GL}_6(k)$ such that
\begin{align*}
\left(  \begin{array}{c}
                                                    \sigma (e_0)   \\
                                                   \sigma(e_1)               \\
                                                   \sigma(e_2)                  \\
                                                   \sigma(e_3)    \\
                                                   \sigma(e_4)    \\
                                                   \sigma(e_5)
                                                   \end{array}\right)=\Omega \left(   \begin{array}{c}
                                                    \Sigma e_0^*   \\
                                                    \Sigma e_1^*               \\
                                                   \Sigma e_2^*                  \\
                                                   \Sigma e_3^*    \\
                                                   \Sigma e_4^*    \\
                                                   \Sigma e_5^*
                                                   \end{array}\right)
\end{align*}
where $$X=\left(
            \begin{array}{ccc}
              X_{11} & X_{12} & X_{13} \\
              X_{21} & X_{22} & X_{23} \\
              X_{31} & X_{32} & X_{33} \\
            \end{array}
          \right), Y=\left(
            \begin{array}{ccc}
              Y_{11} & Y_{12} & Y_{13} \\
              Y_{21} & Y_{22} & Y_{23} \\
              Y_{31} & Y_{32} & Y_{33} \\
            \end{array}
          \right).$$
Since $\sigma$ is a morphism of $\mathcal{E}^e$-modules, we have
$\sigma(e_5)=\sigma(e_4e_1)=e_4\sigma(e_1)=\sigma(e_4)e_1$ and $\sigma(e_5)=\sigma(e_3e_2)=\sigma(e_3)e_2=e_3\sigma(e_2)$.
On the other hand,
\begin{align*}
&\sigma(e_4)e_1=[Y_{21}\Sigma e_0^*+Y_{22}\Sigma e_1^* + Y_{23} \Sigma e_2^*]e_1:\quad  \left\{   \begin{array}{c}
                                                   \Sigma e_0   \\
                                                   \Sigma e_1               \\
                                                   \Sigma e_2                 \\
                                                   \Sigma e_3    \\
                                                   \Sigma e_4    \\
                                                   \Sigma e_5
                                                   \end{array}\right \}  \begin{array}{c}
                                                   \longrightarrow\\
                                                   \longrightarrow\\
                                                   \longrightarrow\\
                                                   \longrightarrow\\
                                                   \longrightarrow\\
                                                   \longrightarrow\\
                                                   \end{array}
                                                    \left\{   \begin{array}{c}
                                                     Y_{22}\Sigma 1_{\mathbbm{k}} \\
                                                     0 \,\,      \\
                                                     0 \,\,  \\
                                                     0 \,\, \\
                                                     0 \,\, \\
                                                     0 \,\, \\
                                                   \end{array}\right \},\\
 &e_4\sigma(e_1)=e_4[X_{21}\Sigma e_3^*+X_{22}\Sigma e_4^*+X_{23}\Sigma e_5^*]:\quad
 \left\{   \begin{array}{c}
                                                   \Sigma e_0   \\
                                                   \Sigma e_1               \\
                                                   \Sigma e_2                 \\
                                                   \Sigma e_3    \\
                                                   \Sigma e_4    \\
                                                   \Sigma e_5
                                                   \end{array}\right \}  \begin{array}{c}
                                                    \longrightarrow\\
                                                    \longrightarrow\\
                                                   \longrightarrow\\
                                                   \longrightarrow\\
                                                   \longrightarrow\\
                                                   \longrightarrow\\
                                                   \end{array}
                                                    \left\{   \begin{array}{c}
                                                     -X_{22}\Sigma 1_{\mathbbm{k}} \,\, \\
                                                     X_{23}\Sigma 1_{\mathbbm{k}}\,\,      \\
                                                     0 \,\,  \\
                                                     0 \,\, \\
                                                      0 \,\, \\
                                                       0 \,\, \\
                                                   \end{array}\right \},\\
&\sigma(e_3)e_2=[Y_{11}\Sigma e_0^*+Y_{12}\Sigma e_1^* + Y_{13} \Sigma e_2^*]e_2: \quad \left\{   \begin{array}{c}
                                                   \Sigma e_0   \\
                                                    \Sigma e_1               \\
                                                   \Sigma e_2                 \\
                                                    \Sigma e_3    \\
                                                    \Sigma e_4    \\
                                                    \Sigma e_5
                                                   \end{array}\right \}  \begin{array}{c}
                                                          \longrightarrow\\
                                                                 \longrightarrow\\
                                                   \longrightarrow\\
                                                   \longrightarrow\\
                                                   \longrightarrow\\
                                                   \longrightarrow\\
                                                   \end{array}
                                                    \left\{   \begin{array}{c}
                                                     Y_{13}\Sigma 1_{\mathbbm{k}} \\
                                                     0 \,\,      \\
                                                     0 \,\,  \\
                                                     0 \,\, \\
                                                     0 \,\, \\
                                                     0\,\, \\
                                                   \end{array}\right \},\\
&e_3\sigma(e_2)=e_3[X_{31}\Sigma e_3^*+X_{32} \Sigma e_4^*+X_{33} \Sigma e_5^*]:\quad \left\{   \begin{array}{c}
                                                    \Sigma e_0   \\
                                                    \Sigma e_1               \\
                                                    \Sigma e_2                 \\
                                                    \Sigma e_3    \\
                                                    \Sigma e_4    \\
                                                    \Sigma e_5
                                                   \end{array}\right \}  \begin{array}{c}
                                                   \longrightarrow\\
                                                    \longrightarrow\\
                                                   \longrightarrow\\
                                                   \longrightarrow\\
                                                   \longrightarrow\\
                                                   \longrightarrow\\
                                                   \end{array}
                                                    \left\{   \begin{array}{c}
                                                     -X_{31}\Sigma 1_{\mathbbm{k}} \\
                                                     0 \,\,      \\
                                                     X_{33}\Sigma 1_{\mathbbm{k}} \\
                                                     0 \,\, \\
                                                     0 \,\,  \\
                                                     0 \,\, \\
                                                   \end{array}\right \},\\
 \end{align*}
 and
 \begin{align*}
  &\sigma(e_5)=Y_{31}\Sigma e_{0}^*+Y_{32}\Sigma e_1^* + Y_{33}\Sigma e_2^*:\quad
  \left\{   \begin{array}{c}
                                                    \Sigma e_0   \\
                                                    \Sigma e_1                \\
                                                    \Sigma e_2           \\
                                                    \Sigma e_3   \\
                                                    \Sigma e_4  \\
                                                    \Sigma e_5
                                                   \end{array}\right \}  \begin{array}{c}
                                                   \longrightarrow\\
                                                   \longrightarrow\\
                                                   \longrightarrow\\
                                                   \longrightarrow\\
                                                    \longrightarrow\\
                                                   \longrightarrow\\
                                                   \end{array}
                                                    \left\{   \begin{array}{c}
                                                     Y_{31}\Sigma 1_{\mathbbm{k}} \,\, \\
                                                     Y_{32}\Sigma 1_{\mathbbm{k}} \,\,      \\
                                                     Y_{33}\Sigma 1_{\mathbbm{k}} \,\,  \\
                                                      0 \,\, \\
                                                      0 \,\,  \\
                                                      0 \,\, \\
                                                   \end{array}\right \}.
 \end{align*}
 Since $\mathrm{char}\mathbbm{k}=0$, we have
 \begin{align}\label{eqsone}
 \begin{cases}
 X_{23}=X_{33}=Y_{32}=Y_{33}=0\\
 Y_{22}=-X_{22}=Y_{13}=-X_{31}=Y_{31}.
 \end{cases}
 \end{align}
 On the other hand,
  \begin{align*}
& \sigma(e_1)e_4=[X_{21}\Sigma e_3^*+ X_{22}\Sigma e_4^*+X_{23}\Sigma e_5^*]e_4: \quad \left\{   \begin{array}{c}
                                                   \Sigma e_0   \\
                                                   \Sigma e_1               \\
                                                   \Sigma e_2                 \\
                                                   \Sigma e_3    \\
                                                   \Sigma e_4    \\
                                                   \Sigma e_5
                                                   \end{array}\right \}  \begin{array}{c}
                                                   \longrightarrow\\
                                                   \longrightarrow\\
                                                   \longrightarrow\\
                                                   \longrightarrow\\
                                                   \longrightarrow\\
                                                   \longrightarrow\\
                                                   \end{array}
                                                    \left\{   \begin{array}{c}
                                                     -X_{22}\Sigma 1_{\mathbbm{k}} \\
                                                     -X_{23}\Sigma 1_{\mathbbm{k}}    \\
                                                     0 \,\,  \\
                                                     0 \,\, \\
                                                     0 \,\, \\
                                                     0 \,\, \\
                                                   \end{array}\right \},\\
& e_1\sigma(e_4)=e_1[Y_{21}\Sigma e_0^*+Y_{22}\Sigma e_1^*  + Y_{23} \Sigma e_2^*]:\quad
 \left\{   \begin{array}{c}
                                                   \Sigma e_0   \\
                                                   \Sigma e_1               \\
                                                   \Sigma e_2                 \\
                                                   \Sigma e_3    \\
                                                   \Sigma e_4    \\
                                                   \Sigma e_5
                                                   \end{array}\right \}  \begin{array}{c}
                                                    \longrightarrow\\
                                                    \longrightarrow\\
                                                   \longrightarrow\\
                                                   \longrightarrow\\
                                                   \longrightarrow\\
                                                   \longrightarrow\\
                                                   \end{array}
                                                    \left\{   \begin{array}{c}
                                                     Y_{22}\Sigma 1_{\mathbbm{k}}\\
                                                     0 \,\,      \\
                                                     0 \,\,  \\
                                                     0 \,\, \\
                                                     0 \,\, \\
                                                     0 \,\, \\
                                                   \end{array}\right \},\\
&\sigma(e_2)e_3=[X_{31}\Sigma e_3^*+X_{32}\Sigma e_4^* + X_{33} \Sigma e_5^*]e_3:\quad \left\{   \begin{array}{c}
                                                   \Sigma e_0   \\
                                                    \Sigma e_1               \\
                                                   \Sigma e_2                 \\
                                                    \Sigma e_3    \\
                                                    \Sigma e_4    \\
                                                    \Sigma e_5
                                                   \end{array}\right \}  \begin{array}{c}
                                                          \longrightarrow\\
                                                                 \longrightarrow\\
                                                   \longrightarrow\\
                                                   \longrightarrow\\
                                                   \longrightarrow\\
                                                   \longrightarrow\\
                                                   \end{array}
                                                    \left\{   \begin{array}{c}
                                                     -X_{31}\Sigma 1_{\mathbbm{k}}  \\
                                                     0 \,\,      \\
                                                     -X_{33}\Sigma 1_{\mathbbm{k}} \\
                                                     0 \,\, \\
                                                     0 \,\, \\
                                                     0\,\, \\
                                                   \end{array}\right \},\\
&e_2\sigma(e_3)=e_2[Y_{11}\Sigma e_0^*+Y_{12}\Sigma e_1^* + Y_{13} \Sigma e_2^*]:\quad \left\{   \begin{array}{c}
                                                    \Sigma e_0   \\
                                                    \Sigma e_1               \\
                                                    \Sigma e_2                 \\
                                                    \Sigma e_3    \\
                                                    \Sigma e_4    \\
                                                    \Sigma e_5
                                                   \end{array}\right \}  \begin{array}{c}
                                                   \longrightarrow\\
                                                    \longrightarrow\\
                                                   \longrightarrow\\
                                                   \longrightarrow\\
                                                   \longrightarrow\\
                                                   \longrightarrow\\
                                                   \end{array}
                                                    \left\{   \begin{array}{c}
                                                     Y_{13}\Sigma 1_{\mathbbm{k}} \\
                                                     0 \,\,      \\
                                                     0 \,\,  \\
                                                     0 \,\, \\
                                                     0 \,\,  \\
                                                     0 \,\, \\
                                                   \end{array}\right \},\\
 \end{align*}
and
\begin{align*}
&\sigma(e_2)e_3=e_2\sigma(e_3)=-\sigma (e_5)=e_1\sigma(e_4)=\sigma(e_1)e_4: \left\{   \begin{array}{c}
                                                    \Sigma e_0   \\
                                                    \Sigma e_1               \\
                                                    \Sigma e_2                 \\
                                                    \Sigma e_3    \\
                                                    \Sigma e_4    \\
                                                    \Sigma e_5
                                                   \end{array}\right \}  \begin{array}{c}
                                                   \longrightarrow\\
                                                    \longrightarrow\\
                                                   \longrightarrow\\
                                                   \longrightarrow\\
                                                   \longrightarrow\\
                                                   \longrightarrow\\
                                                   \end{array}
                                                    \left\{   \begin{array}{c}
                                                     -Y_{31}\Sigma 1_{\mathbbm{k}}  \\
                                                     -Y_{32}\Sigma 1_{\mathbbm{k}}    \\
                                                     -Y_{33}\Sigma 1_{\mathbbm{k}}  \\
                                                     0 \,\, \\
                                                     0 \,\,  \\
                                                     0 \,\, \\
                                                   \end{array}\right \}
\end{align*}
we have \begin{align}\label{eqstwo}
\begin{cases}-X_{22}=Y_{22}=-X_{31}=Y_{13}=-Y_{31},\\
 Y_{32}=Y_{33}=X_{33}=X_{23}=0
 \end{cases}
 \end{align}
By (\ref{eqsone}) and (\ref{eqstwo}), we have
\begin{align*}
\begin{cases}
 Y_{22}=-X_{22}=Y_{13}=-X_{31}=Y_{31}\\
-X_{22}=Y_{22}=-X_{31}=Y_{13}=-Y_{31}\\
Y_{32}=Y_{33}=X_{33}=X_{23}=0.
\end{cases}
\end{align*}
Since $\mathrm{char}\,\mathbbm{k}=0$, we get $$Y_{31}=Y_{32}=Y_{33}=Y_{22}=X_{22}=Y_{13}=X_{31}=X_{33}=X_{23}=0.$$
Then $\Omega\not\in \mathrm{GL}_6(\mathbbm{k})$.
One sees that $\sigma $ is not an isomorphism. Then we reach a contradiction. Hence $\mathcal{E}$ is not a symmetric Frobenius algebra.
Finally, we must emphasize that $\mathcal{E}$ is a graded Frobenius algebra. Indeed, there is an isomorphism $$\eta: \mathcal{E}\to \Sigma \Hom_{\mathbbm{k}}(\mathcal{E},\mathbbm{k})$$ of graded left $\mathcal{E}$-modules such that
\begin{align*}
\left(  \begin{array}{c}
                                                    \eta(e_0)   \\
                                                   \eta(e_1)               \\
                                                   \eta(e_2)                  \\
                                                   \eta(e_3)    \\
                                                   \eta(e_4)    \\
                                                   \eta(e_5)
                                                   \end{array}\right)=\left(
                                                                        \begin{array}{cccccc}
                                                                          0 & 0 & 0 & 1 & -1 & 1 \\
                                                                          0 & 0 & 0 & 0 & 1 & 0 \\
                                                                          0 & 0 & 0 & 1 & 0 & 0 \\
                                                                          -1 & 0 & 1 & 0 & 0 & 0 \\
                                                                          1 & 1 & 0 & 0 & 0 & 0 \\
                                                                          -1 & 0 & 0 & 0 & 0 & 0 \\
                                                                        \end{array}
                                                                      \right)
                                                    \left(   \begin{array}{c}
                                                    \Sigma e_0^*   \\
                                                    \Sigma e_1^*               \\
                                                   \Sigma e_2^*                  \\
                                                   \Sigma e_3^*    \\
                                                   \Sigma e_4^*    \\
                                                   \Sigma e_5^*
                                                   \end{array}\right).
\end{align*}
Hence $\mathscr{A}$ is not Calabi-Yau but Gorenstein by Theorem \ref{Gorenstein} and Theorem \ref{cycond}.
\end{proof}

 In \cite[Proposition $6.9$]{MHLX}, one can see a proof for the down-up DG algebra $\mathscr{A}$ in the next proposition to be a non-Koszul Calabi-Yau DG algebra.  The proof there relies on straightforward computation and the definition of Calabi-Yau DG algebras.
  Theorem \ref{cycond} gives us a new way to prove it.
\begin{prop}\label{nonkoszul}
Let $\mathscr{A}$ be a connected cochain DG algebra such tht $$\mathscr{A}^{\#}=\mathbbm{k}\langle x,y\rangle/(x^2y-yx^2,xy^2-y^2x), |x|=|y|=1,\partial_{\mathscr{A}}(x)=y^2, \partial_{\mathscr{A}}(y)=0.$$  Then  the DG algebra $\mathscr{A}$ is not a Koszul DG algebra,  but it is Calabi-Yau.
\end{prop}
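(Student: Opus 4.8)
The plan is to follow the same strategy as in Proposition \ref{noncykosz}, but now verifying that the Ext-algebra is a \emph{symmetric} Frobenius graded algebra, and then to invoke Theorem \ref{cycond}. First I would compute the cohomology graded algebra $H(\mathscr{A})$. Since $\partial_{\mathscr{A}}(y)=0$ and $\partial_{\mathscr{A}}(x)=y^2$, the cocycle $y$ survives while $\lceil y\rceil^2=\lceil y^2\rceil=\lceil\partial_{\mathscr{A}}(x)\rceil=0$, and a direct degreewise computation (using the relations $x^2y=yx^2$ and $xy^2=y^2x$, which force $\partial_{\mathscr{A}}(x^2)=0$) shows that $H(\mathscr{A})$ is minimally generated by $\lceil y\rceil$ in degree $1$ together with $\lceil x^2\rceil$ and $\lceil xy+yx\rceil$ in degree $2$. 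I expect $H(\mathscr{A})$ to have Hilbert series $\frac{1}{(1-t)(1-t^2)}$ and finite global dimension (namely $2$), so that ${}_{H(\mathscr{A})}\mathbbm{k}$ admits a finite minimal graded free resolution. Note that, in contrast to Proposition \ref{noncykosz}, the algebra $H(\mathscr{A})$ is not AS-Gorenstein, so Gorensteinness of $\mathscr{A}$ cannot be deduced from $H(\mathscr{A})$ via Lemma \ref{Goren} and must instead come from the DG structure.

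From this graded free resolution I would build, via the Eilenberg--Moore construction recalled in the preliminaries, the minimal semi-free resolution $F$ of ${}_{\mathscr{A}}\mathbbm{k}$. Its semi-basis is in bijection with the free basis of the resolution of ${}_{H(\mathscr{A})}\mathbbm{k}$, and a free generator sitting in homological degree $i$ and internal degree $j$ contributes a semi-basis element $\Sigma^{i}e$ of cohomological degree $j-i$. Thus $F$ has a finite semi-basis, with elements lying in cohomological degrees $0$ and $1$. Finiteness of the semi-basis shows ${}_{\mathscr{A}}\mathbbm{k}\in\mathscr{D}^c(\mathscr{A})$, so $\mathscr{A}$ is homologically smooth by \cite[Corollary 2.7]{MW3}; and since the degree-$2$ generators of $H(\mathscr{A})$ produce semi-basis elements of cohomological degree $1$, the semi-basis is \emph{not} concentrated in degree $0$, whence $\mathscr{A}$ fails to be Koszul in the sense of Definition \ref{koszul}.

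Next I would compute the Ext-algebra $\mathcal{E}=H(R\Hom_{\mathscr{A}}(\mathbbm{k},\mathbbm{k}))\cong H(\Hom_{\mathscr{A}}(F,F))$ together with its multiplication, exactly as in the proof of Proposition \ref{noncykosz}: writing endomorphisms of $F$ as matrices with entries in $\mathscr{A}$, computing the cocycles and coboundaries of $\Hom_{\mathscr{A}}(F,F)$, and reading off a homogeneous $\mathbbm{k}$-basis $e_0,\dots,e_5$ (with $\mathcal{E}$ concentrated in cohomological degrees $0$ and $-1$) together with an explicit multiplication table. The crucial step is then to exhibit an isomorphism $\mathcal{E}\to\Sigma^{l}\mathcal{E}^*$ of graded $\mathcal{E}^e$-modules whose associated Frobenius form $\langle-,-\rangle$ satisfies the symmetry condition $\langle a,b\rangle=(-1)^{|a||b|}\langle b,a\rangle$ of Definition \ref{gradedfrob}; this is precisely where the present example diverges from Proposition \ref{noncykosz}, in which the analogous form turned out to be Frobenius but necessarily non-symmetric.

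Finally, once $\mathcal{E}$ is shown to be Frobenius, Theorem \ref{Gorenstein} gives that $\mathscr{A}$ is Gorenstein, and once $\mathcal{E}$ is shown to be symmetric Frobenius, Theorem \ref{cycond} yields that $\mathscr{A}$ is Calabi-Yau, completing the proof. The main obstacle I anticipate is the explicit determination of the multiplicative structure of $\mathcal{E}$ and the verification of the symmetry of its Frobenius form: the underlying linear algebra is lengthy, and the entire point of the example is that symmetry \emph{holds} here while it \emph{fails} in Proposition \ref{noncykosz}, so this computation is exactly what separates the Calabi-Yau case from the merely Gorenstein one.
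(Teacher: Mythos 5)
Your overall strategy --- compute the Ext-algebra $\mathcal{E}=H(R\Hom_{\mathscr{A}}(\mathbbm{k},\mathbbm{k}))$ explicitly as a matrix algebra, verify that it is a symmetric Frobenius graded algebra, and invoke Theorem \ref{cycond} --- is exactly the paper's. The gap is in the route you propose for producing the minimal semi-free resolution $F$ of ${}_{\mathscr{A}}\mathbbm{k}$. Your description of $H(\mathscr{A})$ (minimal generators $\lceil y\rceil$, $\lceil x^2\rceil$, $\lceil xy+yx\rceil$, Hilbert series $\tfrac{1}{(1-t)(1-t^2)}$) agrees with a low-degree check, but the claim that $\mathrm{gl.dim}\,H(\mathscr{A})=2$ is false: since $\lceil y\rceil^2=\lceil \partial_{\mathscr{A}}(x)\rceil=0$, the minimal graded free resolution of ${}_{H(\mathscr{A})}\mathbbm{k}$ has a relation generator in bidegree $(2,2)$, and then $\lceil y\rceil$ times that generator is a minimal second syzygy in bidegree $(3,3)$, and so on; the diagonal strand $\mathrm{Tor}_i^{H(\mathscr{A})}(\mathbbm{k},\mathbbm{k})_i$ never terminates, so the resolution is infinite. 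Hence the Eilenberg--Moore resolution you want to build from it has an \emph{infinite} semi-basis and is \emph{not} minimal, and both of your key deductions --- homological smoothness from "finiteness of the semi-basis" and non-Koszulity from "the semi-basis is not concentrated in degree $0$" --- rest on exactly the finiteness and minimality that fail.

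What your approach misses is that the minimal semi-free resolution of ${}_{\mathscr{A}}\mathbbm{k}$ is strictly smaller than the Eilenberg--Moore one: after adjoining $\Sigma e_y$ with $\partial_F(\Sigma e_y)=y$ and $\Sigma e_z$ with $\partial_F(\Sigma e_z)=x+y\Sigma e_y$ (the class of $x+y\Sigma e_y$ records the relation $\lceil y\rceil^2=0$), one computes $\partial_F(x\Sigma e_y+y\Sigma e_z)=-(xy+yx)$, so the minimal generator $\lceil xy+yx\rceil$ of $H(\mathscr{A})$ becomes exact and contributes \emph{no} semi-basis element. This is why the paper does not pass through $H(\mathscr{A})$ at all but constructs $F$ directly by the step-by-step procedure of \cite[Proposition 2.4]{MW1}, obtaining the finite semi-basis $\{1,\Sigma e_y,\Sigma e_z,\Sigma e_{x^2},\Sigma e_t,\Sigma e_r\}$ concentrated in degrees $0$ and $1$, from which smoothness and non-Koszulity are immediate. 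Once your resolution is replaced by this one, the rest of your plan (the matrix computation of $\mathcal{E}$, which is six-dimensional and in fact commutative, hence symmetric once it is shown Frobenius, followed by Theorem \ref{cycond}) coincides with the paper's proof.
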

\begin{proof}
The trivial DG module ${}_{\mathscr{A}}\mathbbm{k}$ admits a minimal semi-free resolution $F$ with $$F^{\#}=\mathscr{A}^{\#}\oplus \mathscr{A}^{\#}\Sigma e_{y}\oplus \mathscr{A}^{\#}\Sigma e_{z}\oplus \mathscr{A}^{\#}\Sigma e_{x^2}\oplus \mathscr{A}^{\#}\Sigma e_t\oplus \mathscr{A}^{\#}\Sigma e_r, $$
 $\partial_F(\Sigma e_y)=y, \partial_{F}(\Sigma e_z)=x+y\Sigma e_y$, $\partial_F(\Sigma e_{x^2})=x^2, \partial_F(\Sigma e_t)=x^2\Sigma e_y+y\Sigma e_{x^2}$ and
 $\partial_{F}(\Sigma e_r)=y\Sigma e_t+x\Sigma e_{x^2}+x^2\Sigma e_Z$. By the minimality of $F$, we have \begin{align*}
 H(R\Hom_{\mathscr{A}}(\mathbbm{k},\mathbbm{k}))&\cong H(\Hom_{\mathscr{A}}(F,\mathbbm{k}))=\Hom_{\mathscr{A}}(F,\mathbbm{k})\\
 &=\mathbbm{k}1^*\oplus \mathbbm{k}(\Sigma e_y)^*\oplus \mathbbm{k}(\Sigma e_z)^*\oplus \mathbbm{k}(\Sigma e_{x^2})^*\oplus \mathbbm{k}(\Sigma e_{t})^*\oplus \mathbbm{k}(\Sigma e_{r})^*.
 \end{align*}
 It is concentrated in degrees $-1$ and $0$. Hence it is non-Koszul. Since $$\Hom_{\mathscr{A}}(F,F)^{\#}\cong [\mathbbm{k}1^*\oplus \mathbbm{k}(\Sigma e_y)^*\oplus \mathbbm{k}(\Sigma e_z)^*\oplus \mathbbm{k}(\Sigma e_{x^2})^*\oplus \mathbbm{k}(\Sigma e_{t})^*\oplus \mathbbm{k}(\Sigma e_r)^*]\otimes F^{\#}$$ is concentrated in degree $\ge -1$, we have $H^{-1}(\Hom_{\mathscr{A}}(F,F))=Z^{-1}(\Hom_{\mathscr{A}}(F,F))$.
 For any $f\in \Hom_{\mathscr{A}}(F,F)^{-1}$, it is uniquely determined by a matrix
 $$A_f=\left(
     \begin{array}{cccccc}
       0 & 0 & 0 &  0& 0 & 0 \\
       0 & 0 & 0 & 0 & 0 & 0 \\
       0 & 0 & 0 & 0 & 0 & 0 \\
       a_{11} & a_{12} & a_{13} & 0 & 0 & 0 \\
       a_{21} & a_{22} & a_{23} & 0 & 0 & 0 \\
       a_{31} & a_{32} & a_{33} & 0 & 0 & 0 \\
     \end{array}
   \right)\in M_{6}(\mathbbm{k})$$
   such that $$\left(
       \begin{array}{c}
         f(1) \\
         f(\Sigma e_y) \\
         f(\Sigma e_z) \\
         f(\Sigma e_{x^2}) \\
         f(\Sigma e_{t}) \\
         f(\Sigma e_{r}) \\
       \end{array}
     \right)=A_f\left(
                  \begin{array}{c}
                    1 \\
                    \Sigma e_y \\
                    \Sigma e_z \\
                    \Sigma e_{x^2} \\
                    \Sigma e_{t} \\
                    \Sigma e_{r} \\
                  \end{array}
                \right).$$
   Since
\begin{align*}
&\quad \quad \begin{cases}
\partial_F\circ f(1)=0\\
\partial_F\circ f(\Sigma e_y)=0\\
\partial_F\circ f(\Sigma e_z)=0\\
\partial_F\circ f(\Sigma e_{x^2})=a_{12}y+a_{13}(x+y\Sigma e_y)\\
\partial_F\circ f(\Sigma e_{t})=a_{22}y+a_{23}(x+y\Sigma e_y)\\
\partial_F\circ f(\Sigma e_r)=a_{32}y+a_{33}(x+y\Sigma e_y)
\end{cases} \\
& \text{and}\quad \begin{cases}
                                                 f\circ \partial_F(1)=0  \\
                                                   f\circ \partial_F(\Sigma e_y)=0          \\
                                                    f\circ \partial_F(\Sigma e_z)=0         \\
                                                    f\circ \partial_F(\Sigma e_{x^2})=0    \\
                                                     f\circ \partial_F(\Sigma e_{t})=-y(a_{11}+a_{12}\Sigma e_y+a_{13}\Sigma e_z)\\
                                                    f\circ \partial_F(\Sigma e_{r})=-y(a_{21}+a_{22}\Sigma e_y+a_{23}\Sigma e_z)-x(a_{11}+a_{12}\Sigma e_y+a_{13}\Sigma e_z)
                                                   \end{cases},
\end{align*}
 the $\mathscr{A}$-linear map $\partial_{\Hom}(f)=\partial_F\circ f+f\circ \partial_F$ corresponds to the matrix
\begin{align*}
\left(
     \begin{array}{cccccc}
       0 & 0 & 0 &  0& 0 & 0 \\
       0 & 0 & 0 & 0 & 0 & 0 \\
       0 & 0 & 0 & 0 & 0 & 0 \\
       a_{12}y+a_{13}x & a_{13}y & 0 & 0 & 0 & 0 \\
       a_{22}y+a_{23}x-a_{11}y & a_{23}y-a_{12}y & -a_{13}y & 0 & 0 & 0 \\
       a_{32}y+a_{33}x-a_{21}y-a_{11}x & a_{33}y-a_{22}y-a_{12}x & -a_{23}y-a_{13}x & 0 & 0 & 0 \\
     \end{array}
   \right).
\end{align*}
If $f\in Z^{-1}(\Hom_{\mathscr{A}}(F,F))$, then
\begin{align*}
\begin{cases}
a_{12}y+a_{13}x=0\\
a_{13}y=0\\
a_{22}y+a_{23}x-a_{11}y =0\\
a_{23}y-a_{12}y=0\\
a_{32}y+a_{33}x-a_{21}y-a_{11}x =0\\
a_{33}y-a_{22}y-a_{12}x=0 \\
-a_{23}y-a_{13}x=0,
\end{cases}
\end{align*}
which indicate that $a_{12}=a_{13}=0=a_{23}$, $a_{32}=a_{21}$ and $a_{11}=a_{22}=a_{33}$.
Hence $$H^{-1}(\Hom_{\mathscr{A}}(F,F))\cong \left\{\left(
     \begin{array}{cccccc}
       0 & 0 & 0 &  0& 0 & 0 \\
       0 & 0 & 0 & 0 & 0 & 0 \\
       0 & 0 & 0 & 0 & 0 & 0 \\
       a & 0 & 0 & 0 & 0 & 0 \\
       b & a & 0 & 0 & 0 & 0 \\
       c & b & a & 0 & 0 & 0 \\
     \end{array}
   \right)|a,b,c\in \mathbbm{k} \right\}.$$
     Hence we have $\dim_k \Hom_{\mathscr{A}}(F,F)^{-1}=\dim_kZ^{-1}(\Hom_{\mathscr{A}}(F,F))=3$. It implies that $Z^0(\Hom_{\mathscr{A}}(F,F))=H^0(\Hom_{\mathscr{A}}(F,F))$. For any $g\in Z^0(\Hom_{\mathscr{A}}(F,F))$, it is uniquely determined by a
     matrix $$X_g= \left(
     \begin{array}{cccccc}
       b_{11} & b_{12} & b_{13} &  0& 0 & 0 \\
       b_{21} & b_{22} & b_{23} & 0 & 0 & 0 \\
       b_{31} & b_{32} & b_{33} & 0 & 0 & 0 \\
       d_{11} & d_{12} & d_{13} & c_{11} & c_{12} & c_{13} \\
       d_{21} & d_{22} & d_{23} & c_{21} & c_{22} & c_{23} \\
       d_{31} & d_{32} & d_{33} & c_{31} & c_{32} & c_{33} \\
     \end{array}
   \right)=\left(\begin{array}{cc}
   B&0\\
   D&C\\
   \end{array}
   \right)$$
   such that$$\left(
       \begin{array}{c}
         g(1) \\
         g(\Sigma e_y) \\
         g(\Sigma e_z) \\
         g(\Sigma e_{x^2}) \\
         g(\Sigma e_{t}) \\
         g(\Sigma e_r) \\
       \end{array}
     \right)=X_g\left(
                  \begin{array}{c}
                    1 \\
                    \Sigma e_y \\
                    \Sigma e_z \\
                    \Sigma e_{x^2} \\
                    \Sigma e_{t} \\
                    \Sigma e_{r} \\
                  \end{array}
                \right),$$
where $b_{ij},c_{ij}\in \mathbbm{k}$ and $d_{ij}\in \mathscr{A}^1$. For simplicity, we let
$$\left(
      \begin{array}{cccccc}
        0 & 0 & 0 & 0 & 0 & 0 \\
        y & 0 & 0 & 0 & 0 & 0 \\
        x & y & 0 & 0 & 0 & 0 \\
        x^2 & 0 & 0 & 0 & 0 & 0 \\
        0 & x^2 & 0 & y & 0 & 0 \\
        0 & 0 & x^2 & x & y & 0 \\
      \end{array}
    \right)=\left(
      \begin{array}{cc}
      Q_1& 0\\
      Q_2& Q_1\\
      \end{array}\right),$$
      where
      $$Q_1=\left(
      \begin{array}{ccc}
        0 & 0 & 0  \\
        y & 0 & 0 \\
        x & y & 0 \\
\end{array}
\right), Q_2=\left(
      \begin{array}{ccc}
        x^2 & 0 & 0  \\
        0 & x^2 & 0 \\
        0 & 0 & x^2 \\
\end{array}
\right)$$
Since $0=\partial_{\Hom}(g)=\partial_F\circ g-g\circ \partial_F$, we have
\begin{align*}
& \left(
     \begin{array}{cc}
      0 & 0  \\
      \partial_{\mathscr{A}}(D) & 0 \\
     \end{array}
   \right)-\left(
     \begin{array}{cc}
      0 & 0  \\
      DQ_1 & 0 \\
     \end{array}
   \right)+ \left(\begin{array}{cc}
   B&0\\
   0&C\\
   \end{array}
   \right) \left(
      \begin{array}{cc}
        Q_1 & 0 \\
        Q_2 &  Q_1 \\
      \end{array}
    \right)\\
    &=\left(
      \begin{array}{cc}
        Q_1 & 0 \\
        Q_2 &  Q_1 \\
      \end{array}
    \right)\left(\begin{array}{cc}
   B&0\\
   D&C\\
   \end{array}
   \right),
    \end{align*}
    where $\partial_{\mathscr{A}}(D)=\left(
      \begin{array}{ccc}
        \partial_{\mathscr{A}}(d_{11}) &  \partial_{\mathscr{A}}(d_{12})& \partial_{\mathscr{A}}(d_{13})  \\
        \partial_{\mathscr{A}}(d_{21}) & \partial_{\mathscr{A}}(d_{22}) & \partial_{\mathscr{A}}(d_{23}) \\
        \partial_{\mathscr{A}}(d_{31}) & \partial_{\mathscr{A}}(d_{32}) & \partial_{\mathscr{A}}(d_{33}) \\
\end{array}
\right).$
Then
$$\left(
      \begin{array}{cc}
        BQ_1 & 0 \\
      \partial_{\mathscr{A}}(D)-DQ_1+ CQ_2 &  CQ_1 \\
      \end{array}
    \right)=\left(
      \begin{array}{cc}
        Q_1B & 0 \\
        Q_2B+Q_1D &  Q_1C \\
      \end{array}
    \right).$$
Therefore,
\begin{align*}
\left(
      \begin{array}{ccc}
        b_{11} & b_{12} & b_{13}  \\
        b_{21} & b_{22} & b_{23} \\
        b_{31} & b_{32} & b_{33} \\
\end{array}
\right)\left(
      \begin{array}{ccc}
        0 & 0 & 0  \\
        y & 0 & 0 \\
        x & y & 0 \\
\end{array}
\right)=\left(
      \begin{array}{ccc}
        0 & 0 & 0  \\
        y & 0 & 0 \\
        x & y & 0 \\
\end{array}
\right)\left(
      \begin{array}{ccc}
        b_{11} & b_{12} & b_{13}  \\
        b_{21} & b_{22} & b_{23} \\
        b_{31} & b_{32} & b_{33} \\
\end{array}
\right)\\
\left(
      \begin{array}{ccc}
        c_{11} & c_{12} & c_{13}  \\
        c_{21} & c_{22} & c_{23} \\
        c_{31} & c_{32} & c_{33} \\
\end{array}
\right)\left(
      \begin{array}{ccc}
        0 & 0 & 0  \\
        y & 0 & 0 \\
        x & y & 0 \\
\end{array}
\right)=\left(
      \begin{array}{ccc}
        0 & 0 & 0  \\
        y & 0 & 0 \\
        x & y & 0 \\
\end{array}
\right)\left(
      \begin{array}{ccc}
        c_{11} & c_{12} & c_{13}  \\
        c_{21} & c_{22} & c_{23} \\
        c_{31} & c_{32} & c_{33} \\
\end{array}
\right)
\end{align*}
and
\begin{align*}
& \partial_{\mathscr{A}}(D)-\left(
      \begin{array}{ccc}
        d_{11} & d_{12} & d_{13}  \\
        d_{21} & d_{22} & d_{23} \\
        d_{31} & d_{32} & d_{33} \\
\end{array}
\right)\left(
      \begin{array}{ccc}
        0 & 0 & 0  \\
        y & 0 & 0 \\
        x & y & 0 \\
\end{array}
\right)+\left(
      \begin{array}{ccc}
        c_{11} & c_{12} & c_{13}  \\
        c_{21} & c_{22} & c_{23} \\
        c_{31} & c_{32} & c_{33} \\
\end{array}
\right)\left(
      \begin{array}{ccc}
        x^2 & 0 & 0  \\
        0 & x^2 & 0 \\
        0 & 0 & x^2 \\
\end{array}
\right)\\
&=\left(
      \begin{array}{ccc}
        x^2 & 0 & 0  \\
        0 & x^2 & 0 \\
        0 & 0 & x^2 \\
\end{array}
\right)\left(
      \begin{array}{ccc}
        b_{11} & b_{12} & b_{13}  \\
        b_{21} & b_{22} & b_{23} \\
        b_{31} & b_{32} & b_{33} \\
\end{array}
\right)+\left(
      \begin{array}{ccc}
        0 & 0 & 0  \\
        y & 0 & 0 \\
        x & y & 0 \\
\end{array}
\right)\left(
      \begin{array}{ccc}
        d_{11} & d_{12} & d_{13}  \\
        d_{21} & d_{22} & d_{23} \\
        d_{31} & d_{32} & d_{33} \\
\end{array}
\right).
\end{align*}
These equations imply  that
\begin{align*}
\begin{cases}
b_{12}=b_{13}=b_{23}=0\\
b_{11}=b_{22}=b_{33}=c_{11}=c_{22}=c_{33}\\
c_{12}=c_{13}=c_{23}=0\\
b_{21}=b_{32}=c_{21}=c_{32}\\
b_{31}=c_{31}\\
d_{13}=0, d_{12}=ry, d_{11}=rx+sy\\
d_{23}=-ry, d_{22}=-(v+s)y,d_{21}=-vx+uy, \\
d_{33}=-rx+vy, d_{32}=-sx+wy, d_{31}=(u+w)x+\lambda y  \\
\end{cases}
\end{align*}
where $r,s,w,u,v,\lambda \in \mathbbm{k}$.
So $Z^0(\Hom_{\mathscr{A}}(F,F))$ is isomorphic to
$$\left\{ \left(
     \begin{array}{cccccc}
       d & 0 & 0 &  0& 0 & 0 \\
       e & d & 0 & 0 & 0 & 0 \\
       q & e & d & 0 & 0 & 0 \\
       rx+sy & ry & 0 & d &  0 & 0 \\
       -vx+uy & -(s+v)y & -ry & e & d &  0 \\
       (w+u)x+\lambda y & wy-sx & vy-rx & q & e & d \\
     \end{array}
   \right) | d,e,q,r,s,w,u,v,\lambda\in \mathbbm{k}\right\}.$$
For any $g\in Z^0(\Hom_{\mathscr{A}}(F,F))$ such that
$$\left(
                  \begin{array}{c}
                   g(1)\\
                    g(\Sigma e_y) \\
                    g(\Sigma e_z) \\
                    g(\Sigma e_{x^2}) \\
                    g(\Sigma e_{t}) \\
                    g(\Sigma e_{r}) \\
                  \end{array}
                \right)=\left(
     \begin{array}{cccccc}
       0 & 0 & 0 &  0& 0 & 0 \\
       0 & 0 & 0 & 0 & 0 & 0 \\
       0 & 0 & 0 & 0 & 0 & 0 \\
       rx+sy & ry & 0 & 0 &  0 & 0 \\
       -vx+uy & -(s+v)y & -ry & 0 & 0 &  0 \\
       (w+u)x+\lambda y & wy-sx & vy-rx & 0 & 0 & 0 \\
     \end{array}
   \right) \left(
                  \begin{array}{c}
                    1 \\
                    \Sigma e_y \\
                    \Sigma e_z \\
                    \Sigma e_{x^2} \\
                    \Sigma e_{t} \\
                    \Sigma e_{r} \\
                  \end{array}
                \right),$$
we have $\partial_{\Hom}(h)=g$, where $h$ is defined by
$$ \left(
                  \begin{array}{c}
                    h(1) \\
                    h(\Sigma e_y) \\
                    h(\Sigma e_z) \\
                    h(\Sigma e_{x^2}) \\
                    h(\Sigma e_{t}) \\
                    h(\Sigma e_{r}) \\
                  \end{array}
                \right)\left(
     \begin{array}{cccccc}
       0 & 0 & 0 &  0& 0 & 0 \\
       0 & 0 & 0 & 0 & 0 & 0 \\
       0 & 0 & 0 & 0 & 0 & 0 \\
       -u & s & r & 0 &  0 & 0 \\
        -\lambda & 0 & -v & 0 & 0 &  0 \\
       0 & 0 & w & 0 & 0 & 0 \\
     \end{array}
   \right)\left(
                  \begin{array}{c}
                    1 \\
                    \Sigma e_y \\
                    \Sigma e_z \\
                    \Sigma e_{x^2} \\
                    \Sigma e_{t} \\
                    \Sigma e_{r} \\
                  \end{array}
                \right).$$
Therefore $$H^0(\Hom_{\mathscr{A}}(F,F))\cong \left\{ \left(
     \begin{array}{cccccc}
       d & 0 & 0 &  0& 0 & 0 \\
       e & d & 0 & 0 & 0 & 0 \\
       q & e & d & 0 & 0 & 0 \\
       0 & 0 & 0 & d &  0 & 0 \\
       0 & 0 & 0 & e & d &  0 \\
       0 & 0 & 0 & q & e & d \\
     \end{array}
   \right)\quad |\quad d,e,q\in \mathbbm{k}\right\}.$$
From the computations above, the Ext-algebra $$\mathcal{E}=H(\Hom_{\mathscr{A}}(F,F))\cong \left\{ \left(
     \begin{array}{cccccc}
       d & 0 & 0 &  0& 0 & 0 \\
       e & d & 0 & 0 & 0 & 0 \\
       q & e & d & 0 & 0 & 0 \\
       a & 0 & 0 & d &  0 & 0 \\
       b & a & 0 & e & d &  0 \\
       c & b & a & q & e & d \\
     \end{array}
   \right)\quad |\quad a,b,c, d,e,q\in \mathbbm{k}\right\}.$$
It remains to show that the matrix algebra mentioned above
   is a symmetric Frobenius graded algebra. Its identity element $e_0$ is $\sum\limits_{i=1}^6E_{ii}$. Set $e_1=E_{21}+E_{32}+E_{54}+E_{65}$, $e_2=E_{31}+E_{64}$, $e_3=E_{41}+E_{52}+E_{63}$, $e_4=E_{51}+E_{62}$ and $e_5=E_{61}$.
   As a graded vector space, it admits a $\mathbbm{k}$-linear basis
  $ \left\{e_0,e_1,e_2,e_3,e_4,e_5\right \},$
  with $|e_0|=|e_1|=|e_2|=0$ and
  $|e_3|=|e_4|=|e_5|=-1$. Its multiplication is determined by the following tabular\\
\begin{center} \begin{tabular}{l|llllll}
$\cdot$ & $e_0$  & $e_1$ &  $e_2$ & $e_3$ & $e_4$ & $e_5$\\
 \hline
$e_0$    & $e_0$   & $e_1$ & $e_2$  & $e_3$ & $e_4$ & $e_5$ \\
$e_1$  & $e_1$ & $e_2$ & $0$    &$e_4$  & $e_5$ & $0$ \\
$e_2$  & $e_2$ & $0$   & $0$    &$e_5$  & $0$   & $0$\\
$e_3$  &$e_3$  & $e_4$ & $e_5$  &$0$    & $0$   & $0$\\
$e_4$  &$e_4$  & $e_5$ & $0$    &$0$    & $0$   & $0$\\
$e_5$  & $e_5$ & $0$   & $0$    &$0$    & $0$   & $0$
 \\
\end{tabular}.
\end{center}
One sees that the algebra is commutative.
Define a linear map
 \begin{align*}
 \theta: \mathcal{E}\to \Sigma \Hom_{\mathbbm{k}}(\mathcal{E},\mathbbm{k})
 \end{align*}
  by
 $$\left\{   \begin{array}{c}
                                                    e_0   \\
                                                    e_1                \\
                                                     e_2                  \\
                                                     e_3    \\
                                                     e_4    \\
                                                     e_5
                                                   \end{array}\right \}  \begin{array}{c}
                                                   \longrightarrow\\
                                                   \longrightarrow\\
                                                   \longrightarrow\\
                                                   \longrightarrow\\
                                                   \longrightarrow\\
                                                   \longrightarrow
                                                   \end{array}
                                                    \left\{   \begin{array}{c}
                                                     \Sigma e_5^* \,\, \\
                                                     \Sigma e_4^*  \,\,      \\
                                                     \Sigma e_3^* \,\,  \\
                                                     \Sigma e_2^* \,\, \\
                                                     \Sigma e_1^* \,\, \\
                                                     \Sigma e_0^*
                                                   \end{array}\right \}.
                                                     $$
One sees that $\theta$ is bijective. Hence we only need to show that $\theta$ is $\mathcal{E}$-linear.
We have
$\theta(e_ie_0)=\theta(e_i)=\Sigma e_{5-i}^*$ and
$$
e_i\theta(e_0)(\Sigma e_j)=e_i\Sigma e_5^*(\Sigma e_j)=\Sigma e_5^*(\Sigma e_je_i)=\begin{cases}
0, \quad \text{if} \,\,\,\,j\neq 5-i\\
\Sigma 1,\,\, \text{if}\,\,j=5-i.
\end{cases}$$
Hence $e_i\theta(e_0)=\Sigma e_{5-i}^*=\theta(e_ie_0)$. Since
$\theta(e_1e_1)=\theta(e_2)=\Sigma e_3^*$ and
$$e_1\theta(e_1)(\Sigma e_j)=\Sigma e_4^*(\Sigma e_j e_1)=\begin{cases}
0,\quad  \text{if}\,\, j\neq 3 \\
\Sigma 1, \quad \text{if}\,\,j =3
\end{cases},$$
we have $e_1\theta(e_1)=\Sigma e_3^*=\theta(e_1e_1)$. From
 $$e_2\theta(e_1)=e_2\Sigma e_4^*: \left\{   \begin{array}{c}
                                                   \Sigma e_0   \\
                                                    \Sigma e_1                \\
                                                     \Sigma e_2                  \\
                                                     \Sigma e_3    \\
                                                     \Sigma e_4    \\
                                                     \Sigma e_5    \\
                                                   \end{array}\right \}  \begin{array}{c}
                                                   \longrightarrow\\
                                                   \longrightarrow\\
                                                   \longrightarrow\\
                                                   \longrightarrow\\
                                                      \longrightarrow\\
                                                         \longrightarrow\
                                                   \end{array}
                                                    \left\{   \begin{array}{c}
                                                     0 \,\, \\
                                                     0 \,\,      \\
                                                     0 \,\,  \\
                                                     0 \,\, \\
                                                     0 \,\, \\
                                                     0 \,\,
                                                   \end{array}\right \}$$
 and
$$e_5\theta(e_1)=e_5\Sigma e_4^*: \left\{   \begin{array}{c}
                                                   \Sigma  e_0   \\
                                                    \Sigma e_1                \\
                                                      \Sigma e_2                  \\
                                                    \Sigma  e_3    \\
                                                    \Sigma  e_4    \\
                                                     \Sigma e_5    \\
                                                   \end{array}\right \}  \begin{array}{c}
                                                   \longrightarrow\\
                                                   \longrightarrow\\
                                                   \longrightarrow\\
                                                   \longrightarrow\\
                                                      \longrightarrow\\
                                                         \longrightarrow\
                                                   \end{array}
                                                    \left\{   \begin{array}{c}
                                                     0 \,\, \\
                                                     0 \,\,      \\
                                                     0 \,\,  \\
                                                     0 \,\, \\
                                                     0 \,\, \\
                                                     0 \,\,
                                                   \end{array}\right \},$$
we have $e_2\theta(e_1)=0=\theta(0)=\theta(e_2e_1)$ and $e_5\theta(e_1)=0=\theta(0)=\theta(e_5e_1)$.
Since $\theta(e_3e_1)=\theta(e_4)=\Sigma e_1^*$, $\theta(e_4e_1)=\theta(e_5)=\Sigma e_0^*$,
$$e_3\theta(e_1)(\Sigma e_j)=\Sigma e_4^*(\Sigma e_j e_3)=\begin{cases}
0,\quad  \,\, \text{if}\,\, j\neq 1 \\
\Sigma 1, \quad \text{if}\,\,j =1,
\end{cases}$$
and $$e_4\theta(e_1)(\Sigma e_j)=\Sigma e_4^*(\Sigma e_j e_4)=\begin{cases}
0,\quad  \,\, \text{if}\,\, j\neq 0 \\
\Sigma 1, \quad \text{if}\,\,j =0,
\end{cases}$$
we conclude that $e_3\theta(e_1)=\Sigma e_1^*=\theta(e_3e_1)$ and $e_4\theta(e_1)=\Sigma e_0^*=\theta(e_4e_1)$.
From $$e_2\theta(e_2)=e_2\Sigma e_3^*: \left\{   \begin{array}{c}
                                                   \Sigma e_0   \\
                                                    \Sigma e_1                \\
                                                     \Sigma e_2                  \\
                                                     \Sigma e_3    \\
                                                     \Sigma e_4    \\
                                                     \Sigma e_5    \\
                                                   \end{array}\right \}  \begin{array}{c}
                                                   \longrightarrow\\
                                                   \longrightarrow\\
                                                   \longrightarrow\\
                                                   \longrightarrow\\
                                                      \longrightarrow\\
                                                         \longrightarrow\
                                                   \end{array}
                                                    \left\{   \begin{array}{c}
                                                     0 \,\, \\
                                                     0 \,\,      \\
                                                     0 \,\,  \\
                                                     0 \,\, \\
                                                     0 \,\, \\
                                                     0 \,\,
                                                   \end{array}\right \},$$
we have $e_2\theta(e_2)=0=\theta(0)=\theta (e_2e_2)$. Similarly, $e_4\theta(e_2)=0=\theta(0)=\theta(e_4e_2)$ and $e_5\theta(e_2)=0=\theta(0)=\theta(e_5e_2)$. Since
$\theta(e_3e_2)=\theta(e_5)=\Sigma e_0^*$ and
$$e_3\theta(e_2)(\Sigma e_j)=\Sigma e_3^*(\Sigma e_j e_3)=\begin{cases}
0,\quad  \,\,\,\, \text{if}\,\, j\neq 0 \\
\Sigma 1, \quad \text{if}\,\,j =0,
\end{cases}$$
we have $e_3\theta(e_2)=\Sigma e_0^*=\theta(e_3e_2)$. Similarly, we can show
\begin{align*}
\begin{cases}e_i\theta(e_3)=e_i\Sigma e_2^*=0=\theta(0)=\theta(e_ie_3),i=3,4,5\\
e_j\theta(e_4)=e_i\Sigma e_1^*=0=\theta(0)=\theta(e_je_4), j=4,5 \\
e_5\theta(e_5)=e_5\Sigma e_0^*=0=\theta(0)=\theta(e_5e_5).
\end{cases}
\end{align*}
So $\theta$ is an isomorphism of $\mathcal{E}$-module. Since $\mathcal{E}$ is a commutative Frobenius graded algebra, it is a symmetric Frobenius graded algebra. By Theorem \ref{cycond}, $\mathscr{A}$ is a Calabi-Yau DG algebra.
\end{proof}

\section{applications on dg free algebras}
In \cite{MXYA}, DG free algebra  were introduced and systematically studied. Recall that a connected cochain DG algebra $\mathscr{A}$ is called DG free if its underlying graded algebra $$\mathscr{A}^{\#}=\k\langle x_1,x_2,\cdots, x_n\rangle,\,\, \text{with}\,\, |x_i|=1,\,\, \forall i\in \{1,2,\cdots, n\}.$$
 By \cite[Theorem 2.4]{MXYA},  $\partial_{\mathscr{A}}$ is uniquely determined by a crisscrossed ordered $n$-tuple of $n\times n$ matrixes. For the case $n=2$, it is proved in \cite{MXYA} that all those non-trivial DG free algebras are Koszul and Calabi-Yau. It seems that things will become more complicated when $n$ is larger than $2$. In this section, we pick out several examples of non-trivial DG free algebras generated by $3$ degree one elements. We will apply Theorem \ref{Gorenstein} and Theorem \ref{cycond} to judge whether they are Gorenstein and Calabi-Yau.

\begin{ex}\label{example1}
Let $\mathscr{A}$ be a connected cochain DG algebra with $\mathscr{A}^{\#}=\mathbbm{k}\langle x_1,x_2,x_3\rangle$,
$|x_1|=|x_2|=|x_3|=1$ and
 $\partial_{\mathscr{A}}(x_1)=x_1^2, \partial_{\mathscr{A}}(x_2)=x_2x_1, \partial_{\mathscr{A}}(x_3)=x_1x_3$.
\end{ex}

\begin{prop}\label{ex1}
Let $\mathscr{A}$ be the cochain DG algebra described in Example \ref{example1}.
Then $H(\mathscr{A})=\mathbbm{k}[\lceil x_2x_3\rceil ]$, which implies that $\mathscr{A}$ is homologically smooth and Gorenstein. Furthermore,  the Ext-algebra $H(R\Hom_{\mathscr{A}}(\mathbbm{k},\mathbbm{k}))$ is a graded symmetric Frobenius algebra isomorphic to $\mathbbm{k}[x]/(x^2)$ with $|x|=-1$. Then Theorem \ref{cycond} indicates that $\mathscr{A}$ is a non-Koszul 
Calabi-Yau DG algebra.
\end{prop}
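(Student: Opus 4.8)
The plan is to reduce everything to a single computation, namely the determination of the cohomology algebra $H(\mathscr{A})$; once $H(\mathscr{A})\cong\mathbbm{k}[\lceil x_2x_3\rceil]$ with $|\lceil x_2x_3\rceil|=2$ is in hand, all the homological conclusions follow from the machinery already developed. First I would record that $\partial_{\mathscr{A}}^2=0$ and that $x_2x_3$ is a cocycle, since $\partial_{\mathscr{A}}(x_2x_3)=x_2x_1x_3-x_2x_1x_3=0$; this produces a homomorphism of graded algebras $\mathbbm{k}[t]\to H(\mathscr{A})$, $t\mapsto\lceil x_2x_3\rceil$, and the entire problem is to show it is an isomorphism.

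The key observation driving the computation is that $\partial_{\mathscr{A}}$ only ever inserts letters $x_1$: it neither creates nor destroys an $x_2$ or an $x_3$, and it does not disturb the subword of $x_2$'s and $x_3$'s occurring in a monomial. Consequently $\partial_{\mathscr{A}}$ preserves both the bigrading by $(\#x_2,\#x_3)$ and, more finely, the \emph{skeleton} $s$ (the ordered subword of $x_2$'s and $x_3$'s), so that $(\mathscr{A},\partial_{\mathscr{A}})=\bigoplus_s(\mathscr{A}_s,\partial_{\mathscr{A}})$, where $\mathscr{A}_s$ is spanned by the words obtained from $s=y_1\cdots y_m$ by inserting powers $x_1^{a_0},\dots,x_1^{a_m}$ into the $m+1$ gaps. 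Since $\partial_{\mathscr{A}}(x_2)=x_2x_1$ feeds an $x_1$ into the gap immediately to the right of an $x_2$, $\partial_{\mathscr{A}}(x_3)=x_1x_3$ feeds one into the gap immediately to the left of an $x_3$, and $\partial_{\mathscr{A}}(x_1^a)$ equals $x_1^{a+1}$ or $0$ according as $a$ is odd or even, each gap evolves independently; thus $\mathscr{A}_s$ is a tensor product of $m+1$ one-variable complexes $G_i=(\mathbbm{k}[x_1],d_i)$, and by K\"unneth $H(\mathscr{A}_s)=\bigotimes_i H(G_i)$. A direct calculation shows that $d_i(x_1^a)=c_a^{(i)}x_1^{a+1}$, where, up to an overall sign, $c_a^{(i)}=[a\ \mathrm{odd}]$ when the gap receives an even number of neighbouring insertions (so $H(G_i)=\mathbbm{k}$, generated by $x_1^0$) and $c_a^{(i)}=[a\ \mathrm{even}]$ when it receives an odd number (so $H(G_i)=0$). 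I expect the sign bookkeeping in this last step to be the main obstacle: the two insertions into a gap lying between a consecutive pair $x_2x_3$ carry one constant and one alternating sign, and one must check that, together with the self-differential $\partial_{\mathscr{A}}(x_1^a)$, they combine to leave precisely the clean pattern $[a\ \mathrm{odd}]$ rather than a non-vanishing coefficient. A short parity lemma then finishes the count: writing $b_j=[\,y_j=x_2\,]$, every gap receives an even number of insertions if and only if $b_1=1$, $b_{i+1}=1-b_i$ for $1\le i\le m-1$, and $b_m=0$, which forces $s=(x_2x_3)^n$. Hence $H(\mathscr{A}_s)=\mathbbm{k}$ exactly when $s=(x_2x_3)^n$ and vanishes otherwise, so $H(\mathscr{A})=\bigoplus_{n\ge0}\mathbbm{k}\,\lceil(x_2x_3)^n\rceil=\mathbbm{k}[\lceil x_2x_3\rceil]$, a polynomial algebra on one generator of degree $2$.

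With $H(\mathscr{A})$ identified, I would finish as follows. Because $H(\mathscr{A})=\mathbbm{k}[t]$ is AS-regular, Lemma~\ref{Goren} shows $\mathscr{A}$ is left, hence two-sided, Gorenstein once smoothness is known. For smoothness and the Ext-algebra I build the semifree DG module $K=\mathscr{A}e_0\oplus\mathscr{A}e_1$ with $|e_0|=0$, $|e_1|=1$ and $\partial_K(e_1)=x_2x_3\,e_0$; the short exact sequence $0\to\mathscr{A}e_0\to K\to\mathscr{A}e_1\to0$ has connecting map equal to multiplication by $\lceil x_2x_3\rceil$ on $\mathbbm{k}[t]$, which is injective with cokernel $\mathbbm{k}$, so $K\xrightarrow{\simeq}\mathbbm{k}$ is a minimal semifree resolution with a finite semibasis and $\mathscr{A}$ is homologically smooth. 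Minimality makes the differential of $\Hom_{\mathscr{A}}(K,\mathbbm{k})$ vanish, whence $H(\mathcal{E})=\mathbbm{k}e_0^*\oplus\mathbbm{k}e_1^*$ with $|e_0^*|=0$ and $|e_1^*|=-1$; as the algebra is concentrated in degrees $0$ and $-1$, the square of the degree $-1$ class necessarily vanishes, giving $H(\mathcal{E})\cong\mathbbm{k}[x]/(x^2)$ with $|x|=-1$. This exterior algebra on a single odd generator is graded commutative, hence a \emph{symmetric} graded Frobenius algebra, and its Frobenius property re-confirms Gorensteinness via Proposition~\ref{mainthm}. Finally, since $\mathscr{A}$ is homologically smooth and Gorenstein and $H(\mathcal{E})$ is symmetric Frobenius, Theorem~\ref{cycond} yields that $\mathscr{A}$ is Calabi-Yau.
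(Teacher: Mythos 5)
Your proposal is correct, and its core — the computation of $H(\mathscr{A})$ — takes a genuinely different route from the paper. The paper proceeds by brute force: it computes $H^i(\mathscr{A})$ for $i\le 3$ explicitly and then runs a two-step induction, in each degree writing a cocycle as $x_1r_1+x_2r_2+x_3r_3$ and solving the resulting system of equations for the $r_i$; this occupies several pages of term-by-term manipulation. You instead exploit the structural fact that $\partial_{\mathscr{A}}$ only inserts copies of $x_1$, decompose $\mathscr{A}$ as a direct sum of subcomplexes $\mathscr{A}_s$ indexed by the skeleton $s$ of $x_2$'s and $x_3$'s, factor each $\mathscr{A}_s$ (up to Koszul signs) as a tensor product of one-variable gap complexes, and apply K\"unneth together with a parity count on the gaps. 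I checked the sign bookkeeping you flag as the main obstacle: writing $P_i$ for the length of the prefix to the left of gap $i$, the total coefficient of the operator raising $a_i$ by one is $(-1)^{P_i}\bigl([a_i\ \mathrm{odd}]-[y_i=x_2]+(-1)^{a_i}[y_{i+1}=x_3]\bigr)$, which in the four cases gives exactly your pattern ($[a\ \mathrm{odd}]$ up to sign for $0$ or $2$ neighbouring insertions, $[a\ \mathrm{even}]$ up to sign for $1$), and the prefix sign $(-1)^{P_i}$ is the Koszul sign for the block decomposition up to a global sign per factor that does not affect cohomology; so the factorization and the conclusion $H(\mathscr{A})=\mathbbm{k}[\lceil x_2x_3\rceil]$ are sound, and your approach scales better to similar DG free algebras. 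The remainder of your argument — the two-term minimal semifree resolution $K$ with $\partial_K(e_1)=x_2x_3e_0$ giving homological smoothness, the degree argument forcing $H(\mathcal{E})\cong\mathbbm{k}[x]/(x^2)$ with $|x|=-1$, graded commutativity implying the symmetric Frobenius property, and the appeal to Theorem \ref{cycond} — coincides with the paper's proof.
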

\begin{proof}
Since $\partial_{\mathscr{A}}(x_1)=x_1^2, \partial_{\mathscr{A}}(x_2)=x_2x_1, \partial_{\mathscr{A}}(x_3)=x_1x_3$, we have
$H^0(\mathscr{A})=\mathbbm{k}$ and $H^1(\mathscr{A})=0$. For any $z_2\in Z^2(\mathscr{A})$, it can be written as $z_2=\sum\limits_{i,j=1}^3c_{ij}x_ix_j$.
Then \begin{align*}
0 &=\partial_{\mathscr{A}}(z_2)\\
&=c_{12}[x_1^2x_2-x_1x_2x_1]+c_{22}[x_2x_1x_2-x_2^2x_1]+c_{32}[x_1x_3x_2-x_3x_2x_1]\\
&+c_{31}[x_1x_3x_1-x_3x_1^2]+c_{33}[x_1x_3^2-x_3x_1x_3].
\end{align*}
So $c_{12}=c_{22}=c_{31}=c_{32}=c_{33}=0$ and $z_2=c_{11}x_1^2+c_{13}x_1x_3+c_{21}x_2x_1+c_{23}x_2x_3$. Hence
$Z^2(\mathscr{A})=\mathbbm{k}x_1^2\oplus \mathbbm{k}x_1x_3\oplus \mathbbm{k}x_2x_1\oplus \mathbbm{k}x_2x_3$. We get $H^2(\mathscr{A})=k\lceil x_2x_3\rceil$ since $B^2(\mathscr{A})=\mathbbm{k}x_1^2\oplus \mathbbm{k}x_1x_3\oplus \mathbbm{k}x_2x_1$.
For any $z_3=x_1r_1+x_2r_2+x_3r_3\in Z^3(\mathscr{A})$, we have
\begin{align*}
0=\partial_{\mathscr{A}}(z_3)&=x_1^2r_1-x_1\partial_{\mathscr{A}}(r_1)+x_2x_1r_2-x_2\partial_{\mathscr{A}}(r_2)+x_1x_3r_3-x_3\partial_{\mathscr{A}}(r_3)\\
 &=x_1[x_1r_1-\partial_{\mathscr{A}}(r_1)+x_3r_3]+x_2[x_1r_2-\partial_{\mathscr{A}}(r_2)]-x_3\partial_{\mathscr{A}}(r_3).
\end{align*}
Thus $$\begin{cases}
\partial_{\mathscr{A}}(r_3)=0\\
x_1r_2-\partial_{\mathscr{A}}(r_2)=0\\
x_1r_1-\partial_{\mathscr{A}}(r_1)+x_3r_3=0.
\end{cases}$$
So $r_3=t_{11}x_1^2+t_{13}x_1x_3+t_{21}x_2x_1+t_{23}x_2x_3$, for some $t_{11},t_{13},t_{21},t_{23}\in \mathbbm{k}$. Let
$r_2=c_{11}x_1^2+c_{12}x_1x_2+c_{13}x_1x_3+c_{21}x_2x_1+c_{22}x_2^2+c_{23}x_3^2+c_{31}x_3x_1+c_{32}x_3x_1+c_{33}x_3^2$.
Substituting it into the equation $x_1r_2-\partial_{\mathscr{A}}(r_2)=0$. We get $c_{21}=-c_{12}$ and
$c_{11}=c_{13}=c_{22}=c_{23}=c_{31}=c_{32}=c_{33}=0$. Let
$$r_1=b_{11}x_1^2+b_{12}x_1x_2+b_{13}x_1x_3+b_{21}x_2x_1+b_{22}x_2^2+b_{23}x_3^2+b_{31}x_3x_1+b_{32}x_3x_1+b_{33}x_3^2.$$
Since $\partial_{\mathscr{A}}(r_1)=x_1r_1+x_3r_3$, we have
\begin{align*}
b_{12}x_1^2x_2&-b_{12}x_1x_2x_1+b_{22}x_2x_1x_2-b_{22}x_2^2x_1+b_{31}x_1x_3x_1-b_{31}x_3x_1^2\\
&+b_{32}x_1x_3x_2-b_{32}x_3x_2x_1+b_{33}x_1x_3^2-b_{33}x_3x_1x_3\\
=b_{11}x_1^3 &+b_{12}x_1^2x_2+b_{13}x_1^2x_3+b_{21}x_1x_2x_1+b_{22}x_1x_2^2+b_{23}x_1x_2x_3+b_{31}x_1x_3x_1\\
&+b_{32}x_1x_3x_2+b_{33}x_1x_3^2+t_{11}x_3x_1^2+t_{13}x_3x_1x_3+t_{21}x_3x_2x_1+t_{23}x_3x_2x_3.
\end{align*}
Hence
\begin{align*}
\begin{cases}
b_{11}=b_{13}=b_{22}=b_{23}=t_{23}=0\\
b_{12}=-b_{21},t_{11}=-b_{31},\\
t_{21}=-b_{32},t_{13}=-b_{33}.
\end{cases}
\end{align*}
Hence
\begin{align*}
\begin{cases}
r_1=\beta x_1x_2-\beta x_2x_1-\gamma x_3x_1-\omega x_3x_2-\lambda x_3^2\\
r_2= \theta x_1x_2-\theta x_2x_1\\
r_3=\gamma x_1^2+\lambda x_1x_3+\omega x_2x_1,
\end{cases}
\end{align*}
for some $\beta, \gamma, \lambda, \omega\in k$.
Then
\begin{align*}
x_1r_1+x_2r_2+x_3r_3&=\beta(x_1^2x_2-x_1x_2x_1)-\gamma x_1x_3x_1-\omega x_1x_3x_2-\lambda x_1x_3^2\\
&+\theta(x_2x_1x_2-x_2^2x_1)+\gamma x_3x_1^2+\lambda x_3x_1x_3+\omega x_3x_2x_1\\
&=\partial_{\mathscr{A}}[\beta x_1x_2-\gamma x_3x_1-\lambda x_3^2-\omega x_3x_2+\theta x_2^2]\in B^3(\mathscr{A}).
\end{align*}
Therefore, $H^3(\mathscr{A})=0$.
For any $i\ge 2$,
we want to prove  $H^{2i}=\mathbbm{k}\lceil x_2x_3\rceil^i$ and $H^{2i+1}(\mathscr{A})=0$. We assume inductively that they are correct for the cases $i\le t-1$.
We need to show the case $i=t$.
For any $z_{2t}=x_1q_1+x_2q_2+x_3q_3\in Z^{2t}(\mathscr{A})$, we have
\begin{align*}
0=\partial_{\mathscr{A}}(z_{2t})&=x_1^2q_1-x_1\partial_{\mathscr{A}}(q_1)+x_2x_1q_2-x_2\partial_{\mathscr{A}}(q_2)+x_1x_3q_3-x_3\partial_{\mathscr{A}}(q_3)\\
 &=x_1[x_1q_1-\partial_{\mathscr{A}}(q_1)+x_3q_3]+x_2[x_1q_2-\partial_{\mathscr{A}}(q_2)]-x_3\partial_{\mathscr{A}}(q_3).
\end{align*}
Then $$\begin{cases}
\partial_{\mathscr{A}}(q_3)=0\\
x_1q_2-\partial_{\mathscr{A}}(q_2)=0\\
x_1q_1-\partial_{\mathscr{A}}(q_1)+x_3q_3=0.
\end{cases}$$ Since $H^{2t-1}(\mathscr{A})=0$, we have $q_3=\partial_{\mathscr{A}}(a_3)$ for some $a_3\in \mathscr{A}^{2t-2}$. Then $\partial_{\mathscr{A}}(q_1)=x_1q_1+x_2\partial_{\mathscr{A}}(a_3)$ and hence $q_1=-x_3a_3$. Let $q_2=x_1\beta_1+x_2\beta_2+x_3\beta_3$. Since
$x_1q_2=\partial_{\mathscr{A}}(q_2)$, we have
\begin{align*}
& \quad x_1^2\beta_1+x_1x_2\beta_2+x_1x_3\beta_3 =x_1q_2=\partial_{\mathscr{A}}(q_2)=\partial_{\mathscr{A}}[x_1\beta_1+x_2\beta_2+x_3\beta_3]\\
&=x_1^2\beta_1-x_1\partial_{\mathscr{A}}(\beta_1)+x_2x_1\beta_2-x_2\partial_{\mathscr{A}}(\beta_2)+x_1x_3\beta_3-x_3\partial_{\mathscr{A}}(\beta_3).
\end{align*}
So $x_1x_2\beta_2+x_1\partial_{\mathscr{A}}(\beta_1)-x_2x_1\beta_2+x_2\partial_{\mathscr{A}}(\beta_2)+x_3\partial_{\mathscr{A}}(\beta_3)=0$. Then
\begin{align*}
\begin{cases}
\partial_{\mathscr{A}}(\beta_3)=0\\
x_2\beta_2+\partial_{\mathscr{A}}(\beta_1)=0\\
x_1\beta_2=\partial_{\mathscr{A}}(\beta_2).
\end{cases}
\end{align*}
Since $H^{2t-2}(\mathscr{A})=\mathbbm{k}\lceil x_2x_3\rceil^{t-1}$, we have $\beta_3=\lambda (x_2x_3)^{t-1}+\partial_{\mathscr{A}}(\omega)$ for some $\lambda\in \mathbbm{k}$ and $\omega\in \mathscr{A}^{2t-3}$. By the definition of $\partial_{\mathscr{A}}$,  the equation $\partial_{\mathscr{A}}(\beta_1)=-x_2\beta_2$ implies that $\beta_1=x_2\gamma$ for some $\gamma\in \mathscr{A}^{2t-3}$. Since $-x_2\beta_2=\partial_{\mathscr{A}}(x_2\gamma)=x_2x_1\gamma-x_2\partial_{\mathscr{A}}(\gamma)$, we have $\beta_2=\partial_{\mathscr{A}}(\gamma)-x_1\gamma$. Therefore,
\begin{align*}
z_{2t}&=x_1q_1+x_2q_2+x_3q_3\\
&=x_1[-x_3a_3]+x_2[x_1(x_2\gamma)+x_2(\partial_{\mathscr{A}}(\gamma)-x_1\gamma)+x_3(\lambda(x_2x_3)^{t-1}+\partial_{\mathscr{A}}(\omega))]\\
&\quad\quad +x_3\partial_{\mathscr{A}}(a_3)\\
&=\partial_{\mathscr{A}}[-x_3a_3+x_2^2\gamma +x_2x_3\omega ]+\lambda(x_2x_3)^{t}.
\end{align*}
Then we have $H^{2t}(\mathscr{A})=\mathbbm{k}\lceil x_2x_3\rceil^t$. It remains to show that $H^{2t+1}(\mathscr{A})=0$.

For any $z_{2t+1}=x_1s_1+x_2s_2+x_3s_3\in Z^{2t+1}(\mathscr{A})$, we have
\begin{align*}
0=\partial_{\mathscr{A}}(z_{2t+1})&=x_1^2s_1-x_1\partial_{\mathscr{A}}(s_1)+x_2x_1s_2-x_2\partial_{\mathscr{A}}(s_2)+x_1x_3s_3-x_3\partial_{\mathscr{A}}(s_3)\\
 &=x_1[x_1s_1-\partial_{\mathscr{A}}(s_1)+x_3s_3]+x_2[x_1s_2-\partial_{\mathscr{A}}(s_2)]-x_3\partial_{\mathscr{A}}(s_3).
\end{align*}
Then $$\begin{cases}
\partial_{\mathscr{A}}(s_3)=0\\
x_1s_2-\partial_{\mathscr{A}}(s_2)=0\\
x_1s_1-\partial_{\mathscr{A}}(s_1)+x_3s_3=0.
\end{cases}$$
Since $H^{2t}(\mathscr{A})=k\lceil x_2x_3\rceil^t$, we have $s_3=b(x_2x_3)^t+\partial_{\mathscr{A}}(c)$ for some $b\in k$ and $c\in \mathscr{A}^{2t-1}$. Then $\partial_{\mathscr{A}}(s_1)=x_1s_1+x_3s_3=x_1s_1+x_3[b(x_2x_3)^t+\partial_{\mathscr{A}}(c)]$. It implies that
$b=0$ and $s_1=-x_3c$. Let $s_2=x_1\xi_1+x_2\xi_2+x_3\xi_3$.
Since $\partial_{\mathscr{A}}(s_2)=x_1s_2$, we have
\begin{align*}
x_1^2\xi_1+x_1x_2\xi_2&+x_1x_3\xi_3=x_1s_2=\partial_{\mathscr{A}}(s_2)=\partial_{\mathscr{A}}[x_1\xi_1+x_2\xi_2+x_3\xi_3]\\
&=x_1^2\xi_1-x_1\partial_{\mathscr{A}}(\xi_1)+x_2x_1\xi_2-x_2\partial_{\mathscr{A}}(\xi_2)+x_1x_3\xi_3-x_3\partial_{\mathscr{A}}(\xi_3).
\end{align*}
Then $x_1x_2\xi_2+x_1\partial_{\mathscr{A}}(\xi_1)-x_2x_1\xi_2+x_2\partial_{\mathscr{A}}(\xi_2)+x_3\partial_{\mathscr{A}}(\xi_3)=0$ and hence
\begin{align*}
\begin{cases}
x_2\xi_2+\partial_{\mathscr{A}}(\xi_1)=0 \\
\partial_{\mathscr{A}}(\xi_2)=x_1\xi_2 \\
\partial_{\mathscr{A}}(\xi_3)=0.
\end{cases}
\end{align*}
Since $H^{2t-1}(\mathscr{A})=0$, we have $\xi_3=\partial_{\mathscr{A}}(\chi)$ for some $\chi\in \mathscr{A}^{2t-2}$. Since
$\partial_{\mathscr{A}}(\xi_1)=-x_2\xi_2$, we have $\xi_1=x_2\eta$ for some $\eta\in \mathscr{A}^{2t-2}$, and $\xi_2=\partial_{\mathscr{A}}(\eta)-x_1\eta$. Then $\partial_{\mathscr{A}}(\xi_2)=-x_1^2\eta+x_1\partial_{\mathscr{A}}(\eta)=x_1\xi_2$.
Thus $$s_2=x_1\xi_1+x_2\xi_2+x_3\xi_3=x_1x_2\eta+x_2\partial_{\mathscr{A}}(\eta)-x_2x_1\eta+x_3\partial_{\mathscr{A}}(\chi).$$
We have \begin{align*}
z_{2t+1}&=x_1s_1+x_2s_2+x_3s_3\\
&=x_1(-x_3c)+x_1[x_1x_2\eta+x_2\partial_{\mathscr{A}}(\eta)-x_2x_1\eta+x_3\partial_{\mathscr{A}}(\chi)]+x_3\partial_{\mathscr{A}}(c)\\
&=\partial_{\mathscr{A}}[-x_3c+x_1x_2\eta+x_3\partial_{\mathscr{A}}(\chi)]\in B^{2t+1}(\mathscr{A}).
\end{align*}
So $H^{2t+1}(\mathscr{A})=0$.
By the induction above, we conclude that $H(\mathscr{A})=\mathbbm{k}[\lceil x_2x_3\rceil ]$. Since $H(\mathscr{A})$ is a graded Gorenstein algebra,  the DG algebra is Gorenstein. The graded $H(\mathscr{A})$-module $\mathbbm{k}$ admits a minimal free resolution
$$0\to H(\mathscr{A})e\stackrel{d_1}{\to} H(\mathscr{A})\stackrel{\varepsilon}{\to} \mathbbm{k}\to 0,$$
where $d_1(e)=\lceil x_2x_3\rceil$ and $\varepsilon$ is the augmentation map of $H(\mathscr{A})$. From it, we can construct an Eilenberg-Moore resolution $F$ of ${}_{\mathscr{A}}\mathbbm{k}$ with $F^{\#}=\mathscr{A}^{\#}\oplus \mathscr{A}^{\#}\Sigma e$ such that $\partial_F(\Sigma e)=x_2x_3$.
Obviously, $F$ is minimal and hence $$H(R\Hom_{\mathscr{A}}(\mathbbm{k},\mathbbm{k}))=\Hom_{\mathscr{A}}(F,\mathbbm{k})=\mathbbm{k}1^*\oplus \mathbbm{k}(\Sigma e)^*.$$
So $\mathscr{A}$ is homologically smooth. We need to figure out the algebra structure of the Ext-algebra $H(R\Hom_{\mathscr{A}}(\mathbbm{k},\mathbbm{k}))$.
 For this, we consider the DG algebra
$\Hom_{\mathscr{A}}(F,F)$ since $H(\Hom_{\mathscr{A}}(F,F))=H(R\Hom_{\mathscr{A}}(\mathbbm{k},\mathbbm{k}))$. Note that $$\Hom_{\mathscr{A}}(F,F)^{\#}\cong [k\cdot 1^*\oplus \mathbbm{k}(\Sigma e)^*]\otimes F^{\#}$$ is concentrated in degrees $\ge -1$. Hence $\Hom_{\mathscr{A}}(F,F))=Z^{-1}(\Hom_{\mathscr{A}}(F,F))$. For any $f\in \Hom_{\mathscr{A}}(F,F)^{-1}$, it is determined by a matrix
$\left(
  \begin{array}{cc}
    0 & 0 \\
    c & 0 \\
  \end{array}
\right)$ such that $$\left(
                      \begin{array}{c}
                        f(1) \\
                        f(\Sigma e) \\
                      \end{array}
                    \right)=\left(
                              \begin{array}{cc}
                                0 & 0 \\
                                c & 0 \\
                              \end{array}
                            \right)\left(
                      \begin{array}{c}
                        1 \\
                        \Sigma e \\
                      \end{array}
                    \right),$$ for some $c\in \mathbbm{k}$. For convenience,  we write $f_c$ instead of $f$.
                     Since $\partial_{\Hom}(f_c)=\partial_F\circ f_c+f_c\circ \partial_F$, we have
 \begin{align*}\partial_{\Hom}(f_c)(1)&= \partial_F\circ f_c(1)+f_c\circ \partial_F(1)=0 \,\,\, \text{and}\\
 \partial_{\Hom}(f_c)(\Sigma e)&=\partial_F\circ f_c(\Sigma e)+ f_c\circ \partial_F(\Sigma e)=0.
 \end{align*}
 So $Z^{-1}(\Hom_{\mathscr{A}}(F,F))=H^{-1}(\Hom_{\mathscr{A}}(F,F))=\Hom_{\mathscr{A}}(F,F).$ Furthermore, we have $H^0(\Hom_{\mathscr{A}}(F,F))=Z^0(\Hom_{\mathscr{A}}(F,F))$. We have $H^{-1}(\Hom_{\mathscr{A}}(F,F))=\mathbbm{k}\lceil f_1\rceil$ and $H^0(\Hom_{\mathscr{A}}(F,F))=\mathbbm{k}\lceil \mathrm{id}_F\rceil$. Since $f_1\circ f_1=0$, we have $\lceil f_1\rceil^2=0$. Clearly, $H(\Hom_{\mathscr{A}}(F,F))\cong \mathbbm{k}[x]/(x^2)$ with $|x|=-1$. It is a graded symmetric Frobenius algebra. By Theorem \ref{cycond}, $\mathscr{A}$ is Calabi-Yau.
\end{proof}
It is natural to ask whether any non-trivial DG free algebras generated in three degree one elements are Calabi-Yau.
The following example indicates it is not right. We even can't ensure that they are Gorenstein. One can see this from the following example.

\begin{ex}\label{ex3}
Let $\mathscr{A}$ be a connected cochain DG algebra with $\mathscr{A}^{\#}=\mathbbm{k}\langle x_1,x_2,x_3\rangle$, $|x_1|=|x_2|=|x_3|=1$ and
$\partial_{\mathscr{A}}(x_1)=x_2x_3,\,\, \partial_{\mathscr{A}}(x_2)=0 \,\, \partial_{\mathscr{A}}(x_3)=0$.

\end{ex}
\begin{prop}
Let $\mathscr{A}$ be the DG algebra illustrated in Example \ref{ex3}. Then $$H(\mathscr{A})=\mathbbm{k}\langle \lceil x_2\rceil, \lceil x_3\rceil \rangle/(\lceil x_2\rceil\cdot \lceil x_3\rceil)$$ and $\mathscr{A}$ is a Koszul, homologically smooth and non-Gorenstein DG algebra.
\end{prop}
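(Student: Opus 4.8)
The plan is to compute $H(\mathscr{A})$ first, and then extract Koszulness, homological smoothness and the failure of Gorensteinness from the resulting minimal semi-free resolution of ${}_{\mathscr{A}}\mathbbm{k}$. To compute the cohomology, I would grade the graded algebra $\mathscr{A}^{\#}=\mathbbm{k}\langle x_1,x_2,x_3\rangle$ by the number of letters $x_1$ occurring in a word, writing $\mathscr{A}=\bigoplus_{m\ge 0}\mathscr{A}_{(m)}$. Since $\partial_{\mathscr{A}}$ replaces each $x_1$ by $x_2x_3$ and kills $x_2,x_3$, it maps $\mathscr{A}_{(m)}\to\mathscr{A}_{(m-1)}$, so $H(\mathscr{A})=\bigoplus_m H_{(m)}$ is the homology of $\cdots\to\mathscr{A}_{(2)}\to\mathscr{A}_{(1)}\to\mathscr{A}_{(0)}\to 0$ with $\mathscr{A}_{(0)}=W:=\mathbbm{k}\langle x_2,x_3\rangle$. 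Because $\partial(u x_1 v)=(-1)^{|u|}u(x_2x_3)v$, the image $\partial(\mathscr{A}_{(1)})$ is exactly the two-sided ideal $(x_2x_3)\subseteq W$, so $H_{(0)}=W/(x_2x_3)$. The substantive point is that $H_{(m)}=0$ for $m\ge 1$, which I would prove with an explicit contracting homotopy $s$: for a word $w$, if its leading block (the maximal prefix lying in $W$, i.e. preceding the first $x_1$) contains $x_2x_3$, replace the leftmost such occurrence inside that block by $x_1$, with the Koszul sign given by the length of the prefix before it; otherwise $s(w)=0$. A direct check gives $\partial s+s\partial=\mathrm{id}-p$, where $p$ projects $W$ onto the span of the normal monomials $x_3^ix_2^j$ (words in $x_2,x_3$ avoiding the factor $x_2x_3$) and is $0$ on $\mathscr{A}_{(m)}$ for $m\ge 1$. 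Hence $H_{(m)}=0$ for $m\ge1$ and $H(\mathscr{A})$ is the span of the normal monomials; the induced product is that of $\mathbbm{k}\langle\lceil x_2\rceil,\lceil x_3\rceil\rangle/(\lceil x_2\rceil\lceil x_3\rceil)$ (as $\lceil x_2\rceil\lceil x_3\rceil=\lceil\partial_{\mathscr{A}}x_1\rceil=0$), and linear independence of the classes is clear since $(x_2x_3)=\mathrm{im}\,\partial$ meets their span in $0$.

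Writing $B:=H(\mathscr{A})=\mathbbm{k}\langle\lceil x_2\rceil,\lceil x_3\rceil\rangle/(\lceil x_2\rceil\lceil x_3\rceil)$, the next two properties follow quickly. As $B$ is a connected quadratic monomial algebra it is a Koszul graded algebra, so $\mathscr{A}$ is a Koszul DG algebra by Lemma \ref{koszul}. The minimal graded free resolution of ${}_B\mathbbm{k}$ is finite, $0\to B(-2)\to B(-1)^2\to B\to\mathbbm{k}\to 0$ (so $\mathrm{gl.dim}\,B=2$); feeding it into the Eilenberg-Moore construction produces a minimal semi-free resolution $F$ of ${}_{\mathscr{A}}\mathbbm{k}$ with finite semi-basis $\{\xi_0,\xi_a,\xi_b,\xi_{ab}\}$ concentrated in degree $0$, where $\partial_F\xi_a=x_2\xi_0$, $\partial_F\xi_b=x_3\xi_0$ and $\partial_F\xi_{ab}=x_2\xi_b+x_1\xi_0$ (the summand $x_1\xi_0$ being the differential correction forced by $\partial_F^2=0$). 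Finiteness of the semi-basis gives ${}_{\mathscr{A}}\mathbbm{k}\in\mathscr{D}^c(\mathscr{A})$, so $\mathscr{A}$ is homologically smooth.

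For non-Gorensteinness I would show $\dim_{\mathbbm{k}}H(R\Hom_{\mathscr{A}}(\mathbbm{k},\mathscr{A}))\ne 1$ by computing it through $\Hom_{\mathscr{A}}(F,\mathscr{A})$. Filtering this complex by the homological degree of $F$ (a finite, three-step filtration), the associated spectral sequence has $E_1=\Hom_B(P_\bullet,B)$, where $P_\bullet\to\mathbbm{k}$ is the graded resolution above and $d_1$ is the dualized resolution differential, so $E_2=\mathrm{Ext}^{\ast}_B(\mathbbm{k},B)$. A direct computation from $P_\bullet$ yields $\mathrm{Ext}^0_B(\mathbbm{k},B)=0$, while $\mathrm{Ext}^1_B(\mathbbm{k},B)$ and $\mathrm{Ext}^2_B(\mathbbm{k},B)$ are infinite-dimensional (reflecting that $B$ is far from AS-Gorenstein). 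Since $E_2$ is supported in homological degrees $0,1,2$ and $E_2^{0}=0$, the only higher differential that could be nonzero, $d_2\colon E_2^{0}\to E_2^{2}$, vanishes, so the sequence degenerates and $\dim_{\mathbbm{k}}H(R\Hom_{\mathscr{A}}(\mathbbm{k},\mathscr{A}))=\infty\ne 1$. Thus $\mathscr{A}$ is not left Gorenstein, hence not Gorenstein. Equivalently, since $\mathscr{A}$ is Koszul and homologically smooth, one may instead compute its Ext-algebra $H(\Hom_{\mathscr{A}}(F,F))$, identify it as the four-dimensional algebra $\mathbbm{k}\langle\alpha,\beta\rangle/(\alpha^2,\beta\alpha,\beta^2)$ whose socle is two-dimensional, hence not Frobenius, and invoke the criterion that a Koszul homologically smooth DG algebra is Gorenstein precisely when its Ext-algebra is Frobenius.

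The main obstacle is the cohomology computation, specifically verifying that the leftmost-reduction map $s$ is a genuine contracting homotopy, i.e. that $\mathscr{A}_{(\bullet)}$ is acyclic in positive $x_1$-degree. The delicate point is that $\partial_{\mathscr{A}}$ produces several terms and can create new $x_2x_3$-factors near the front of a word; confining the reduction to the leading block (rather than to the leftmost occurrence anywhere in the word) is exactly what makes the identity $\partial s+s\partial=\mathrm{id}-p$ telescope, and the proof amounts to a careful but elementary case analysis on that leading block and on the sign bookkeeping.
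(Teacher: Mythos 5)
Your proposal is correct, and it reaches the paper's conclusions by genuinely different means at both substantive steps. For the cohomology, the paper does not use a weight filtration or a contracting homotopy: it writes an arbitrary cocycle as $x_1b_1+x_2b_2+x_3b_3$, extracts the system of equations the $b_i$ must satisfy, and runs an induction on cohomological degree. Your grading by the number of occurrences of $x_1$ together with the leftmost-reduction homotopy $s$ packages the same combinatorics in one stroke; I checked $\partial s+s\partial=\mathrm{id}-p$ on representative words, including ones where $\partial$ creates a new $x_2x_3$ straddling the end of the leading block, and restricting the reduction to the leading block is indeed what makes the identity telescope, so the case analysis you defer is routine. The minimal semi-free resolution you build agrees with the paper's, except that you correctly include the correction term $x_1\xi_0$ in $\partial_F\xi_{ab}$, which the paper's displayed formula omits (without it $\partial_F^2\neq 0$); one can check this does not change the resulting Ext-algebra. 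For non-Gorensteinness the routes diverge more sharply: the paper computes $H(\Hom_{\mathscr{A}}(F,F))$ explicitly as a four-dimensional algebra of lower-triangular matrices and rules out a Frobenius form by an explicit linear-algebra contradiction, then invokes Theorem \ref{Gorenstein}; your primary argument instead attacks $H(R\Hom_{\mathscr{A}}(\mathbbm{k},\mathscr{A}))$ directly through the finite (hence convergent) Eilenberg--Moore spectral sequence with $E_2=\mathrm{Ext}_B(\mathbbm{k},B)$, and your computation of $\mathrm{Ext}^0_B(\mathbbm{k},B)=0$ and of the infinite-dimensionality of $\mathrm{Ext}^2_B(\mathbbm{k},B)=B/aB$ checks out. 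This buys a self-contained verification that Definition \ref{gordef} fails, at the cost of the spectral-sequence setup. Your secondary route is essentially the paper's, but it replaces the explicit matrix contradiction by the cleaner observation that a finite-dimensional local Frobenius algebra must have simple left socle, whereas here the socle is two-dimensional. Both of your arguments are sound.
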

\begin{proof}
Clearly, $H^0(\mathscr{A})=\mathbbm{k}$ and $H^1(\mathscr{A})=\mathbbm{k}\lceil x_2\rceil\oplus \mathbbm{k}\lceil x_3\rceil$. For any $z\in Z^2(\mathscr{A})$,
we have $$z=c_{11}x_1^2+c_{12}x_1x_2+c_{13}x_1x_3+c_{21}x_2x_1+c_{22}x_2^2+c_{23}x_2x_3+c_{31}x_3x_1+c_{32}x_3x_2+c_{33}x_3^2,$$ for some $c_{ij}\in \mathbbm{k}$. Then
\begin{align*}
0&=\partial_{\mathscr{A}}(z)\\
&=\partial_{\mathscr{A}}[c_{11}x_1^2+c_{12}x_1x_2+c_{13}x_1x_3+c_{21}x_2x_1+c_{31}x_3x_1]\\
&=c_{11}x_2x_3x_1-c_{11}x_1x_2x_3+c_{12}x_2x_3x_2+c_{13}x_2x_3^2-c_{21}x_2^2x_3-c_{31}x_3x_2x_3.
\end{align*}
It implies that $c_{11}=0$, $c_{12}=c_{13}=c_{21}=c_{31}=0$. Hence $$Z^2(\mathscr{A})=\mathbbm{k}x_2^2\oplus \mathbbm{k}x_2x_3\oplus \mathbbm{k}x_3x_2\oplus \mathbbm{k}x_3^2.$$
From the definition of $\partial_{\mathscr{A}}$, one sees that $B^2(\mathscr{A})=\mathbbm{k} x_2x_3$. So $$H^2(\mathscr{A})=\mathbbm{k}\lceil x_2\rceil^2 \oplus \mathbbm{k}\lceil x_3\rceil \lceil x_2\rceil \oplus \mathbbm{k}\lceil x_3\rceil^2.$$
Suppose inductively that we have proved $$H^i(\mathscr{A})=\bigoplus\limits_{j=0}^i \mathbbm{k}\lceil x_3\rceil^{i-j}\lceil x_2\rceil^j, j\le n-1.$$
For any $\omega \in Z^n(\mathscr{A})$, we may write
$\omega= x_1b_1+x_2b_2+x_3b_3$, for some $b_i\in \mathscr{A}^{n-1}$.
Then
\begin{align*}
0&=\partial_{\mathscr{A}}(\omega)=x_2x_3b_1-x_2\partial_{\mathscr{A}}(b_2)-x_3\partial_{\mathscr{A}}(b_3)\\
 &=x_2[x_3b_1-\partial_{\mathscr{A}}(b_2)]-x_3\partial_{\mathscr{A}}(b_3).
\end{align*}
It implies that $\partial_{\mathscr{A}}(b_2)=x_3b_1$ and $\partial_{\mathscr{A}}(b_3)=0$. Therefore, $b_1\in B^{n-1}(\mathscr{A})$ and $b_3\in Z^{n-1}(\mathscr{A})$. We have $b_1=\partial_{\mathscr{A}}(c_1)$ and $b_3=\sum\limits_{j=0}^{n-1}t_jx_3^{n-1-j}x_2^j+\partial_{\mathscr{A}}(c_3)$, for some $t_j\in \mathbbm{k}$ and $c_1, c_3\in \mathscr{A}^{n-2}$. Then $b_2=-x_3c_1+\sum\limits_{j=0}^{n-1}r_jx_3^{n-1-j}x_2^j+\partial_{\mathscr{A}}(c_2)$, for some $r_j\in \mathbbm{k}$ and $c_2\in \mathscr{A}^{n-2}$. Hence
\begin{align*}
\omega &= x_1b_1+x_2b_2+x_3b_3\\
       &= x_1\partial_{\mathscr{A}}(c_1)+x_2[-x_3c_1+\sum\limits_{j=0}^{n-1}r_jx_3^{n-1-j}x_2^j +\partial_{\mathscr{A}}(c_2)]\\
       &\quad+x_3[\sum\limits_{j=0}^{n-1}t_jx_3^{n-1-j}x_2^j+\partial_{\mathscr{A}}(c_3)]\\
       &=\partial_{\mathscr{A}}(-x_1c_1-x_2c_2-x_3c_3+\sum\limits_{j=0}^{n-2}r_jx_1x_3^{n-j-2}x_2^j)+r_{n-1}x_2^n+\sum\limits_{j=0}^{n-1}t_jx_3^{n-j}x_2^j.
\end{align*}
It means that $$H^n(\mathscr{A})=\bigoplus\limits_{j=0}^n\mathbbm{k} \lceil x_3\rceil^{n-j}\lceil x_2\rceil^j.$$
By the induction above, we conclude that $$H(\mathscr{A})=\mathbbm{k} \langle \lceil x_2\rceil, \lceil x_3\rceil\rangle/(\lceil x_2\rceil\cdot\lceil x_3\rceil).$$
The graded $H(\mathscr{A})$-module $\mathbbm{k}$ admits a minimal free resolution:
$$0\to H(\mathscr{A})e_r\stackrel{d_2}{\to} H(\mathscr{A})e_{x_2}\oplus H(\mathscr{A})e_{x_3}\stackrel{d_1}{\to} H(\mathscr{A})\stackrel{\epsilon}{\to} \mathbbm{k}\to 0,$$
where the differentials $d_2$ and $d_1$ are $H(\mathscr{A})$-linear maps defined by
$$d_2(e_r)=\lceil x_2\rceil e_{x_3}, d_1(e_{x_2})=\lceil x_2\rceil,
d_1(e_{x_3})=\lceil x_3\rceil. $$
By the construction of Eilengberg-Moore resolution, we can get a semi-free resolution $F$ of the DG $\mathscr{A}$-module $\mathbbm{k}$ from the free resolution above. According to the constructing procedure,
$$F^{\#}\cong \mathscr{A}^{\#}\oplus \mathscr{A}^{\#}\Sigma e_{x_2}\oplus \mathscr{A}^{\#}\Sigma e_{x_3}\oplus \mathscr{A}^{\#}\Sigma^2 e_r$$
and the differential $\partial_F$ is defined by $$\partial_F(1)=0, \partial_F(\Sigma e_{x_2})=x_2, \partial_F(\Sigma e_{x_3})=x_3, \partial_F(\Sigma^2 e_r)=x_2\Sigma e_{x_3}.$$
Note that $F$ has a semi-basis $\{1,\Sigma e_{x_2},\Sigma e_{x_3},\Sigma^2 e_r\}$ concentrated in degree $0$. Hence $\mathscr{A}$ is a Koszul and homologically smooth DG algebra. It remains to show that $\mathscr{A}$ is not Gorenstein. It suffices to prove that its Ext-algebra is not Frobenius by Theorem \ref{Gorenstein}.
By the minimality of $F$, the differential of $\Hom_{\mathscr{A}}(F,\mathbbm{k})$ is zero. We have
\begin{align*}
H(\Hom_{\mathscr{A}}(F,\mathbbm{k}))=\Hom_{\mathscr{A}}(F,\mathbbm{k})=\mathbbm{k}1^*\oplus \mathbbm{k}(\Sigma e_{x_2})^*\oplus \mathbbm{k}(\Sigma e_{x_3})^*\oplus \mathbbm{k}(\Sigma^2 e_r)^*.
\end{align*}
So the Ext-algebra $E=H(\Hom_{\mathscr{A}}(F,F))\cong H(\Hom_{\mathscr{A}}(F,\mathbbm{k}))$ is concentrated in degree $0$. On the other hand, $$\Hom_{\mathscr{A}}(F,F)^{\#}\cong \{\mathbbm{k} 1^*\oplus \mathbbm{k}(\Sigma e_{x_2})^*\oplus \mathbbm{k}(\Sigma e_{x_3})^*\oplus \mathbbm{k}(\Sigma^2 e_r)^*\}\otimes_{\mathbbm{k}} F^{\#}$$ is concentrated in degrees $\ge 0$ since $|1^*|=|(\Sigma e_{x_2})^*|=|(\Sigma e_{x_3})^*|=|(\Sigma^2 e_r)^*|=0$ and $F$ is concentrated in degrees $\ge 0$. This implies that $E= Z^0(\Hom_{\mathscr{A}}(F,F))$. Since $F^{\#}$ is a free graded $\mathscr{A}^{\#}$-module with a basis
$\{1, \Sigma e_{x_2}, \Sigma e_{x_3},\Sigma^2 e_r \}$ concentrated in degree $0$,
  the elements in  $\Hom_{\mathscr{A}}(F,F)^0$ is one to one correspondence with the matrixes in $M_4(\mathbbm{k})$. Indeed, any $f\in \Hom_{\mathscr{A}}(F,F)^0$ is uniquely determined by
  a matrix $A_f=(a_{ij})_{4\times 4}\in M_4(\mathbbm{k})$ with
$$\left(
                         \begin{array}{c}
                          f(1) \\
                          f(\Sigma e_{x_2})\\
                           f(\Sigma e_{x_3})\\
                            f(\Sigma^2 e_r)
                         \end{array}
                       \right) =      A_f \cdot \left(
                         \begin{array}{c}
                          1 \\
                          \Sigma e_{x_2}\\
                           \Sigma e_{x_3}\\
                            \Sigma^2 e_r
                         \end{array}
                       \right).  $$
                       And $f\in  Z^0[\Hom_{\mathcal{A}}(F,F)]$ if and only if $\partial_{F}\circ f=f\circ \partial_{F}$, if and only if

 $$ A_f \left(
            \begin{array}{cccc}
              0  & 0  & 0 & 0 \\
              x_2 & 0 & 0 & 0 \\
              x_3 & 0 & 0 & 0 \\
              0   & 0 & x_2 & 0\\
            \end{array}
          \right) = \left(
            \begin{array}{cccc}
              0 & 0 & 0 & 0 \\
              x_2 & 0& 0 & 0 \\
              x_3 & 0 & 0 & 0 \\
              0 &  0& x_2 & 0\\
            \end{array}
          \right) A_f,$$
 which is also equivalent to
                       $$\begin{cases}
                       a_{12}=a_{13}=a_{14}=a_{23}=a_{24}=a_{32}=a_{34}=a_{43}=0\\
                       a_{11}=a_{22}=a_{33}= a_{44}\\
                       a_{42}=a_{31}\\
                       \end{cases}$$
by direct computations. Hence the the Ext-algebra $$H(\Hom_{\mathscr{A}}(F,F))\cong \mathcal{E}=\left\{\left(
                                                             \begin{array}{cccc}
                                                               a & 0 & 0 & 0  \\
                                                               b & a & 0 & 0 \\
                                                               c & 0 & a & 0  \\
                                                               d & c & 0 & a
                                                             \end{array}
                                                           \right)
\quad | \quad a,b,c,d\in \mathbbm{k} \right\}.$$
Let $e_0=\sum\limits_{i=1}^4E_{ii}$, $e_1=E_{21}$, $e_2=E_{31}+E_{42}$ and $e_3=E_{41}$. They constitute a basis of $\mathcal{E}$. The multiplication of $\mathcal{E}$ is defined by the following tabular: \\
\begin{center} \begin{tabular}{l|llll}
$\cdot$   & $e_0$   & $e_1$ &  $e_2$ & $e_3$ \\
 \hline
$e_0$     & $e_0$   & $e_1$ & $e_2$  & $e_3$ \\
$e_1$     & $e_1$   & $0$   & $0$    & $0$  \\
$e_2$     & $e_2$   & $e_3$ & $0$    & $0$ \\
$e_3$     & $e_3$   & $0$   & $0$    & $0$  \\
\end{tabular}.
\end{center}
If $\mathcal{E}$ is a Frobenius algebra, then there is an isomorphism $\sigma: \mathcal{E}\to \Hom_{\mathbbm{k}}(\mathcal{E},\mathbbm{k})$ of left $\mathcal{E}$-modules.
There exists a matrix $M=(m_{ij})_{4\times 4}\in \mathrm{GL}_4(k)$ such that
\begin{align*}
\left(  \begin{array}{c}
                                                    \sigma(e_0)   \\
                                                   \sigma (e_1)               \\
                                                   \sigma(e_2)                  \\
                                                   \sigma(e_3)    \\
                                                   \end{array}\right)=M \left(   \begin{array}{c}
                                                    e_0^*   \\
                                                    e_1^*               \\
                                                    e_2^*                  \\
                                                    e_3^*    \\
                                                   \end{array}\right).
\end{align*}
Since
\begin{align*}
 \sigma(e_2\cdot e_1)=\sigma(e_3)= m_{41}e_0^*+m_{42} e_1^* + m_{43} e_2^*+m_{44}e_3^* : \left\{   \begin{array}{c}
                                                    e_0   \\
                                                   e_1             \\
                                               e_2               \\
                                               e_3  \\
                                                   \end{array}\right \}  \begin{array}{c}
                                                   \longrightarrow\\
                                                   \longrightarrow\\
                                                   \longrightarrow\\
                                                   \longrightarrow\\
                                                   \end{array}
                                                    \left\{   \begin{array}{c}
                                                     m_{41} \,\, \\
                                                     m_{42} \,\,      \\
                                                     m_{43} \,\,  \\
                                                     m_{44} \,\, \\
                                                   \end{array}\right \}
 \end{align*}
and
\begin{align*}
 e_2\sigma(\sigma e_1)=e_2[m_{21}e_0^*+m_{22} e_1^* + m_{23} e_2^*+m_{24}e_3^*]: \left\{   \begin{array}{c}
                                                    e_0   \\
                                                    e_1                \\
                                                    e_2                  \\
                                                    e_3  \\
                                                   \end{array}\right \}  \begin{array}{c}
                                                   \longrightarrow\\
                                                   \longrightarrow\\
                                                   \longrightarrow\\
                                                   \longrightarrow\\
                                                   \end{array}
                                                    \left\{   \begin{array}{c}
                                                     m_{23} \,\, \\
                                                     0 \,\,      \\
                                                     0 \,\,  \\
                                                     0 \,\, \\
                                                   \end{array}\right \},
 \end{align*}
 we conclude that $m_{41}=m_{23},m_{42}=m_{43}=m_{44}=0$ by $\sigma(e_2e_1)=e_2\sigma(e_1)$.
 Similarly, we can get $m_{32}=m_{34}=0$ from $0=\sigma(0)=\sigma(e_1e_2)=e_1\sigma (e_2)$, since
 \begin{align*}
 e_1\sigma(e_2)=e_1[m_{31}e_0^*+m_{32} e_1^* + m_{33} e_2^*+m_{34}e_3^*]: \left\{   \begin{array}{c}
                                                    e_0   \\
                                                    e_1                \\
                                                    e_2                  \\
                                                    e_3  \\
                                                   \end{array}\right \}  \begin{array}{c}
                                                   \longrightarrow\\
                                                   \longrightarrow\\
                                                   \longrightarrow\\
                                                   \longrightarrow\\
                                                   \end{array}
                                                    \left\{   \begin{array}{c}
                                                     m_{32} \,\, \\
                                                     0 \,\,      \\
                                                     m_{34} \,\,  \\
                                                     0 \,\, \\
                                                   \end{array}\right \}.
 \end{align*}
 By $0=\sigma(0)=\sigma(e_2e_2)=e_2\sigma (e_2)$ and
 \begin{align*}
 e_2\sigma(e_2)=e_2[m_{31}e_0^*+m_{32} e_1^* + m_{33} e_2^*+m_{34}e_3^*]: \left\{   \begin{array}{c}
                                                    e_0   \\
                                                    e_1                \\
                                                    e_2                  \\
                                                    e_3  \\
                                                   \end{array}\right \}  \begin{array}{c}
                                                   \longrightarrow\\
                                                   \longrightarrow\\
                                                   \longrightarrow\\
                                                   \longrightarrow\\
                                                   \end{array}
                                                    \left\{   \begin{array}{c}
                                                     m_{33} \,\, \\
                                                     0 \,\,      \\
                                                     0 \,\,  \\
                                                     0 \,\, \\
                                                   \end{array}\right \},
 \end{align*}
 we get $m_{33}=0$.
 This contradicts with $M\in \mathrm{GL}_4(\mathbbm{k})$. Hence $\mathcal{E}$ is not a Frobenius algebra.
By Theorem \ref{Gorenstein}, $\mathscr{A}$ is not Gorenstein.
\end{proof}

\begin{rem}
Note that the DG free algebra in Example \ref{example1} is a non-Koszul Calabi-Yau DG algebra, while the algebra in Example \ref{ex3} is not Gorenstein but Koszul and homologically smooth. These examples indicate that the DG free algebras of the case $n=3$ are quite different from those of the case $n=2$. In spite of this, there are many Koszul Calabi-Yau non-trivial DG free algebras generated in three degree one elements. In the rest of this section, we will give some examples to explain it.
\end{rem}

\begin{ex}\label{ex2}
Let $\mathcal{A}$ be a connected cochain DG algebra such that
\begin{align*}
\mathscr{A}^{\#}=\mathbbm{k}\langle x_1,x_2,x_3\rangle,\,\,
\partial_{\mathscr{A}}(x_1)=x_3^2,\,\, \partial_{\mathscr{A}}(x_2)=x_1x_3+x_3x_1 \,\, \partial_{\mathscr{A}}(x_3)=0.
\end{align*}
\end{ex}
\begin{prop}
Let $\mathscr{A}$ be the DG algebra in Example \ref{ex2}. Then
$$H(\mathscr{A})=\mathbbm{k}[\lceil x_3\rceil,\lceil x_1^2+x_2x_3+x_3x_2\rceil ]/(\lceil x_3\rceil^2)$$ and
it is a Koszul and Calabi-Yau DG algebra.
\end{prop}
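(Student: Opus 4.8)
The plan is to identify two generating cohomology classes, exhibit the unique relation among them, and then prove both halves of the resulting comparison. Write $w=x_1^2+x_2x_3+x_3x_2$. First I would record the elementary identities: $\partial_{\mathscr{A}}(x_3)=0$, so $\lceil x_3\rceil\in H^1(\mathscr{A})$; a Leibniz-rule computation gives $\partial_{\mathscr{A}}(w)=0$, so $\lceil w\rceil\in H^2(\mathscr{A})$; moreover $x_3^2=\partial_{\mathscr{A}}(x_1)$ and $x_3w-wx_3=\partial_{\mathscr{A}}(x_1x_2+x_2x_1)$, so that $\lceil x_3\rceil^2=0$ and $\lceil x_3\rceil$ commutes with $\lceil w\rceil$ in $H(\mathscr{A})$. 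These relations make the assignment $u\mapsto\lceil x_3\rceil$, $v\mapsto\lceil w\rceil$ (with $|u|=1$, $|v|=2$) extend to a homomorphism of graded algebras
\[
\rho:\ \mathbbm{k}[u,v]/(u^2)\ \To\ H(\mathscr{A}),\qquad \rho(v^j)=\lceil w^j\rceil,\quad \rho(uv^j)=\lceil x_3w^j\rceil .
\]
The assertion is precisely that $\rho$ is an isomorphism. Since $\mathbbm{k}[u,v]/(u^2)$ has basis $\{v^j,uv^j\mid j\ge 0\}$, it is one-dimensional in every degree, so it suffices to prove that $\dim_{\mathbbm{k}}H^n(\mathscr{A})=1$ for all $n\ge 0$ and that $\rho$ is nonzero in each degree.

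For the lower bound I would use a normal-form argument. Call a word in $x_1,x_2,x_3$ \emph{reduced} if it contains none of the factors $x_3x_3$, $x_1x_3$, $x_3x_1$. Because $\partial_{\mathscr{A}}$ replaces a single generator by $x_3^2$, by $x_1x_3+x_3x_1$, or by $0$, every monomial occurring in any coboundary $\partial_{\mathscr{A}}(a)$ contains one of these three factors; hence a cocycle whose support contains a reduced word cannot be a coboundary. Now set $s_n=w^{n/2}$ for $n$ even and $s_n=x_3w^{(n-1)/2}$ for $n$ odd; these are products of the cocycles $w$ and $x_3$, hence cocycles representing $\rho(v^{n/2})$ resp.\ $\rho(uv^{(n-1)/2})$. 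The reduced monomial $x_1^{\,n}$ occurs in $s_n$ with coefficient $1$ when $n$ is even, and the reduced monomial $x_3(x_2x_3)^{(n-1)/2}$ occurs in $s_n$ with coefficient $1$ when $n$ is odd; in either case $s_n\notin B^n(\mathscr{A})$, so $\lceil s_n\rceil\ne 0$ and $\rho$ is nonzero (hence injective) in degree $n$.

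The upper bound $\dim_{\mathbbm{k}}H^n(\mathscr{A})\le 1$ I would prove by induction on $n$, the cases $n=0,1$ being immediate. For $n\ge 2$, every length-$n$ word has a unique first letter, so each $z\in\mathscr{A}^n$ is written uniquely as $z=x_1b_1+x_2b_2+x_3b_3$ with $b_i\in\mathscr{A}^{n-1}$; expanding $\partial_{\mathscr{A}}(z)$ and sorting by first letter turns $z\in Z^n(\mathscr{A})$ into the coupled system
\[
\partial_{\mathscr{A}}(b_2)=0,\qquad \partial_{\mathscr{A}}(b_1)=x_3b_2,\qquad \partial_{\mathscr{A}}(b_3)=x_3b_1+x_1b_2 .
\]
Using the inductive description $H^{n-1}(\mathscr{A})=\mathbbm{k}\lceil s_{n-1}\rceil$, I would first subtract a coboundary to arrange $b_2=\beta s_{n-1}$; the solvability of $\partial_{\mathscr{A}}(b_1)=x_3b_2$ is then governed by whether $x_3s_{n-1}$ is a coboundary. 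When $n$ is even one has $x_3s_{n-1}=x_3^2w^{(n-2)/2}=\partial_{\mathscr{A}}(x_1w^{(n-2)/2})$, so the equation is always solvable; when $n$ is odd one has $x_3s_{n-1}=s_n$, and the nonvanishing $\lceil s_n\rceil\ne 0$ from the previous step forces $\beta=0$. Iterating this analysis through the remaining two equations, repeatedly replacing cocycle parts by multiples of $s_{n-1}$ modulo coboundaries and using the identities $x_3^2=\partial_{\mathscr{A}}(x_1)$, $x_1x_3+x_3x_1=\partial_{\mathscr{A}}(x_2)$, and $x_3\partial_{\mathscr{A}}(a)=-\partial_{\mathscr{A}}(x_3a)$ to recognise coboundaries, collapses $z$ to a scalar multiple of $s_n$ modulo $B^n(\mathscr{A})$. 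Hence $Z^n(\mathscr{A})/B^n(\mathscr{A})=\mathbbm{k}\lceil s_n\rceil$, completing the induction.

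Putting the two bounds together gives $\dim_{\mathbbm{k}}H^n(\mathscr{A})=1=\dim_{\mathbbm{k}}\bigl(\mathbbm{k}[u,v]/(u^2)\bigr)^n$ with $\rho$ injective in each degree, so $\rho$ is an isomorphism and $H(\mathscr{A})=\mathbbm{k}[\lceil x_3\rceil,\lceil x_1^2+x_2x_3+x_3x_2\rceil]/(\lceil x_3\rceil^2)$, as claimed. I expect the main obstacle to be the inductive step of the third paragraph: the three equations are coupled, and closing the induction requires careful bookkeeping of which lower cocycles can be cleared by coboundaries, where the parity of $n$ and the crucial input $\lceil s_n\rceil\ne 0$ (needed only in the odd case) must be tracked precisely. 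By contrast, the normal-form argument for nonvanishing and the verification of the defining relations are routine once the forbidden factors $x_3x_3$, $x_1x_3$, $x_3x_1$ have been identified.
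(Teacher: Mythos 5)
Your proposal is correct in substance and follows essentially the same route as the paper's proof: writing $w=x_1^2+x_2x_3+x_3x_2$, you decompose a cocycle as $x_1b_1+x_2b_2+x_3b_3$ and arrive at exactly the coupled system $\partial_{\mathscr{A}}(b_2)=0$, $\partial_{\mathscr{A}}(b_1)=x_3b_2$, $\partial_{\mathscr{A}}(b_3)=x_3b_1+x_1b_2$ that drives the paper's induction, and you resolve it with the same identities ($x_3^2=\partial_{\mathscr{A}}(x_1)$, $x_1x_3+x_3x_1=\partial_{\mathscr{A}}(x_2)$, $x_3w-wx_3=\partial_{\mathscr{A}}(x_1x_2+x_2x_1)$). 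The one genuinely different ingredient is your reduced-monomial argument, and it is a real improvement: the paper's inductive step concludes (in its notation) that $l_2=0$ from $\partial_{\mathscr{A}}(a_1)=l_2x_3w^{n-1}+x_3\partial_{\mathscr{A}}(b_2)$, which tacitly requires knowing that $x_3w^{n-1}$ is not a coboundary in the very degree being computed, a fact the paper never justifies. Your observation that every monomial of every coboundary contains one of the consecutive factors $x_3x_3$, $x_1x_3$, $x_3x_1$, while the reduced monomial $x_3(x_2x_3)^{m}$ (resp.\ $x_1^{2m}$) occurs with coefficient $1$ in $x_3w^{m}$ (resp.\ $w^{m}$), supplies precisely this missing non-vanishing input and at the same time gives injectivity of your map $\rho$. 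On the other side of the ledger, your third paragraph only sketches the bookkeeping that closes the induction; this is exactly the part the paper writes out in full, separately for odd and even total degree, and it does go through as you describe --- the parity-dependent solvability criterion you isolate (namely that $x_3\cdot x_3w^{m}=\partial_{\mathscr{A}}(x_1w^{m})$ is exact while $x_3\cdot w^{m}$ is not) is what governs the two cases. So once that computation is executed, your argument is complete and, if anything, slightly more rigorous than the paper's.
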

\begin{proof}
Obviously, we have $H^1(\mathscr{A})=\mathbbm{k}$ and $H^1(\mathscr{A})=\mathbbm{k}\lceil x_3\rceil$. For any $z\in Z^2(\mathscr{A})$, it can be written by
$$z=c_{11}x_1^2+c_{12}x_1x_2+c_{13}x_1x_3+c_{21}x_2x_1+c_{22}x_2^2+c_{23}x_2x_3+c_{31}x_3x_1+c_{32}x_3x_2+c_{33}x_3^2,$$
with each $c_{ij}\in \mathbbm{k}$.
We have
\begin{align*}
0&=\partial_{\mathscr{A}}(z)=c_{11}(x_3^2x_1-x_1x_3^2)+c_{12}x_3^2x_2-c_{12}x_1(x_1x_3+x_3x_1)+c_{13}x_3^3\\
&\quad +c_{21}(x_1x_3+x_3x_1)x_1-c_{21}x_2x_3^2+c_{22}(x_1x_3+x_3x_1)x_2-c_{22}x_2(x_1x_3+x_3x_1)\\
&\quad +c_{23}(x_1x_3+x_3x_1)x_3-c_{31}x_3^3-c_{32}x_3(x_1x_3+x_3x_1)\\
&=(c_{11}-c_{32})x_3^2x_1+(c_{23}-c_{11})x_1x_3^2+c_{12}x_3^2x_2-c_{12}x_1^2x_3+(c_{21}-c_{12})x_1x_3x_1\\
&\quad +(c_{13}-c_{31})x_3^3+c_{21}x_3x_1^2-c_{21}x_2x_3^2+c_{22}x_1x_3x_2+c_{22}x_3x_1x_2\\
&\quad -c_{22}x_2x_1x_3-c_{22}x_2x_3x_1+(c_{23}-c_{32})x_3x_1x_3.
\end{align*}
It implies that $c_{11}=c_{32}=c_{23}$, $c_{12}=0=c_{21}=c_{22}$ and $c_{13}=c_{31}$. Hence $$Z^2(\mathscr{A})=\mathbbm{k}x_3^2\oplus \mathbbm{k}(x_1x_3+x_3x_1)\oplus \mathbbm{k}(x_1^2+x_2x_3+x_3x_2).$$ Since $B^2(\mathscr{A})=\mathbbm{k}x_3^2\oplus \mathbbm{k}(x_1x_3+x_3x_1)$, we have $H^2(\mathscr{A})=\mathbbm{k}\lceil x_1^2+x_2x_3+x_3x_2\rceil$.
Note that \begin{align*}
x_3(x_1^2+x_2x_3+x_3x_2)-(x_1^2+x_2x_3+x_3x_2)x_3&=x_3x_1^2-x_1^2x_3+x_3^2x_2-x_2x_3^2\\
&=\partial_{\mathscr{A}}(x_1x_2+x_2x_1).
\end{align*}
Thus $\lceil x_3\rceil \lceil x_1^2+x_2x_3+x_3x_2\rceil =\lceil x_1^2+x_2x_3+x_3x_2\rceil\lceil x_3\rceil$ in $H(\mathscr{A})$.
We assume inductively that we have proved  $$H^{2i-1}(\mathscr{A})=\mathbbm{k} \lceil x_3\rceil \lceil x_1^2+x_2x_3+x_3x_2\rceil^{i-1}\quad\text{and}\quad H^{2i}(\mathscr{A})=\mathbbm{k}\lceil x_1^2+x_2x_3+x_3x_2\rceil^{i},$$
for $i\le n-1$.
For any $x_1a_1+x_2a_2+x_3a_3\in Z^{2n-1}(\mathscr{A})$, we have
\begin{align*}
0&=\partial_{\mathscr{A}}(x_1a_1+x_2a_2+x_3a_3)\\
&=x_3^2a_1-x_1\partial_{\mathscr{A}}(a_1)+(x_1x_3+x_3x_1)a_2-x_2\partial_{\mathscr{A}}(a_2)-x_3\partial_{\mathscr{A}}(a_3)\\
&=x_1(x_3a_2-\partial_{\mathscr{A}}(a_1))-x_2\partial_{\mathscr{A}}(a_2) +x_3(x_3a_1+x_1a_2-\partial_{\mathscr{A}}(a_3)).\\
\end{align*}
So $$\begin{cases}
\partial_{\mathscr{A}}(a_2)=0\\
\partial_{\mathscr{A}}(a_1)=x_3a_2\\
\partial_{\mathscr{A}}(a_3)=x_3a_1+x_1a_2.
\end{cases}$$
Then $a_2=l_2(x_1^2+x_2x_3+x_3x_2)^{n-1}+\partial_{\mathscr{A}}(b_2)$, for some $l_2\in k$ and $b_2\in \mathscr{A}^{2n-3}$.
Since $\partial_{\mathscr{A}}(a_1)=l_2x_3(x_1^2+x_2x_3+x_3x_2)^{n-1}+x_3\partial_{\mathscr{A}}(b_2)$, we have $l_2=0$ and $$a_1=-x_3 b_2+l_1(x_1^2+x_2x_3+x_3x_2)^{n-1}+\partial_{\mathscr{A}}(b_1),$$ for some $l_1\in \mathbbm{k}$ and $b_1\in \mathscr{A}^{2n-3}$. Then
$$\partial_{\mathscr{A}}(a_3)=-x_3^2b_2+l_1x_3(x_1^2+x_2x_3+x_3x_2)^{n-1}+x_3\partial_{\mathscr{A}}(b_1)+x_1\partial_{\mathscr{A}}(b_2).$$
It implies that $l_1=0$ and $a_3=-x_1b_2-x_3b_1+l_3(x_1^2+x_2x_3+x_3x_2)^{n-1}+\partial_{\mathscr{A}}(b_3)$ for some $l_3\in \mathbbm{k}$ and $b_3\in  (x_1^2+x_2x_3+x_3x_2)^{n-1}$. Then
\begin{align*}
x_1a_1+x_2a_2+x_3a_3&=-x_1x_3b_2+x_1\partial_{\mathscr{A}}(b_1)+x_2\partial_{\mathscr{A}}(b_2)\\
&+x_3[-x_1b_2-x_3b_1+l_3(x_1^2+x_2x_3+x_3x_2)^{n-1}+\partial_{\mathscr{A}}(b_3)]\\
&=\partial_{\mathscr{A}}(-x_2b_2-x_1b_1-x_3b_3)+l_3x_3(x_1^2+x_2x_3+x_3x_2)^{n-1}.
\end{align*}
So $H^{2n-1}(\mathscr{A})=\mathbbm{k}(\lceil x_3\rceil \lceil x_1^2+x_2x_3+x_3x_2\rceil^{n-1})$.

For any $x_1r_1+x_2r_2+x_3r_3\in Z^{2n}(\mathscr{A})$, we have
\begin{align*}
0&=\partial_{\mathscr{A}}(x_1r_1+x_2r_2+x_3r_3)\\
&=x_3^2r_1-x_1\partial_{\mathscr{A}}(r_1)+(x_1x_3+x_3x_1)r_2-x_2\partial_{\mathscr{A}}(r_2)-x_3\partial_{\mathscr{A}}(r_3)\\
&=x_1(x_3r_2-\partial_{\mathscr{A}}(r_1))-x_2\partial_{\mathscr{A}}(r_2) +x_3(x_3r_1+x_1r_2-\partial_{\mathscr{A}}(r_3)).\\
\end{align*}
Thus $$\begin{cases}
\partial_{\mathscr{A}}(r_2)=0\\
\partial_{\mathscr{A}}(r_1)=x_3r_2\\
\partial_{\mathscr{A}}(r_3)=x_3r_1+x_1r_2.
\end{cases}$$
So $r_2=t_2x_3(x_1^2+x_2x_3+x_3x_2)^{n-1}+\partial_{\mathscr{A}}(s_2)$ for some $t_2\in \mathbbm{k}$ and $s_2\in \mathscr{A}^{2n-2}$.
Then $\partial_{\mathscr{A}}(r_1)=t_2x_3^2(x_1^2+x_2x_3+x_3x_2)^{n-1}+x_3\partial_{\mathscr{A}}(s_2)$. It implies that
$$r_1=t_2x_1(x_1^2+x_2x_3+x_3x_2)^{n-1}-x_3s_2+t_1x_3(x_1^2+x_2x_3+x_3x_2)^{n-1}+\partial_{\mathscr{A}}(s_1),$$ for some $t_1\in \mathbbm{k}$ and $s_1\in \mathscr{A}^{2n-2}$. Then
\begin{align*}
& \partial_{\mathscr{A}}(r_3)=x_3r_1+x_1r_2 \\ &=t_2x_3x_1(x_1^2+x_2x_3+x_3x_2)^{n-1}-x_3^2s_2+t_1x_3^2(x_1^2+x_2x_3+x_3x_2)^{n-1}\\
& \quad +x_3\partial_{\mathscr{A}}(s_1)+t_2x_1x_3(x_1^2+x_2x_3+x_3x_2)^{n-1}+x_1\partial_{\mathscr{A}}(s_2)\\
&=\partial_{\mathscr{A}}[t_2x_2(x_1^2+x_2x_3+x_3x_2)^{n-1}-x_1s_2+t_1x_1(x_1^2+x_2x_3+x_3x_2)^{n-1}-x_3s_1].
\end{align*}
It implies that \begin{align*}
r_3 &=t_2x_2(x_1^2+x_2x_3+x_3x_2)^{n-1}-x_1s_2+t_1x_1(x_1^2+x_2x_3+x_3x_2)^{n-1}-x_3s_1\\
&\quad\quad +t_3x_3(x_1^2+x_2x_3+x_3x_2)^{n-1}+\partial_{\mathscr{A}}(s_3),
\end{align*}
for some $t_3\in \mathbbm{k}$ and $s_3\in \mathscr{A}^{2n-2}$. Then
\begin{align*}
&x_1r_1+x_2r_2+x_3r_3\\
&=x_1[t_2x_1(x_1^2+x_2x_3+x_3x_2)^{n-1}-x_3s_2+t_1x_3(x_1^2+x_2x_3+x_3x_2)^{n-1}+\partial_{\mathscr{A}}(s_1)]\\
&+x_2[t_2x_3(x_1^2+x_2x_3+x_3x_2)^{n-1}+\partial_{\mathscr{A}}(s_2)]+x_3[t_2x_2(x_1^2+x_2x_3+x_3x_2)^{n-1}-x_1s_2]\\
&+x_3[t_1x_1(x_1^2+x_2x_3+x_3x_2)^{n-1}-x_3s_1+t_3x_3(x_1^2+x_2x_3+x_3x_2)^{n-1}+\partial_{\mathscr{A}}(s_3)]\\
&=t_2(x_1^2+x_2x_3+x_3x_2)^{n}+t_1(x_1x_3+x_3x_1)(x_1^2+x_2x_3+x_3x_2)^{n-1}\\
&-(x_1x_3+x_3x_1)s_2+x_2\partial_{\mathscr{A}}(s_2)+x_1\partial_{\mathscr{A}}(s_1)-x_3^2s_1+x_3\partial_{\mathscr{A}}(s_3)\\
&=t_2(x_1^2+x_2x_3+x_3x_2)^{n}+\partial_{\mathscr{A}}[t_1x_2(x_1^2+x_2x_3+x_3x_2)^{n-1}-x_2s_2-x_1s_1-x_3s_3].
\end{align*}
This indicates that $H^{2n}(\mathscr{A})=\mathbbm{k}\lceil x_1^2+x_2x_3+x_3x_2\rceil^n$.
By the induction above, we conclude that $$H(\mathscr{A})=\mathbbm{k}[\lceil x_3\rceil, \lceil x_1^2+x_2x_3+x_3x_2\rceil ]/(\lceil x_3\rceil^2).$$
Applying the constructing procedure of minimal semi-free resolution described in the proof of \cite[Proposition 2.4]{MW1}, we can construct a semi-free DG $\mathscr{A}$-module $F$ such that
$$F^{\#}=\mathscr{A}^{\#}\oplus \mathscr{A}^{\#}\Sigma e_{x_3}\oplus \mathscr{A}^{\#}\Sigma e_{z}\oplus \mathscr{A}^{\#}\Sigma e_{r}$$ and the differential
$\partial_{F}$ is defined by
$$\left(
    \begin{array}{c}
      \partial_F(1) \\
      \partial_F(\Sigma e_{x_3}) \\
      \partial_F(\Sigma e_z) \\
       \partial_F(\Sigma e_r)\\
    \end{array}
  \right)=\left(
            \begin{array}{cccc}
              0   & 0   & 0   & 0 \\
              x_3 & 0   & 0   & 0 \\
              x_1 & x_3 & 0   & 0 \\
              x_2 & x_1 & x_3 & 0\\
            \end{array}
          \right)\left(
                   \begin{array}{c}
                     1 \\
                     \Sigma e_{x_3} \\
                     \Sigma e_z \\
                     \Sigma e_r \\
                   \end{array}
                 \right).$$
Note that $F$ admits a semi-basis $\{1, \Sigma e_{x_3},\Sigma e_z, \Sigma e_r\}$ concentrated in degree $0$. Hence it is a Koszul and homoloigcally smooth DG algebra.
By the minimality of $F$, the differential of $\Hom_{\mathscr{A}}(F,\mathbbm{k})$ is zero. We have
\begin{align*}
H(\Hom_{\mathscr{A}}(F,\mathbbm{k}))=\Hom_{\mathscr{A}}(F,k)=\mathbbm{k}1^*\oplus \mathbbm{k}(\Sigma e_{x_3})^*\oplus \mathbbm{k}(\Sigma e_z)^*\oplus \mathbbm{k}(\Sigma e_r)^*.
\end{align*}
So the Ext-algebra $E=H(\Hom_{\mathscr{A}}(F,F))\cong H(\Hom_{\mathscr{A}}(F,k))$ is concentrated in degree $0$. On the other hand, $$\Hom_{\mathscr{A}}(F,F)^{\#}\cong \{\mathbbm{k}1^*\oplus \mathbbm{k}(\Sigma e_{x_3})^*\oplus \mathbbm{k}(\Sigma e_z)^*\oplus \mathbbm{k}(\Sigma e_r)^*\}\otimes_\mathbbm{k} F^{\#}$$ is concentrated in degrees $\ge 0$ since $|1^*|=|(\Sigma e_{x_3})^*|=|(\Sigma e_{z})^*|=|(\Sigma e_r)^*|=0$ and $F$ is concentrated in degrees $\ge 0$. This implies that $E= Z^0(\Hom_{\mathscr{A}}(F,F))$.
Since $F^{\#}$ is a free graded $\mathscr{A}^{\#}$-module with a basis
$\{1, \Sigma e_{x_3}, \Sigma e_{z},\Sigma e_r \}$ concentrated in degree $0$,
  the elements in  $\Hom_{\mathscr{A}}(F,F)^0$ is one to one correspondence with the matrixes in $M_4(\mathbbm{k})$. Indeed, any $f\in \Hom_{\mathscr{A}}(F,F)^0$ is uniquely determined by
  a matrix $A_f=(a_{ij})_{4\times 4}\in M_4(\mathbbm{k})$ with
$$\left(
                         \begin{array}{c}
                          f(1) \\
                          f(\Sigma e_{x_3})\\
                           f(\Sigma e_z)\\
                            f(\Sigma e_r)
                         \end{array}
                       \right) =      A_f \cdot \left(
                         \begin{array}{c}
                          1 \\
                          \Sigma e_{x_3}\\
                           \Sigma e_z\\
                            \Sigma e_r
                         \end{array}
                       \right).  $$
                       And $f\in  Z^0[\Hom_{\mathcal{A}}(F,F)]$ if and only if $\partial_{F}\circ f=f\circ \partial_{F}$, if and only if

 $$ A_f \left(
            \begin{array}{cccc}
              0 & 0 & 0 & 0 \\
              x_3 & 0& 0 & 0 \\
              x_1 & x_3 & 0 & 0 \\
              x_2 & x_1 & x_3 & 0\\
            \end{array}
          \right) = \left(
            \begin{array}{cccc}
              0 & 0 & 0 & 0 \\
              x_3 & 0& 0 & 0 \\
              x_1 & x_3 & 0 & 0 \\
              x_2 & x_1 & x_3 & 0\\
            \end{array}
          \right) A_f,$$
 which is also equivalent to
                       $$\begin{cases}
                       a_{ij}=0, \forall i<j\\
                       a_{11}=a_{22}=a_{33}= a_{44}\\
                       a_{32}=a_{21}=a_{43} \\
                       a_{31}=a_{42}
                       \end{cases}$$
by direct computations. Hence the the Ext-algebra $$ E\cong \left\{\left(
                                                             \begin{array}{cccc}
                                                               a & 0 & 0 & 0  \\
                                                               b & a & 0 & 0 \\
                                                               c & b & a & 0  \\
                                                               d & c & b & a
                                                             \end{array}
                                                           \right)
\quad | \quad a,b,c,d\in \mathbbm{k} \right\}\cong \mathbbm{k}[x]/(x^4)$$
is a symmetric Frobenius graded algebra. By Theorem \ref{cycond}, $\mathscr{A}$ is a Koszul and Calabi-Yau DG algebra.
\end{proof}

\begin{ex}\label{ex5}
Let $\mathcal{A}$ be a connected cochain DG algebra such that
\begin{align*}
\mathscr{A}^{\#}=\mathbbm{k}\langle x_1,x_2,x_3\rangle,\,\,
\partial_{\mathscr{A}}(x_1)=x_2x_3+x_3x_2,\,\, \partial_{\mathscr{A}}(x_2)=0 \,\, \partial_{\mathscr{A}}(x_3)=0.
\end{align*}
\end{ex}

\begin{rem}
Let $\mathscr{A}$ be the DG algebra in Example \ref{ex5}. By definition, we have
$H^0(\mathscr{A})=\mathbbm{k}, H^1(\mathscr{A})=\mathbbm{k}\lceil x_2\rceil\oplus \mathbbm{k}\lceil x_3\rceil$  and $ H^2(\mathscr{A})=\mathbbm{k}\lceil x_2\rceil^2\oplus \mathbbm{k}\lceil x_2x_3\rceil\oplus \mathbbm{k}\lceil x_3\rceil^2$.
By tedious computation, we have
$H(\mathscr{A})=\mathbbm{k}\langle \lceil x_2\rceil,\lceil x_3\rceil \rangle/(\lceil x_2\rceil\cdot \lceil x_3\rceil+\lceil x_3\rceil\cdot \lceil x_2\rceil).$
By \cite[Theorem 2.6]{MH}, $\mathscr{A}$ is a Koszul Calabi-Yau DG algebra.
\end{rem}

\section*{Acknowledgments}
The work is supported by the National Natural Science Foundation of
China (No. 11871326), by Key Disciplines of Shanghai Municipality(No. S30104), and the Innovation Program of
Shanghai Municipal Education Commission (No. 12YZ031).


\def\refname{References}

\end{document}